\documentclass[11pt]{article}

\usepackage[margin=1in]{geometry}
\usepackage{amsmath,amssymb,amsthm,mathtools,bm}

\usepackage{enumitem}
\usepackage{algorithm}
\usepackage{algpseudocode}
\usepackage{microtype}
\usepackage[hidelinks]{hyperref}
\newcommand{\keywords}[1]{%
  \par\noindent\textbf{Keywords:} #1
}
\usepackage{booktabs}
\usepackage{makecell}
\usepackage{graphicx}
\newcommand{\safeincludegraphics}[2][]{%
  \IfFileExists{#2}{\includegraphics[#1]{#2}}{%
    \fbox{\parbox[c][0.22\textheight][c]{0.92\linewidth}{\centering
      Figure file not included in the source bundle:\\[3pt]
      {\ttfamily\detokenize{#2}}}}%
  }%
}

\usepackage{tabularx}
\usepackage{array}

\newcolumntype{Y}{>{\raggedright\arraybackslash}X}
\newcolumntype{M}[1]{>{\raggedright\arraybackslash}m{#1}}

\usepackage{tikz}
\usetikzlibrary{positioning,arrows.meta,fit,calc}

\usepackage{subcaption}
\newtheorem{theorem}{Theorem}[section]
\newtheorem{proposition}[theorem]{Proposition}
\newtheorem{lemma}[theorem]{Lemma}
\newtheorem{corollary}[theorem]{Corollary}
\newtheorem{assumption}[theorem]{Assumption}
\theoremstyle{definition}

\newtheorem{example}[theorem]{Example}
\theoremstyle{remark}
\newtheorem{remark}[theorem]{Remark}

\newcommand{\ReMID}{\textsc{ReMID}}

\title{Fully First-Order Optimization for Implicit Multilevel Compositions via Recursive Perturbations}
\author{%
  Xiaoyu Wang\textsuperscript{1} and Shiqian Ma\textsuperscript{2}\\[0.3em]
  \textsuperscript{1}University of Chinese Academy of Sciences \quad 
  \textsuperscript{2}Johns Hopkins University\\[0.3em]
  \texttt{wangxiaoyu@ucas.ac.cn}\quad
  \texttt{sqma@jhu.edu}%
}

\date{}

\begin{document}
\maketitle

\begin{abstract}
We study optimization over a finite composition of implicit response maps
generated by smooth potentials with strongly monotone signed gradients.
This class includes chains mixing strongly convex minimization
and strongly convex--strongly concave saddle responses.  We propose a fully first-order method based on recursively
perturbed response solves. The central issue is stability under inexact response solves: a downstream
adjoint error changes the perturbation vector of the next upstream perturbed problem.
We establish coupled bounds for the forward response errors and backward errors in the scaled response differences
whose coefficients remain bounded as the perturbation parameter $\delta$
decreases.  A triangular Lyapunov construction combines these bounds with
the $O(\delta)$ bias of the exact perturbation estimator.  For the
warm-started method with finitely many inner iterations, this gives $\frac{1}{K}\sum_{k=0}^{K-1}\|\nabla\Phi(x^k)\|^2
\leq O(K^{-1})+O(\delta^2).$
Thus, for fixed problem data and finite depth, $\delta=O(\epsilon)$ yields an
$O(\epsilon^{-2})$ outer-iteration dependence.  For strongly convex
minimization chains, we also derive condition-number-dependent first-order
oracle complexity bounds.
\end{abstract}

\keywords{
Multilevel optimization;
implicit differentiation;
first-order hypergradient methods;
equilibrium responses}


\section{Introduction}
\label{sec:intro}

Multilevel optimization arises in machine learning, optimal control,
and sequential decision-making, where the solution of one optimization
or equilibrium problem parameterizes the next.
An upstream decision is evaluated through the resulting sequence of
responses, so its influence on terminal performance must be traced
through the entire chain.

We consider a finite composition of implicit response maps
$S_i:\mathbb{R}^{d_{i-1}}\to\mathbb{R}^{d_i}$:
\begin{equation}
z_0^\star(x):=x,
\qquad
z_i^\star(x):=S_i\bigl(z_{i-1}^\star(x)\bigr),
\qquad i=1,\ldots,M.
\label{eq:intro-implicit-chain}
\end{equation}
The resulting outer problem is
\begin{equation}
\min_{x\in\mathbb{R}^{d_0}}
\quad
\Phi(x):=f\bigl(x,z_M^\star(x)\bigr),
\label{eq:intro-outer-problem}
\end{equation}
where $f$ measures terminal performance and may also depend directly on
$x$. The response maps and their Jacobians are not assumed to be
available explicitly; each response must instead be obtained by solving
its defining problem. We focus on smooth, single-valued chains of the
form \eqref{eq:intro-implicit-chain}; general nested hierarchical
programs need not admit this representation.

Even when the response chain can be evaluated, differentiating it remains
a substantially harder task. At every level, standard implicit
differentiation requires second-order derivative actions together with
the solution of a linearized optimality system, and these operations
compound across the chain. This is particularly restrictive in
large-scale settings, where the response problems may be accessible only
through first-order solvers. Differentiating through the solver
iterations avoids separate linearized solves, but its computational and
memory costs grow with the unrolling horizon, and it yields sensitivities
of the truncated solver trajectories rather than of the exact response
maps. These limitations raise a fundamental question: can one optimize
the exact composite objective using only first-order evaluations and a
fixed finite number of response updates per outer iteration?

\subsection{Motivating applications}
\label{subsec:motivating-applications}

Response-chain structures arise in multilevel learning pipelines
\cite{somayajula2022multilevel}, end-to-end data augmentation and
task-adaptive pretraining
\cite{fang2022endtoend,zhang2025tapweight}, and multistage decision and
market models \cite{persak2024decision,xia2026sequentialmarkets}.
Although the complete formulations may be constrained, nonconvex, or
discrete, the following examples identify smooth structured instances
covered by our framework.

\paragraph{Sequentially regularized model adaptation.}
A model adapted across sites or tasks may be regularized toward the model
from the preceding stage
\cite{kirkpatrick2017overcoming,lenga2020continual}:
\begin{equation}
\label{eq:sequential-adaptation}
z_i^\star
=\operatorname*{arg\,min}_{z}
\left\{
\ell_i(z;\mathcal D_i)
+\frac{\omega_i}{2}
(z-z_{i-1}^\star)^\top W_i(z-z_{i-1}^\star)
+\frac{\mu_i}{2}\|z\|^2
\right\},
\qquad z_0^\star=x.
\end{equation}
The initial model $x$ is chosen according to validation performance after
the full adaptation sequence. If $\ell_i$ is twice continuously
differentiable and convex, $\omega_i\ge0$, $W_i\succeq0$, and $\mu_i>0$,
then each stage defines a unique smooth response. Our domain-adaptation
experiment uses this structure with fixed features and convex stage
losses.

\paragraph{Saddle-point response chains.}
A stage may instead return an equilibrium between competing decisions,
as in parametrized minimax models
\cite{zheng2026bilevel,shen2026penalty}.
Given the preceding response $z_{i-1}^\star$, let $z_i^\star=(y_i^\star,p_i^\star)$
satisfy
\begin{equation}
\label{eq:sequential-saddle-chain}
\min_{y_i}\max_{p_i}
\Psi_i(y_i,p_i;z_{i-1}^\star), \qquad
z_i^\star:=(y_i^\star,p_i^\star),
\qquad
z_0^\star=x,
\end{equation}
where $\Psi_i$ is uniformly strongly convex in $y_i$ and
strongly concave in $p_i$.
Passing $z_i^\star$ to the next stage produces a response chain.
Smooth, uniformly strongly convex--strongly concave potentials yield
unique differentiable responses within our framework.

\paragraph{Layer-wise thermal control.}
In laser powder bed fusion, each layer inherits a thermal state from its
predecessor, and process parameters are chosen using thermal models and
interlayer measurements
\cite{alam2021slmcontrol,riensche2022feedforward,kavas2023layer,
ren2025interlayer}.
A representative stage problem is
\begin{equation}
\label{eq:lpbf-sequential-control}
(c_i^\star,\theta_i^\star)
\in
\operatorname*{arg\,min}_{c_i,\theta_i}
\mathcal J_i(c_i,\theta_i;\theta_{i-1}^\star)
\quad\text{subject to}\quad
\mathcal A_i(c_i,\theta_i;\theta_{i-1}^\star)=0,
\end{equation}
where $c_i$ denotes the layer-wise control and $\mathcal A_i=0$ represents discretized heat-transfer dynamics.
A locally smooth state reduction and a uniformly strongly convex reduced
control problem define a smooth response to
$\theta_{i-1}^\star$, whose terminal state becomes the input to the next
layer. This example motivates the response-chain structure; our
experiments use structured simplified instances rather than a complete
manufacturing-process model.

For the structured instances considered here, first-order iterations
can approximate each response. Differentiating their composition,
however, typically involves second-order operations or unrolling
solver trajectories. We introduce \ReMID{} (Recursive Multilevel Implicit
Differentiation), which approximates reverse sensitivity through
recursively perturbed response solves using only first-order information.
We next describe the response structure that enables this approach
and the challenge of controlling errors from inexact recursive solves.

\subsection{Response class and analytical challenge}
\label{subsec:response-class-challenge}

The minimization and saddle responses above share a common
potential-based formulation:
\begin{equation}
\label{eq:intro-response-preview}
\nabla_1\Psi_i\bigl(S_i(u),u\bigr)=0.
\end{equation}
We assume that each level admits a fixed symmetric
orthogonal matrix $P_i$ such that
$z\mapsto P_i\nabla_1\Psi_i(z,u)$ is uniformly strongly monotone
in $u$. For minimization responses, $P_i=I$ recovers the standard
uniform strong convexity condition.
For saddle responses, $P_i=\operatorname{diag}(I,-I)$,
with signs corresponding to the minimization and maximization
variables, recovers uniform strong convexity--concavity.
Under the regularity assumptions of Section~\ref{sec:model},
this condition ensures well-posed responses and contractive
first-order updates.

\paragraph{Perturbations and reverse sensitivity.}
Reverse differentiation propagates the terminal gradient backward
through the response chain:
\begin{equation}
\label{eq:intro-adjoint-preview}
v_M^\star=\nabla_2f\bigl(x,z_M^\star(x)\bigr),
\qquad
v_{i-1}^\star
=DS_i\bigl(z_{i-1}^\star(x)\bigr)^\top v_i^\star,
\quad i=M,\ldots,1,
\end{equation}
with $\nabla\Phi(x)=\nabla_1f(x,z_M^\star(x))+v_0^\star$.
The potential structure enables these reverse actions to be recovered
through response perturbations.
For a general implicit equation $F(z,u)=0$, reverse differentiation
requires a transpose-inverse action involving $(D_zF)^{-\top}$,
whereas an additive response perturbation involves $(D_zF)^{-1}$.
These generally differ.
Here the state Jacobian of the response equation is the symmetric
Hessian $\nabla_{11}^2\Psi_i$, so its inverse and transpose inverse
coincide.
Specifically, for fixed $u$ and a downstream adjoint vector $w$,
the perturbed response
$\nabla_1\Psi_i(z_{i,\delta},u)+\delta w=0$ satisfies
\begin{equation}
\label{eq:intro-perturbation-preview}
\frac{\nabla_2\Psi_i(z_{i,\delta},u)
-\nabla_2\Psi_i(S_i(u),u)}{\delta}
=DS_i(u)^\top w+O(\delta)
\end{equation}
under the regularity assumptions.
Thus, perturbed response solves and first-order gradient evaluations
approximate the reverse action without Hessian--vector products
or separate adjoint linear-system solves.
Related single-response constructions are known
\cite{sow2022hessianfree,kwon2023fullyfirst,kornowski2024constrained,
zhao2026ffolayer}.
The difficulty here is the interaction between recursive
composition and inexact response computation.

\paragraph{Recursive finite-solve errors.}
At depth $M\ge2$, an inexact downstream adjoint supplies the
perturbation vector for the next upstream response problem.
Its error therefore changes the upstream response target, and the
resulting solve generates the next adjoint approximation.
Consequently, single-response estimates with a prescribed
perturbation vector do not directly close the multilevel error
recursion. Moreover, bounding the perturbed and unperturbed
response errors separately before division by $\delta$ can yield
terms of order $r/\delta$, where $r$ denotes an absolute
response-solve error
(Proposition~\ref{prop:two-level-finite-solve-obstruction}).

\ReMID{} refreshes the unperturbed responses in forward order and
then updates the perturbed responses in backward order.
At each level, the two branches share the same refreshed input,
stepsize, and inner budget, and remain warm started across outer
iterations. This pairing allows their updates to be subtracted
to obtain a recursion directly for the scaled response difference
$(z_{i,\delta}-z_i)/\delta$, avoiding bounds that divide separate
absolute solve errors by $\delta$.
The resulting tracking bounds have coefficients that remain
bounded as $\delta\downarrow0$, while accounting for both forward
response errors and backward errors in the recursively generated
adjoint approximations.
A weighted Lyapunov construction closes these coupled recursions.
\subsection{Contributions}
\label{subsec:contributions}
Our contributions are threefold.
\begin{itemize}[leftmargin=1.6em]
\item \textbf{Recursive first-order differentiation.}
We construct a recursive perturbation estimator for the potential-based
response class using only gradient evaluations, without Hessian--vector
products or separate linearized-system solves. The construction applies at arbitrary fixed depth and accommodates
both strongly convex minimization and strongly convex--strongly
concave saddle responses within the same chain. For exact response solves, we show that the estimator recovers
the exact adjoint as $\delta\downarrow0$, establish a uniform
$O(\delta)$ one-sided bias bound, and derive regularity estimates
for finite-solve tracking.

\item \textbf{Uniform tracking with fixed inner budgets.}
We establish tracking bounds for scaled paired-response
differences (Theorem~\ref{thm:auxiliary-tracking}) and combine
forward and backward error recursions through a weighted Lyapunov
argument (Theorem~\ref{thm:lyapunov-closure}). For any fixed admissible
upper radius $\delta_{\rm adm}$, the required inner budgets depend on the
problem data, chain depth, and solver contractions, but not on the chosen
$\delta\in(0,\delta_{\rm adm}]$. The result also accommodates paired
initialization without requiring $\delta$-dependent initial solve
accuracy.

\item \textbf{Stationarity and first-order oracle complexity.}
With fixed inner budget and paired initialization choices, we establish
\[
\frac1K\sum_{k=0}^{K-1}\|\nabla\Phi(x^k)\|^2
\leq O(K^{-1})+O(\delta^2)
\]
for the exact composite objective, with constants uniform in $\delta$ (Theorem~\ref{thm:global-stationarity}). For fixed problem data and depth,
choosing $\delta=c\epsilon$
for sufficiently small fixed $c>0$ yields an average squared
gradient norm at most $\epsilon^2$ in $O(\epsilon^{-2})$
outer iterations with fixed inner budgets.
In the homogeneous strongly convex minimization regime, we further obtain
\[
K=O\!\left(\kappa^{2M+1}\epsilon^{-2}\right),
\qquad
\mathcal N_{\rm FO}
=\widetilde O\!\left(M^2\kappa^{2M+2}\epsilon^{-2}\right),
\]
where $\kappa$ is the largest condition number among the
response problems and
$\mathcal N_{\rm FO}$ counts first-order oracle calls. The hidden constants may depend on the fixed depth $M$. At $M=1$, the oracle complexity reduces to
$\widetilde O(\kappa^4\epsilon^{-2})$, matching the
condition-number and accuracy dependence of the nonaccelerated
fully first-order bilevel bound of Chen et al.~\cite{chen2025f2ba}.
\end{itemize}

\subsection{Related work}
\label{subsec:related-work}

\paragraph{Bilevel differentiation and first-order alternatives.}
Bilevel programming has an extensive literature on formulations
and algorithms
\cite{dempe2002foundations,colson2007overview,luo1996mpec}.
Gradient methods typically differentiate either finite lower-level
optimization trajectories
\cite{domke2012generic,maclaurin2015reversible,franceschi2018bilevel}
or optimality systems
\cite{pedregosa2016hyperparameter,grazzi2020iteration,blondel2022implicit}.
Their convergence and complexity have been studied in
\cite{ghadimi2018approximation,ji2021bilevel,yang2021faster,
hong2023twotimescale,dagreou2022framework,dagreou2024lower}.
Hessian-free and fully first-order methods replace explicit
second-order computations with nearby-path, penalty, two-timescale,
or perturbation constructions
\cite{sow2022hessianfree,kwon2023fullyfirst,chen2025f2ba,
li2022fullysingle,liu2022bome,kornowski2024constrained}.
Related approaches use perturbed optimization solves for backward
differentiation \cite{zhao2026ffolayer} or zeroth-order queries
for response-sensitivity estimation \cite{jiang2026bizol}.
Our single-response mechanism belongs to this broader family.
The equality-constrained method of
Kornowski et al.~\cite{kornowski2024constrained}
uses solve accuracies that tighten with the perturbation radius.

\paragraph{Multilevel differentiation.}
Sato et al.~\cite{sato2021multilevel} differentiate finite
optimization trajectories and establish stationarity guarantees
for the resulting surrogate, together with asymptotic consistency
of optimal values and solutions as the inner iteration counts grow.
Shafiei et al.~\cite{shafiei2024trilevel} prove convergence and
rate results for convex hierarchical problems defined by nested
solution sets, using monotone operators and fixed-point methods.
Gu and Chen~\cite{gu2024differentiable} develop hierarchical
implicit differentiation with gradient-computation complexity
and convergence analyses; their construction uses inverse
linearized-system actions.

The \textsc{Betty} framework~\cite{choe2023betty} composes
approximate best-response Jacobians and supports finite-difference
approximations. Related recursive constructions appear in
\cite{xie2023classroom,guo2025mlomae}.
These works demonstrate recursive differentiation in software and
learning applications. Our focus is the finite-computation analysis:
for potential-based response chains, we establish stationarity
and first-order oracle complexity guarantees for the exact
response-composition objective, using fixed inner update budgets
that are independent of the perturbation radius.

\paragraph{Inexact tracking and compositional optimization.}
Dizon and Valkonen~\cite{dizon2024differential} develop an abstract
framework for inexact differential tracking, accommodating inner
and adjoint iterations, including iterative linear-system solves.
Our analysis instead controls scaled differences of recursively
perturbed response pairs uniformly in the perturbation radius,
using only first-order response updates with radius-independent
inner budgets.
In explicit nested-composition optimization, component values and
derivatives are accessed through oracles
\cite{wang2017scgd,yang2019multilevel,zhang2021nested,
balasubramanian2022multilevel,jiang2022optimal};
in our setting, the component maps are implicit responses
approximated by iterative solvers.

\paragraph{Differentiation beyond minimization.}
Implicit differentiation extends to root equations and KKT systems
\cite{blondel2022implicit}, as well as game equilibria and
Stackelberg learning dynamics
\cite{ling2018game,li2020endtoendgames,fiez2020stackelberg}.
For bilevel problems with minimax lower levels, first-order methods
have been developed using regularized gap functions
\cite{yao2025gap} and penalty formulations
\cite{shen2026minimax}. Our response model permits minimization and strongly
convex--strongly concave saddle problems within the same chain.

\paragraph{Organization.}
Section~\ref{sec:model} states the response model and exact adjoint.
Section~\ref{sec:perturbation} develops the perturbation targets.
Sections~\ref{sec:algorithm}--\ref{sec:convergence} establish finite-solve
tracking and complexity guarantees. Section~\ref{sec:numerics} presents
experiments. Additional proofs and implementation details are in the
appendices.

\paragraph{Notation.}
We write $\langle\cdot,\cdot\rangle$ and $\|\cdot\|$ for the Euclidean
inner product and norm, respectively, and use the same norm notation for
the induced matrix norm. The symbols $I_d$ and $A^\top$ denote the
$d\times d$ identity matrix and the transpose of $A$; the subscript on
$I_d$ is omitted when its dimension is clear, and
$A^{-\top}:=(A^{-1})^\top$. For symmetric matrices,
$A\preceq B$ means that $B-A$ is positive semidefinite. We set
$\operatorname{diag}(A,B)$ for the block-diagonal matrix with blocks
$A$ and $B$, $\operatorname{sym}(A):=(A+A^\top)/2$, and use
$\mathbf 1_{\mathcal E}$ for the indicator of an event or condition
$\mathcal E$. The notation $Dh$ denotes the Jacobian of a differentiable
map $h$. For a scalar function $h$ of two arguments, $\nabla_a h$ is its
gradient with respect to argument $a$, and
$\nabla_{ab}^2h:=D_{\mathrm{arg}_b}(\nabla_a h)$. Empty products and
empty sums equal one and zero, respectively. We use standard asymptotic
notation, with $\widetilde O(\cdot)$ suppressing logarithmic factors.

\section{Implicit Multilevel Compositions}
\label{sec:model}

We specify the implicit response maps in
\eqref{eq:intro-implicit-chain} and the assumptions used to analyze
the composite objective \eqref{eq:intro-outer-problem}.
We then derive the exact adjoint recursion, which motivates
the perturbation construction in Section~\ref{sec:perturbation}.

\subsection{Response model and assumptions}
\label{subsec:problem-formulation}

At level $i=1,\ldots,M$, let
$\Psi_i:\mathbb R^{d_i}\times\mathbb R^{d_{i-1}}\to\mathbb R$
be a potential, and let
$P_i=P_i^\top=P_i^{-1}\in\mathbb R^{d_i\times d_i}$.
Define the signed-gradient operator
$T_i(z,u):=P_i\nabla_1\Psi_i(z,u)$. The response map is
\begin{equation}
\label{eq:implicit-solution-map}
S_i(u):=\textnormal{the unique }z\in\mathbb R^{d_i}
\textnormal{ satisfying }T_i(z,u)=0,
\end{equation}
whose existence and uniqueness follow from the assumptions below.
Since $P_i$ is nonsingular, this equation is equivalent to
\begin{equation}
\label{eq:implicit-stationarity}
\nabla_1\Psi_i(S_i(u),u)=0.
\end{equation}
\begin{assumption}[Implicit-layer regularity]
\label{ass:implicit-layers}
For each $i=1,\ldots,M$, there are constants
$\mu_i,L_i>0$ and $\rho_i\ge0$ such that the following conditions hold.

\begin{enumerate}[label=\textnormal{(\roman*)}]
\item
For every $u\in\mathbb R^{d_{i-1}}$,
the map $T_i(\cdot,u)$ is uniformly $\mu_i$-strongly monotone:
\[
\left\langle T_i(z,u)-T_i(z',u),z-z' \right\rangle \ge \mu_i\|z-z'\|^2.
\]

\item
The function $\Psi_i$ is twice continuously differentiable and $\|\nabla^2\Psi_i(z,u)\|
\le L_i$ for all $(z,u)$.

\item
The Hessian of $\Psi_i$ is $\rho_i$-Lipschitz:
$\|\nabla^2\Psi_i(z,u)-\nabla^2\Psi_i(z',u')\| \le \rho_i \left( \|z-z'\|+\|u-u'\| \right).$
\end{enumerate}
\end{assumption}
The formulation $T_i=P_i\nabla_1\Psi_i$ includes the following two canonical
response classes, subject to the derivative bounds above.

\begin{example}[Strongly convex minimization responses]
\label{prop:minimization-layer}
Let $P_i=I$ and $\Psi_i(z,u)=g_i(z,u)$, where
$g_i(\cdot,u)$ is uniformly $\mu_i$-strongly convex for every $u$.
Then $T_i(z,u)=\nabla_1g_i(z,u)$ is $\mu_i$-strongly monotone, and
the response map is
$S_i(u) = \operatorname*{arg\,min}_{z\in\mathbb R^{d_i}}g_i(z,u).$
\end{example}

\begin{example}[Strongly convex--strongly concave saddle responses]
\label{prop:saddle-layer}
Let $z=(y,p)\in\mathbb R^{m_i}\times\mathbb R^{n_i}$, with
$d_i=m_i+n_i$, and let
$P_i:=\operatorname{diag}(I_{m_i},-I_{n_i})$.
Write $\Psi_i(y,p,u):=\Psi_i((y,p),u)$.
Suppose that, for every $u\in\mathbb R^{d_{i-1}}$,
$\Psi_i(\cdot,\cdot,u)$ is uniformly $\mu_{y,i}$-strongly convex in $y$
and uniformly $\mu_{p,i}$-strongly concave in $p$. Then
\[
T_i(z,u)
=
\begin{pmatrix}
\nabla_y\Psi_i(y,p,u)\\
-\nabla_p\Psi_i(y,p,u)
\end{pmatrix}
\]
is $\mu_i$-strongly monotone in $z$, where
$\mu_i:=\min\{\mu_{y,i},\mu_{p,i}\}$.
Consequently,
$S_i(u)=(y_i^\star(u),p_i^\star(u))$ is the unique saddle point of
$\Psi_i(\cdot,\cdot,u)$.
\end{example}

\paragraph{Relation to standard response models.}
For $P_i=I$, Assumption~\ref{ass:implicit-layers}(i) is
equivalent to uniform strong convexity of $\Psi_i(\cdot,u)$,
as commonly assumed in smooth bilevel optimization
\cite{ji2021bilevel,kwon2023fullyfirst,chen2025f2ba}.
For $P_i=\operatorname{diag}(I,-I)$, it is equivalent to
uniform strong convexity--concavity of the saddle potential.
The corresponding strongly monotone saddle-gradient formulation
is standard in saddle-point and variational-inequality methods
\cite{ostroukhov2022tensor}.
Thus the signed-gradient formulation covers both response types,
which may coexist in a chain.
Strong monotonicity is imposed on $T_i$, while the state Hessian
of $\Psi_i$ may be indefinite.

\begin{assumption}[Outer-function regularity]
\label{ass:outer-function}
The outer objective $f$ is twice continuously differentiable.
There exist constants $C_f,L_f>0$ and $\rho_f\ge0$ such that
$\|\nabla_2f(x,z)\| \le C_f$, $\|\nabla^2f(x,z)\| \le L_f$
for all $(x,z)$, and
$\|\nabla^2f(x,z)-\nabla^2f(x',z')\| \le \rho_f \bigl( \|x-x'\|+\|z-z'\| \bigr)$
for all $(x,z)$ and $(x',z')$.
\end{assumption}
The derivative bounds in these assumptions follow familiar
smooth bilevel formulations: Lipschitz gradients and lower-level
Hessians are used in
\cite{ji2021bilevel,kwon2023fullyfirst,chen2025f2ba},
and the bounded terminal gradient
$\|\nabla_2f\|\le C_f$ is also assumed in
\cite{kwon2023fullyfirst,chen2025f2ba}.
We additionally impose Lipschitz continuity of the outer Hessian
to control the regularity of the recursive perturbation targets;
this condition also appears in
\cite{kwon2023fullyfirst}.
All bounds are global and uniform in the response inputs.

\paragraph{First-order information.}
We assume access to the gradient blocks
$\nabla_1\Psi_i$, $\nabla_2\Psi_i$, $\nabla_1f$, and $\nabla_2f$.
A first-order oracle call evaluates one such block.
The method developed below uses these evaluations without querying
Hessians or Hessian--vector products, or differentiating through
inner solver iterations.

\subsection{Regularity of the response maps}
\label{subsec:response-regularity}

Set $\kappa_i:=L_i/\mu_i\ge1$ and define
\begin{equation}
\label{eq:H-B-def}
H_i(u)
:=
\nabla_{11}^2\Psi_i\bigl(S_i(u),u\bigr),
\qquad
B_i(u)
:=
\nabla_{21}^2\Psi_i\bigl(S_i(u),u\bigr).
\end{equation}
Here $H_i(u)\in\mathbb R^{d_i\times d_i}$ is symmetric and
$B_i(u)\in\mathbb R^{d_{i-1}\times d_i}$; Hessian symmetry gives
$B_i(u)^\top=\nabla_{12}^2\Psi_i(S_i(u),u)$.

\begin{proposition}[Response-map regularity]
\label{prop:response-regularity}
Under Assumption~\ref{ass:implicit-layers}, each response map $S_i$
is well defined and continuously differentiable, with
\begin{equation}
\label{eq:implicit-jacobian-general}
DS_i(u)=-H_i(u)^{-1}B_i(u)^\top.
\end{equation}
Moreover,
\begin{equation}
\label{eq:Hinv-general}
\|H_i(u)^{-1}\|\le\mu_i^{-1},\qquad
\|DS_i(u)\|\le\kappa_i,
\end{equation}
and, for all $u,u'$,
\[
\|S_i(u)-S_i(u')\|\le\kappa_i\|u-u'\|,\qquad
\|DS_i(u)-DS_i(u')\|\le L_{J_i}\|u-u'\|,
\]
where $L_{J_i}:=\rho_i(1+\kappa_i)^2/\mu_i$.
\end{proposition}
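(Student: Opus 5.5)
Existence and uniqueness come first. For fixed $u$, $T_i(\cdot,u)$ is continuous and $\mu_i$-strongly monotone on $\mathbb R^{d_i}$, hence a bijection by Browder--Minty (or directly: $z\mapsto z-\eta T_i(z,u)$ is a contraction for small $\eta>0$, since $\|D_zT_i\|=\|P_i\nabla^2_{11}\Psi_i\|\le L_i$; its unique fixed point is the unique zero). This defines $S_i(u)$.

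Next comes the Jacobian. Strong monotonicity of the $C^1$ map $T_i(\cdot,u)$ gives $\operatorname{sym}(P_i\nabla^2_{11}\Psi_i(z,u))\succeq\mu_i I$ at every $(z,u)$. So for any $h$,
\[
\mu_i\|h\|^2\le\langle P_i\nabla^2_{11}\Psi_i h,h\rangle\le\|\nabla^2_{11}\Psi_i h\|\,\|h\|,
\]
using $\|P_i\|=1$. Hence $\nabla^2_{11}\Psi_i(z,u)$ is invertible everywhere with inverse norm at most $\mu_i^{-1}$; in particular $\|H_i(u)^{-1}\|\le\mu_i^{-1}$. The implicit function theorem applied to $\nabla_1\Psi_i(z,u)=0$ (a $C^1$ map with invertible $z$-Jacobian) yields that $S_i$ is $C^1$ near each $u$. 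Uniqueness of zeros ensures the local implicit solution coincides with $S_i$, so $S_i$ is globally $C^1$. Differentiating the identity gives $H_iDS_i+\nabla^2_{12}\Psi_i=0$, i.e.\ $DS_i=-H_i^{-1}B_i^\top$. Since $B_i^\top$ is a sub-block of $\nabla^2\Psi_i$, we have $\|B_i\|\le L_i$, hence $\|DS_i\|\le\kappa_i$. The Lipschitz bound on $S_i$ then follows from the mean value inequality along the segment from $u'$ to $u$.

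The Jacobian-Lipschitz bound is the only step requiring care. Write
\[
DS_i(u)-DS_i(u')=-H^{-1}(B^\top-B'^\top)-(H^{-1}-H'^{-1})B'^\top,
\]
and use $H^{-1}-H'^{-1}=H^{-1}(H'-H)H'^{-1}$, so the second term equals $-H^{-1}(H'-H)H'^{-1}B'^\top=H^{-1}(H'-H)DS_i(u')$. By Assumption~\ref{ass:implicit-layers}(iii) and the bound on $S_i$, each of $\|H-H'\|$ and $\|B-B'\|$ is at most $\rho_i(\|S_i(u)-S_i(u')\|+\|u-u'\|)\le\rho_i(1+\kappa_i)\|u-u'\|$. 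Combining,
\[
\|DS_i(u)-DS_i(u')\|\le\mu_i^{-1}\rho_i(1+\kappa_i)\|u-u'\|\,(1+\kappa_i),
\]
which is exactly $L_{J_i}\|u-u'\|$.

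A tighter estimate could be obtained by bounding the joint block $[H-H',\,B^\top-B'^\top]$ at once. The point to watch is not double-counting, but the grouping above already produces the stated constant, so no further refinement is needed.
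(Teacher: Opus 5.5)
Your proposal is correct and follows the paper's proof essentially step for step: a contraction argument for well-posedness, the bound $\|H_i h\|\ge\mu_i\|h\|$ for invertibility, the implicit-function theorem combined with global uniqueness of the zero, and the inverse-difference identity, which yields exactly $L_{J_i}=\rho_i(1+\kappa_i)^2/\mu_i$. The differences are cosmetic. You obtain $\|S_i(u)-S_i(u')\|\le\kappa_i\|u-u'\|$ from the mean value inequality, whereas the paper compares the two stationarity equations directly via strong monotonicity. You also factor the inverse-difference term through $DS_i(u')$, whereas the paper bounds $\|H^{-1}-H'^{-1}\|\le\rho_i(1+\kappa_i)/\mu_i^2$ and multiplies by $\|B\|\le L_i$; both give the same constant.
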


\begin{proof}
Well-posedness follows from continuity and strong monotonicity on
$\mathbb R^{d_i}$; a direct contraction argument is given in
Appendix~\ref{app:supporting-proofs}. Set
$A_i:=D_zT_i(S_i(u),u)=P_iH_i(u)$. Then
\[
\mu_i\|h\|^2\le\langle A_i h,h\rangle
\le\|A_i h\|\,\|h\|=\|H_i(u)h\|\,\|h\|.
\]
Thus $\|H_i(u)h\|\ge\mu_i\|h\|$, so $H_i(u)$ is nonsingular
and $\|H_i(u)^{-1}\|\le\mu_i^{-1}$, even when it is indefinite.
The implicit-function theorem and uniqueness give a global $C^1$ map.
Differentiating \eqref{eq:implicit-stationarity} yields
$H_iDS_i+B_i^\top=0$, and $\|B_i\|\le L_i$ gives the Jacobian bound.
The remaining Lipschitz estimates follow from the inverse-difference
identity, as detailed in Appendix~\ref{app:supporting-proofs}.
\end{proof}

By Proposition~\ref{prop:response-regularity}, the exact chain is
Lipschitz continuous in $x$. For convenience, let
$L_{z,0}:=1$, $L_{z,i}:=\prod_{j=1}^i\kappa_j$, $i=1,\ldots,M.$
Then
$\|z_i^{\star}(x)-z_i^{\star}(x')\|
\le L_{z,i}\|x-x'\|$.

\begin{lemma}[Smoothness of the outer objective]
\label{lem:outer-objective-smoothness}
Under Assumptions~\ref{ass:implicit-layers} and \ref{ass:outer-function}, the composite objective $\Phi$ is
continuously differentiable and
$\|\nabla\Phi(x)-\nabla\Phi(x')\| \le L_\Phi\|x-x'\|,$
where we may take
$L_\Phi := L_f(1+L_{z,M})^2 + C_f \sum_{r=1}^M L_{J_r}L_{z,r-1}^2 \prod_{j=r+1}^M\kappa_j.$
\end{lemma}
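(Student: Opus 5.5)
The plan is to write $\nabla\Phi$ explicitly through the chain rule and bound the variation of each factor separately. Continuous differentiability follows from Proposition~\ref{prop:response-regularity}: each $S_i$ is $C^1$, so $z_M^\star$ is $C^1$ as a composition, and $f$ is $C^2$. The chain rule gives
\[
\nabla\Phi(x)=\nabla_1 f\bigl(x,z_M^\star(x)\bigr)+J(x)^\top\nabla_2 f\bigl(x,z_M^\star(x)\bigr),
\qquad
J(x):=Dz_M^\star(x)=\prod_{j=M}^{1}DS_j\bigl(z_{j-1}^\star(x)\bigr),
\]
where the product is ordered with $DS_M$ leftmost. This agrees with the adjoint recursion \eqref{eq:intro-adjoint-preview}. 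I would then fix $x,x'$, write $\Delta:=\|x-x'\|$, and add and subtract $J(x)^\top\nabla_2f(x',z_M^\star(x'))$ to split the difference $\nabla\Phi(x)-\nabla\Phi(x')$ into two pieces.

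\textbf{Step 1: the pieces involving $\nabla f$.} Since $\|\nabla^2 f\|\le L_f$, the map $(x,z)\mapsto\nabla f(x,z)$ is $L_f$-Lipschitz. Using $\|z_M^\star(x)-z_M^\star(x')\|\le L_{z,M}\Delta$, each partial gradient therefore moves by at most $L_f(1+L_{z,M})\Delta$. The first piece is $\nabla_1f(x,\cdot)-\nabla_1f(x',\cdot)$. The second is $J(x)^\top$ applied to the change in $\nabla_2 f$, and $\|J(x)\|\le\prod_j\kappa_j=L_{z,M}$. Summing the two pieces gives at most $L_f(1+L_{z,M})^2\Delta$. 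This is slightly generous, but it matches the stated constant. If one instead bounds the full gradient difference jointly, the constant $(1+L_{z,M})^2$ is again an upper bound.

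\textbf{Step 2: the Jacobian variation.} The remaining piece is $(J(x)-J(x'))^\top\nabla_2f(x',z_M^\star(x'))$, whose norm is at most $C_f\|J(x)-J(x')\|$. I would telescope the product: swap the factors one at a time, replacing $DS_r(z_{r-1}^\star(x))$ by $DS_r(z_{r-1}^\star(x'))$ for $r=1,\dots,M$. For each $r$, Proposition~\ref{prop:response-regularity} bounds the changed factor by
\[
L_{J_r}\|z_{r-1}^\star(x)-z_{r-1}^\star(x')\|\le L_{J_r}L_{z,r-1}\Delta.
\]
The other factors are bounded by $\kappa_j$. Those with $j>r$ contribute $\prod_{j=r+1}^M\kappa_j$, and those with $j<r$ contribute $\prod_{j<r}\kappa_j=L_{z,r-1}$. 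The $r$-th term is therefore at most $L_{J_r}L_{z,r-1}^2\prod_{j=r+1}^M\kappa_j\,\Delta$, which gives exactly the stated sum.

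\textbf{Main obstacle.} The only delicate point is the bookkeeping in the telescoping identity
\[
\prod_j A_j-\prod_j A'_j=\sum_r A_M\cdots A_{r+1}(A_r-A'_r)A'_{r-1}\cdots A'_1,
\]
where $A_j:=DS_j(z_{j-1}^\star(x))$ and $A'_j:=DS_j(z_{j-1}^\star(x'))$. One has to make sure the factor $L_{z,r-1}$ appears twice: once from the norms of the upstream Jacobians and once from the Lipschitz constant of $z_{r-1}^\star$. Everything else is a routine application of the triangle inequality.
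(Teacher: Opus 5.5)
Your proposal is correct and follows essentially the paper's argument: the same chain-rule split produces $L_f(1+L_{z,M})^2\|x-x'\|$ plus $C_f\|J(x)-J(x')\|$, with the roles of $x$ and $x'$ in the add-and-subtract swapped, which is immaterial. Your explicit telescoping identity is the unrolled form of the paper's one-step recursion $\|\mathcal J_i(x)-\mathcal J_i(x')\|\le L_{J_i}L_{z,i-1}^2\|x-x'\|+\kappa_i\|\mathcal J_{i-1}(x)-\mathcal J_{i-1}(x')\|$ for $\mathcal J_i:=Dz_i^\star$, and it yields the same constant.
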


\begin{proof}
Writing $J(x):=Dz_M^\star(x)$, the chain rule gives
$\nabla\Phi=\nabla_1f+J^\top\nabla_2f$.
The first term in $L_\Phi$ controls variation of the gradient blocks
of $f$; the second controls variation of $J$ multiplied by the bounded
terminal gradient. The full estimate is in
Appendix~\ref{app:supporting-proofs}.
\end{proof}

\subsection{Adjoint recursion and response perturbations}
\label{subsec:adjoint}

We call $\nabla\Phi(x)$ the hypergradient and the vectors propagated
backward through the responses adjoint vectors.
Along the exact chain \eqref{eq:intro-implicit-chain}, we use the shorthand
\begin{equation}
\label{eq:H-B-chain}
H_i^{\star}(x)
:=
H_i\bigl(z_{i-1}^{\star}(x)\bigr),
\qquad
B_i^{\star}(x)
:=
B_i\bigl(z_{i-1}^{\star}(x)\bigr).
\end{equation}
By Proposition~\ref{prop:response-regularity},
$DS_i\bigl(z_{i-1}^{\star}(x)\bigr)
= -[H_i^{\star}(x)]^{-1}[B_i^{\star}(x)]^\top.$

\begin{proposition}[Exact adjoint recursion]
\label{prop:exact-adjoint}
Under Assumptions~\ref{ass:implicit-layers} and \ref{ass:outer-function}, define
$v_M^{\star}(x):=\nabla_2f(x,z_M^{\star}(x))$ and, recursively for
$i=M,\ldots,1$,
\begin{equation}
\label{eq:exact-adjoint}
v_{i-1}^{\star}(x)
=
-B_i^{\star}(x)[H_i^{\star}(x)]^{-1}v_i^{\star}(x).
\end{equation}
Then the gradient of the outer objective in
\eqref{eq:intro-outer-problem} is
\begin{equation}
\label{eq:exact-hypergradient}
\nabla\Phi(x)
=
\nabla_1f(x,z_M^{\star}(x))
+
v_0^{\star}(x).
\end{equation}
\end{proposition}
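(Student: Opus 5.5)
The plan is to prove the statement by the chain rule applied to $\Phi(x)=f(x,z_M^\star(x))$, using Proposition~\ref{prop:response-regularity} for the Jacobians of the individual responses. First, each $S_i$ is $C^1$, so the composition $z_M^\star=S_M\circ\cdots\circ S_1$ is $C^1$. Writing $J_i(x):=DS_i(z_{i-1}^\star(x))$, the chain rule gives
\[
Dz_M^\star(x)=J_M(x)J_{M-1}(x)\cdots J_1(x).
\]
Since $f$ is $C^1$ by Assumption~\ref{ass:outer-function}, differentiating $\Phi$ yields
\[
\nabla\Phi(x)=\nabla_1 f(x,z_M^\star(x))+\bigl(J_M\cdots J_1\bigr)^\top\nabla_2 f(x,z_M^\star(x))
=\nabla_1 f(x,z_M^\star(x))+J_1^\top\cdots J_M^\top v_M^\star(x).
\]

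Next, I will identify the transposed product with the backward recursion. By Proposition~\ref{prop:response-regularity} and the shorthand \eqref{eq:H-B-chain}, $J_i=-[H_i^\star]^{-1}[B_i^\star]^\top$. Because $H_i^\star$ is the symmetric state Hessian and is invertible, with $\|[H_i^\star]^{-1}\|\le\mu_i^{-1}$, we have $[H_i^\star]^{-\top}=[H_i^\star]^{-1}$, and therefore
\[
J_i^\top=-B_i^\star[H_i^\star]^{-1}.
\]
Thus \eqref{eq:exact-adjoint} reads $v_{i-1}^\star=J_i^\top v_i^\star$.

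A downward induction on $i$ from $M$ then shows $v_{i-1}^\star=J_i^\top\cdots J_M^\top v_M^\star$. The base case $i=M$ is the recursion itself, and each step applies one more transpose. Taking $i=1$ gives $v_0^\star=J_1^\top\cdots J_M^\top v_M^\star$, which is exactly the second term above, and \eqref{eq:exact-hypergradient} follows.

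There is no serious obstacle. The only points needing care are the symmetry of $H_i^\star$, which lets the transpose-inverse become the plain inverse even when $H_i^\star$ is indefinite (e.g.\ saddle levels), and the index bookkeeping: the transposes reverse the order of the Jacobian product, so the adjoint is propagated from level $M$ down to level $1$.
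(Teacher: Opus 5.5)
Your proof is correct and matches the paper's argument: the chain rule gives $v_{i-1}^\star = DS_i(z_{i-1}^\star)^\top v_i^\star$ and $\nabla\Phi=\nabla_1 f+v_0^\star$, and substituting $DS_i=-H_i^{-1}B_i^\top$ together with the symmetry of $H_i^\star$ yields the recursion. Your downward induction on the transposed Jacobian product simply spells out the step the paper states in one line.
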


\begin{proof}
The chain rule gives
$v_{i-1}^\star=DS_i(z_{i-1}^\star)^\top v_i^\star$ and
$\nabla\Phi=\nabla_1f+v_0^\star$.
Substitute \eqref{eq:implicit-jacobian-general} and use symmetry of
$H_i^\star$ to obtain \eqref{eq:exact-adjoint}.
\end{proof}

Equation~\eqref{eq:exact-adjoint} exposes the computational bottleneck: each
implicit level requires the action of $[H_i^{\star}(x)]^{-1}$.  This is the usual
inverse-Hessian action for minimization layers; for saddle layers
$H_i^{\star}$ may be indefinite but remains nonsingular.

\begin{proposition}[Adjoint action through response perturbations]
\label{prop:adjoint-compatibility}
Under Assumption~\ref{ass:implicit-layers}, fix a level $i$, a parameter $u$,
and a vector $w\in\mathbb R^{d_i}$.  Let $z_{i,\delta}$ be the local response
defined for $\delta$ near zero by
$\nabla_1\Psi_i(z_{i,\delta},u)+\delta w=0$, with
$z_{i,0}=S_i(u)$.  Then
\begin{equation}
\label{eq:adjoint-compatible-direction}
DS_i(u)^\top w
=
B_i(u)
\left.\frac{d z_{i,\delta}}{d\delta}\right|_{\delta=0}
=
-B_i(u)H_i(u)^{-1}w .
\end{equation}
Thus the inverse Hessian action generated by a response
perturbation is exactly the one required by reverse differentiation.
\end{proposition}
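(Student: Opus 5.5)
The plan is to apply the implicit-function theorem to the perturbed equation. Define
\[
G(z,\delta):=\nabla_1\Psi_i(z,u)+\delta w ,
\]
viewed as a map from $\mathbb R^{d_i}\times\mathbb R$ to $\mathbb R^{d_i}$. By Assumption~\ref{ass:implicit-layers}(ii), $G$ is continuously differentiable. Its partial derivatives are
\[
D_zG(z,\delta)=\nabla_{11}^2\Psi_i(z,u),\qquad D_\delta G(z,\delta)=w .
\]
At the base point $(S_i(u),0)$ we have $G(S_i(u),0)=0$ by \eqref{eq:implicit-stationarity}. There $D_zG=H_i(u)$, which is nonsingular with $\|H_i(u)^{-1}\|\le\mu_i^{-1}$ by Proposition~\ref{prop:response-regularity}. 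Note that this holds even when $H_i(u)$ is indefinite, since nonsingularity came from strong monotonicity of $P_iH_i(u)$.

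The implicit-function theorem then gives a neighborhood of $\delta=0$ and a unique $C^1$ curve $\delta\mapsto z_{i,\delta}$ with $z_{i,0}=S_i(u)$ and $G(z_{i,\delta},\delta)=0$. This is the local response referred to in the statement. Differentiating the identity $\nabla_1\Psi_i(z_{i,\delta},u)+\delta w=0$ in $\delta$ and evaluating at $\delta=0$ yields
\[
H_i(u)\,\frac{dz_{i,\delta}}{d\delta}\Big|_{\delta=0}+w=0,
\qquad\text{so}\qquad
\frac{dz_{i,\delta}}{d\delta}\Big|_{\delta=0}=-H_i(u)^{-1}w .
\]
Multiplying on the left by $B_i(u)$ gives the second equality in \eqref{eq:adjoint-compatible-direction}.

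For the first equality, take the transpose of the Jacobian formula \eqref{eq:implicit-jacobian-general}:
\[
DS_i(u)^\top=-B_i(u)\,H_i(u)^{-\top}.
\]
The key point is that $H_i(u)=\nabla_{11}^2\Psi_i(S_i(u),u)$ is a symmetric Hessian block, so $H_i(u)^{-\top}=H_i(u)^{-1}$. Hence $DS_i(u)^\top w=-B_i(u)H_i(u)^{-1}w$, which is exactly the expression obtained above, and the two equalities chain together.

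There is no real obstacle. The one point requiring care is that the implicit-function theorem step uses nonsingularity of $H_i(u)$ rather than positive definiteness, and Proposition~\ref{prop:response-regularity} supplies this. One could optionally add a remark: strong monotonicity of $P_i\nabla_1\Psi_i(\cdot,u)+\delta P_iw$ shows that $z_{i,\delta}$ is in fact globally unique for every $\delta$. The argument does not need this.
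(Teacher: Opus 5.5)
Your proposal is correct and follows the paper's own proof: differentiate the perturbed stationarity equation at $\delta=0$ to obtain $-H_i(u)^{-1}w$, then transpose the Jacobian formula of Proposition~\ref{prop:response-regularity} and use the symmetry of $H_i(u)$. Your explicit implicit-function-theorem step rests on nonsingularity of $H_i(u)$ rather than definiteness. It supplies the existence and differentiability of the local path, which the paper's brief proof takes for granted.
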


\begin{proof}
Differentiating
$\nabla_1\Psi_i(z_{i,\delta},u)+\delta w=0$ at $\delta=0$ gives
$\left.dz_{i,\delta}/d\delta\right|_{\delta=0}
=-H_i(u)^{-1}w$.
Since $H_i(u)$ is symmetric,
Proposition~\ref{prop:response-regularity} gives
$DS_i(u)^\top w=-B_i(u)H_i(u)^{-1}w$.
\end{proof}

The signed Jacobian $P_iH_i$ need not be symmetric. The adjoint identity
uses the symmetric state Jacobian of $\nabla_1\Psi_i=0$, not that of
the signed equation $T_i=0$. For a generic implicit equation $F(z,u)=0$, let
$A:=D_zF$ and $C:=D_uF$, with $A$ nonsingular.
Reverse sensitivity requires $-C^\top A^{-\top}w$.
In contrast, the perturbation $F(z_\delta,u)+\delta w=0$
gives $\dot z_0=-A^{-1}w$, and hence
$C^\top\dot z_0=-C^\top A^{-1}w$.
These expressions need not coincide when $A$ is nonsymmetric.

Finally, \eqref{eq:adjoint-compatible-direction} can be written using
only first-order evaluations:
\[
\lim_{\delta\to0}
\frac{\nabla_2\Psi_i(z_{i,\delta},u)
-\nabla_2\Psi_i(S_i(u),u)}{\delta}
=DS_i(u)^\top w.
\]
A related perturbation-based finite-difference construction
is used for linearly constrained bilevel problems
\cite{kornowski2024constrained}.
Section~\ref{sec:perturbation} applies this identity recursively,
with each gradient difference supplying the perturbation vector
for the preceding response problem.

\section{Recursive First-Order Perturbation}
\label{sec:perturbation}

We construct recursive perturbations along the exact unperturbed
response chain, proceeding backward from the terminal level.
At each level, a difference of input gradients generates the
perturbation vector for the preceding response problem.
We use $t$ for the continuous perturbation parameter and $\delta$
for the radius used by the algorithm.
We prove recovery of the exact adjoint as $t\downarrow0$,
an $O(\delta)$ bias bound, and regularity estimates uniform in
$\delta$ that support the finite-solve tracking analysis.

\subsection{Recursive perturbation and consistency}
\label{subsec:recursive-perturbation}

Fix an admissible upper radius satisfying
\begin{equation}
\label{eq:admissible-radius}
0<\delta_{\rm adm}<\frac{\mu_M}{L_f},
\qquad
\bar\mu_M:=\mu_M-\delta_{\rm adm}L_f>0.
\end{equation}
We construct the perturbation paths for
$t\in[0,\delta_{\rm adm}]$.
The radius restriction preserves uniform strong monotonicity
of the terminal perturbed operator.

Fix $x$ and its exact unperturbed response chain.
Each perturbed response uses the same input
$z_{i-1}^\star(x)$ as its unperturbed counterpart.
For $t\in(0,\delta_{\rm adm}]$, we begin at the terminal level
by defining $z_{M,t}^{\star}(x)$ through
\begin{equation}
\label{eq:terminal-pert-path-general}
\nabla_1\Psi_M\bigl(z_{M,t}^{\star}(x),z_{M-1}^{\star}(x)\bigr)
+
t\nabla_2f\bigl(x,z_{M,t}^{\star}(x)\bigr)
=
0.
\end{equation}
Set
$v_{M,t}^{\star}(x)
:=\nabla_2f\bigl(x,z_{M,t}^{\star}(x)\bigr)$.
At each preceding level $i=M-1,\ldots,1$, the downstream
gradient difference supplies the perturbation vector
\begin{equation}
\label{eq:general-vfd}
v_{i,t}^{\star}(x)
:=
\frac{
\nabla_2\Psi_{i+1}\bigl(z_{i+1,t}^{\star}(x),z_i^{\star}(x)\bigr)
-
\nabla_2\Psi_{i+1}\bigl(z_{i+1}^{\star}(x),z_i^{\star}(x)\bigr)
}{t},
\end{equation}
and define the corresponding perturbed response $z_{i,t}^{\star}(x)$ by
\begin{equation}
\label{eq:general-intermediate-pert}
\nabla_1\Psi_i\bigl(z_{i,t}^{\star}(x),z_{i-1}^{\star}(x)\bigr)
+
t v_{i,t}^{\star}(x)
=
0.
\end{equation}
Finally, the adjoint approximation returned from level $1$ is
\begin{equation}
\label{eq:general-v0fd}
v_{0,t}^{\star}(x)
:=
\frac{
\nabla_2\Psi_1\bigl(z_{1,t}^{\star}(x),x\bigr)
-
\nabla_2\Psi_1\bigl(z_1^{\star}(x),x\bigr)
}{t}.
\end{equation}
Evaluating the construction at the algorithmic radius
$\delta\in(0,\delta_{\rm adm}]$ gives the gradient estimator
with exact response solves:
\begin{equation}
\label{eq:finite-radius-gradient-estimator}
G_\delta^{\star}(x)
:=
\nabla_1f\bigl(x,z_M^{\star}(x)\bigr)
+
v_{0,\delta}^{\star}(x).
\end{equation}

Set
$z_{i,0}^{\star}(x):=z_i^{\star}(x)$.
Although the gradient differences are defined for $t>0$,
the following theorem shows that they extend continuously
to zero and recover the exact adjoint sequence.

\begin{theorem}[Recovery of the exact adjoint as $t\downarrow0$]
\label{thm:zero-radius-recovery}
Suppose Assumptions~\ref{ass:outer-function}
and~\ref{ass:implicit-layers} hold, and let $\delta_{\rm adm}$
satisfy \eqref{eq:admissible-radius}.
Then, for every $x$, the perturbed state paths
$z_{i,t}^{\star}(x)$ are uniquely defined for
$t\in[0,\delta_{\rm adm}]$ and $i=1,\ldots,M$.
Each path is continuously differentiable on this interval,
with one-sided derivatives at the endpoints.
Writing a dot for
differentiation with respect to $t$,
\begin{equation}
\label{eq:zero-radius-direction}
\dot z_{i,0}^{\star}(x)
=
-[H_i^{\star}(x)]^{-1}v_i^{\star}(x),
\qquad i=1,\ldots,M,
\end{equation}
where $\{v_i^{\star}(x)\}_{i=0}^M$ is the exact adjoint sequence from
Proposition~\ref{prop:exact-adjoint}. Moreover, the adjoint approximations
admit continuous extensions to $t=0$ by setting
$v_{i,0}^{\star}(x):=v_i^{\star}(x)$, and
\begin{equation}
\label{eq:zero-radius-adjoint-recovery}
\lim_{t\downarrow0} v_{i-1,t}^{\star}(x)=v_{i-1}^{\star}(x),
\qquad i=1,\ldots,M.
\end{equation}
Consequently,
\begin{equation}
\label{eq:zero-radius-hypergradient}
\lim_{t\downarrow0}
\bigl[
\nabla_1 f(x,z_M^{\star}(x))+v_{0,t}^{\star}(x)
\bigr]
=
\nabla\Phi(x).
\end{equation}
\end{theorem}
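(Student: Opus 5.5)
The proof is a backward induction on the level $i=M,M-1,\ldots,1$. The key device is to rewrite each gradient-difference quotient as an integral of Hessian blocks, so that it is defined and continuous all the way down to $t=0$. Throughout, $x$ is fixed and all inputs $z_{i-1}^\star(x)$ are the exact unperturbed responses, so at each level the unknown is only $z_{i,t}^\star$.

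\textbf{Terminal level.} Consider
\[
F_M(z,t):=P_M\bigl[\nabla_1\Psi_M(z,z_{M-1}^\star)+t\nabla_2f(x,z)\bigr].
\]
By Assumption~\ref{ass:implicit-layers}(i) and $\|\nabla^2f\|\le L_f$, the map $F_M(\cdot,t)$ is $(\mu_M-tL_f)$-strongly monotone. Here we use that $P_M$ is orthogonal, so $\langle P_M(a-b),z-z'\rangle\ge -\|a-b\|\|z-z'\|$. On $[0,\delta_{\rm adm}]$ this modulus is bounded below by $\bar\mu_M>0$. The well-posedness argument behind Proposition~\ref{prop:response-regularity} therefore gives a unique root $z_{M,t}^\star$.

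To get differentiability, note that $D_zF_M$ is $P_M$ times a matrix whose symmetric part, after multiplying by $P_M$, is bounded below by $\bar\mu_M$. As in the proof of Proposition~\ref{prop:response-regularity}, this makes the Jacobian nonsingular. The implicit function theorem, together with uniqueness, then yields a $C^1$ path on $[0,\delta_{\rm adm}]$; at the endpoints we apply it to the equation extended to a slightly larger open $t$-interval. Differentiating at $t=0$ gives
\[
\dot z_{M,0}^\star=-[H_M^\star]^{-1}\nabla_2f(x,z_M^\star)=-[H_M^\star]^{-1}v_M^\star.
\]
Moreover $v_{M,t}^\star=\nabla_2f(x,z_{M,t}^\star)$ is continuous, with value $v_M^\star$ at $t=0$.

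\textbf{Inductive step.} Suppose $z_{i+1,t}^\star$ is $C^1$ on $[0,\delta_{\rm adm}]$ with $z_{i+1,0}^\star=z_{i+1}^\star$. Write
\[
v_{i,t}^\star=\Bigl[\int_0^1\nabla_{21}^2\Psi_{i+1}\bigl(z_{i+1}^\star+s(z_{i+1,t}^\star-z_{i+1}^\star),z_i^\star\bigr)\,ds\Bigr]\frac{z_{i+1,t}^\star-z_{i+1}^\star}{t},
\]
and use $(z_{i+1,t}^\star-z_{i+1}^\star)/t=\int_0^1\dot z_{i+1,\tau t}^\star\,d\tau$. Both factors are continuous on $[0,\delta_{\rm adm}]$, so $v_{i,t}^\star$ extends continuously to $t=0$. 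Its value there is
\[
B_{i+1}^\star\dot z_{i+1,0}^\star=-B_{i+1}^\star[H_{i+1}^\star]^{-1}v_{i+1}^\star=v_i^\star
\]
by Proposition~\ref{prop:exact-adjoint} and the induction hypothesis. This is \eqref{eq:zero-radius-adjoint-recovery} at index $i$.

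Now consider the level-$i$ equation \eqref{eq:general-intermediate-pert}. Its perturbation vector $v_{i,t}^\star$ does not depend on the unknown $z$, so the operator $T_i(\cdot,z_{i-1}^\star)+tP_iv_{i,t}^\star$ is $\mu_i$-strongly monotone for every $t$. It therefore has a unique root, $z_{i,t}^\star=S_i$-type solution of a shifted equation. Since $D_z$ of the equation is $\nabla_{11}^2\Psi_i$, which is nonsingular by Proposition~\ref{prop:response-regularity}, this root depends smoothly on the data.

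The main obstacle is differentiability of $t\mapsto z_{i,t}^\star$. This requires $t\,v_{i,t}^\star=\nabla_2\Psi_{i+1}(z_{i+1,t}^\star,z_i^\star)-\nabla_2\Psi_{i+1}(z_{i+1}^\star,z_i^\star)$ to be $C^1$ in $t$, rather than $v_{i,t}^\star$ itself. Fortunately $t\,v_{i,t}^\star$ is $C^1$: it is the composition of the $C^1$ map $\nabla_2\Psi_{i+1}$ with the $C^1$ path $z_{i+1,t}^\star$. The implicit function theorem applied to $(z,t)\mapsto\nabla_1\Psi_i(z,z_{i-1}^\star)+t v_{i,t}^\star$ therefore gives a $C^1$ path. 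Differentiating at $t=0$ gives
\[
H_i^\star\dot z_{i,0}^\star+\tfrac{d}{dt}\bigl(t\,v_{i,t}^\star\bigr)\big|_{t=0}=0,
\]
and $\tfrac{d}{dt}(t\,v_{i,t}^\star)|_{t=0}=B_{i+1}^\star\dot z_{i+1,0}^\star=v_i^\star$. Hence $\dot z_{i,0}^\star=-[H_i^\star]^{-1}v_i^\star$, which is \eqref{eq:zero-radius-direction}.

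\textbf{Conclusion.} Running the same integral representation once more at level $1$ gives $\lim_{t\downarrow0}v_{0,t}^\star=v_0^\star$. Adding $\nabla_1f(x,z_M^\star)$ and invoking \eqref{eq:exact-hypergradient} yields \eqref{eq:zero-radius-hypergradient}.
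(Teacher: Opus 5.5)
Your proposal is correct and follows essentially the same route as the paper: backward induction, applying the implicit-function theorem to $\nabla_1\Psi_i(z,z_{i-1}^\star)+q_i(t)=0$ with the undivided $C^1$ gradient difference $q_i(t)=t\,v_{i,t}^\star$, and reading off $\dot z_{i,0}^\star=-[H_i^\star]^{-1}v_i^\star$ from $q_i'(0)=B_{i+1}^\star\dot z_{i+1,0}^\star=v_i^\star$. The differences are minor: you get the continuous extension of $v_{i,t}^\star$ from the averaged-Hessian integral representation, which the paper uses later (Lemma~\ref{lem:averaged-linearization}, \eqref{eq:adjoint-path-integral}), rather than from $q_i(t)/t\to q_i'(0)$. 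You should also state the derivative identity for level $i+1$ explicitly in your induction hypothesis, since you use it.
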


\begin{proof}
For $t\in[0,\delta_{\rm adm}]$, the terminal signed perturbed operator is
$(\mu_M-tL_f)$-strongly monotone, hence uniformly strongly monotone with
modulus at least $\mu_M-\delta_{\rm adm}L_f>0$. The contraction-based argument of
Proposition~\ref{prop:response-regularity} gives a unique terminal response.
At every preceding level the perturbation is an additive constant in the
response variable, so $\mu_i$-strong monotonicity is unchanged.  Backward
induction gives well-posedness on $[0,\delta_{\rm adm}]$.

The terminal state Jacobian is nonsingular throughout the interval
by the same monotonicity bound. The implicit-function theorem gives a
$C^1$ terminal path. At $t=0$, differentiation of the
stationarity equation gives
$H_M^{\star}\dot z_{M,0}^{\star}+v_M^{\star}=0$, hence
$\dot z_{M,0}^{\star}=-[H_M^{\star}]^{-1}v_M^{\star}$. Therefore,
$\lim_{t\downarrow0}v_{M-1,t}^{\star}
=B_M^{\star}\dot z_{M,0}^{\star}=v_{M-1}^{\star}$.

For the induction step, suppose the downstream path
$t\mapsto z_{i+1,t}^{\star}$ is $C^1$ on
$[0,\delta_{\rm adm}]$ and satisfies
$\dot z_{i+1,0}^{\star}
=-[H_{i+1}^{\star}]^{-1}v_{i+1}^{\star}$.
Define
\[
q_i(t)
:=
\nabla_2\Psi_{i+1}(z_{i+1,t}^{\star},z_i^{\star})
-
\nabla_2\Psi_{i+1}(z_{i+1}^{\star},z_i^{\star}).
\]
Then $q_i$ is $C^1$, $q_i(0)=0$, and
$q_i(t)=t v_{i,t}^{\star}$ for $t>0$.
The chain rule and \eqref{eq:exact-adjoint} give
$q_i'(0)=B_{i+1}^{\star}\dot z_{i+1,0}^{\star}
=v_i^{\star}$.
Applying the implicit-function theorem to
$\nabla_1\Psi_i(z_{i,t}^{\star},z_{i-1}^{\star})+q_i(t)=0$
gives a $C^1$ path. Differentiating at $t=0$ yields
$H_i^{\star}\dot z_{i,0}^{\star}+v_i^{\star}=0$,
hence $\dot z_{i,0}^{\star}
=-[H_i^{\star}]^{-1}v_i^{\star}$.
The defining gradient difference consequently gives
$\lim_{t\downarrow0}v_{i-1,t}^{\star}
=B_i^{\star}\dot z_{i,0}^{\star}=v_{i-1}^{\star}$.
Backward induction proves the regularity and limits.
The hypergradient limit
\eqref{eq:zero-radius-hypergradient} follows from
\eqref{eq:exact-hypergradient}.
\end{proof}

\subsection{Perturbation bounds and adjoint bias}
\label{subsec:finite-radius-bias}
We now quantify the error at a finite perturbation radius.
We first bound the response displacements, which control the
scaled differences used in the backward recursion.
\begin{lemma}[Perturbation displacement bounds]
\label{lem:perturbation-displacement}
Under Assumptions~\ref{ass:implicit-layers} and \ref{ass:outer-function}, and under \eqref{eq:admissible-radius},
there exist constants $C_{\mathrm{path},i}>0$, independent of $x$ and
$t\in[0,\delta_{\rm adm}]$, such that
$\|z_{i,t}^{\star}(x)-z_i^{\star}(x)\| \le C_{\mathrm{path},i}t, \, i=1,\ldots,M.$
The constants may be chosen as
\[
C_{\mathrm{path},M}:=\frac{C_f}{\bar\mu_M},
\qquad
C_{\mathrm{path},i}
:=\frac{L_{i+1}}{\mu_i}C_{\mathrm{path},i+1},
\quad i=M-1,\ldots,1.
\]
The sharper terminal displacement bound
$\|z_{M,t}^{\star}-z_M^{\star}\|\le(C_f/\mu_M)t$ also holds.
\end{lemma}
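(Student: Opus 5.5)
The proof is a backward induction from the terminal level. At each level the only tool is the strong-monotonicity inequality of Assumption~\ref{ass:implicit-layers}(i), applied to the signed operator $T_i$ at the two points $z_{i,t}^\star$ and $z_i^\star$. These points share the same input $z_{i-1}^\star(x)$. The generic step is as follows. If $\nabla_1\Psi_i(z',u)+e=0$ and $\nabla_1\Psi_i(z,u)=0$, then
\[
\mu_i\|z'-z\|^2\le\langle P_i\nabla_1\Psi_i(z',u)-P_i\nabla_1\Psi_i(z,u),z'-z\rangle=-\langle P_ie,z'-z\rangle\le\|e\|\,\|z'-z\|,
\]
using that $P_i$ is orthogonal. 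Hence $\|z'-z\|\le\|e\|/\mu_i$. Existence and uniqueness of the perturbed paths on $[0,\delta_{\rm adm}]$ is already supplied by Theorem~\ref{thm:zero-radius-recovery}, so only the quantitative bounds remain.

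\textbf{Terminal level.} Apply the generic step with $e=t\nabla_2f(x,z_{M,t}^\star)$ and $\|e\|\le tC_f$ from Assumption~\ref{ass:outer-function}. This gives the sharper bound $\|z_{M,t}^\star-z_M^\star\|\le (C_f/\mu_M)t$ directly. It is consistent with the stated $C_{\mathrm{path},M}=C_f/\bar\mu_M$, since $\bar\mu_M\le\mu_M$ and so $C_f/\mu_M\le C_f/\bar\mu_M$. Alternatively, one can treat $z\mapsto T_M(z,u)+tP_M\nabla_2f(x,z)$ as a $(\mu_M-tL_f)$-strongly monotone operator and compare it with its value at $z_M^\star$, where it equals $tP_M\nabla_2f(x,z_M^\star)$. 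That route gives exactly $C_f/\bar\mu_M$. I will present the direct route and note the weaker constant as a consequence.

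\textbf{Induction step.} For $i<M$, the perturbation in \eqref{eq:general-intermediate-pert} is $e=tv_{i,t}^\star$. By \eqref{eq:general-vfd},
\[
t\,v_{i,t}^\star=\nabla_2\Psi_{i+1}(z_{i+1,t}^\star,z_i^\star)-\nabla_2\Psi_{i+1}(z_{i+1}^\star,z_i^\star).
\]
By the mean value inequality and $\|\nabla^2\Psi_{i+1}\|\le L_{i+1}$ (which bounds the block $\nabla_{21}^2\Psi_{i+1}$), we get $\|tv_{i,t}^\star\|\le L_{i+1}\|z_{i+1,t}^\star-z_{i+1}^\star\|\le L_{i+1}C_{\mathrm{path},i+1}t$ by the inductive hypothesis. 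The generic step then yields $\|z_{i,t}^\star-z_i^\star\|\le (L_{i+1}/\mu_i)C_{\mathrm{path},i+1}t$, which is the claimed recursion. At $t=0$ all bounds are trivial. No constant depends on $x$ or $t$, because all the bounds in the assumptions are global.

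The only mildly delicate point is the terminal level, where the perturbation depends on the unknown $z_{M,t}^\star$. The direct route sidesteps this, because $\nabla_2f$ is uniformly bounded by $C_f$ wherever it is evaluated. Everything else is routine.
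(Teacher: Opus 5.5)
Your proposal is correct and matches the paper's proof: subtracting the exact stationarity condition and using strong monotonicity gives $\mu_M\|z_{M,t}^\star-z_M^\star\|\le tC_f$, the constant $C_f/\bar\mu_M$ follows from $\bar\mu_M\le\mu_M$, and for $i<M$ the bound $\mu_i\|z_{i,t}^\star-z_i^\star\|\le L_{i+1}\|z_{i+1,t}^\star-z_{i+1}^\star\|$ closes the backward induction. Your alternative terminal route via the $(\mu_M-tL_f)$-strongly monotone perturbed operator is also valid, but it is not needed.
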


\begin{proof}
See Appendix~\ref{app:supporting-proofs}.
\end{proof}

For $t>0$, define the scaled response difference
$d_{i,t}^{\star}(x)
:=[z_{i,t}^{\star}(x)-z_i^{\star}(x)]/t$,
$i=1,\ldots,M$.
Lemma~\ref{lem:perturbation-displacement} gives
$\|d_{i,t}^{\star}(x)\|\le C_{\mathrm{path},i}$
throughout the admissible interval.
By Theorem~\ref{thm:zero-radius-recovery}, these differences
extend continuously to $t=0$ with
$d_{i,0}^{\star}(x):=\dot z_{i,0}^{\star}(x)$.

To express the finite-radius recursion exactly, define the
Hessian blocks averaged along each response perturbation segment:
\begin{align}
\label{eq:averaged-H}
\overline H_{i,t}(x)
&:=
\int_0^1
\nabla_{11}^2\Psi_i
\bigl(
z_i^{\star}(x)+s(z_{i,t}^{\star}(x)-z_i^{\star}(x)),
z_{i-1}^{\star}(x)
\bigr)\,ds,
\\
\label{eq:averaged-B}
\overline B_{i,t}(x)
&:=
\int_0^1
\nabla_{21}^2\Psi_i
\bigl(
z_i^{\star}(x)+s(z_{i,t}^{\star}(x)-z_i^{\star}(x)),
z_{i-1}^{\star}(x)
\bigr)\,ds.
\end{align}

\begin{lemma}[Identities for the averaged Hessian blocks]
\label{lem:averaged-linearization}
For every $t\in(0,\delta_{\rm adm}]$ and $i=1,\ldots,M$,
\begin{equation}
\label{eq:averaged-recursion}
\overline H_{i,t}d_{i,t}^{\star}+v_{i,t}^{\star}=0,
\qquad
v_{i-1,t}^{\star}
=
\overline B_{i,t}d_{i,t}^{\star}.
\end{equation}
Moreover,
$\|\overline H_{i,t}^{-1}\|\le\mu_i^{-1}$ and
$\|\overline B_{i,t}\|\le L_i$.
\end{lemma}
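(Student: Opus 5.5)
The proof rests on the fundamental theorem of calculus applied to the gradient blocks along the perturbation segment. Fix $x$, $t\in(0,\delta_{\rm adm}]$ and a level $i$. Write $z_s:=z_i^\star+s(z_{i,t}^\star-z_i^\star)$ for $s\in[0,1]$ and $u:=z_{i-1}^\star$. Since $\Psi_i$ is $C^2$ by Assumption~\ref{ass:implicit-layers}(ii), the map $s\mapsto\nabla_1\Psi_i(z_s,u)$ is $C^1$. Hence
\[
\nabla_1\Psi_i(z_{i,t}^\star,u)-\nabla_1\Psi_i(z_i^\star,u)
=\int_0^1\nabla_{11}^2\Psi_i(z_s,u)\,ds\,(z_{i,t}^\star-z_i^\star)
=t\,\overline H_{i,t}d_{i,t}^\star .
\]
The second term on the left vanishes by \eqref{eq:implicit-stationarity}. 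At levels $i<M$, \eqref{eq:general-intermediate-pert} turns the first term into $-tv_{i,t}^\star$. Dividing by $t>0$ gives the first identity in \eqref{eq:averaged-recursion}.

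The terminal level $i=M$ needs separate care. By \eqref{eq:terminal-pert-path-general}, the perturbation there is $t\nabla_2 f(x,z_{M,t}^\star)$, and this equals $tv_{M,t}^\star$ by the definition of $v_{M,t}^\star$. So the same computation applies verbatim, even though the perturbation depends on the state.

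For the second identity, apply the same argument to $s\mapsto\nabla_2\Psi_i(z_s,u)$. Its derivative is $\nabla_{21}^2\Psi_i(z_s,u)(z_{i,t}^\star-z_i^\star)$, using the convention $\nabla^2_{ab}h=D_{\arg_b}(\nabla_a h)$. Integrating gives $\nabla_2\Psi_i(z_{i,t}^\star,u)-\nabla_2\Psi_i(z_i^\star,u)=t\,\overline B_{i,t}d_{i,t}^\star$. Dividing by $t$ and comparing with \eqref{eq:general-vfd} (with index $i$ in place of $i+1$) for $i\ge2$, and with \eqref{eq:general-v0fd} for $i=1$, gives $v_{i-1,t}^\star=\overline B_{i,t}d_{i,t}^\star$.

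For the norm bounds, $\|\overline B_{i,t}\|\le\int_0^1\|\nabla_{21}^2\Psi_i(z_s,u)\|\,ds\le L_i$. This holds because $\nabla_{21}^2\Psi_i$ is a submatrix of the full Hessian, so its norm is at most $L_i$.

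The one genuinely nontrivial point is the inverse bound for $\overline H_{i,t}$, since $\overline H_{i,t}$ may be indefinite for saddle layers. I would mimic the proof of Proposition~\ref{prop:response-regularity}. Strong monotonicity of $T_i(\cdot,u)=P_i\nabla_1\Psi_i(\cdot,u)$ together with $C^1$ regularity gives $\langle P_i\nabla_{11}^2\Psi_i(z,u)h,h\rangle\ge\mu_i\|h\|^2$ for every $z$ and $h$. To see this, take $z'=z+\epsilon h$ in Assumption~\ref{ass:implicit-layers}(i), divide by $\epsilon^2$, and let $\epsilon\to0$. The inequality is linear in the matrix, so averaging over $s$ preserves it: $\langle P_i\overline H_{i,t}h,h\rangle\ge\mu_i\|h\|^2$. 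Cauchy--Schwarz then gives $\mu_i\|h\|^2\le\|P_i\overline H_{i,t}h\|\,\|h\|$. Since $P_i$ is orthogonal, this means $\|\overline H_{i,t}h\|\ge\mu_i\|h\|$. Therefore $\overline H_{i,t}$ is nonsingular and $\|\overline H_{i,t}^{-1}\|\le\mu_i^{-1}$.

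At the terminal level, only the $\Psi_M$-Hessian appears in $\overline H_{M,t}$; the $t\nabla_2 f$ part has been absorbed into $v_{M,t}^\star$. So the bound there is $\mu_M$, not $\bar\mu_M$.
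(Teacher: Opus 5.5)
Your proof is correct and is essentially the paper's argument. The two identities come from the fundamental theorem of calculus applied to $\nabla_1\Psi_i$ and $\nabla_2\Psi_i$ along the segment from $z_i^\star$ to $z_{i,t}^\star$; the inverse bound comes from the averaged strong-monotonicity inequality $\langle P_i\overline H_{i,t}h,h\rangle\ge\mu_i\|h\|^2$ together with orthogonality of $P_i$; and $\|\overline B_{i,t}\|\le L_i$ comes from the Hessian bound. You also supply details the paper leaves implicit: the pointwise derivation of the Hessian inequality from Assumption~\ref{ass:implicit-layers}(i), and the observation that the state-dependent terminal perturbation is absorbed into $v_{M,t}^\star$, so the modulus at level $M$ is $\mu_M$ rather than $\bar\mu_M$.
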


\begin{proof}
See Appendix~\ref{app:supporting-proofs}.
\end{proof}

These identities have the same form as the exact adjoint
recursion, with the unperturbed Hessian blocks replaced by
their segment averages.
Hessian Lipschitz continuity bounds the block differences
by $O(t)$. The next theorem quantifies how this error
propagates backward through the chain.

\begin{theorem}[Adjoint approximation bias]
\label{thm:finite-radius-bias}
Under Assumptions~\ref{ass:implicit-layers} and \ref{ass:outer-function}, and under \eqref{eq:admissible-radius}, for each
$i=0,\ldots,M$ there exists $C_{\mathrm{bias},i}<\infty$, independent of
$x$ and $t\in(0,\delta_{\rm adm}]$, such that
$\|v_{i,t}^{\star}(x)-v_i^{\star}(x)\| \le C_{\mathrm{bias},i}t.$
The constants may be chosen recursively as
$C_{\mathrm{bias},M} := L_f C_{\mathrm{path},M},$
and, for $i=M,\ldots,1$,
\[
C_{\mathrm{bias},i-1}
:=
\kappa_i C_{\mathrm{bias},i}
+
\frac{\rho_i C_{\mathrm{path},i}^2}{2}(1+\kappa_i),
\qquad
\kappa_i=\frac{L_i}{\mu_i}.
\]
Moreover, the scaled response differences satisfy
\[
\|d_{i,t}^{\star}(x)-d_{i,0}^{\star}(x)\|
\le
\left(
\frac{C_{\mathrm{bias},i}}{\mu_i}
+
\frac{\rho_i C_{\mathrm{path},i}^2}{2\mu_i}
\right)t,
\qquad
i=1,\ldots,M.
\]
\end{theorem}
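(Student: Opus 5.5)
The plan is a backward induction on $i$ from $M$ down to $0$, comparing the averaged recursion of Lemma~\ref{lem:averaged-linearization} with the exact recursion of Proposition~\ref{prop:exact-adjoint}.

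\textbf{Base case.} At the terminal level, $v_{M,t}^\star-v_M^\star=\nabla_2f(x,z_{M,t}^\star)-\nabla_2f(x,z_M^\star)$. The Hessian bound $\|\nabla^2 f\|\le L_f$ and Lemma~\ref{lem:perturbation-displacement} give $\|v_{M,t}^\star-v_M^\star\|\le L_f C_{\mathrm{path},M}t$. This is exactly the claimed $C_{\mathrm{bias},M}$.

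\textbf{Scaled differences at level $i$.} Assume $\|v_{i,t}^\star-v_i^\star\|\le C_{\mathrm{bias},i}t$. By Theorem~\ref{thm:zero-radius-recovery}, $d_{i,0}^\star=-[H_i^\star]^{-1}v_i^\star$, while Lemma~\ref{lem:averaged-linearization} gives $d_{i,t}^\star=-\overline H_{i,t}^{-1}v_{i,t}^\star$. I would write
\[
\overline H_{i,t}(d_{i,t}^\star-d_{i,0}^\star)
=-(v_{i,t}^\star-v_i^\star)-(\overline H_{i,t}-H_i^\star)d_{i,0}^\star .
\]
Rather than working with the inverse difference, I would use $d_{i,0}^\star$ here so that no second factor of $\|H^{-1}\|$ appears. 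The Hessian of $\Psi_i$ is $\rho_i$-Lipschitz, and the segment-averaged Hessian uses points $z_i^\star+s(z_{i,t}^\star-z_i^\star)$. Hence $\|\overline H_{i,t}-H_i^\star\|\le \int_0^1\rho_i s\,C_{\mathrm{path},i}t\,ds=\tfrac{\rho_i}{2}C_{\mathrm{path},i}t$, and similarly for $\overline B_{i,t}-B_i^\star$. The stated constant uses $\|d_{i,0}^\star\|\le C_{\mathrm{path},i}$, which follows by taking the limit $t\downarrow0$ in the displacement bound, since $d_{i,t}^\star\to d_{i,0}^\star$. Applying $\|\overline H_{i,t}^{-1}\|\le\mu_i^{-1}$ then gives
\[
\|d_{i,t}^\star-d_{i,0}^\star\|\le\frac{C_{\mathrm{bias},i}}{\mu_i}t+\frac{\rho_iC_{\mathrm{path},i}^2}{2\mu_i}t,
\]
which is the second claim of the theorem.

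\textbf{Adjoint at level $i-1$.} Decompose
\[
v_{i-1,t}^\star-v_{i-1}^\star=\overline B_{i,t}(d_{i,t}^\star-d_{i,0}^\star)+(\overline B_{i,t}-B_i^\star)d_{i,0}^\star .
\]
Using $\|\overline B_{i,t}\|\le L_i$, the first term is at most $\kappa_iC_{\mathrm{bias},i}t+\kappa_i\tfrac{\rho_i}{2}C_{\mathrm{path},i}^2t$. The second term is at most $\tfrac{\rho_i}{2}C_{\mathrm{path},i}^2t$. Summing gives exactly
\[
C_{\mathrm{bias},i-1}=\kappa_iC_{\mathrm{bias},i}+\tfrac{\rho_iC_{\mathrm{path},i}^2}{2}(1+\kappa_i).
\]
The induction then runs down to $i=0$. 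Nothing in the argument depends on $x$ or on $t$, so the constants are uniform in both.

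\textbf{Main obstacle.} The delicate point is justifying $\|d_{i,0}^\star\|\le C_{\mathrm{path},i}$, which is needed to get the sharp constants. The continuity-limit argument above handles it. A direct bound $\mu_i^{-1}\|v_i^\star\|$ would instead produce different constants.
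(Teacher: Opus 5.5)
Your proposal is correct and follows the paper's proof essentially step for step: the same base case via Lipschitz continuity of $\nabla_2 f$; the same comparison of $\overline H_{i,t}d_{i,t}^\star+v_{i,t}^\star=0$ with $H_i^\star d_{i,0}^\star+v_i^\star=0$, using $\|\overline H_{i,t}^{-1}\|\le\mu_i^{-1}$ and $\|\overline H_{i,t}-H_i^\star\|\le(\rho_iC_{\mathrm{path},i}/2)t$; and the same two-term split of $v_{i-1,t}^\star-v_{i-1}^\star$ via $\overline B_{i,t}$ and $B_i^\star$. The bound $\|d_{i,0}^\star\|\le C_{\mathrm{path},i}$ is not really an obstacle, since the paper gets it the same way (continuous extension, or equivalently $\|\dot z_{i,0}^\star\|\le C_{\mathrm{path},i}$ from Lemma~\ref{lem:path-derivative-x-lipschitz}); your closing remark is also slightly off, because unrolling $\|d_{i,0}^\star\|\le\mu_i^{-1}\|v_i^\star\|\le(L_{i+1}/\mu_i)\|d_{i+1,0}^\star\|$ yields the same constants, and sharper ones at level $M$.
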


\begin{proof}
At the terminal level, Lipschitz continuity of $\nabla_2f$ and
Lemma~\ref{lem:perturbation-displacement} give
$\|v_{M,t}^{\star}-v_M^{\star}\|\le L_fC_{\mathrm{path},M}t$, so
$C_{\mathrm{bias},M}=L_fC_{\mathrm{path},M}$. For the induction step, assume
$\|v_{i,t}^{\star}-v_i^{\star}\|
\le C_{\mathrm{bias},i}t$.
Comparing
$\overline H_{i,t}d_{i,t}^{\star}+v_{i,t}^{\star}=0$
with
$H_i^{\star}d_{i,0}^{\star}+v_i^{\star}=0$
gives
\[
\|d_{i,t}^{\star}-d_{i,0}^{\star}\|\le
\left(\frac{C_{\mathrm{bias},i}}{\mu_i}
+\frac{\rho_iC_{\mathrm{path},i}^2}{2\mu_i}\right)t,
\]
because $\|\overline H_{i,t}^{-1}\|\le\mu_i^{-1}$,
$\|d_{i,0}^{\star}\|\le C_{\mathrm{path},i}$, and
$\|\overline H_{i,t}-H_i^{\star}\|\le(\rho_iC_{\mathrm{path},i}/2)t$.
We then propagate this estimate to the preceding adjoint.
Using
$v_{i-1,t}^{\star}=\overline B_{i,t}d_{i,t}^{\star}$,
$v_{i-1}^{\star}=B_i^{\star}d_{i,0}^{\star}$,
$\|\overline B_{i,t}\|\le L_i$, and
$\|\overline B_{i,t}-B_i^{\star}\|
\le(\rho_iC_{\mathrm{path},i}/2)t$, we obtain
\[
\begin{aligned}
\|v_{i-1,t}^{\star}-v_{i-1}^{\star}\|
&\le
L_i\|d_{i,t}^{\star}-d_{i,0}^{\star}\|
+\frac{\rho_iC_{\mathrm{path},i}^2}{2}t\le
\left[
\kappa_i C_{\mathrm{bias},i}
+\frac{\rho_iC_{\mathrm{path},i}^2}{2}(1+\kappa_i)
\right]t.
\end{aligned}
\]
This gives the stated recursion for
$C_{\mathrm{bias},i-1}$.
Backward induction proves both claims.
\end{proof}
The direct term $\nabla_1f(x,z_M^\star(x))$ is unchanged
in the perturbation estimator, so its gradient error is
exactly the level-$0$ adjoint error.

\begin{corollary}[Bias of the perturbation-based gradient estimator]
\label{cor:finite-radius-gradient-bias}
Under the conditions of
Theorem~\ref{thm:finite-radius-bias},
$\|G_\delta^{\star}(x)-\nabla\Phi(x)\| \le C_{\mathrm{bias},0}\delta.$
Consequently,
$\|G_\delta^{\star}(x)-\nabla\Phi(x)\|^2
\le C_{\mathrm{bias},0}^2\delta^2$.
\end{corollary}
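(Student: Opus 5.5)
The corollary follows directly from Theorem~\ref{thm:finite-radius-bias} at level $i=0$ and radius $t=\delta$; the plan is to show that the estimator error is exactly the level-$0$ adjoint error.

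\textbf{Step 1 (subtract the two expressions).} Fix $x$ and $\delta\in(0,\delta_{\rm adm}]$. By \eqref{eq:finite-radius-gradient-estimator},
\[
G_\delta^\star(x)=\nabla_1 f\bigl(x,z_M^\star(x)\bigr)+v_{0,\delta}^\star(x).
\]
By Proposition~\ref{prop:exact-adjoint}, namely \eqref{eq:exact-hypergradient},
\[
\nabla\Phi(x)=\nabla_1 f\bigl(x,z_M^\star(x)\bigr)+v_0^\star(x).
\]
Both expressions use the same unperturbed terminal response $z_M^\star(x)$, so the direct terms cancel. This gives
\[
G_\delta^\star(x)-\nabla\Phi(x)=v_{0,\delta}^\star(x)-v_0^\star(x).
\]

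\textbf{Step 2 (apply the bias bound).} Theorem~\ref{thm:finite-radius-bias} with $i=0$ and $t=\delta$ gives $\|v_{0,\delta}^\star(x)-v_0^\star(x)\|\le C_{\mathrm{bias},0}\,\delta$. The constant is independent of $x$ and $\delta$, since $\delta$ lies in the admissible interval.

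\textbf{Step 3 (square).} Both sides are nonnegative, so squaring yields the second inequality.

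There is no real obstacle here. The only point to check is that the estimator's direct term is evaluated at the unperturbed response $z_M^\star(x)$ rather than at $z_{M,\delta}^\star(x)$, which \eqref{eq:finite-radius-gradient-estimator} confirms.
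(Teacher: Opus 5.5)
Your proposal is correct and follows essentially the same route as the paper: the direct terms $\nabla_1 f(x,z_M^\star(x))$ cancel, so the error reduces to $v_{0,\delta}^\star(x)-v_0^\star(x)$. Theorem~\ref{thm:finite-radius-bias} with $i=0$ and $t=\delta$ then bounds this by $C_{\mathrm{bias},0}\delta$, and squaring gives the second inequality.
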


\begin{proof}
By \eqref{eq:exact-hypergradient} and
\eqref{eq:finite-radius-gradient-estimator},
$G_\delta^{\star}(x)-\nabla\Phi(x)
=v_{0,\delta}^{\star}(x)-v_0^{\star}(x)$.
The result now follows from
Theorem~\ref{thm:finite-radius-bias}.
\end{proof}

\subsection{Uniform Lipschitz bounds}
\label{subsec:finite-radius-regularity}

We next establish Lipschitz bounds in $x$ for the perturbed
states, scaled response differences, and adjoint approximations.
The constants are uniform for $t\in(0,\delta_{\rm adm}]$.

\begin{lemma}[Uniform Lipschitz continuity of the perturbed states]
\label{lem:perturbed-state-x-lipschitz}
Under Assumptions~\ref{ass:implicit-layers}
and~\ref{ass:outer-function} and
\eqref{eq:admissible-radius}, for each $i=1,\ldots,M$ there
exists $\widehat L_{z,i}<\infty$, independent of
$t\in[0,\delta_{\rm adm}]$, such that
$\|z_{i,t}^{\star}(x)-z_{i,t}^{\star}(x')\|
\le \widehat L_{z,i}\|x-x'\|$.
\end{lemma}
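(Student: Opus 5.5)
The plan is a backward induction over levels, from $i=M$ down to $i=1$, using only the defining equations \eqref{eq:terminal-pert-path-general} and \eqref{eq:general-intermediate-pert} together with strong monotonicity of the signed operators. The one point needing care is the perturbation term $t v_{i,t}^{\star}(x)$. Estimating the Lipschitz constant of $v_{i,t}^{\star}$ itself would require controlling a difference quotient and risks a factor $1/t$. Instead I will work with the undivided quantity
\[
q_{i,t}(x):=t v_{i,t}^{\star}(x)=\nabla_2\Psi_{i+1}\bigl(z_{i+1,t}^{\star}(x),z_i^{\star}(x)\bigr)-\nabla_2\Psi_{i+1}\bigl(z_{i+1}^{\star}(x),z_i^{\star}(x)\bigr),
\]
which is a plain difference of gradients. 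The unperturbed states are already handled: $\|z_i^{\star}(x)-z_i^{\star}(x')\|\le L_{z,i}\|x-x'\|$ by Proposition~\ref{prop:response-regularity}.

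\textbf{Terminal level.} Write $z:=z_{M,t}^{\star}(x)$ and $z':=z_{M,t}^{\star}(x')$, and set
\[
F_t(z,u,x):=P_M\bigl[\nabla_1\Psi_M(z,u)+t\nabla_2 f(x,z)\bigr].
\]
For fixed $(u,x)$, the map $F_t(\cdot,u,x)$ is $(\mu_M-tL_f)\ge\bar\mu_M$ strongly monotone, as used in the proof of Theorem~\ref{thm:zero-radius-recovery}; this uses $\|P_M\|=1$ and $\|\nabla^2_{22}f\|\le L_f$. Since $F_t(z,u,x)=0=F_t(z',u',x')$, I insert the cross term $F_t(z',u,x)$ and take the inner product with $z-z'$. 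This gives
\[
\bar\mu_M\|z-z'\|\le\|F_t(z',u,x)-F_t(z',u',x')\|\le L_M\|u-u'\|+tL_f\|x-x'\|.
\]
Here I used $\|\nabla^2\Psi_M\|\le L_M$ and $\|\nabla^2 f\|\le L_f$, with $u=z_{M-1}^{\star}(x)$ and $u'=z_{M-1}^{\star}(x')$. Hence one may take
\[
\widehat L_{z,M}:=\bigl(L_M L_{z,M-1}+\delta_{\rm adm}L_f\bigr)/\bar\mu_M.
\]
This constant is independent of $t$.

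\textbf{Intermediate levels.} Assume $\widehat L_{z,i+1}$ has been obtained. From the formula for $q_{i,t}$ and $\|\nabla^2\Psi_{i+1}\|\le L_{i+1}$, each of the two gradient terms is $L_{i+1}$-Lipschitz in its joint argument. Therefore
\[
\|q_{i,t}(x)-q_{i,t}(x')\|\le L_{i+1}\bigl(\widehat L_{z,i+1}+L_{z,i+1}+2L_{z,i}\bigr)\|x-x'\|=:L_{q,i}\|x-x'\|.
\]
The state $z_{i,t}^{\star}(x)$ solves $P_i[\nabla_1\Psi_i(z,z_{i-1}^{\star}(x))+q_{i,t}(x)]=0$. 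The additive term does not change the $\mu_i$-strong monotonicity in $z$, so the same cross-term argument yields
\[
\mu_i\|z_{i,t}^{\star}(x)-z_{i,t}^{\star}(x')\|\le\bigl(L_iL_{z,i-1}+L_{q,i}\bigr)\|x-x'\|.
\]
I then set $\widehat L_{z,i}:=(L_iL_{z,i-1}+L_{q,i})/\mu_i$.

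At $t=0$ all these estimates remain valid, because $z_{i,0}^{\star}=z_i^{\star}$ and $q_{i,0}=0$. Backward induction completes the proof. I expect no real obstacle once one commits to bounding $q_{i,t}$ rather than $v_{i,t}^{\star}$. The only place where the admissible radius enters is the terminal modulus $\bar\mu_M$ in \eqref{eq:admissible-radius}.
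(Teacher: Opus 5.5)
Your proposal is correct and follows the paper's proof essentially verbatim. It uses the same terminal cross-term argument with modulus $\bar\mu_M$, yielding $\widehat L_{z,M}=(L_ML_{z,M-1}+\delta_{\rm adm}L_f)/\bar\mu_M$. It also uses the same device of bounding the undivided difference $q_i=t\,v_{i,t}^{\star}$ rather than $v_{i,t}^{\star}$, which produces exactly the paper's backward recursion $\widehat L_{z,i}=\bigl[L_iL_{z,i-1}+L_{i+1}(\widehat L_{z,i+1}+L_{z,i+1}+2L_{z,i})\bigr]/\mu_i$.
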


\begin{proof}
See Appendix~\ref{app:supporting-proofs}.
\end{proof}

For the scaled differences, we use
$d_{i,t}^\star(x)
=t^{-1}\int_0^t\dot z_{i,s}^\star(x)\,ds$.
Lemma~\ref{lem:path-derivative-x-lipschitz} in
Appendix~\ref{app:perturbation-technical-details} gives
$\|\dot z_{i,t}^\star(x)\|\le C_{\mathrm{path},i}$ and
$\|\dot z_{i,t}^\star(x)-\dot z_{i,t}^\star(x')\|
\le L_{d,i}\|x-x'\|$,
with constants independent of $t$.
Averaging yields the following bounds.

\begin{corollary}[Uniform regularity of the scaled response differences]
\label{cor:finite-direction-regularity}
Under Assumptions~\ref{ass:implicit-layers}
and~\ref{ass:outer-function} and
\eqref{eq:admissible-radius}, for every $i=1,\ldots,M$
and $t\in(0,\delta_{\rm adm}]$,
\[
\|d_{i,t}^{\star}(x)\|\le C_{\mathrm{path},i},
\qquad
\|d_{i,t}^{\star}(x)-d_{i,t}^{\star}(x')\|
\le L_{d,i}\|x-x'\|.
\]
The constants are independent of $t$.
\end{corollary}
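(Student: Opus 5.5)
The plan is to write the scaled difference as a path average and pass both bounds through the integral. The text just before the statement supplies the representation $d_{i,t}^\star(x)=t^{-1}\int_0^t\dot z_{i,s}^\star(x)\,ds$. This follows from Theorem~\ref{thm:zero-radius-recovery}: the map $s\mapsto z_{i,s}^\star(x)$ is $C^1$ on $[0,\delta_{\rm adm}]$, with one-sided derivatives at the endpoints, and $z_{i,0}^\star(x)=z_i^\star(x)$. The fundamental theorem of calculus then gives
\[
z_{i,t}^\star(x)-z_i^\star(x)=\int_0^t\dot z_{i,s}^\star(x)\,ds,
\]
and dividing by $t>0$ gives the representation. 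The point of using this form is that the factor $t^{-1}$ is exactly offset by the length of the integration interval, so no $1/t$ blow-up can occur.

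\textbf{Norm bound.} By Lemma~\ref{lem:path-derivative-x-lipschitz}, $\|\dot z_{i,s}^\star(x)\|\le C_{\mathrm{path},i}$ for all $s\in[0,\delta_{\rm adm}]$. Hence
\[
\|d_{i,t}^\star(x)\|\le t^{-1}\int_0^t\|\dot z_{i,s}^\star(x)\|\,ds\le C_{\mathrm{path},i}.
\]
Alternatively, this bound follows directly from Lemma~\ref{lem:perturbation-displacement}.

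\textbf{Lipschitz bound.} Subtracting the integral representations at $x$ and $x'$ gives
\[
d_{i,t}^\star(x)-d_{i,t}^\star(x')=t^{-1}\int_0^t\bigl[\dot z_{i,s}^\star(x)-\dot z_{i,s}^\star(x')\bigr]ds .
\]
Applying the pointwise bound $\|\dot z_{i,s}^\star(x)-\dot z_{i,s}^\star(x')\|\le L_{d,i}\|x-x'\|$, uniform in $s$, and averaging over $[0,t]$ yields the stated Lipschitz bound with constant $L_{d,i}$. The constant does not depend on $t$, because neither $C_{\mathrm{path},i}$ nor $L_{d,i}$ depends on $s$.

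\textbf{Where the work lies.} Everything above is averaging. The substantive content is the uniform-in-$s$ Lipschitz estimate for $\dot z_{i,s}^\star$ in Lemma~\ref{lem:path-derivative-x-lipschitz}, which we are allowed to assume. If we had to justify it, we would differentiate the defining equations with respect to $s$. At the terminal level this gives
\[
\bigl[\nabla_{11}^2\Psi_M+s\nabla_{22}^2f\bigr]\dot z_{M,s}^\star=-\nabla_2 f ,
\]
and at each earlier level
\[
\nabla_{11}^2\Psi_i\,\dot z_{i,s}^\star=-\tfrac{d}{ds}q_i(s),\qquad \tfrac{d}{ds}q_i(s)=\nabla_{21}^2\Psi_{i+1}\,\dot z_{i+1,s}^\star .
\]
We would then argue by backward induction. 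The inverse bounds come from strong monotonicity, with modulus $\bar\mu_M$ at the terminal level and $\mu_i$ at earlier levels. The $x$-variation of the Hessian-type coefficients is controlled by the Lipschitz constants $\rho_i$, $\rho_f$ together with Lemma~\ref{lem:perturbed-state-x-lipschitz} and Proposition~\ref{prop:response-regularity}. The step that needs care is ensuring that no $1/s$ factor enters: $q_i$ is differentiated directly, rather than going through $v_{i,s}^\star=q_i(s)/s$.
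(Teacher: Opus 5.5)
Your proposal is correct and matches the paper's proof: the paper also writes $d_{i,t}^\star(x)=t^{-1}\int_0^t\dot z_{i,s}^\star(x)\,ds$ and averages the $s$-uniform bounds of Lemma~\ref{lem:path-derivative-x-lipschitz} over $[0,t]$. Your remark that the norm bound also follows directly from Lemma~\ref{lem:perturbation-displacement} is correct as well.
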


\begin{proof}
See Appendix~\ref{app:supporting-proofs}.
\end{proof}

Similarly, write
$B_{i,s}(x)
:=\nabla_{21}^2\Psi_i
(z_{i,s}^\star(x),z_{i-1}^\star(x))$.
For $i=0,\ldots,M-1$ and $t>0$, the fundamental theorem
of calculus gives
\begin{equation}
\label{eq:adjoint-path-integral}
v_{i,t}^{\star}(x)
=
\frac{1}{t}
\int_0^t
B_{i+1,s}(x)\dot z_{i+1,s}^{\star}(x)\,ds.
\end{equation}
This representation gives uniform Lipschitz bounds
for the adjoint approximations.

\begin{corollary}[Uniform regularity of the adjoint approximations]
\label{cor:finite-adjoint-regularity}
Under Assumptions~\ref{ass:implicit-layers}
and~\ref{ass:outer-function} and
\eqref{eq:admissible-radius}, for each $i=0,\ldots,M$ there
exists $L_{v,i}<\infty$, independent of
$t\in(0,\delta_{\rm adm}]$, such that
$\|v_{i,t}^{\star}(x)-v_{i,t}^{\star}(x')\|
\le L_{v,i}\|x-x'\|$.
One may choose
$L_{v,M}:=L_f(1+\widehat L_{z,M})$
and, for $i=0,\ldots,M-1$,
\[
L_{v,i}
:=
\rho_{i+1}
\bigl(\widehat L_{z,i+1}+L_{z,i}\bigr)
C_{\mathrm{path},i+1}
+
L_{i+1}L_{d,i+1}.
\]
\end{corollary}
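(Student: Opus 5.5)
We treat the terminal level and the intermediate levels separately. In each case we work from a representation of $v_{i,t}^\star$ in which the division by $t$ has already been absorbed. Estimating the gradient differences in \eqref{eq:general-vfd} directly would require dividing by $t$, so we avoid it.

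\emph{Terminal level.} Here $v_{M,t}^\star(x)=\nabla_2 f(x,z_{M,t}^\star(x))$. Assumption~\ref{ass:outer-function} gives $\|\nabla^2 f\|\le L_f$, so $\nabla_2 f$ is $L_f$-Lipschitz jointly in $(x,z)$. Hence
\[
\|v_{M,t}^\star(x)-v_{M,t}^\star(x')\|\le L_f\bigl(\|x-x'\|+\|z_{M,t}^\star(x)-z_{M,t}^\star(x')\|\bigr).
\]
Lemma~\ref{lem:perturbed-state-x-lipschitz} bounds the last term by $\widehat L_{z,M}\|x-x'\|$, which yields $L_{v,M}=L_f(1+\widehat L_{z,M})$. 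The estimate holds uniformly for $t\in(0,\delta_{\rm adm}]$.

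\emph{Intermediate levels $i\le M-1$.} We start from the path-integral representation \eqref{eq:adjoint-path-integral}. For each $s\in(0,t]$ we split the integrand difference as
\[
B_{i+1,s}(x)\dot z_{i+1,s}^\star(x)-B_{i+1,s}(x')\dot z_{i+1,s}^\star(x')
=\bigl[B_{i+1,s}(x)-B_{i+1,s}(x')\bigr]\dot z_{i+1,s}^\star(x)
+B_{i+1,s}(x')\bigl[\dot z_{i+1,s}^\star(x)-\dot z_{i+1,s}^\star(x')\bigr].
\]
\begin{itemize}
\item \emph{First term.} The Hessian is $\rho_{i+1}$-Lipschitz (Assumption~\ref{ass:implicit-layers}(iii)), and its arguments move by at most $\|z_{i+1,s}^\star(x)-z_{i+1,s}^\star(x')\|+\|z_i^\star(x)-z_i^\star(x')\|\le(\widehat L_{z,i+1}+L_{z,i})\|x-x'\|$. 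Here we use Lemma~\ref{lem:perturbed-state-x-lipschitz} and the Lipschitz bound on the exact chain. Combined with $\|\dot z_{i+1,s}^\star\|\le C_{\mathrm{path},i+1}$ from Lemma~\ref{lem:path-derivative-x-lipschitz}, the first term is at most $\rho_{i+1}(\widehat L_{z,i+1}+L_{z,i})C_{\mathrm{path},i+1}\|x-x'\|$.
\item \emph{Second term.} We use $\|B_{i+1,s}\|\le L_{i+1}$ and the derivative-Lipschitz bound $L_{d,i+1}$ from the same lemma, giving at most $L_{i+1}L_{d,i+1}\|x-x'\|$.
\end{itemize}
Both bounds hold for every $s$. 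Averaging over $s\in[0,t]$, i.e. applying $t^{-1}\int_0^t$, preserves them, and this yields the stated $L_{v,i}$.

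\emph{Main point to check.} The step needing care is the validity of \eqref{eq:adjoint-path-integral} itself. It relies on $s\mapsto z_{i+1,s}^\star(x)$ being $C^1$ on $[0,t]$, which Theorem~\ref{thm:zero-radius-recovery} provides. It also relies on the fundamental theorem of calculus applied to $s\mapsto\nabla_2\Psi_{i+1}(z_{i+1,s}^\star,z_i^\star)$; its derivative is $B_{i+1,s}\dot z_{i+1,s}^\star$, using $\nabla^2_{21}=D_1\nabla_2$. Otherwise, the whole burden rests on the appendix lemma, which gives $t$-uniform bounds on $\dot z$ and its Lipschitz constant. We take that lemma as given.
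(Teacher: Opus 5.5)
Your proposal is correct and matches the paper's proof essentially step for step. The terminal case uses the $L_f$-Lipschitz continuity of $\nabla_2 f$ together with Lemma~\ref{lem:perturbed-state-x-lipschitz}, and the intermediate cases subtract the path-integral representations \eqref{eq:adjoint-path-integral}, split the integrand the same way, and combine Hessian Lipschitz continuity with the $t$-uniform bounds of Lemma~\ref{lem:path-derivative-x-lipschitz}; your extra check that \eqref{eq:adjoint-path-integral} itself is justified by the $C^1$ paths from Theorem~\ref{thm:zero-radius-recovery} is a point the paper leaves implicit.
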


\begin{proof}
See Appendix~\ref{app:supporting-proofs}.
\end{proof}

\paragraph{Exact targets at fixed $\delta$.}
For the algorithmic analysis, fix
$\delta\in(0,\delta_{\rm adm}]$ and write
$d_i^\star(x):=d_{i,\delta}^\star(x)
=[z_{i,\delta}^\star(x)-z_i^\star(x)]/\delta$,
$i=1,\ldots,M$.
We retain $v_{i,\delta}^\star(x)$ for the adjoint approximations.
The superscript $\star$ denotes exact response solves,
not zero perturbation bias.
All preceding bounds are uniform in $\delta$.

\paragraph{From exact targets to inexact solves.}
The preceding bounds concern exact response solves.
With inexact solves, a downstream response error affects
the adjoint approximation that defines the next upstream
perturbed problem. The following example isolates this
effect in a chain with zero exact-solve perturbation bias.

\begin{proposition}[Propagation of inner-solve errors in a two-level chain]
\label{prop:two-level-finite-solve-obstruction}
Consider the scalar two-level chain
\[
\Psi_1(z_1,x)=\frac12(z_1-a_1x)^2,
\qquad
\Psi_2(z_2,z_1)=\frac12(z_2-a_2z_1)^2,
\]
with outer objective
$f(x,z_2)=\frac{\tau}{2}x^2+cz_2$,
where $\tau>0$ and $a_1a_2c\neq0$.
For every $\delta>0$, the exactly solved recursive
perturbation estimator satisfies
$G_\delta^\star(x)=\nabla\Phi(x)$.

Suppose the unperturbed responses are exact, but the
terminal perturbed response is returned as
$\widehat z_{2,\delta}
=a_2z_1^\star-\delta c+r_2$,
where $r_2$ is its signed state error.
If the level-$1$ perturbed problem defined by the resulting
adjoint approximation is solved exactly, then
$\widehat v_{1,\delta}=a_2c-(a_2/\delta)r_2$, and
\begin{equation}
\label{eq:two-level-finite-solve-obstruction}
\widehat G_\delta-\nabla\Phi(x)
=
-\frac{a_1a_2}{\delta}r_2.
\end{equation}
\end{proposition}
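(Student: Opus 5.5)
This is an explicit computation on a linear scalar chain, so the plan is to write everything in closed form and compare. First compute the exact objects. Here $\nabla_1\Psi_1=z_1-a_1x$, $\nabla_2\Psi_1=-a_1(z_1-a_1x)$, $\nabla_1\Psi_2=z_2-a_2z_1$, and $\nabla_2\Psi_2=-a_2(z_2-a_2z_1)$. The assumptions hold with $\mu_i=L_i=1$ and $\rho_i=0$. We have $z_1^\star=a_1x$ and $z_2^\star=a_2a_1x$, so $\Phi(x)=\tfrac{\tau}{2}x^2+ca_1a_2x$ and $\nabla\Phi(x)=\tau x+a_1a_2c$. Note that $\nabla_2 f=c$ is constant and $L_f$ for the $z_2$-block vanishes, so any $\delta>0$ is admissible.

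Next verify $G_\delta^\star=\nabla\Phi$ by running the recursion \eqref{eq:terminal-pert-path-general}--\eqref{eq:general-v0fd}. The terminal equation $z_{2,\delta}-a_2z_1^\star+\delta c=0$ gives $z_{2,\delta}^\star=a_2z_1^\star-\delta c$. Then
\[
v_{1,\delta}^\star=\frac{-a_2(z_{2,\delta}^\star-a_2z_1^\star)+a_2\cdot 0}{\delta}=a_2c .
\]
Level $1$ solves $z_{1,\delta}-a_1x+\delta a_2c=0$, so $z_{1,\delta}^\star=a_1x-\delta a_2c$. Hence
\[
v_{0,\delta}^\star=\frac{-a_1(z_{1,\delta}^\star-a_1x)}{\delta}=a_1a_2c .
\]
With $\nabla_1f=\tau x$, this gives $G_\delta^\star=\tau x+a_1a_2c=\nabla\Phi(x)$. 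The exactness is expected, since $\rho_i=0$ and the perturbation in $f$ is linear, consistent with Theorem~\ref{thm:finite-radius-bias} yielding zero constants.

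Then repeat the recursion with the inexact terminal state $\widehat z_{2,\delta}=a_2z_1^\star-\delta c+r_2$, keeping the unperturbed responses exact. The same gradient-difference formula gives
\[
\widehat v_{1,\delta}=-a_2(-\delta c+r_2)/\delta=a_2c-(a_2/\delta)r_2 .
\]
Solving the level-$1$ perturbed problem exactly with this vector gives $\widehat z_{1,\delta}=a_1x-\delta\widehat v_{1,\delta}$. Therefore $\widehat v_{0,\delta}=a_1\widehat v_{1,\delta}=a_1a_2c-(a_1a_2/\delta)r_2$. Subtracting $\nabla\Phi(x)$, the $\tau x$ terms cancel, which yields \eqref{eq:two-level-finite-solve-obstruction}.

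There is no real obstacle; the only care needed is sign bookkeeping in $\nabla_2\Psi_i$, and using the exact $z_{1}^\star$, rather than $\widehat z_{1,\delta}$, as the input in both terminal gradient evaluations.
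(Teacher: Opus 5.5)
Your computation is correct and is essentially the paper's own proof; the only difference is that the paper gets the exact-solve identity $G_\delta^\star=\nabla\Phi$ by setting $r_2=0$ in the inexact computation, whereas you run the exact recursion separately first. Your side remarks on constants are slightly off, although the argument does not use them. Under the paper's conventions the full Hessians give $L_i=1+a_i^2$ and $L_f\ge\tau$, so Theorem~\ref{thm:finite-radius-bias} yields a nonzero $C_{\mathrm{bias},0}$; the zero bias here therefore comes from your explicit calculation, not from that theorem.
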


\begin{proof}
The exact responses are
$z_1^\star=a_1x$ and $z_2^\star=a_2z_1^\star$.
Since $\nabla_2\Psi_2=-a_2(z_2-a_2z_1)$,
the terminal gradient difference gives
$\widehat v_{1,\delta}=a_2c-a_2r_2/\delta$.
The upstream solve yields
$\widehat z_{1,\delta}
=a_1x-\delta\widehat v_{1,\delta}$,
and its input-gradient difference divided by $\delta$
is $a_1\widehat v_{1,\delta}$.
Thus
$\widehat G_\delta=\tau x+a_1\widehat v_{1,\delta}$,
whereas $\nabla\Phi(x)=\tau x+a_1a_2c$.
Subtracting gives
\eqref{eq:two-level-finite-solve-obstruction};
setting $r_2=0$ proves the exact-solve identity.
\end{proof}

The $1/\delta$ amplification also appears in finite-difference
hypergradient estimation for equality-constrained bilevel problems
\cite{kornowski2024constrained}.
Here, the amplified error additionally changes the upstream
perturbation vector.
For fixed nonzero $a_1a_2$, an $O(\delta)$ gradient error
in this example requires $r_2=O(\delta^2)$.
This example specifies the terminal state error independently
of the solver.
In Algorithm~\ref{alg:remid}, paired updates couple the errors
of the two branches.
Section~\ref{sec:algorithm} uses this coupling to track their
scaled differences and the recursively generated adjoint errors.

\begin{remark}[Bilevel reduction]
\label{rem:bilevel-reduction}
For $M=1$, the construction uses one perturbed response
and one input-gradient difference.
Solve errors may still be amplified by $1/\delta$,
but there is no upstream response problem whose
perturbation vector they alter.
\end{remark}

\section{First-Order Algorithm and Tracking Analysis}
\label{sec:algorithm}

We now approximate the perturbation targets using finitely many
warm-started first-order updates.
Throughout this section, Assumptions~\ref{ass:implicit-layers}
and~\ref{ass:outer-function} hold, and
$0<\delta\le\delta_{\rm adm}$ satisfies
\eqref{eq:admissible-radius}.
A further radius restriction for tracking is introduced below.

Let $\widehat G^k$ denote the estimator produced by
Algorithm~\ref{alg:remid} at $x^k$. Its error separates into
\[
\widehat G^k-\nabla\Phi(x^k)
=
\underbrace{\widehat G^k-G_\delta^{\star}(x^k)}_{\text{inexact-solve tracking}}
+
\underbrace{G_\delta^{\star}(x^k)-\nabla\Phi(x^k)}_{\text{perturbation bias}}.
\]
Section~\ref{sec:perturbation} bounds the bias.
Here we control the tracking error by jointly analyzing
the forward response updates and the backward
scaled response differences.

\subsection{Warm-started first-order realization}
\label{subsec:first-order-realization}

\paragraph{Forward and backward updates.}
At outer iteration $k$, \ReMID{} first refreshes the
unperturbed responses from level $1$ to $M$, using the
newly updated input at each level.
It then updates the perturbed responses from level $M$
to $1$, generating each upstream perturbation vector
from the completed downstream update. 

Set $\bar z_0^k:=x^k$ and denote the refreshed
unperturbed states by $\bar z_i^k$, so that
$z_i^{k+1}=\bar z_i^k$.
At each level, both branches use the same input
$\bar z_{i-1}^k$, stepsize $\eta_i$, and budget $N_i$.
These shared choices yield a recursion directly for the scaled
response difference, avoiding separate absolute solve-error
bounds divided by $\delta$. Both branches retain their endpoints as warm starts
for the next outer iteration.
Warm starts are standard in bilevel optimization
\cite{arbel2022amortized}; here the tracking analysis
must also account for recursively generated
perturbation vectors.

\begin{figure}[!t]
\centering
\begin{tikzpicture}[
    state/.style={
        draw,
        rounded corners=1.5pt,
        minimum height=7.2mm,
        minimum width=16.5mm,
        inner sep=2pt,
        font=\small
    },
    wide/.style={
        draw,
        rounded corners=1.5pt,
        minimum height=7.5mm,
        inner sep=2.5pt,
        font=\small
    },
    flow/.style={
        -{Latex[length=1.7mm,width=1.1mm]},
        line width=0.55pt
    },
    guide/.style={
        densely dashed,
        -{Latex[length=1.4mm,width=0.9mm]},
        line width=0.4pt
    },
    heading/.style={
        font=\small\bfseries
    },
    note/.style={
        font=\scriptsize,
        align=center
    }
]

\node[heading] at (5.30,2.45) {Forward unperturbed sweep};
\node[state] (x0) at (0,1.65) {$x^k=\bar z_0^k$};
\node[state] (z1) at (2.65,1.65) {$\bar z_1^k$};
\node (fdots) at (5.30,1.65) {$\cdots$};
\node[state] (zMm1) at (7.95,1.65) {$\bar z_{M-1}^k$};
\node[state] (zM) at (10.60,1.65) {$\bar z_M^k$};
\draw[flow] (x0) -- (z1);
\draw[flow] (z1) -- (fdots);
\draw[flow] (fdots) -- (zMm1);
\draw[flow] (zMm1) -- (zM);
\node[note] at (3.98,0.90)
    {same refreshed input at level $i$: $\bar z_{i-1}^k$};
\node[note] (terminal) at (10.60,0.82)
    {terminal perturbation\\[-1pt] $\delta\nabla_2f(x^k,\cdot)$};
\node[state] (v0) at (0,-0.45) {$v_{0,\delta}^k$};
\node[state] (z1d) at (2.65,-0.45) {$z_{1,\delta}^{k+1}$};
\node (bdots) at (5.30,-0.45) {$\cdots$};
\node[state] (vMm1) at (7.95,-0.45) {$v_{M-1,\delta}^k$};
\node[state] (zMd) at (10.60,-0.45) {$z_{M,\delta}^{k+1}$};
\draw[flow] (z1d) -- (v0);
\draw[flow] (bdots) -- (z1d);
\draw[flow] (vMm1) -- (bdots);
\draw[flow] (zMd) -- (vMm1);
\draw[guide] (terminal.south) -- (zMd.north);
\node[heading] at (5.30,-1.20) {Backward perturbed/adjoint sweep};
\node[wide] (Ghat) at (3.40,-2.25)
    {$\widehat G^k=\nabla_1f(x^k,\bar z_M^k)+v_{0,\delta}^k$};
\node[wide] (xnext) at (8.20,-2.25)
    {$x^{k+1}=x^k-\alpha\widehat G^k$};
\draw[flow] (Ghat) -- (xnext);
\end{tikzpicture}

\caption{
Forward--backward realization of \ReMID.
The unperturbed sweep refreshes the response inputs in
forward order. The perturbed sweep uses these same inputs
in backward order, generating $v_{i-1,\delta}^k$ from
paired input-gradient differences at each level.
The final adjoint approximation $v_{0,\delta}^k$
forms the gradient estimator and outer update.
}
\label{fig:forward-backward-pipeline}
\end{figure}

Algorithm~\ref{alg:remid} gives the complete update sequence.
The method stores only the current inner iterates and
branch endpoints; it does not differentiate through
solver trajectories or solve adjoint linear systems.

\begin{algorithm}[t]
\caption{\ReMID: Recursive multilevel implicit differentiation via first-order response perturbations}
\label{alg:remid}
\begin{algorithmic}[1]
\Require Number of outer iterations $K\ge1$, outer initial point $x^0$,
unperturbed warm starts $\{z_i^0\}_{i=1}^M$,
outer stepsize $\alpha$, perturbation parameter $\delta$, inner stepsizes
$\{\eta_i\}_{i=1}^M$, and positive integer inner budgets $\{N_i\}_{i=1}^M$.
\State Initialize the perturbed branch by $z_{i,\delta}^0\gets z_i^0$ for all $i=1,\ldots,M$.
\For{$k=0,1,\ldots,K-1$}
    \State $\bar z_0^k\gets x^k$.
    \Statex \hspace{0.5em}\textbf{Forward unperturbed sweep:}
    \For{$i=1,2,\ldots,M$}
        \State $z_i^{k,0}\gets z_i^k$.
        \For{$\ell=0,1,\ldots,N_i-1$}
            \State $z_i^{k,\ell+1}\gets z_i^{k,\ell}
            -\eta_iP_i\nabla_1\Psi_i\!\left(z_i^{k,\ell},\bar z_{i-1}^k\right)$.
        \EndFor
        \State $\bar z_i^k\gets z_i^{k,N_i}$ and $z_i^{k+1}\gets\bar z_i^k$.
    \EndFor
    \Statex \hspace{0.5em}\textbf{Backward perturbed/adjoint sweep:}
    \For{$i=M,M-1,\ldots,1$}
        \State $z_{i,\delta}^{k,0}\gets z_{i,\delta}^k$.
        \If{$i=M$}
            \For{$\ell=0,1,\ldots,N_M-1$}
                \State $z_{M,\delta}^{k,\ell+1}\gets z_{M,\delta}^{k,\ell}
                -\eta_MP_M\!\left[
                \nabla_1\Psi_M\!\left(z_{M,\delta}^{k,\ell},\bar z_{M-1}^k\right)
                +\delta\nabla_2 f\!\left(x^k,z_{M,\delta}^{k,\ell}\right)
                \right]$.
            \EndFor
        \Else
            \For{$\ell=0,1,\ldots,N_i-1$}
                \State $z_{i,\delta}^{k,\ell+1}\gets z_{i,\delta}^{k,\ell}
                -\eta_iP_i\!\left[
                \nabla_1\Psi_i\!\left(z_{i,\delta}^{k,\ell},\bar z_{i-1}^k\right)
                +\delta v_{i,\delta}^k
                \right]$.
            \EndFor
        \EndIf
        \State $z_{i,\delta}^{k+1}\gets z_{i,\delta}^{k,N_i}$.
        \State $v_{i-1,\delta}^k\gets
        \dfrac{\nabla_2\Psi_i\!\left(z_{i,\delta}^{k+1},\bar z_{i-1}^k\right)
        -\nabla_2\Psi_i\!\left(\bar z_i^k,\bar z_{i-1}^k\right)}{\delta}$.
    \EndFor
    \State $\widehat G^k\gets\nabla_1 f\!\left(x^k,\bar z_M^k\right)+v_{0,\delta}^k$.
    \State $x^{k+1}\gets x^k-\alpha\widehat G^k$.
\EndFor
\end{algorithmic}
\end{algorithm}

\paragraph{Inner contraction.}
For level $i$, define the signed-gradient update map
$\mathcal T_i^{\eta_i}(z,u)
:=z-\eta_iP_i\nabla_1\Psi_i(z,u)$.
At nonterminal levels, the perturbation vector is fixed
during the local solve and leaves the contraction
constants unchanged.
At the terminal level, the perturbation depends on the
response state through $\nabla_2f$.
Accordingly, set
\[
\widetilde\mu_i
:=
\mu_i-\delta L_f\mathbf 1_{\{i=M\}},
\qquad
\widetilde L_i
:=
L_i+\delta L_f\mathbf 1_{\{i=M\}}.
\]
The radius condition \eqref{eq:admissible-radius}
ensures $\widetilde\mu_M\ge\bar\mu_M>0$.
The following lemma gives contraction factors for
both branches.

\begin{lemma}[Inner contraction factors]
\label{lem:inner-contraction}
Under Assumptions~\ref{ass:implicit-layers} and \ref{ass:outer-function}, let $\delta\in(0,\delta_{\rm adm}]$ satisfy
\eqref{eq:admissible-radius}. For each level $i$, let $\vartheta_i$ and
$\widetilde\vartheta_i$ denote squared-distance contraction factors for the
unperturbed and perturbed inner iterations, respectively. The following choices
are admissible.

\begin{enumerate}[label=\textnormal{(\alph*)}]
\item
For a general potential-based layer, if
$0<\eta_i< \frac{2\widetilde\mu_i}{\widetilde L_i^2},$
then the same stepsize is admissible for the unperturbed map as well, and we may
take
\[
\vartheta_i
=
1-2\eta_i\mu_i+\eta_i^2L_i^2,
\qquad
\widetilde\vartheta_i
=
1-2\eta_i\widetilde\mu_i
+\eta_i^2\widetilde L_i^2.
\]
In particular,
$\widetilde\vartheta_i=\vartheta_i$ for $i<M$.

\item
Suppose that the $i$th layer is a strongly convex minimization
layer, so that $P_i=I$ and
$\Psi_i(z,u)=g_i(z,u)$ with
$\mu_i I \preceq \nabla_{11}^2g_i(z,u) \preceq L_i I.$
For $i<M$, any $0<\eta_i\le 1/L_i$ is admissible, with
\[
\vartheta_i=\widetilde\vartheta_i=(1-\eta_i\mu_i)^2.
\]
In particular, $\eta_i=1/L_i$ gives
$\vartheta_i=\widetilde\vartheta_i=(1-\mu_i/L_i)^2$.
At the terminal level, any common unperturbed--perturbed stepsize satisfying
$0<\eta_M\le1/\widetilde L_M$ is admissible, with
\[
\vartheta_M=(1-\eta_M\mu_M)^2,
\qquad
\widetilde\vartheta_M=(1-\eta_M\widetilde\mu_M)^2.
\]
\end{enumerate}

The unperturbed contraction factor also applies to
averages of the state Hessian of $\Psi_i$ along response
segments:
\begin{equation}
\label{eq:averaged-inner-contraction}
\left\|
\bigl(
I-\eta_iP_i\overline H
\bigr)h
\right\|^2
\le
\vartheta_i\|h\|^2,
\end{equation}
for every such averaged Hessian $\overline H$.
\end{lemma}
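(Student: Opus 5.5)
Part (a) is the standard gradient-descent argument for a strongly monotone Lipschitz operator, applied to each branch. For the unperturbed map, fix $u$ and write $T=T_i(\cdot,u)$. Then
\[
\|\mathcal T_i^{\eta_i}(z,u)-\mathcal T_i^{\eta_i}(z',u)\|^2
=\|h\|^2-2\eta_i\langle T(z)-T(z'),h\rangle+\eta_i^2\|T(z)-T(z')\|^2,
\qquad h=z-z'.
\]
Assumption~\ref{ass:implicit-layers}(i) bounds the middle term. For the last term, $P_i$ is orthogonal and $\|\nabla_{11}^2\Psi_i\|\le L_i$, so $\|T(z)-T(z')\|\le L_i\|h\|$. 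This gives $\vartheta_i=1-2\eta_i\mu_i+\eta_i^2L_i^2$.

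For the perturbed branch at $i<M$, the perturbation $\delta v_{i,\delta}^k$ is a constant during the local solve. It cancels in differences, so $\widetilde\vartheta_i=\vartheta_i$. At $i=M$, the perturbed operator is $z\mapsto P_M[\nabla_1\Psi_M(z,u)+\delta\nabla_2 f(x,z)]$. Because $\nabla_2f$ is $L_f$-Lipschitz in $z$ and $P_M$ is orthogonal, it is $(\mu_M-\delta L_f)$-strongly monotone and $(L_M+\delta L_f)$-Lipschitz. The same expansion then gives $\widetilde\vartheta_M$.

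Admissibility of the stepsize only requires the factor to be below one. For the perturbed branch this means $\eta_i<2\widetilde\mu_i/\widetilde L_i^2$. For the unperturbed branch, since $\widetilde\mu_i\le\mu_i$ and $\widetilde L_i\ge L_i$, we have $2\widetilde\mu_i/\widetilde L_i^2\le 2\mu_i/L_i^2$, so the same stepsize also works there.

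For part (b), with $P_i=I$, write the difference of updates as $(I-\eta_i\overline H)h$, where $\overline H=\int_0^1\nabla_{11}^2g_i(z'+s h,u)\,ds$ is symmetric and satisfies $\mu_iI\preceq\overline H\preceq L_iI$. For $\eta_i\le1/L_i$, the eigenvalues of $I-\eta_i\overline H$ lie in $[0,1-\eta_i\mu_i]$. This yields the factor $(1-\eta_i\mu_i)^2$ for both branches when $i<M$. At $i=M$, the perturbed averaged Jacobian is $\overline H+\delta\overline F$, where $\overline F$ is an average of $\nabla_{22}^2f$ with $\|\overline F\|\le L_f$. I would state the extra hypothesis that the terminal perturbed Jacobian is symmetric, which holds since $\nabla_1\Psi_M+\delta\nabla_2f$ is the $z$-gradient of $\Psi_M+\delta f(x,\cdot)$. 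Its spectrum then lies in $[\widetilde\mu_M,\widetilde L_M]$. With $\eta_M\le1/\widetilde L_M$, the eigenvalues of $I-\eta_M(\overline H+\delta\overline F)$ lie in $[0,1-\eta_M\widetilde\mu_M]$, giving $\widetilde\vartheta_M$. The same stepsize satisfies $\eta_M\le1/L_M$, so the unperturbed factor $\vartheta_M$ also holds.

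For \eqref{eq:averaged-inner-contraction}, note that the averaged Hessian $\overline H$ inherits both properties used in (a)–(b), because the averaging is a convex combination. First, $\langle P_i\overline Hh,h\rangle\ge\mu_i\|h\|^2$, obtained by averaging the infinitesimal form of strong monotonicity (differentiate (i) along the segment). Second, $\|P_i\overline H\|\le L_i$. Expanding $\|(I-\eta_iP_i\overline H)h\|^2$ then gives the bound in (a). In the minimization case the spectral argument of (b) applies directly. The main point needing care is this infinitesimal monotonicity, $\operatorname{sym}(P_i\nabla_{11}^2\Psi_i)\succeq\mu_iI$. It follows from (i) by taking $z'=z+\epsilon h$, dividing by $\epsilon^2$, and letting $\epsilon\to0$ using $C^2$ smoothness. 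Beyond that step, the proof is routine bookkeeping.
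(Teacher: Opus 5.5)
Your proposal is correct and follows the paper's proof essentially step for step. Part (a) uses the standard strongly monotone/Lipschitz expansion, with the $\delta L_f$ shift in the monotonicity and Lipschitz constants at the terminal level. Part (b) uses the spectral bound $\|I-\eta\overline H\|\le 1-\eta\mu$ for symmetric averaged Hessians. For \eqref{eq:averaged-inner-contraction}, you rely on the fact that averaging preserves the monotonicity and norm bounds.

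Your derivation of $\operatorname{sym}(P_i\nabla_{11}^2\Psi_i)\succeq\mu_iI$ from (i) fills in a step the paper leaves implicit. The symmetry of the terminal perturbed Jacobian is automatic, as you note yourself, so it need not be stated as an extra hypothesis.
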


\begin{proof}
For a $\mu$-strongly monotone, $L$-Lipschitz operator $T$,
$\|(z-\eta T(z))-(z'-\eta T(z'))\|^2
\le(1-2\eta\mu+\eta^2L^2)\|z-z'\|^2$.
At nonterminal levels the constant perturbation cancels in differences.
At level $M$, the operator $\delta P_M\nabla_2f(x,\cdot)$ changes the
monotonicity and Lipschitz bounds by at most $\delta L_f$.
This proves part~(a).

For part~(b), the averaged Hessians have spectra in $[\mu_i,L_i]$,
or $[\widetilde\mu_M,\widetilde L_M]$ for the terminal perturbed problem.
The stated stepsizes give $\|I-\eta H\|\le1-\eta\mu$ with the corresponding
lower spectral bound. Averaging also preserves the monotonicity and norm
bounds in part~(a), proving \eqref{eq:averaged-inner-contraction}.
\end{proof}

We use $\vartheta_i$ and $\widetilde\vartheta_i$
for any valid factors from
Lemma~\ref{lem:inner-contraction}.

\subsection{Response and adjoint error bounds}
\label{subsec:primal-forcing-tracking}

We track the unperturbed states and the scaled differences
$d_i^k:=(z_{i,\delta}^k-z_i^k)/\delta$.
Set $x^{-1}:=x^0$ and $s^k:=x^k-x^{k-1}$,
so that $s^0=0$.
For $k\ge1$, the stored states were computed at iteration
$k-1$, and we measure their errors relative to the targets
at $x^{k-1}$:
\begin{equation}
\label{eq:tracking-errors}
\mathcal E_i^k
:=
\|z_i^k-z_i^\star(x^{k-1})\|^2,
\qquad
\mathcal Z_i^k
:=
\|d_i^k-d_i^\star(x^{k-1})\|^2.
\end{equation}
Thus $\mathcal E_i^{k+1}$ and $\mathcal Z_i^{k+1}$
measure tracking at the current outer point $x^k$.
Set $\mathcal E_0^k:=0$ and
$\chi_i:=\vartheta_i^{N_i}$.

The forward response error has three sources:
the previous warm-start error, the updated input error,
and the movement of the outer variable.
\begin{lemma}[Forward primal tracking recursion]
\label{lem:primal-tracking}
Under Assumption~\ref{ass:implicit-layers}, suppose that the unperturbed inner
iterations use squared-distance contraction factors $\vartheta_i<1$ as in
Lemma~\ref{lem:inner-contraction}. Then the unperturbed iterates generated by
Algorithm~\ref{alg:remid} satisfy
\begin{equation}
\label{eq:primal-tracking-recursion}
\mathcal E_i^{k+1}
\le
\tau_i\mathcal E_i^k
+\lambda_i\mathcal E_{i-1}^{k+1}
+\gamma_i\|s^k\|^2.
\end{equation}
For $i=1$, one may take
$\tau_1:=2\chi_1$, $\lambda_1:=0$, and
$\gamma_1:=2\chi_1L_{z,1}^2$.
For $i=2,\ldots,M$, one may take
\begin{equation}
\label{eq:forward-primal-coefficients}
\tau_i:=4\chi_i,
\qquad
\lambda_i:=2\kappa_i^2(1+4\chi_i),
\qquad
\gamma_i:=8\chi_iL_{z,i}^2.
\end{equation}
\end{lemma}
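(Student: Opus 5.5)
The plan is to combine the $N_i$-step inner contraction with triangle-inequality splittings that move the target from the stale input $z_{i-1}^\star(x^{k-1})$ to the current one $z_{i-1}^\star(x^k)$.

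\textbf{Inner contraction.} At level $i$, the unperturbed inner loop runs $N_i$ steps of $\mathcal T_i^{\eta_i}(\cdot,\bar z_{i-1}^k)$. This map has the fixed point $S_i(\bar z_{i-1}^k)$ and, by Lemma~\ref{lem:inner-contraction}, squared contraction factor $\vartheta_i$. Hence
\[
\|\bar z_i^k-S_i(\bar z_{i-1}^k)\|^2\le\chi_i\|z_i^k-S_i(\bar z_{i-1}^k)\|^2 .
\]
Since $z_i^{k+1}=\bar z_i^k$ and $z_i^\star(x^k)=S_i(z_{i-1}^\star(x^k))$, we then apply $\|a+b\|^2\le2\|a\|^2+2\|b\|^2$:
\[
\mathcal E_i^{k+1}\le 2\|\bar z_i^k-S_i(\bar z_{i-1}^k)\|^2+2\|S_i(\bar z_{i-1}^k)-S_i(z_{i-1}^\star(x^k))\|^2 .
\]
By Proposition~\ref{prop:response-regularity}, the second term is at most $2\kappa_i^2\|\bar z_{i-1}^k-z_{i-1}^\star(x^k)\|^2=2\kappa_i^2\mathcal E_{i-1}^{k+1}$. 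This uses $\bar z_{i-1}^k=z_{i-1}^{k+1}$ for $i\ge2$.

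\textbf{Splitting the warm-start distance.} We write
\[
z_i^k-S_i(\bar z_{i-1}^k)=\bigl[z_i^k-z_i^\star(x^{k-1})\bigr]+\bigl[z_i^\star(x^{k-1})-z_i^\star(x^k)\bigr]+\bigl[S_i(z_{i-1}^\star(x^k))-S_i(\bar z_{i-1}^k)\bigr].
\]
The first bracket has squared norm $\mathcal E_i^k$. The second is bounded by $L_{z,i}\|s^k\|$ via the chain Lipschitz bound. The third is bounded by $\kappa_i\|\bar z_{i-1}^k-z_{i-1}^\star(x^k)\|$.

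\textbf{Case $i=1$.} Here $\bar z_0^k=x^k$, so the third bracket vanishes and so does the outer $2\kappa_1^2$ term. Using $\|a+b\|^2\le 2\|a\|^2+2\|b\|^2$ on the two remaining pieces gives
\[
\|z_1^k-S_1(x^k)\|^2\le 2\mathcal E_1^k+2L_{z,1}^2\|s^k\|^2 .
\]
The direct bound without the outer doubling is $\mathcal E_1^{k+1}\le\chi_1(2\mathcal E_1^k+2L_{z,1}^2\|s^k\|^2)$. This is exactly $\tau_1=2\chi_1$, $\lambda_1=0$, $\gamma_1=2\chi_1L_{z,1}^2$.

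\textbf{Case $i\ge2$.} We group the three pieces as $(\text{first}+\text{second})+\text{third}$ and apply $\|a+b\|^2\le2\|a\|^2+2\|b\|^2$ twice:
\[
\|z_i^k-S_i(\bar z_{i-1}^k)\|^2\le 4\mathcal E_i^k+4L_{z,i}^2\|s^k\|^2+2\kappa_i^2\mathcal E_{i-1}^{k+1}.
\]
Multiplying by $2\chi_i$ and adding the outer $2\kappa_i^2\mathcal E_{i-1}^{k+1}$ gives
\[
\mathcal E_i^{k+1}\le 8\chi_i\mathcal E_i^k+8\chi_iL_{z,i}^2\|s^k\|^2+\bigl(2\kappa_i^2+4\chi_i\kappa_i^2\bigr)\mathcal E_{i-1}^{k+1}.
\]
This matches the claimed $\gamma_i$, and the $\mathcal E_{i-1}^{k+1}$ coefficient is at most $\lambda_i=2\kappa_i^2(1+4\chi_i)$. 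The $\mathcal E_i^k$ coefficient, however, is $8\chi_i$ rather than the claimed $4\chi_i$.

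\textbf{Main obstacle: the constant in $\tau_i$.} Reaching $4\chi_i$ with $\gamma_i=8\chi_iL_{z,i}^2$ requires a weighted Young split, $\|a+b\|^2\le(1+\epsilon)\|a\|^2+(1+\epsilon^{-1})\|b\|^2$, chosen to fit all three stated coefficients at once. One way is to avoid the outer $\|a+b\|^2$ doubling entirely. Instead, measure the current error by comparing directly with the target and use the triangle inequality in norm:
\[
\|\bar z_i^k-z_i^\star(x^k)\|\le\sqrt{\chi_i}\,\|z_i^k-S_i(\bar z_{i-1}^k)\|+\kappa_i\|\bar z_{i-1}^k-z_{i-1}^\star(x^k)\|.
\]
Then square once with factor $2$ and split the warm-start term with the pieces weighted as $(2,4,4)$. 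This gives coefficients $4\chi_i\mathcal E_i^k+8\chi_iL_{z,i}^2\|s^k\|^2+(2\kappa_i^2+8\chi_i\kappa_i^2)\mathcal E_{i-1}^{k+1}$. These match $\tau_i,\gamma_i,\lambda_i$ exactly, so this is the bookkeeping I would use.
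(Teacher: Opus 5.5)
Your final bookkeeping is exactly the paper's proof: contract toward the local target $S_i(\bar z_{i-1}^k)$, add the $\kappa_i$-Lipschitz input mismatch via a factor-$2$ squaring, and split the warm-start distance as $2\mathcal E_i^k+4L_{z,i}^2\|s^k\|^2+4\kappa_i^2\mathcal E_{i-1}^{k+1}$, with $i=1$ handled directly since $S_1(x^k)=z_1^\star(x^k)$. One small correction: this does not avoid the outer doubling---squaring your norm triangle inequality with factor $2$ is that same doubling---and what actually yields $\tau_i=4\chi_i$ is grouping the three warm-start pieces as $a+(b+c)$ rather than $(a+b)+c$.
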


The cross-level term involves the new error in the preceding-level response
$\mathcal E_{i-1}^{k+1}$. The term $\mathcal E_{i-1}^{k+1}$ reflects the use of
the refreshed input: current response errors propagate
forward through the chain.

\begin{proof}
See Appendix~\ref{app:supporting-proofs}.
\end{proof}

We next express the adjoint error in terms of the response
and scaled-difference errors.
Define \[
C_{vZ,i}:=2L_i^2+6\rho_i^2C_{\mathrm{path},i}^2\delta^2,
\qquad
C_{vE,i}:=6\rho_i^2C_{\mathrm{path},i}^2,
\qquad i=1,\ldots,M.
\]

\begin{lemma}[Propagation of adjoint approximation errors]
\label{lem:finite-adjoint-propagation}
Under Assumptions~\ref{ass:implicit-layers} and \ref{ass:outer-function}, and under \eqref{eq:admissible-radius}, for every
$i=1,\ldots,M$,
\begin{equation}
\label{eq:finite-adjoint-propagation}
\begin{aligned}
\|v_{i-1,\delta}^k-v_{i-1,\delta}^{\star}(x^k)\|^2
\le{}&
C_{vZ,i}\mathcal Z_i^{k+1}
+C_{vE,i}\mathcal E_i^{k+1}
+
C_{vE,i}\mathbf 1_{\{i\ge2\}}\mathcal E_{i-1}^{k+1}.
\end{aligned}
\end{equation}
Consequently, for $i<M$,
\begin{equation}
\label{eq:finite-adjoint-propagation-shifted}
\begin{aligned}
\|v_{i,\delta}^k-v_{i,\delta}^{\star}(x^k)\|^2
\le{}&
C_{vZ,i+1}\mathcal Z_{i+1}^{k+1}
+
C_{vE,i+1}
\bigl(
\mathcal E_{i+1}^{k+1}
+\mathcal E_i^{k+1}
\bigr).
\end{aligned}
\end{equation}
\end{lemma}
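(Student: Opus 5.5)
The plan is to write both the algorithmic adjoint and its exact target as averaged-Hessian actions on scaled differences, and then split the difference into a matrix-mismatch term and a direction-error term. By the fundamental theorem of calculus applied to $\nabla_2\Psi_i(\cdot,\bar z_{i-1}^k)$ along the segment from $\bar z_i^k=z_i^{k+1}$ to $z_{i,\delta}^{k+1}$,
\[
v_{i-1,\delta}^k=\widehat B_i^k d_i^{k+1},\qquad
\widehat B_i^k:=\int_0^1\nabla_{21}^2\Psi_i\bigl(z_i^{k+1}+s\delta d_i^{k+1},\bar z_{i-1}^k\bigr)\,ds .
\]
Lemma~\ref{lem:averaged-linearization} gives the analogous identity $v_{i-1,\delta}^\star(x^k)=\overline B_{i,\delta}(x^k)d_i^\star(x^k)$, with the segment based at $z_i^\star(x^k)$ and input $z_{i-1}^\star(x^k)$. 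Note that $\bar z_{i-1}^k=z_{i-1}^{k+1}$ for $i\ge2$, while $\bar z_0^k=x^k$ is exact; this is why the indicator $\mathbf 1_{\{i\ge2\}}$ appears.

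The decomposition I will use is
\[
v_{i-1,\delta}^k-v_{i-1,\delta}^\star(x^k)=\widehat B_i^k\bigl(d_i^{k+1}-d_i^\star(x^k)\bigr)+\bigl(\widehat B_i^k-\overline B_{i,\delta}(x^k)\bigr)d_i^\star(x^k).
\]
For the first term, $\|\widehat B_i^k\|\le L_i$. For the second, $\|d_i^\star(x^k)\|\le C_{\mathrm{path},i}$ by Corollary~\ref{cor:finite-direction-regularity}. The Hessian-Lipschitz condition (Assumption~\ref{ass:implicit-layers}(iii)) bounds the matrix gap pointwise in $s$ by
\[
\rho_i\Bigl(\|z_i^{k+1}-z_i^\star\|+s\delta\|d_i^{k+1}-d_i^\star\|+\|\bar z_{i-1}^k-z_{i-1}^\star\|\Bigr).
\]
This uses the fact that the two segment points are $z_i^{k+1}+s\delta d_i^{k+1}$ and $z_i^\star+s\delta d_i^\star$. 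Bounding $s\le1$ and integrating, I get
\[
\|\widehat B_i^k-\overline B_{i,\delta}\|\le\rho_i\bigl(\sqrt{\mathcal E_i^{k+1}}+\delta\sqrt{\mathcal Z_i^{k+1}}+\mathbf 1_{\{i\ge2\}}\sqrt{\mathcal E_{i-1}^{k+1}}\bigr).
\]

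To square, I apply $(a+b)^2\le2a^2+2b^2$ to the two-term split. This gives $2L_i^2\mathcal Z_i^{k+1}$ plus twice the square of the second term. For that square I apply $(a+b+c)^2\le3(a^2+b^2+c^2)$, which yields
\[
6\rho_i^2C_{\mathrm{path},i}^2\bigl(\mathcal E_i^{k+1}+\delta^2\mathcal Z_i^{k+1}+\mathbf 1_{\{i\ge2\}}\mathcal E_{i-1}^{k+1}\bigr).
\]
Collecting terms recovers exactly $C_{vZ,i}$ and $C_{vE,i}$, so \eqref{eq:finite-adjoint-propagation} follows. The shifted bound \eqref{eq:finite-adjoint-propagation-shifted} is obtained by substituting $i+1$ for $i$; since $i+1\ge2$ the indicator equals one.

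The only delicate point is the bookkeeping that makes the constants come out $\delta$-uniform. The $\delta$ multiplying $d$ inside the segment must be kept, so that the $\mathcal Z$ contribution to the matrix gap carries $\delta^2$ and no $1/\delta$ ever appears. One also has to check that the tracking errors $\mathcal E_i^{k+1}$ and $\mathcal Z_i^{k+1}$ are measured against targets at $x^k$, which matches \eqref{eq:tracking-errors} with the index shift.
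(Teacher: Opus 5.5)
Your proposal is correct and follows the paper's proof essentially step for step. You use the same averaged-Hessian representations $v_{i-1,\delta}^k=\widehat B_i^k d_i^{k+1}$ and $v_{i-1,\delta}^\star(x^k)=\overline B_{i,\delta}(x^k)d_i^\star(x^k)$, the same split into a direction-error term and a matrix-mismatch term acting on the bounded target $d_i^\star(x^k)$, and the same two- and three-term squaring inequalities, which produce $C_{vZ,i}$ and $C_{vE,i}$ exactly.
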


\begin{proof}
Fix $i$ and $k$, and write $d^\star=d_i^\star(x^k)$.
The fundamental theorem of calculus gives
$v_{i-1,\delta}^k=\overline B_i^k d_i^{k+1}$ and
$v_{i-1,\delta}^\star(x^k)=\overline B_i^\star d^\star$,
where
\[
\begin{aligned}
\overline B_i^k
&:=\int_0^1\nabla_{21}^2\Psi_i
 \bigl(z_i^{k+1}+r\delta d_i^{k+1},\bar z_{i-1}^k\bigr)\,dr,\\
\overline B_i^\star
&:=\int_0^1\nabla_{21}^2\Psi_i
 \bigl(z_i^\star(x^k)+r\delta d^\star,z_{i-1}^\star(x^k)\bigr)\,dr.
\end{aligned}
\]
Consequently,
$v_{i-1,\delta}^k-v_{i-1,\delta}^\star(x^k)
=\overline B_i^k(d_i^{k+1}-d^\star)
+(\overline B_i^k-\overline B_i^\star)d^\star$.
We have $\|\overline B_i^k\|\le L_i$ and
\[
\|\overline B_i^k-\overline B_i^\star\|
\le\rho_i\left(
\sqrt{\mathcal E_i^{k+1}}
+\delta\sqrt{\mathcal Z_i^{k+1}}
+\sqrt{\mathcal E_{i-1}^{k+1}}\right),
\]
where the last term vanishes for $i=1$.
Using $\|d^\star\|\le C_{\mathrm{path},i}$,
$\|a+b\|^2\le2\|a\|^2+2\|b\|^2$, and
$(a+b+c)^2\le3(a^2+b^2+c^2)$ proves
\eqref{eq:finite-adjoint-propagation}. Shifting the index gives
\eqref{eq:finite-adjoint-propagation-shifted}.
\end{proof}

Writing each gradient difference as an averaged Hessian
block acting on a scaled response difference removes
the explicit division by $\delta$ from the error bound.
The estimate uses the bounded exact target
$d_i^\star(x^k)$ and requires no prior bound on the
computed adjoints.

\paragraph{Local perturbation targets.}
The inner updates use approximate inputs and adjoints.
To separate these target errors from finite-iteration
errors, we introduce the exact solutions of the local
problems solved at iteration $k$. Define $\widehat z_i^{k,*}:=S_i(\bar z_{i-1}^k)$.
For $i<M$, let $\widehat z_{i,\delta}^{k,*}$ solve
$\nabla_1\Psi_i(z,\bar z_{i-1}^k)
+\delta v_{i,\delta}^k=0$.
At the terminal level, let $\widehat z_{M,\delta}^{k,*}$
solve
$\nabla_1\Psi_M(z,\bar z_{M-1}^k)
+\delta\nabla_2f(x^k,z)=0$.
Their scaled difference is
\begin{equation}
\label{eq:local-direction-target}
\widehat d_i^{k,*}
:=
\frac{\widehat z_{i,\delta}^{k,*}-\widehat z_i^{k,*}}{\delta}.
\end{equation}
To compare these local targets with the exact-chain
targets, shrink $\delta_{\rm adm}$, if necessary, so that
\begin{equation}
\label{eq:tracking-radius-condition}
\delta_{\rm adm}\rho_iC_{\mathrm{path},i}
\le\frac{\mu_i}{2}
\quad (i<M),
\qquad
\delta_{\rm adm}
\bigl(\rho_MC_{\mathrm{path},M}+L_f\bigr)
\le\frac{\mu_M}{2}.
\end{equation}
The constants $C_{\mathrm{path},i}$ are retained from
the original admissible interval, so this choice is
not circular.

The next lemma bounds the local target mismatch
in terms of the input and downstream adjoint errors.

\begin{lemma}[Local target error]
\label{lem:local-direction-reference}
Under Assumptions~\ref{ass:implicit-layers} and \ref{ass:outer-function}, and under
\eqref{eq:admissible-radius} and \eqref{eq:tracking-radius-condition}, for $i<M$,
\begin{equation}
\label{eq:forward-local-direction-reference}
\begin{aligned}
\|\widehat d_i^{k,*}-d_i^{\star}(x^k)\|^2
\le{}&
C_{d,u,i}\mathcal E_{i-1}^{k+1}
+
C_{d,v,i}
\|v_{i,\delta}^k-v_{i,\delta}^{\star}(x^k)\|^2,
\end{aligned}
\end{equation}
where one may take
\[
C_{d,u,i}
:=
\frac{8\rho_i^2C_{\mathrm{path},i}^2(1+\kappa_i)^2}{\mu_i^2},
\qquad
C_{d,v,i}:=\frac{8}{\mu_i^2}.
\]
Whenever $i=1$, the input mismatch vanishes because
$\bar z_0^k=x^k=z_0^{\star}(x^k)$, and we set $C_{d,u,1}:=0$.

At the terminal level with $M\ge2$,
\[
\|\widehat d_M^{k,*}-d_M^{\star}(x^k)\|^2
\le
C_{d,u,M}\mathcal E_{M-1}^{k+1},
\]
where
\[
C_{d,u,M}
:=
\frac{4}{\mu_M^2}
\left[
\rho_MC_{\mathrm{path},M}(1+\kappa_M)+L_f\kappa_M
\right]^2.
\]
If $M=1$, the terminal input is exact and $C_{d,u,1}=0$.  In all cases,
set $C_{d,v,M}:=0$.
\end{lemma}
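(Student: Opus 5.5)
The plan is to write both scaled differences, $\widehat d_i^{k,*}$ and $d_i^\star(x^k)$, through averaged-Hessian identities, as in Lemma~\ref{lem:averaged-linearization}, and then compare them. Write $u:=\bar z_{i-1}^k$, $u^\star:=z_{i-1}^\star(x^k)$, $w:=v_{i,\delta}^k$ and $w^\star:=v_{i,\delta}^\star(x^k)$.

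For $i<M$, subtracting the unperturbed equation from the perturbed one gives
\[
\widehat H\,\widehat d_i^{k,*}+w=0,
\qquad
\overline H^\star d_i^\star(x^k)+w^\star=0 .
\]
Here $\widehat H$ is the state Hessian $\nabla_{11}^2\Psi_i(\cdot,u)$ averaged along the segment from $S_i(u)$ to $\widehat z_{i,\delta}^{k,*}$. Likewise, $\overline H^\star$ is $\nabla_{11}^2\Psi_i(\cdot,u^\star)$ averaged along the segment from $z_i^\star(x^k)$ to $z_{i,\delta}^\star(x^k)$.

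The averaging argument of Proposition~\ref{prop:response-regularity} shows that $\|\widehat H^{-1}\|\le\mu_i^{-1}$. Indeed, $P_i\widehat H$ is an average of Jacobians of the strongly monotone map $T_i$, so $\langle P_i\widehat H h,h\rangle\ge\mu_i\|h\|^2$. Subtracting the two identities then yields
\[
\widehat d_i^{k,*}-d_i^\star
=-\widehat H^{-1}\bigl[(w-w^\star)+(\widehat H-\overline H^\star)d_i^\star\bigr].
\]
Since $\|d_i^\star\|\le C_{\mathrm{path},i}$ by Corollary~\ref{cor:finite-direction-regularity}, this reduces the task to bounding $\|\widehat H-\overline H^\star\|$.

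Hessian Lipschitz continuity bounds $\|\widehat H-\overline H^\star\|$ by $\rho_i$ times the average distance between the two segments. That distance is at most
\[
\|u-u^\star\|+\max\Bigl\{\|S_i(u)-S_i(u^\star)\|,\ \|\widehat z_{i,\delta}^{k,*}-z_{i,\delta}^\star\|\Bigr\}.
\]
The first term is $\sqrt{\mathcal E_{i-1}^{k+1}}$, and $\|S_i(u)-S_i(u^\star)\|\le\kappa_i\|u-u^\star\|$ by Proposition~\ref{prop:response-regularity}. For the perturbed endpoint I will use
\[
\widehat z_{i,\delta}^{k,*}-z_{i,\delta}^\star
=\bigl(S_i(u)-S_i(u^\star)\bigr)+\delta\bigl(\widehat d_i^{k,*}-d_i^\star\bigr),
\]
which means the bound contains the unknown itself, multiplied by $\delta$.

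This self-reference is the main obstacle, and it is exactly where \eqref{eq:tracking-radius-condition} enters. Collecting terms gives
\[
\mu_i\|e\|\le\|w-w^\star\|+\rho_iC_{\mathrm{path},i}\bigl[(1+\kappa_i)\|u-u^\star\|+\delta\|e\|\bigr],
\qquad e:=\widehat d_i^{k,*}-d_i^\star .
\]
Since $\delta\rho_iC_{\mathrm{path},i}\le\mu_i/2$, the $\delta\|e\|$ term can be absorbed into the left side. This leaves
\[
\|e\|\le\frac{2}{\mu_i}\Bigl[\|w-w^\star\|+\rho_iC_{\mathrm{path},i}(1+\kappa_i)\|u-u^\star\|\Bigr].
\]
Squaring with $(a+b)^2\le2a^2+2b^2$ produces $C_{d,v,i}=8/\mu_i^2$ and $C_{d,u,i}=8\rho_i^2C_{\mathrm{path},i}^2(1+\kappa_i)^2/\mu_i^2$. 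For $i=1$ we have $u=x^k=u^\star$, so the input term vanishes.

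At the terminal level the perturbation is state dependent. The two identities become
\[
\widehat H\,\widehat d_M+\nabla_2f(x^k,\widehat z_{M,\delta}^{k,*})=0,
\qquad
\overline H^\star d_M^\star+\nabla_2f(x^k,z_{M,\delta}^\star)=0 .
\]
Their difference picks up the extra term $\nabla_2f(x^k,\widehat z_{M,\delta}^{k,*})-\nabla_2f(x^k,z_{M,\delta}^\star)$. By $L_f$-Lipschitz continuity this is at most
\[
L_f\Bigl(\kappa_M\|u-u^\star\|+\delta\|e\|\Bigr).
\]
The combined $\delta\|e\|$ coefficient is then $\delta(\rho_MC_{\mathrm{path},M}+L_f)\le\mu_M/2$ by \eqref{eq:tracking-radius-condition}, so the same absorption works. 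It gives
\[
\|e\|\le\frac{2}{\mu_M}\bigl[\rho_MC_{\mathrm{path},M}(1+\kappa_M)+L_f\kappa_M\bigr]\|u-u^\star\|,
\]
with no separate adjoint-error term. Squaring this bound gives $C_{d,u,M}$ as stated, with $C_{d,v,M}=0$. When $M=1$, the input is $x^k$ itself and the bound is zero.
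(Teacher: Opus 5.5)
Your proposal is correct and follows essentially the same argument as the paper. Both targets are written through averaged-Hessian identities at their respective inputs, and the Hessian mismatch is bounded by $\rho_i$ times the segment distance, which contains $\delta\|\widehat d_i^{k,*}-d_i^\star(x^k)\|$. That self-referential term is absorbed using \eqref{eq:tracking-radius-condition}, squaring gives the stated constants, and the terminal level picks up the extra $L_f$ term in the same way.
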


\begin{proof}
See Appendix~\ref{app:supporting-proofs}.
\end{proof}

\subsection{Tracking the scaled response differences}
\label{subsec:auxiliary-tracking}
We now combine the paired-update bounds across levels.
Forward response errors affect the common inputs, while
downstream adjoint errors affect the upstream perturbation vectors.
The resulting coupled recursions track both effects with
coefficients uniformly bounded over the admissible radii.

Set
\[
\overline\vartheta_i:=\max\{\vartheta_i,\widetilde\vartheta_i\},
\qquad
\theta_i:=\frac{1+\overline\vartheta_i}{2},
\qquad
\chi_{d,i}:=\theta_i^{N_i}.
\]
By Lemma~\ref{lem:inner-contraction},
$0\le\overline\vartheta_i<\theta_i<1$. For $i<M$, define the local target error bound
\[
\mathcal R_{i-1}^k
:=
C_{d,u,i}\mathcal E_{i-1}^{k+1}
+
C_{d,v,i}
\|v_{i,\delta}^k-v_{i,\delta}^{\star}(x^k)\|^2.
\]
At level $M$, set
$\mathcal R_{M-1}^k:=C_{d,u,M}\mathcal E_{M-1}^{k+1}$.
Lemma~\ref{lem:local-direction-reference} gives
\begin{equation}
\label{eq:local-direction-residual}
\|\widehat d_i^{k,*}-d_i^{\star}(x^k)\|^2
\le
\mathcal R_{i-1}^k.
\end{equation}
In particular, $\mathcal R_0^k=0$ when $M=1$.

Current response errors propagate forward through
$\mathcal E_{i-1}^{k+1}$, while scaled-difference errors
propagate backward through $\mathcal Z_{i+1}^{k+1}$.
The following theorem captures both dependencies.

\begin{theorem}[Coupled tracking recursion for the scaled response differences]
\label{thm:auxiliary-tracking}
Under Assumptions~\ref{ass:implicit-layers} and \ref{ass:outer-function}, let $\delta\in(0,\delta_{\rm adm}]$ with
\eqref{eq:admissible-radius} and \eqref{eq:tracking-radius-condition}, and suppose
that the inner stepsizes and contraction factors satisfy
Lemma~\ref{lem:inner-contraction}. Then, for every $i=1,\ldots,M$ and
$k\ge0$,
\begin{equation}
\label{eq:auxiliary-tracking-recursion}
\begin{aligned}
\mathcal Z_i^{k+1}
\le{}&
\tau_{d,i}\mathcal Z_i^k
+\beta_{d,i}\mathcal Z_{i+1}^{k+1}
+
\lambda_{d,i}
\bigl(
\mathcal E_{i+1}^{k+1}+\mathcal E_i^{k+1}
\bigr)
+\psi'_{d,i}\mathcal E_i^k
+\psi_{d,i}\mathcal E_{i-1}^{k+1}
+\gamma_{d,i}\|s^k\|^2,
\end{aligned}
\end{equation}
where
$\mathcal Z_{M+1}^{k+1}=\mathcal E_{M+1}^{k+1}:=0$ and
$\mathcal E_0^{k+1}:=0$.  The coefficients are finite and nonnegative,
$\beta_{d,M}=\lambda_{d,M}=0$, and
$\tau_{d,i}=O(\theta_i^{N_i})$ for fixed problem data.  Explicit admissible
coefficients are given in Appendix~\ref{app:auxiliary-tracking-details}; their
uniformity in $\delta$ is established in Lemma~\ref{lem:delta-uniformity}.
\end{theorem}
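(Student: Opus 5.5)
The plan is to analyze the paired inner iterations at a fixed level $i$ and outer iteration $k$ by subtracting the perturbed and unperturbed updates. Both branches start from the warm starts $z_i^k$ and $z_{i,\delta}^k$. They use the same input $\bar z_{i-1}^k$ and the same stepsize and budget, so the scaled difference $d_i^{k,\ell}:=(z_{i,\delta}^{k,\ell}-z_i^{k,\ell})/\delta$ obeys
\[
d_i^{k,\ell+1}=d_i^{k,\ell}-\eta_iP_i\Bigl[\overline H_i^{k,\ell}d_i^{k,\ell}+q_i^{k,\ell}\Bigr].
\]
Here $\overline H_i^{k,\ell}$ is the state Hessian of $\Psi_i$ averaged along the segment joining $z_i^{k,\ell}$ and $z_{i,\delta}^{k,\ell}$ (input $\bar z_{i-1}^k$). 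The forcing term is $q_i^{k,\ell}=v_{i,\delta}^k$ for $i<M$. For $i=M$ it is $\nabla_2f(x^k,z_{M,\delta}^{k,\ell})$. The division by $\delta$ is thus absorbed exactly, as in Lemma~\ref{lem:finite-adjoint-propagation}.

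The local target $\widehat d_i^{k,*}$ satisfies the same affine identity with the Hessian averaged along the segment $[\widehat z_i^{k,*},\widehat z_{i,\delta}^{k,*}]$. Subtracting the two identities gives
\[
d_i^{k,\ell+1}-\widehat d_i^{k,*}=(I-\eta_iP_i\overline H_i^{k,\ell})(d_i^{k,\ell}-\widehat d_i^{k,*})-\eta_iP_i\Delta_i^{k,\ell},
\]
where $\Delta_i^{k,\ell}$ collects the Hessian mismatch acting on the bounded vector $\widehat d_i^{k,*}$. At $i=M$ it also contains the $\nabla_2f$ mismatch. By \eqref{eq:averaged-inner-contraction}, the linear part contracts with factor $\vartheta_i$. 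At the terminal level, the Lipschitz $\nabla_2 f$ term is folded into $\overline\vartheta_M$ through an averaged $\nabla^2_{22}f$.

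The main obstacle is the Hessian-mismatch term. It is bounded by $\rho_i(\|z_i^{k,\ell}-\widehat z_i^{k,*}\|+\delta\|d_i^{k,\ell}-\widehat d_i^{k,*}\|)$ times $\|\widehat d_i^{k,*}\|$. The first factor is the absolute unperturbed inner error, which is not divided by $\delta$ and decays like $\vartheta_i^{\ell/2}$ times the initial error. The $\delta$-term is absorbed into the contraction via \eqref{eq:tracking-radius-condition}. That condition also gives $\|\widehat d_i^{k,*}\|\lesssim C_{\mathrm{path},i}+\text{(target errors)}$. I would rather bound $\|\widehat d_i^{k,*}\|$ by $\|d_i^\star(x^k)\|+\sqrt{\mathcal R_{i-1}^k}$; bilinear products of error terms are then controlled by uniform boundedness of the iterates, or more simply by the same mean-value argument that avoids squaring errors.

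Applying Young's inequality with the slack $\theta_i-\overline\vartheta_i=(1-\overline\vartheta_i)/2$ yields a one-step recursion with factor $\theta_i$. Summing over $\ell<N_i$ gives
\[
\|d_i^{k+1}-\widehat d_i^{k,*}\|^2\le\chi_{d,i}\|d_i^k-\widehat d_i^{k,*}\|^2+c\,N_i\theta_i^{N_i}\|z_i^k-\widehat z_i^{k,*}\|^2 .
\]

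The remaining work is target bookkeeping, done with triangle inequalities. First, $\|d_i^k-\widehat d_i^{k,*}\|^2\le3\mathcal Z_i^k+3\|d_i^\star(x^{k-1})-d_i^\star(x^k)\|^2+3\|\widehat d_i^{k,*}-d_i^\star(x^k)\|^2$. The middle term is at most $L_{d,i}^2\|s^k\|^2$ by Corollary~\ref{cor:finite-direction-regularity}, and the last is at most $\mathcal R_{i-1}^k$ by \eqref{eq:local-direction-residual}. Second, $\|z_i^k-\widehat z_i^{k,*}\|^2\lesssim\mathcal E_i^k+L_{z,i}^2\|s^k\|^2+\kappa_i^2\mathcal E_{i-1}^{k+1}$ by Proposition~\ref{prop:response-regularity}. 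Third, $\mathcal Z_i^{k+1}\le2\|d_i^{k+1}-\widehat d_i^{k,*}\|^2+2\mathcal R_{i-1}^k$. Inserting Lemma~\ref{lem:finite-adjoint-propagation} in the form \eqref{eq:finite-adjoint-propagation-shifted} into $\mathcal R_{i-1}^k$ produces the terms $\beta_{d,i}\mathcal Z_{i+1}^{k+1}$ and $\lambda_{d,i}(\mathcal E_{i+1}^{k+1}+\mathcal E_i^{k+1})$, with $\beta_{d,i}\propto C_{d,v,i}C_{vZ,i+1}$. At $i=M$ we have $C_{d,v,M}=0$, so $\beta_{d,M}=\lambda_{d,M}=0$.

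Collecting terms gives \eqref{eq:auxiliary-tracking-recursion} with $\tau_{d,i}=6\chi_{d,i}=O(\theta_i^{N_i})$. Every constant is built from $\mu_i,L_i,\rho_i,L_f,C_{\mathrm{path},i},L_{d,i}$ and $\delta\le\delta_{\rm adm}$, and hence is bounded uniformly in $\delta$.
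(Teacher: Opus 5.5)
There is a genuine gap at the step you identify as the main obstacle. Write $e_i^{k,\ell}:=d_i^{k,\ell}-\widehat d_i^{k,*}$, $a_\ell:=\|z_i^{k,\ell}-\widehat z_i^{k,*}\|^2$ and $b_\ell:=\|z_{i,\delta}^{k,\ell}-\widehat z_{i,\delta}^{k,*}\|^2$. Your residual $(\overline H_i^{k,\ell}-\widehat H_i^{k,*})\widehat d_i^{k,*}$ acts on the \emph{local} target, which depends on the computed vector $v_{i,\delta}^k$. Condition \eqref{eq:tracking-radius-condition}, through Lemma~\ref{lem:local-direction-reference}, only gives $\|\widehat d_i^{k,*}\|\le C_{\mathrm{path},i}+\sqrt{\mathcal R_{i-1}^k}$, which is not an absolute bound. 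Combined with the Hessian-Lipschitz bound on the mismatch, this puts products such as $a_\ell\,\mathcal R_{i-1}^k$ and $\delta^2\|e_i^{k,\ell}\|^2\mathcal R_{i-1}^k$ into the squared residual. Such products cannot appear in the linear recursion \eqref{eq:auxiliary-tracking-recursion}. Your remedy of ``uniform boundedness of the iterates'' is asserted, not proved, and the mean-value remark is too vague to remove them.

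The missing idea is the paper's split $\widehat d_i^{k,*}=d_i^\star(x^k)+(\widehat d_i^{k,*}-d_i^\star(x^k))$. Use Hessian Lipschitz continuity only against the bounded exact target, where $\|d_i^\star(x^k)\|\le C_{\mathrm{path},i}$. Against the second piece, use the crude bound $\|\overline H_i^{k,\ell}-\widehat H_i^{k,*}\|\le 2L_i$ from Assumption~\ref{ass:implicit-layers}(ii). The squared residual is then linear in the errors, with an extra term $16L_i^2\mathcal R_{i-1}^k$ that survives unrolling as a multiple of $\mathcal R_{i-1}^k$. Your displayed inner bound has no such term, so it holds only under the unproved boundedness. (An absolute bound on $\widehat d_i^{k,*}$ can in fact be obtained by backward induction from $\|\nabla_2 f\|\le C_f$, the contraction \eqref{eq:averaged-inner-contraction} of the $d$-recursion, and $d_i^0=0$. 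You would have to supply that argument, and its constant is not controlled by \eqref{eq:tracking-radius-condition}.)

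Second, absorbing the $\delta$-part of the mismatch into the contraction is not justified under the stated hypotheses, even after the split. That part contributes $\eta_i\rho_i\delta C_{\mathrm{path},i}\|e_i^{k,\ell}\|\le(\eta_i\mu_i/2)\|e_i^{k,\ell}\|$. This is dominated by the gap $1-\sqrt{\vartheta_i}=\eta_i\mu_i$ when $\eta_i=1/L_i$ on a nonterminal minimization layer. It is not dominated for the general stepsizes allowed by Lemma~\ref{lem:inner-contraction}(a), where the gap can be arbitrarily small relative to $\eta_i\mu_i$ (for $i<M$, $\vartheta_i\to1$ as $\eta_i\uparrow 2\mu_i/L_i^2$).

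The paper needs no absorption at all. Since $\delta e_i^{k,\ell}=(z_{i,\delta}^{k,\ell}-\widehat z_{i,\delta}^{k,*})-(z_i^{k,\ell}-\widehat z_i^{k,*})$, the mismatch is at most $\rho_i(\sqrt{a_\ell}+\sqrt{b_\ell})$. Each branch contracts toward its own local target, so $a_\ell+b_\ell\le\overline\vartheta_i^{\ell}(a_0+b_0)$. The scaled error then enters only through $b_0\le 2a_0+2\delta^2\|d_i^k-\widehat d_i^{k,*}\|^2$, which is where the factor $1+C_{\mathrm{dec},i}\delta^2$ in the paper's $\tau_{d,i}$ comes from.

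With these two repairs, the rest of your plan goes through essentially as in the paper. This covers the pairing by $\ell$ and the triangle-inequality bookkeeping with \eqref{eq:finite-adjoint-propagation-shifted}. It also covers your folding of the terminal $\nabla_2 f$ mismatch into the linear part via an averaged $\nabla_{22}^2 f$, which is a valid variant because the resulting matrix still contracts with factor $\widetilde\vartheta_M$.
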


\begin{proof}[Proof sketch]
Although the two sweeps are executed separately, their
level-$i$ inner iterates can be paired by index $\ell$
because they share the input $\bar z_{i-1}^k$,
stepsize, and budget.
Set
$d_i^{k,\ell}
:=[z_{i,\delta}^{k,\ell}-z_i^{k,\ell}]/\delta$
and
$e_i^{k,\ell}:=d_i^{k,\ell}-\widehat d_i^{k,*}$.
Subtracting the updates and using local stationarity gives
\[
e_i^{k,\ell+1}
=(I-\eta_iP_i\overline H_i^{k,\ell})e_i^{k,\ell}
-\eta_iP_i\Xi_i^{k,\ell}.
\]
Here $\overline H_i^{k,\ell}$ and $\widehat H_i^{k,*}$ are the state
Hessians averaged along the computed and local exact perturbation
segments, respectively, at input $\bar z_{i-1}^k$. The residual is
\[
\Xi_i^{k,\ell}
:=(\overline H_i^{k,\ell}-\widehat H_i^{k,*})\widehat d_i^{k,*}
+
\begin{cases}
0,&i<M,\\
\nabla_2f(x^k,z_{M,\delta}^{k,\ell})
-\nabla_2f(x^k,\widehat z_{M,\delta}^{k,*}),&i=M.
\end{cases}
\]

To bound the Hessian mismatch, write
$\widehat d_i^{k,*}
=d_i^\star(x^k)
+(\widehat d_i^{k,*}-d_i^\star(x^k))$.
For the first term, we use Hessian Lipschitz continuity
and the exact-target bound
$\|d_i^\star(x^k)\|\le C_{\mathrm{path},i}$.
For the second, we use
$\|\overline H_i^{k,\ell}-\widehat H_i^{k,*}\|\le 2L_i$
and the local target error bound $\mathcal R_{i-1}^k$.
This controls the residual without an a priori bound
on the recursively computed perturbation vectors.
Inner contraction and Young's inequality then give
a one-step bound, which is unrolled over $N_i$ updates.
The uniform target regularity bounds control temporal
drift, while the local target and adjoint bounds give
the cross-level terms.
The full derivation and coefficients are given in
Appendix~\ref{app:auxiliary-tracking-details}.
\end{proof}
Increasing $N_i$ contracts the warm-start errors,
but current input and downstream adjoint errors remain.
The Lyapunov argument in
Section~\ref{subsec:outer-descent}
handles these cross-level dependencies jointly.

\begin{lemma}[Uniformity over the admissible perturbation interval]
\label{lem:delta-uniformity}
Fix $\delta_{\rm adm}$ satisfying
\eqref{eq:admissible-radius} and
\eqref{eq:tracking-radius-condition}, and set
$\bar L_M:=L_M+\delta_{\rm adm}L_f$.
Choose inner stepsizes independently of $\delta$.
At the terminal level, take
$0<\eta_M<2\bar\mu_M/\bar L_M^2$ for a general
potential-based layer, or $\eta_M=1/\bar L_M$
for a strongly convex minimization layer.
At nonterminal levels, use fixed stepsizes satisfying
the corresponding contraction conditions.
Then there exist constants
$\vartheta_i^{\rm adm}<1$ and $\theta_i^{\rm adm}<1$,
independent of $\delta\in(0,\delta_{\rm adm}]$, such that
\[
\max\{\vartheta_i,\widetilde\vartheta_i\}
\le\vartheta_i^{\rm adm}<1,
\qquad
\theta_i\le\theta_i^{\rm adm}<1,
\qquad i=1,\ldots,M.
\]
Hence fixed budgets can be chosen so that
$\sup_\delta\tau_i<1$ and
$\sup_\delta\tau_{d,i}<1$ at every level.

For these fixed choices, all target regularity constants
and tracking coefficients are bounded uniformly in
$\delta$. In particular, the tracking recursions contain
no negative powers of $\delta$.
\end{lemma}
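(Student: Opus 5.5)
The plan is to prove the lemma in three layers. First come the contraction factors, then the target regularity constants, and finally the tracking coefficients. At each layer I will check that every $\delta$-dependence is monotone and enters only through nonnegative powers of $\delta\le\delta_{\rm adm}$.

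\textbf{Contraction factors.} For $i<M$, the perturbation is an additive constant, so Lemma~\ref{lem:inner-contraction} gives $\widetilde\vartheta_i=\vartheta_i$. With a fixed stepsize these factors do not depend on $\delta$ at all, so set $\vartheta_i^{\rm adm}:=\vartheta_i$.

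At the terminal level, $\widetilde\mu_M=\mu_M-\delta L_f\ge\bar\mu_M$ and $\widetilde L_M\le\bar L_M$ for every $\delta\le\delta_{\rm adm}$.
\begin{itemize}[leftmargin=1.6em]
\item \emph{General layer.} The factor $1-2\eta_M\widetilde\mu_M+\eta_M^2\widetilde L_M^2$ increases as $\widetilde\mu_M$ decreases or $\widetilde L_M$ increases. It is therefore bounded by $1-2\eta_M\bar\mu_M+\eta_M^2\bar L_M^2$, which is $<1$ exactly when $\eta_M<2\bar\mu_M/\bar L_M^2$. The same stepsize also satisfies the unperturbed condition, since $\mu_M\ge\bar\mu_M$ and $L_M\le\bar L_M$.
\item \emph{Minimization layer.} With $\eta_M=1/\bar L_M\le1/\widetilde L_M$, we get $\widetilde\vartheta_M=(1-\widetilde\mu_M/\bar L_M)^2\le(1-\bar\mu_M/\bar L_M)^2<1$.
\end{itemize}
Then $\theta_i^{\rm adm}:=(1+\vartheta_i^{\rm adm})/2<1$ dominates $\theta_i$. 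Consequently $\chi_i\le(\vartheta_i^{\rm adm})^{N_i}$ and $\chi_{d,i}\le(\theta_i^{\rm adm})^{N_i}$ uniformly in $\delta$.

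\textbf{Target regularity constants.} I will inspect the explicit constants from Section~\ref{sec:perturbation}:
\begin{itemize}[leftmargin=1.6em]
\item $C_{\mathrm{path},i}$, $C_{\mathrm{bias},i}$, $\widehat L_{z,i}$, $L_{d,i}$ and $L_{v,i}$ from Lemma~\ref{lem:perturbation-displacement}, Theorem~\ref{thm:finite-radius-bias}, Lemma~\ref{lem:perturbed-state-x-lipschitz} and Corollaries~\ref{cor:finite-direction-regularity}--\ref{cor:finite-adjoint-regularity};
\item $C_{vZ,i}$ and $C_{vE,i}$ from Lemma~\ref{lem:finite-adjoint-propagation};
\item $C_{d,u,i}$ and $C_{d,v,i}$ from Lemma~\ref{lem:local-direction-reference}.
\end{itemize}
These are already stated as independent of $t\in(0,\delta_{\rm adm}]$ or defined through $\bar\mu_M$. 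The only explicit $\delta$ appears in $C_{vZ,i}=2L_i^2+6\rho_i^2C_{\mathrm{path},i}^2\delta^2$, which is bounded by its value at $\delta_{\rm adm}$.

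The appendix Lemma~\ref{lem:path-derivative-x-lipschitz} is taken as given with $t$-independent constants. Since $\delta$ is one particular value of $t$, every such constant is bounded by a number depending only on $\delta_{\rm adm}$.

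\textbf{Tracking coefficients and choice of budgets.} This is the main obstacle, because it requires auditing the explicit coefficients of Theorem~\ref{thm:auxiliary-tracking} in Appendix~\ref{app:auxiliary-tracking-details}.

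The structural point to verify is that the paired recursion is written for the scaled error $e_i^{k,\ell}$ and is driven by the residual $\Xi_i^{k,\ell}$. This residual involves only:
\begin{itemize}[leftmargin=1.6em]
\item Hessian differences of the form $\rho_i(\|z_i^{k,\ell}-\widehat z_i^{k,*}\|+\delta\|e_i^{k,\ell}\|)$ multiplied by bounded targets;
\item the term $2L_i$ times the local target error;
\item at level $M$, the term $L_f\|z_{M,\delta}^{k,\ell}-\widehat z_{M,\delta}^{k,*}\|$, where $z_{M,\delta}^{k,\ell}-\widehat z_{M,\delta}^{k,*}=(z_M^{k,\ell}-\widehat z_M^{k,*})+\delta e_M^{k,\ell}+\delta(\widehat d_M^{k,*}-\ldots)$.
\end{itemize}
No term in this list carries a factor $1/\delta$. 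Each appears with a coefficient that is a polynomial in $\delta$ with nonnegative powers, multiplied by the constants above.

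Young's inequality is applied with the fixed split $\theta_i$ versus $\overline\vartheta_i$, giving factors like $1/(\theta_i-\overline\vartheta_i)$. These are bounded because $\theta_i-\overline\vartheta_i=(1-\overline\vartheta_i)/2\ge(1-\vartheta_i^{\rm adm})/2$.

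Temporal drift terms come from changing targets between outer iterations. They use the uniform Lipschitz constants $L_{z,i}$, $\widehat L_{z,i}$ and $L_{d,i}$, so they are also uniform.

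Replacing each $\delta$ by $\delta_{\rm adm}$ and each contraction factor by its adm bound therefore yields $\delta$-free majorants $\bar\tau_i$ and $\bar\tau_{d,i}$ of the form $c\,(\vartheta_i^{\rm adm})^{N_i}$ and $c'(\theta_i^{\rm adm})^{N_i}$. Choosing $N_i$ large enough that these are $<1$ gives $\sup_\delta\tau_i<1$ and $\sup_\delta\tau_{d,i}<1$, which completes the proof.
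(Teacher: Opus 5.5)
Your proposal is correct and follows the paper's proof essentially step for step. It derives uniform terminal contraction from $\widetilde\mu_M\ge\bar\mu_M$ and $\widetilde L_M\le\bar L_M$, uses $\delta$-free nonterminal factors and the already-uniform exact-target constants, and audits the appendix coefficients to show they involve only nonnegative powers of $\delta$ and denominators controlled by $\theta_i-\overline\vartheta_i=1-\theta_i=(1-\overline\vartheta_i)/2$.

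There are two small cautions. First, the majorant $\tau_{d,i}\le c'(\theta_i^{\rm adm})^{N_i}$ relies on the appendix form $\tau_{d,i}=4\chi_{d,i}(1+C_{\mathrm{dec},i}\delta^2)$. There, each branch contracts separately toward its own local target, so $\delta^2$ only multiplies $\chi_{d,i}$; your per-step bound through $\delta\|e_i^{k,\ell}\|$, if used directly, would instead inflate the per-step factor $\theta_i$. Second, the extra term $\delta(\widehat d_M^{k,*}-\ldots)$ in your level-$M$ identity is spurious, since $z_{M,\delta}^{k,\ell}-\widehat z_{M,\delta}^{k,*}=(z_M^{k,\ell}-\widehat z_M^{k,*})+\delta e_M^{k,\ell}$ exactly.
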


\begin{proof}
See Appendix~\ref{app:supporting-proofs}.
\end{proof}

Warm starts are standard in bilevel optimization \cite{arbel2022amortized};
here the additional difficulty is the recursively generated multilevel
perturbation vector.

\subsection{Hypergradient error bound}
\label{subsec:gradient-estimator-tracking}

The response and adjoint bounds now control the tracking
term in the gradient error decomposition.
Adding the finite-radius bias gives the total error bound.
\begin{lemma}[Tracking error of the gradient estimator]
\label{lem:gradient-estimator-tracking}
Under Assumptions~\ref{ass:implicit-layers} and \ref{ass:outer-function}, and under \eqref{eq:admissible-radius}, the
estimator generated by Algorithm~\ref{alg:remid} satisfies
\begin{equation}
\label{eq:gradient-estimator-tracking}
\begin{aligned}
\|\widehat G^k-G_\delta^{\star}(x^k)\|^2
\le{}&
2C_{vZ,1}\mathcal Z_1^{k+1}
+2C_{vE,1}\mathcal E_1^{k+1}
+2L_f^2\mathcal E_M^{k+1}.
\end{aligned}
\end{equation}
Consequently,
\begin{equation}
\label{eq:gradient-total-error}
\begin{aligned}
\|\widehat G^k-\nabla\Phi(x^k)\|^2
\le{}&
4C_{vZ,1}\mathcal Z_1^{k+1}
+4C_{vE,1}\mathcal E_1^{k+1}
+4L_f^2\mathcal E_M^{k+1}
+2C_{\mathrm{bias},0}^2\delta^2.
\end{aligned}
\end{equation}
\end{lemma}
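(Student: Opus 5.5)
The estimator error splits into two parts: the direct term and the level-$0$ adjoint term. By Algorithm~\ref{alg:remid} and \eqref{eq:finite-radius-gradient-estimator},
\[
\widehat G^k-G_\delta^{\star}(x^k)
=
\bigl[\nabla_1f(x^k,\bar z_M^k)-\nabla_1f(x^k,z_M^\star(x^k))\bigr]
+
\bigl[v_{0,\delta}^k-v_{0,\delta}^\star(x^k)\bigr].
\]
We apply $\|a+b\|^2\le2\|a\|^2+2\|b\|^2$ and bound the two brackets separately.

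For the first bracket, Assumption~\ref{ass:outer-function} gives $\|\nabla^2f\|\le L_f$, so $\nabla_1f(x^k,\cdot)$ is $L_f$-Lipschitz. Since $z_M^{k+1}=\bar z_M^k$, the definition \eqref{eq:tracking-errors} gives $\|\bar z_M^k-z_M^\star(x^k)\|^2=\mathcal E_M^{k+1}$. Hence the squared first bracket is at most $L_f^2\mathcal E_M^{k+1}$.

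For the second bracket, we invoke Lemma~\ref{lem:finite-adjoint-propagation} with $i=1$. The indicator term vanishes because $\bar z_0^k=x^k$, which yields
\[
\|v_{0,\delta}^k-v_{0,\delta}^\star(x^k)\|^2\le C_{vZ,1}\mathcal Z_1^{k+1}+C_{vE,1}\mathcal E_1^{k+1}.
\]
Substituting both bounds into the splitting gives \eqref{eq:gradient-estimator-tracking}.

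The total bound \eqref{eq:gradient-total-error} follows from the decomposition $\widehat G^k-\nabla\Phi(x^k)=(\widehat G^k-G^\star_\delta(x^k))+(G^\star_\delta(x^k)-\nabla\Phi(x^k))$. We apply the same Young inequality once more, use \eqref{eq:gradient-estimator-tracking} for the first term, and use Corollary~\ref{cor:finite-radius-gradient-bias} for the second.

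There is no real obstacle. The only point needing care is the index convention: the stored states $z_i^{k+1}=\bar z_i^k$ and $d_1^{k+1}$ are measured against targets at $x^k$, so $\mathcal E_M^{k+1}$ and $\mathcal Z_1^{k+1}$ are exactly the quantities that appear. This also applies to the paired perturbed branch, $d_1^{k+1}=(z^{k+1}_{1,\delta}-z_1^{k+1})/\delta$, which is what Lemma~\ref{lem:finite-adjoint-propagation} uses.
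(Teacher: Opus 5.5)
Your proposal is correct and matches the paper's proof: split off the direct term using the $L_f$-Lipschitz property of $\nabla_1 f(x^k,\cdot)$ together with $z_M^{k+1}=\bar z_M^k$, bound the level-$0$ adjoint error by Lemma~\ref{lem:finite-adjoint-propagation} with $i=1$, and combine with Corollary~\ref{cor:finite-radius-gradient-bias} via $\|a+b\|^2\le2\|a\|^2+2\|b\|^2$. One minor wording point: the $\mathcal E_0^{k+1}$ term drops simply because $\mathbf 1_{\{i\ge2\}}=0$ at $i=1$ (equivalently $\mathcal E_0^{k+1}:=0$), and this convention is justified by the exactness of the input $\bar z_0^k=x^k$.
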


\begin{proof}
By definition,
\[
\begin{aligned}
\widehat G^k-G_\delta^{\star}(x^k)
={}&
\nabla_1f(x^k,z_M^{k+1})
-\nabla_1f(x^k,z_M^{\star}(x^k))
+
v_{0,\delta}^k-v_{0,\delta}^{\star}(x^k).
\end{aligned}
\]
The $L_f$-Lipschitz property and
Lemma~\ref{lem:finite-adjoint-propagation} with $i=1$ give
\eqref{eq:gradient-estimator-tracking}.  Combining this estimate with
$\|G_\delta^{\star}(x^k)-\nabla\Phi(x^k)\|\le C_{\mathrm{bias},0}\delta$ and
$\|a+b\|^2\le2\|a\|^2+2\|b\|^2$ gives
\eqref{eq:gradient-total-error}.
\end{proof}

The next section combines these tracking recursions
with outer descent to establish stationarity guarantees.


\section{Convergence and Complexity}
\label{sec:convergence}

The tracking bounds from the preceding section are coupled across
levels. We combine them with outer descent through a weighted
Lyapunov function to obtain an $O(K^{-1}+\delta^2)$ bound on the
average squared gradient norm of the exact objective.
For fixed problem data and depth, the inner budgets and outer
stepsize can be chosen independently of the target accuracy.
We then derive first-order oracle complexity bounds and quantify
their conditioning dependence for strongly convex minimization chains.

\begin{figure}[t]
\centering
\begin{tikzpicture}[
    proofnode/.style={
        draw, rounded corners=1.5pt, align=center,
        font=\scriptsize, inner sep=3pt,
        text width=33mm, minimum height=10mm
    },
    proofarrow/.style={
        -{Latex[length=1.5mm,width=1mm]}, line width=0.45pt
    }
]
\node[proofnode] (bias) at (0,0)
    {Perturbation bias\\
     Corollary~\ref{cor:finite-radius-gradient-bias}};
\node[proofnode] (tracking) at (4.3,0)
    {Coupled tracking\\
     Lemma~\ref{lem:primal-tracking};
     Theorem~\ref{thm:auxiliary-tracking}};
\node[proofnode] (descent) at (0,-1.65)
    {Outer descent\\
     Lemma~\ref{lem:outer-descent}};
\node[proofnode] (closure) at (4.3,-1.65)
    {Lyapunov descent\\
     Theorem~\ref{thm:lyapunov-closure}};
\node[proofnode] (stationarity) at (8.6,-1.65)
    {Stationarity and complexity\\
     Theorems~\ref{thm:global-stationarity}
     and~\ref{thm:minimization-complexity}};
\draw[proofarrow] (bias) -- (descent);
\draw[proofarrow] (tracking) -- (descent);
\draw[proofarrow] (tracking) -- (closure);
\draw[proofarrow] (descent) -- (closure);
\draw[proofarrow] (closure) -- (stationarity);
\end{tikzpicture}
\caption{Dependencies among the main convergence and complexity results.}
\label{fig:proof-dependency}
\end{figure}

Throughout this section, Assumptions~\ref{ass:implicit-layers}
and~\ref{ass:outer-function} hold, and
$\Phi^\star:=\inf_x\Phi(x)>-\infty$.
Fix an admissible upper radius $\delta_{\rm adm}$ satisfying
\eqref{eq:admissible-radius} and
\eqref{eq:tracking-radius-condition}, and let
$0<\delta\le\delta_{\rm adm}$.
Choose the inner stepsizes independently of $\delta$ as in
Lemma~\ref{lem:delta-uniformity}, and fix inner budgets such that
\[
\sup_{0<\delta\le\delta_{\rm adm}}\tau_i<1,
\qquad
\sup_{0<\delta\le\delta_{\rm adm}}\tau_{d,i}<1,
\qquad i=1,\ldots,M.
\]
Uniform geometric contraction ensures that such budgets exist.

\subsection{Lyapunov descent}
\label{subsec:outer-descent}
We first express outer descent in terms of the tracking errors
and perturbation bias. Define
\[
b_i^E
:=
2C_{vE,1}\mathbf1_{\{i=1\}}
+2L_f^2\mathbf1_{\{i=M\}},
\qquad
b_i^Z
:=
2C_{vZ,1}\mathbf1_{\{i=1\}},
\qquad
b_\delta:=C_{\mathrm{bias},0}^2.
\]
When $M=1$, the two contributions to $b_1^E$ are added.

\begin{lemma}[Inexact outer descent]
\label{lem:outer-descent}
For any $\alpha>0$, the outer update in
Algorithm~\ref{alg:remid} satisfies
\begin{equation}
\label{eq:outer-descent}
\begin{aligned}
\Phi(x^{k+1})-\Phi(x^k)
\le{}&
-\frac{\alpha}{2}\|\nabla\Phi(x^k)\|^2
-\left(
\frac{1}{2\alpha}-\frac{L_\Phi}{2}
\right)\|s^{k+1}\|^2
\\
&+
\alpha\sum_{i=1}^M b_i^E\mathcal E_i^{k+1}
+\alpha\sum_{i=1}^M b_i^Z\mathcal Z_i^{k+1}
+\alpha b_\delta\delta^2.
\end{aligned}
\end{equation}
\end{lemma}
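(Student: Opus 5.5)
The plan is the standard descent-lemma argument for an inexact gradient step, followed by the error decomposition of Lemma~\ref{lem:gradient-estimator-tracking}. By Lemma~\ref{lem:outer-objective-smoothness}, $\Phi$ is $L_\Phi$-smooth. With $s^{k+1}=x^{k+1}-x^k=-\alpha\widehat G^k$, this gives
\[
\Phi(x^{k+1})\le\Phi(x^k)+\langle\nabla\Phi(x^k),s^{k+1}\rangle+\frac{L_\Phi}{2}\|s^{k+1}\|^2 .
\]

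The inner product is handled by the polarization identity rather than by Young's inequality; this is what produces the exact coefficients in \eqref{eq:outer-descent}. Since $\widehat G^k=-s^{k+1}/\alpha$, we have
\[
\langle\nabla\Phi(x^k),s^{k+1}\rangle=-\alpha\langle\nabla\Phi(x^k),\widehat G^k\rangle
=-\frac{\alpha}{2}\Bigl(\|\nabla\Phi(x^k)\|^2+\|\widehat G^k\|^2-\|\widehat G^k-\nabla\Phi(x^k)\|^2\Bigr).
\]
Substituting $\alpha\|\widehat G^k\|^2=\|s^{k+1}\|^2/\alpha$ yields
\[
\Phi(x^{k+1})-\Phi(x^k)\le-\frac{\alpha}{2}\|\nabla\Phi(x^k)\|^2-\Bigl(\frac{1}{2\alpha}-\frac{L_\Phi}{2}\Bigr)\|s^{k+1}\|^2+\frac{\alpha}{2}\|\widehat G^k-\nabla\Phi(x^k)\|^2 .
\]

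It remains to bound the error term using \eqref{eq:gradient-total-error}:
\[
\frac{\alpha}{2}\|\widehat G^k-\nabla\Phi(x^k)\|^2\le\alpha\bigl(2C_{vZ,1}\mathcal Z_1^{k+1}+2C_{vE,1}\mathcal E_1^{k+1}+2L_f^2\mathcal E_M^{k+1}+C_{\mathrm{bias},0}^2\delta^2\bigr).
\]
By the definitions of $b_i^E$, $b_i^Z$ and $b_\delta$, the right-hand side equals $\alpha\sum_i b_i^E\mathcal E_i^{k+1}+\alpha\sum_i b_i^Z\mathcal Z_i^{k+1}+\alpha b_\delta\delta^2$. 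When $M=1$, both $\mathcal E$ contributions fall on $\mathcal E_1^{k+1}$, which is exactly why the two terms of $b_1^E$ are added.

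No step is a genuine obstacle. The only point requiring care is to use the polarization identity, not a Young-type split with an arbitrary constant, so that the factor $\tfrac12$ on the error term combines with the factors $4$ and $2$ in \eqref{eq:gradient-total-error} to give precisely $b_i^E$, $b_i^Z$ and $b_\delta$. The statement holds for every $\alpha>0$, since no sign condition on $\frac{1}{2\alpha}-\frac{L_\Phi}{2}$ is used.
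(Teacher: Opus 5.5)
Your proposal is correct and follows the paper's proof essentially line for line. Both use the $L_\Phi$-smoothness descent inequality, the identity $-\langle a,b\rangle=-(\|a\|^2+\|b\|^2)/2+\|a-b\|^2/2$ with $s^{k+1}=-\alpha\widehat G^k$, and then \eqref{eq:gradient-total-error}, and your identification of $b_i^E$, $b_i^Z$, $b_\delta$ (including the $M=1$ case) matches the paper's definitions.
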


\begin{proof}
By $L_\Phi$-smoothness and $x^{k+1}=x^k-\alpha\widehat G^k$,
\[
\Phi(x^{k+1})
\le
\Phi(x^k)
-\alpha\langle\nabla\Phi(x^k),\widehat G^k\rangle
+\frac{L_\Phi\alpha^2}{2}\|\widehat G^k\|^2.
\]
Using
$-\langle a,b\rangle=-(\|a\|^2+\|b\|^2)/2+\|a-b\|^2/2$,
$s^{k+1}=-\alpha\widehat G^k$, and
\eqref{eq:gradient-total-error} gives \eqref{eq:outer-descent}.
\end{proof}

\paragraph{Forward--backward Lyapunov weights.}
The auxiliary recursion transfers error from level $i+1$ to
level $i$, whereas the primal recursion transfers error from
level $i-1$ to level $i$.
We therefore construct the auxiliary weights from level $1$
to $M$, absorbing the backward couplings first.
The resulting primal-error terms are then absorbed by constructing
the primal weights from level $M$ to $1$.
First define
\begin{equation}
\label{eq:auxiliary-lyapunov-weights-first}
\widetilde u_1
:=
\frac{b_1^Z}{1-\tau_{d,1}},
\qquad
\overline u_1:=\tau_{d,1}\widetilde u_1,
\end{equation}
and, for $i=2,\ldots,M$, successively define
\begin{equation}
\label{eq:auxiliary-lyapunov-weights}
\widetilde u_i
:=
\frac{b_i^Z+\beta_{d,i-1}\widetilde u_{i-1}}
{1-\tau_{d,i}},
\qquad
\overline u_i:=\tau_{d,i}\widetilde u_i.
\end{equation}
After this sweep, collect the coefficients of the new response errors as
\begin{equation}
\label{eq:effective-primal-coefficient}
\widehat b_i^E
:=
b_i^E
+\lambda_{d,i}\widetilde u_i
+\mathbf1_{\{i\ge2\}}\lambda_{d,i-1}\widetilde u_{i-1}
+\mathbf1_{\{i\le M-1\}}\psi_{d,i+1}\widetilde u_{i+1}.
\end{equation}
Set $\widetilde\omega_{M+1}:=0$ and $\lambda_{M+1}:=0$.  For
$i=M,M-1,\ldots,1$, define
\begin{equation}
\label{eq:forward-primal-load}
q_i
:=
\widehat b_i^E+\lambda_{i+1}\widetilde\omega_{i+1},
\end{equation}
\begin{equation}
\label{eq:primal-lyapunov-weights}
\widetilde\omega_i
:=
\frac{q_i+\psi'_{d,i}\widetilde u_i}{1-\tau_i},
\qquad
\overline\omega_i
:=
\tau_i\widetilde\omega_i+\psi'_{d,i}\widetilde u_i.
\end{equation}
Thus $\widetilde\omega_i=\overline\omega_i+q_i$.  All $\widetilde u_i$ are determined before the recursion for
$\widetilde\omega_i$ begins. The weights are finite and nonnegative.

Define the temporal-drift coefficient
\begin{equation}
\label{eq:Gamma-definition}
\Gamma(\delta)
:=
\sum_{i=1}^M
\left(
\widetilde\omega_i\gamma_i
+\widetilde u_i\gamma_{d,i}
\right).
\end{equation}
The quantity $\Gamma(\delta)$ collects the tracking terms
proportional to $\|s^k\|^2$.
To choose one outer stepsize for all admissible radii, we next
bound it uniformly in $\delta$.

\begin{lemma}[Uniform bound on the Lyapunov coefficient]
\label{lem:Gamma-uniformity}
Under the $\delta$-independent inner stepsizes and budgets of
Lemma~\ref{lem:delta-uniformity}, there exists
$\bar\Gamma_{\rm adm}<\infty$, independent of
$\delta\in(0,\delta_{\rm adm}]$, such that
\begin{equation}
\label{eq:Gamma-uniform-majorant}
0\le \Gamma(\delta)\le \bar\Gamma_{\rm adm}
\qquad
\text{for all }0<\delta\le\delta_{\rm adm}.
\end{equation}
The majorant depends only on the data on the admissible parameter interval and the fixed
inner-solver choices, and hence is independent of the target accuracy.
\end{lemma}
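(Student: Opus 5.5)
The bound follows by tracing the finitely many arithmetic operations that define $\Gamma(\delta)$ and bounding each ingredient by a $\delta$-independent majorant on $(0,\delta_{\rm adm}]$. Nonnegativity is immediate. Every coefficient entering the construction is nonnegative: the $b_i^E,b_i^Z$; the tracking coefficients $\tau_i,\lambda_i,\gamma_i$ of Lemma~\ref{lem:primal-tracking}; and the coefficients $\tau_{d,i},\beta_{d,i},\lambda_{d,i},\psi_{d,i},\psi'_{d,i},\gamma_{d,i}$ of Theorem~\ref{thm:auxiliary-tracking}. Moreover $1-\tau_i>0$ and $1-\tau_{d,i}>0$ by the budget choice. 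Hence all weights $\widetilde u_i,\overline u_i,\widetilde\omega_i,\overline\omega_i$ are nonnegative, and so is $\Gamma(\delta)$.

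For the upper bound, I first collect uniform majorants for the inputs. By Lemma~\ref{lem:delta-uniformity}, for the fixed $\delta$-independent stepsizes and budgets, every tracking coefficient is bounded by a constant independent of $\delta$. In particular, set
\[
\bar\tau_i:=\sup_\delta\tau_i<1,\qquad \bar\tau_{d,i}:=\sup_\delta\tau_{d,i}<1.
\]
Similarly, let $\bar\beta_{d,i},\bar\lambda_{d,i},\bar\psi_{d,i},\bar\psi'_{d,i},\bar\gamma_{d,i},\bar\gamma_i,\bar\lambda_i$ denote finite suprema. For the outer coefficients, $C_{vE,1}=6\rho_1^2C_{\mathrm{path},1}^2$ is independent of $\delta$. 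Also $C_{vZ,1}=2L_1^2+6\rho_1^2C_{\mathrm{path},1}^2\delta^2\le 2L_1^2+6\rho_1^2C_{\mathrm{path},1}^2\delta_{\rm adm}^2$, since $C_{\mathrm{path},i}$ is defined from the admissible interval (Lemma~\ref{lem:perturbation-displacement}). Thus $b_i^E,b_i^Z$ have uniform majorants $\bar b_i^E,\bar b_i^Z$.

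Next I propagate these majorants through the two sweeps by induction, using monotonicity. Each weight is a monotone nondecreasing function of its inputs: sums, products, and the map $\tau\mapsto 1/(1-\tau)$ on $[0,1)$. Define $\bar u_1:=\bar b_1^Z/(1-\bar\tau_{d,1})$ and $\bar u_i:=(\bar b_i^Z+\bar\beta_{d,i-1}\bar u_{i-1})/(1-\bar\tau_{d,i})$. Forward induction in $i$ then gives $\widetilde u_i\le\bar u_i$ for all $\delta$. With these bounds, $\widehat b_i^E$ in \eqref{eq:effective-primal-coefficient} is dominated by the corresponding expression in barred quantities. A backward induction from $i=M$ to $1$ does the same for the primal weights, using $\widetilde\omega_{M+1}=0$, the definitions \eqref{eq:forward-primal-load} and \eqref{eq:primal-lyapunov-weights}, and $\lambda_{i+1}\le\bar\lambda_{i+1}$. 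It bounds $q_i$ and then $\widetilde\omega_i\le\bar\omega_i:=(\bar q_i+\bar\psi'_{d,i}\bar u_i)/(1-\bar\tau_i)$. Finally,
\[
\Gamma(\delta)\le\bar\Gamma_{\rm adm}:=\sum_{i=1}^M(\bar\omega_i\bar\gamma_i+\bar u_i\bar\gamma_{d,i}).
\]
This majorant depends only on the problem constants, $\delta_{\rm adm}$, the fixed stepsizes, and the budgets, and not on the target accuracy.

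The only substantive step is the input to this argument: that $\sup_\delta\tau_{d,i}<1$ and that $\beta_{d,i},\psi_{d,i},\psi'_{d,i},\lambda_{d,i},\gamma_{d,i}$ carry no negative powers of $\delta$. This is exactly the content of Lemma~\ref{lem:delta-uniformity}, together with the uniform contraction factors $\theta_i^{\rm adm}<1$, so I would cite it rather than reprove it. The one point to check carefully is that the denominators are controlled by suprema strictly below one, not merely pointwise in $\delta$. Otherwise $1/(1-\tau_{d,i})$ could blow up as $\delta$ varies. The budget hypothesis stated at the start of this section guarantees this.
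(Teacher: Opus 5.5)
Your proposal is correct and follows essentially the same route as the paper. Both take suprema of all coefficients over $(0,\delta_{\rm adm}]$, with $\sup_\delta\tau_i,\sup_\delta\tau_{d,i}<1$ supplied by the budget conditions, majorize $\widetilde u_i$ by a forward induction and $\widetilde\omega_i$ by a backward induction via monotonicity of the weight recursions, and sum to obtain $\bar\Gamma_{\rm adm}$. One small point is notational: your $\bar u_i$ and $\bar\omega_i$ collide with the paper's Lyapunov weights $\overline u_i=\tau_{d,i}\widetilde u_i$ and $\overline\omega_i$, so use different symbols (the paper writes $U_i$ and $W_i$).
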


\begin{proof}
Lemma~\ref{lem:delta-uniformity} bounds the tracking coefficients uniformly
in $\delta$. The fixed budgets keep $1-\tau_i$ and $1-\tau_{d,i}$
uniformly positive. Successive substitution in the weight recursions
therefore bounds all weights and $\Gamma(\delta)$ uniformly.
An explicit majorant is given in Appendix~\ref{app:supporting-proofs}.
\end{proof}

\begin{theorem}[Forward--backward Lyapunov descent]
\label{thm:lyapunov-closure}
Suppose Assumptions~\ref{ass:implicit-layers} and \ref{ass:outer-function}
hold and $\Phi^\star:=\inf_x\Phi(x)>-\infty$.
Run Algorithm~\ref{alg:remid} with the schedule using updated inputs,
inner stepsizes as in Lemma~\ref{lem:delta-uniformity} and fixed budgets
satisfying $\sup_{0<\delta\le\delta_{\rm adm}}\tau_i<1$ and
$\sup_{0<\delta\le\delta_{\rm adm}}\tau_{d,i}<1$ for every $i$, and a fixed parameter
$0<\delta\le\delta_{\rm adm}$ satisfying
\eqref{eq:admissible-radius} and~\eqref{eq:tracking-radius-condition}.
Use the weights constructed above and initialize $s^0=0$.  Define
\begin{equation}
\label{eq:Lyapunov-function}
\mathcal V^k
:=
\Phi(x^k)-\Phi^{\star}
+\alpha\sum_{i=1}^M\overline\omega_i\mathcal E_i^k
+\alpha\sum_{i=1}^M\overline u_i\mathcal Z_i^k
+\alpha\Gamma(\delta)\|s^k\|^2.
\end{equation}
If
\begin{equation}
\label{eq:outer-stepsize-condition}
\alpha
\le
\frac{1}{2\max\{L_\Phi,\sqrt{\Gamma(\delta)}\}},
\end{equation}
then
\begin{equation}
\label{eq:Lyapunov-descent}
\mathcal V^{k+1}-\mathcal V^k
\le
-\frac{\alpha}{2}\|\nabla\Phi(x^k)\|^2
+\alpha b_\delta\delta^2.
\end{equation}
Moreover, the condition, independent of $\delta$,
\begin{equation}
\label{eq:outer-stepsize-uniform}
\alpha
\le
\frac{1}{2\max\{L_\Phi,\sqrt{\bar\Gamma_{\rm adm}}\}}
\end{equation}
is sufficient for every $\delta\in(0,\delta_{\rm adm}]$.
\end{theorem}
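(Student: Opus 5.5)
The plan is to compute $\mathcal V^{k+1}-\mathcal V^k$ term by term. I will substitute the outer descent inequality of Lemma~\ref{lem:outer-descent} for the function-value part. For the weighted tracking terms, I will substitute the primal recursion \eqref{eq:primal-tracking-recursion} for $\mathcal E_i^{k+1}$ and the auxiliary recursion \eqref{eq:auxiliary-tracking-recursion} for $\mathcal Z_i^{k+1}$. The weights were built exactly so that, after these substitutions, every tracking quantity at time $k+1$ carries a nonpositive net coefficient. Every quantity at time $k$ is absorbed by the corresponding term of $\mathcal V^k$.

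\textbf{Step 1 (bookkeeping).} The convenient form of the increment is
\[
\overline\omega_i\mathcal E_i^{k+1}-\overline\omega_i\mathcal E_i^k=\widetilde\omega_i\mathcal E_i^{k+1}-q_i\mathcal E_i^{k+1}-\overline\omega_i\mathcal E_i^k,
\]
using $\widetilde\omega_i=\overline\omega_i+q_i$. Similarly, set $\widetilde u_i=\overline u_i+p_i$ with $p_i:=(1-\tau_{d,i})\widetilde u_i-\tau_{d,i}\widetilde u_i\cdot 0$; that is, write the increment as $\widetilde u_i\mathcal Z_i^{k+1}-(\widetilde u_i-\overline u_i)\mathcal Z_i^{k+1}-\overline u_i\mathcal Z_i^k$.

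Multiply \eqref{eq:auxiliary-tracking-recursion} by $\widetilde u_i$ and \eqref{eq:primal-tracking-recursion} by $\widetilde\omega_i$, then bound the terms $\widetilde u_i\mathcal Z_i^{k+1}$ and $\widetilde\omega_i\mathcal E_i^{k+1}$ from above.

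\textbf{Step 2 ($\mathcal Z$ terms).} The time-$k$ terms $\widetilde u_i\tau_{d,i}\mathcal Z_i^k=\overline u_i\mathcal Z_i^k$ cancel exactly against $-\overline u_i\mathcal Z_i^k$. The time-$(k+1)$ coefficient of $\mathcal Z_i$ collects three pieces:
\begin{itemize}
\item $\alpha b_i^Z$ from the descent lemma,
\item $\alpha\beta_{d,i-1}\widetilde u_{i-1}$ from level $i-1$,
\item $-\alpha(1-\tau_{d,i})\widetilde u_i$ from Step 1.
\end{itemize}
By \eqref{eq:auxiliary-lyapunov-weights} these sum to exactly zero.

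\textbf{Step 3 ($\mathcal E$ terms).} Every $\mathcal E_i^{k+1}$ generated by the auxiliary recursions, namely the $\lambda_{d,\cdot}$ and $\psi_{d,i+1}$ terms, together with $b_i^E$, is collected into $\alpha\widehat b_i^E$. The term $\lambda_{i+1}\widetilde\omega_{i+1}\mathcal E_i^{k+1}$ from the primal recursion at level $i+1$ is also added, giving total load $\alpha q_i$. This cancels the $-\alpha q_i\mathcal E_i^{k+1}$ from Step 1.

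The time-$k$ pieces are $\widetilde\omega_i\tau_i\mathcal E_i^k$ from the primal recursion and $\widetilde u_i\psi'_{d,i}\mathcal E_i^k$ from the auxiliary recursion. They sum to $\overline\omega_i\mathcal E_i^k$ and cancel.

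\textbf{Step 4 (drift and stepsize).} The remaining terms are
\[
-\tfrac{\alpha}{2}\|\nabla\Phi(x^k)\|^2-\bigl(\tfrac1{2\alpha}-\tfrac{L_\Phi}{2}\bigr)\|s^{k+1}\|^2+\alpha\Gamma(\delta)\|s^k\|^2+\alpha b_\delta\delta^2,
\]
together with the Lyapunov drift difference $\alpha\Gamma(\|s^{k+1}\|^2-\|s^k\|^2)$. The $\|s^k\|^2$ terms cancel, leaving a coefficient on $\|s^{k+1}\|^2$ of
\[
-\tfrac1{2\alpha}+\tfrac{L_\Phi}{2}+\alpha\Gamma.
\]
Under \eqref{eq:outer-stepsize-condition} we have $L_\Phi/2\le 1/(4\alpha)$ and $\alpha\Gamma\le 1/(4\alpha)$, so this coefficient is $\le 0$. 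That yields \eqref{eq:Lyapunov-descent}. The uniform statement then follows because $\Gamma(\delta)\le\bar\Gamma_{\rm adm}$ by Lemma~\ref{lem:Gamma-uniformity}.

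The main obstacle is the index bookkeeping in Step 3. Three things need care:
\begin{itemize}
\item The shifts $i\pm1$ must match the ordering in \eqref{eq:effective-primal-coefficient}.
\item Boundary conventions must be handled: $\mathcal E_0=0$, $\mathcal Z_{M+1}=\mathcal E_{M+1}=0$, $\beta_{d,M}=\lambda_{d,M}=0$, and the merged $M=1$ coefficient.
\item The time-$(k+1)$ errors $\mathcal E_{i-1}^{k+1}$ entering both recursions must be counted once each, which is why all $\widetilde u$ are fixed before the backward $\widetilde\omega$ sweep.
\end{itemize}
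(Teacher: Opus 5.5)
Your proposal is correct and is essentially the paper's proof. Multiplying each recursion by $\widetilde u_i$ or $\widetilde\omega_i$ and using $\widetilde u_i-\overline u_i=b_i^Z+\beta_{d,i-1}\widetilde u_{i-1}$ and $\widetilde\omega_i-\overline\omega_i=q_i$ to cancel every new-time coefficient is exactly what the paper's two elimination sweeps (auxiliary for $i=1,\ldots,M$, then primal for $i=M,\ldots,1$) encode, and your bound on the $\|s^{k+1}\|^2$ coefficient is the same as the paper's. Two small fixes: state explicitly that the weights are nonnegative (this follows from $\tau_i,\tau_{d,i}<1$ and is what allows you to multiply the inequalities by them), and replace the garbled definition of $p_i$ with simply $p_i=(1-\tau_{d,i})\widetilde u_i$.
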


\begin{proof}
We display the two elimination sweeps explicitly.  First substitute the
auxiliary recursions in the order $i=1,\ldots,M$.  The definitions
\eqref{eq:auxiliary-lyapunov-weights-first}--
\eqref{eq:effective-primal-coefficient} give
\begin{equation}
\label{eq:auxiliary-elimination-display}
\begin{aligned}
\sum_{i=1}^M(b_i^Z+\overline u_i)\mathcal Z_i^{k+1}
\le{}&
\sum_{i=1}^M\overline u_i\mathcal Z_i^k
+\sum_{i=1}^M(\widehat b_i^E-b_i^E)\mathcal E_i^{k+1}
+\sum_{i=1}^M\psi'_{d,i}\widetilde u_i\mathcal E_i^k
+\sum_{i=1}^M\widetilde u_i\gamma_{d,i}\|s^k\|^2.
\end{aligned}
\end{equation}
Indeed, the coefficient passed from $\mathcal Z_i^{k+1}$ to
$\mathcal Z_{i+1}^{k+1}$ is $\beta_{d,i}\widetilde u_i$, which is precisely
the load included in the definition of $\widetilde u_{i+1}$.

After \eqref{eq:auxiliary-elimination-display}, eliminate the primal
recursions in the reverse order $i=M,\ldots,1$.  Equations
\eqref{eq:forward-primal-load}--\eqref{eq:primal-lyapunov-weights} yield
\begin{equation}
\label{eq:primal-elimination-display}
\begin{aligned}
&\sum_{i=1}^M
(\overline\omega_i+\widehat b_i^E)\mathcal E_i^{k+1}
+\sum_{i=1}^M\psi'_{d,i}\widetilde u_i\mathcal E_i^k
\le
\sum_{i=1}^M\overline\omega_i\mathcal E_i^k
+\sum_{i=1}^M\widetilde\omega_i\gamma_i\|s^k\|^2.
\end{aligned}
\end{equation}
Here the load $\lambda_{i+1}\widetilde\omega_{i+1}$ is propagated from level
$i+1$ to level $i$, while
$\overline\omega_i=\tau_i\widetilde\omega_i+
\psi'_{d,i}\widetilde u_i$ leaves exactly the old-time Lyapunov weight.
Thus every new-time tracking term is eliminated once, and
\eqref{eq:Gamma-definition} is the total coefficient of $\|s^k\|^2$.

Combining with \eqref{eq:outer-descent} gives
\[
\begin{aligned}
\mathcal V^{k+1}-\mathcal V^k
\le{}&
-\frac{\alpha}{2}\|\nabla\Phi(x^k)\|^2
-\left(
\frac{1}{2\alpha}-\frac{L_\Phi}{2}-\alpha\Gamma(\delta)
\right)\|s^{k+1}\|^2
+\alpha b_\delta\delta^2.
\end{aligned}
\]
Condition \eqref{eq:outer-stepsize-condition} implies
$\alpha L_\Phi\le1/2$ and $\alpha^2\Gamma(\delta)\le1/4$, so the coefficient
of $\|s^{k+1}\|^2$ is nonnegative.  Dropping this term yields
\eqref{eq:Lyapunov-descent}.  Finally,
\eqref{eq:Gamma-uniform-majorant} implies that
\eqref{eq:outer-stepsize-uniform} is sufficient uniformly over the admissible
parameter interval.
\end{proof}

\subsection{Stationarity guarantees}
\label{subsec:global-convergence}

Telescoping the Lyapunov descent yields a stationarity bound with
an $O(K^{-1})$ optimization term and an $O(\delta^2)$ bias term.

\begin{theorem}[Non-asymptotic stationarity]
\label{thm:global-stationarity}
Under the conditions of Theorem~\ref{thm:lyapunov-closure}, for every
$K\ge1$,
\begin{equation}
\label{eq:average-stationarity}
\frac{1}{K}\sum_{k=0}^{K-1}\|\nabla\Phi(x^k)\|^2
\le
\frac{2\mathcal V^0}{\alpha K}
+2C_{\mathrm{bias},0}^2\delta^2.
\end{equation}
Consequently,
\[
\min_{0\le k<K}\|\nabla\Phi(x^k)\|^2
\le
\frac{2\mathcal V^0}{\alpha K}
+2C_{\mathrm{bias},0}^2\delta^2.
\]
If $R$ is sampled uniformly from $\{0,\ldots,K-1\}$,
the same bound holds for
$\mathbb E_R\|\nabla\Phi(x^R)\|^2$.
This output rule requires no exact-gradient evaluations.
\end{theorem}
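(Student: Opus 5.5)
The plan is to telescope the one-step Lyapunov inequality \eqref{eq:Lyapunov-descent} of Theorem~\ref{thm:lyapunov-closure}. Summing it over $k=0,\ldots,K-1$ gives
\[
\mathcal V^K-\mathcal V^0
\le
-\frac{\alpha}{2}\sum_{k=0}^{K-1}\|\nabla\Phi(x^k)\|^2
+\alpha K b_\delta\delta^2 .
\]
Next I will use that $\mathcal V^K\ge0$. This holds because $\Phi(x^K)-\Phi^\star\ge0$ by the definition of $\Phi^\star$. The weights $\overline\omega_i$, $\overline u_i$ and $\Gamma(\delta)$ are nonnegative by construction (Lemma~\ref{lem:Gamma-uniformity} and the remarks after \eqref{eq:primal-lyapunov-weights}). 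Finally, $\mathcal E_i^K$, $\mathcal Z_i^K$ and $\|s^K\|^2$ are squared norms. Dropping $\mathcal V^K$, dividing by $\alpha K/2$, and recalling $b_\delta=C_{\mathrm{bias},0}^2$ yields \eqref{eq:average-stationarity}.

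One point needs checking: $\mathcal V^0$ must be finite and well defined. For $k=0$, the convention $x^{-1}:=x^0$ makes $\mathcal E_i^0=\|z_i^0-z_i^\star(x^0)\|^2$ and $\mathcal Z_i^0=\|d_i^0-d_i^\star(x^0)\|^2$. Since $s^0=0$, the drift term vanishes. Both error terms are finite, so $\mathcal V^0<\infty$. With the paired initialization $z_{i,\delta}^0=z_i^0$ we have $d_i^0=0$, so $\mathcal Z_i^0=\|d_i^\star(x^0)\|^2\le C_{\mathrm{path},i}^2$ by Corollary~\ref{cor:finite-direction-regularity}. This bound is uniform in $\delta$, which is worth remarking on even though the statement does not require it. I also need to confirm that the descent inequality was established for every $k\ge0$, including $k=0$ with these conventions; Theorem~\ref{thm:lyapunov-closure} is stated for all $k$.

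The two consequences are immediate. The minimum over $k$ is at most the average, which gives the $\min$ bound. For uniform $R$, $\mathbb E_R\|\nabla\Phi(x^R)\|^2$ equals the average exactly. Since $R$ is drawn independently of the gradients, selecting $x^R$ requires no evaluation of $\nabla\Phi$.

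I expect no real obstacle. The only delicate step is justifying nonnegativity of every Lyapunov component so that $\mathcal V^K$ can be dropped, together with the $k=0$ conventions.
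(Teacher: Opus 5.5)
Your proposal is correct and is essentially the paper's proof: sum \eqref{eq:Lyapunov-descent} over $k=0,\ldots,K-1$, drop $\mathcal V^K\ge0$, and divide by $\alpha K/2$ using $b_\delta=C_{\mathrm{bias},0}^2$. The $\min$ bound and the bound for the uniformly sampled iterate then follow because the minimum is at most the average and $\mathbb E_R\|\nabla\Phi(x^R)\|^2$ equals it.
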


\begin{proof}
Sum \eqref{eq:Lyapunov-descent} over $k=0,\ldots,K-1$, use
$\mathcal V^K\ge0$, and divide by $\alpha K/2$.
\end{proof}

\begin{remark}[Minimization and saddle response chains]
Theorem~\ref{thm:global-stationarity} applies to the full
signed-potential response class, including chains mixing
strongly convex minimization and strongly convex--strongly
concave saddle responses.
Positive definiteness of the potential Hessians is not required.
The explicit single-$\kappa$ complexity bounds established below
specialize to the homogeneous strongly convex minimization regime.
\end{remark}

\begin{corollary}[$\epsilon$-stationarity]
\label{cor:epsilon-stationarity}
Under the conditions of Theorem~\ref{thm:global-stationarity},
let $\epsilon>0$. If
\[
\delta
\le
\min\left\{
\delta_{\rm adm},
\frac{\epsilon}{2C_{\mathrm{bias},0}}
\right\},
\qquad
K\ge\frac{4\mathcal V^0}{\alpha\epsilon^2},
\]
then
$\min_{0\le k<K}\|\nabla\Phi(x^k)\|\le\epsilon$.
The randomized output above also satisfies
$\mathbb E_R\|\nabla\Phi(x^R)\|^2\le\epsilon^2$.
If $C_{\mathrm{bias},0}=0$, the second restriction on $\delta$ is omitted.
\end{corollary}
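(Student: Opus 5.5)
The corollary follows by substituting the two parameter restrictions into the bound of Theorem~\ref{thm:global-stationarity}. No new analysis is needed. The plan is to bound each of the two terms on the right-hand side of \eqref{eq:average-stationarity} by $\epsilon^2/2$.

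\textbf{Bias term.} The hypothesis $\delta\le\delta_{\rm adm}$ places $\delta$ in the admissible interval. Hence the conditions of Theorem~\ref{thm:lyapunov-closure} remain valid, in particular \eqref{eq:admissible-radius} and \eqref{eq:tracking-radius-condition}. The outer stepsize may be fixed by the $\delta$-independent rule \eqref{eq:outer-stepsize-uniform}, so $\alpha$ does not depend on $\epsilon$. If $C_{\mathrm{bias},0}>0$, the restriction $\delta\le\epsilon/(2C_{\mathrm{bias},0})$ gives
\[
2C_{\mathrm{bias},0}^2\delta^2\le 2C_{\mathrm{bias},0}^2\cdot\frac{\epsilon^2}{4C_{\mathrm{bias},0}^2}=\frac{\epsilon^2}{2}.
\]
If $C_{\mathrm{bias},0}=0$, this term vanishes for every admissible $\delta$, which justifies omitting the second restriction.

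\textbf{Optimization term.} The restriction $K\ge 4\mathcal V^0/(\alpha\epsilon^2)$ gives $2\mathcal V^0/(\alpha K)\le\epsilon^2/2$. Here $\mathcal V^0\ge 0$ is finite: it is $\Phi(x^0)-\Phi^\star$ plus finite weighted initial tracking errors, and the term involving $s^0$ vanishes because $s^0=0$. Adding the two estimates, the average squared gradient norm is at most $\epsilon^2$. The minimum over $k$ of the squared norm is at most the average, so taking square roots yields $\min_{0\le k<K}\|\nabla\Phi(x^k)\|\le\epsilon$. The randomized output satisfies $\mathbb E_R\|\nabla\Phi(x^R)\|^2$ equal to the average, so it also satisfies the bound.

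\textbf{Main subtlety.} The potential obstacle is that $\mathcal V^0$ depends on $\delta$, through the weights $\overline\omega_i,\overline u_i$ and through the initial scaled-difference error $\mathcal Z_i^0$. For the corollary as stated this causes no difficulty: the condition on $K$ is expressed directly in terms of $\mathcal V^0$, so the argument only needs $\mathcal V^0<\infty$.
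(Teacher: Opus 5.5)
Your proposal is correct and matches the paper's argument: the corollary is stated without a separate proof because it follows from Theorem~\ref{thm:global-stationarity} exactly as you describe. Each of the two terms is bounded by $\epsilon^2/2$, and the minimum and the randomized output are both controlled by the average. Your observation that only $\mathcal V^0<\infty$ is needed here is also consistent with the paper, which treats the uniformity of $\mathcal V^0$ in $\delta$ separately in the paragraph following the corollary.
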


\paragraph{Initialization and accuracy dependence.}
Paired initialization gives $d_i^0=0$ and
$\mathcal Z_i^0\le C_{\mathrm{path},i}^2$.
With fixed unperturbed initial states and the uniform stepsize
\eqref{eq:outer-stepsize-uniform}, the uniformly bounded weights imply
$\sup_{0<\delta\le\delta_{\rm adm}}\mathcal V^0(\delta)<\infty$.
Thus, for fixed problem data and depth, choosing $\delta=O(\epsilon)$
gives $\epsilon$-stationarity in $O(\epsilon^{-2})$ outer iterations
with fixed inner budgets.
The same conclusion holds for any initialization with a uniformly
bounded $\mathcal V^0$.

\subsection{Conditioning and oracle complexity}
\label{subsec:complexity}

We now translate the stationarity guarantee into outer-iteration
and first-order oracle complexity bounds.
The condition-number estimates below hold for fixed depth $M$;
their hidden constants may depend on $M$.

\paragraph{General complexity bounds.}
Let
\[
\mu_{\min}:=\min_i\mu_i,
\qquad
\mu_{\max}:=\max_i\mu_i,
\qquad
\kappa:=\max_i\frac{L_i}{\mu_i},
\qquad
\kappa_\mu:=\frac{\mu_{\max}}{\mu_{\min}}.
\]
Then $L_{i+1}/\mu_i\le\kappa\kappa_\mu$.  The tracking restriction already implies
$\delta_{\rm adm}L_f\le\mu_M/2$. Recall that
\[
\bar\mu_M=\mu_M-\delta_{\rm adm}L_f,
\qquad
\bar L_M=L_M+\delta_{\rm adm}L_f.
\]
For every $0<\delta\le\delta_{\rm adm}$,
\[
\widetilde\mu_M\ge\bar\mu_M\ge\frac{\mu_M}{2},
\qquad
\widetilde L_M\le\bar L_M\le L_M+\frac{\mu_M}{2},
\qquad
\frac{\bar L_M}{\bar\mu_M}\le2\kappa+1.
\]

Choose
\[
\alpha
=
\frac{1}{2\max\{L_\Phi,\sqrt{\bar\Gamma_{\rm adm}}\}},
\qquad
\delta
\le
\min\left\{
\delta_{\rm adm},
\frac{\epsilon}{2C_{\mathrm{bias},0}}
\right\},
\]
with the second parameter restriction omitted when
$C_{\mathrm{bias},0}=0$.  Let $\overline{\mathcal V}_0<\infty$ be any
bound uniform in $\delta$ satisfying
$\mathcal V^0(\delta)\le\overline{\mathcal V}_0$ for all admissible $\delta$.
Paired initialization with fixed unperturbed warm starts provides such a bound.  Corollary~\ref{cor:epsilon-stationarity}
then gives
\begin{equation}
\label{eq:general-outer-complexity}
K
=
O\!\left(
\overline{\mathcal V}_0
\max\{L_\Phi,\sqrt{\bar\Gamma_{\rm adm}}\}\epsilon^{-2}
\right).
\end{equation}

Under the blockwise oracle convention of
Section~\ref{subsec:problem-formulation}, one outer iteration uses
$2\sum_{i=1}^MN_i+N_M+2M+1$ first-order calls.  Hence
\begin{equation}
\label{eq:general-fo-complexity}
\mathcal N_{\rm FO}
=
O\!\left(
K\left[M+\sum_{i=1}^MN_i\right]
\right).
\end{equation}

\paragraph{Strongly convex minimization chains.}
For strongly convex minimization responses, gradient descent
provides a contraction gap of order $\kappa^{-1}$.
Suppose $P_i=I$ and $\Psi_i=g_i$, with each
$g_i(\cdot,u)$ uniformly $\mu_i$-strongly convex and $L_i$-smooth.
Choose $\eta_i=1/L_i$ for $i<M$ and $\eta_M=1/\bar L_M$. Lemma~\ref{lem:delta-uniformity} and the spectral
contraction estimate in Lemma~\ref{lem:inner-contraction} imply
\[
\overline\vartheta_i
\le
\left(1-\frac{1}{3\kappa}\right)^2,
\qquad
1-\theta_i
\ge
\frac{5}{18\kappa},
\qquad i=1,\ldots,M.
\]
Thus
\begin{equation}
\label{eq:minimization-inner-interface}
\max\{\chi_i,\chi_{d,i}\}\le\xi_i
\qquad\text{with}\qquad
N_i=O\!\left(\kappa\log\frac2{\xi_i}\right).
\end{equation}

For explicit $\kappa$-rates, assume the homogeneous fixed-depth regime
\begin{equation}
\label{eq:single-kappa-regularity}
\kappa_\mu=O(1),
\qquad
\mu_{\min}=\Theta(\kappa^{-1}),
\qquad
\max_{1\le i\le M}\{L_i,\rho_i\}+L_f+C_f+\rho_f=O(1).
\end{equation}
Choose the admissible parameter independently of $\epsilon$ so that
\begin{equation}
\label{eq:minimization-uniform-radius}
0<\delta_{\rm adm}
\le c_{\rm adm}\kappa^{-(2M+1)}
\end{equation}
for a sufficiently small constant $c_{\rm adm}$, and require
\begin{equation}
\label{eq:forward-contraction-accuracy}
\chi_{d,i}\le c\kappa^{-(4M+4)},
\qquad i=1,\ldots,M,
\end{equation}
with $c>0$ sufficiently small. For $M\ge2$,
Proposition~\ref{prop:minimization-kappa-bookkeeping} in
Appendix~\ref{app:lyapunov-details} gives
\begin{equation}
\label{eq:minimization-bookkeeping-orders}
C_{\mathrm{bias},0}=O(\kappa^{2M+1}),
\qquad
L_\Phi=O(\kappa^{2M+1}).
\end{equation}
Moreover,
\begin{equation}
\label{eq:forward-Gamma-order}
\bar\Gamma_{\rm adm}=O(\kappa^{2M-2}).
\end{equation}
Since
$\chi_{d,i}\le\exp(-5N_i/(18\kappa))$,
\eqref{eq:forward-contraction-accuracy} is achieved with
$N_i=O(\kappa M\log(2+\kappa))$ at fixed depth.

Consequently, for constants
$C_N,c_{\rm adm},c_\delta,c_\alpha>0$ independent of $\kappa$ and $\epsilon$,
we may choose
\begin{equation}
\label{eq:minimization-parameter-choice}
N_i
=
\left\lceil C_N\kappa M\log(2+\kappa)\right\rceil,
\qquad
0<\delta
\le
\min\left\{
\delta_{\rm adm},
 c_\delta\epsilon\kappa^{-(2M+1)}
\right\},
\qquad
\alpha=c_\alpha\kappa^{-(2M+1)},
\end{equation}
where $\delta_{\rm adm}$ satisfies
\eqref{eq:minimization-uniform-radius} and $c_\alpha$ is chosen so that
\eqref{eq:outer-stepsize-uniform} holds.  For $M\ge2$, the bounds on $L_\Phi$ and
$\bar\Gamma_{\rm adm}$ ensure that this choice of $\alpha$
satisfies \eqref{eq:outer-stepsize-uniform}.
The case $M=1$ follows from the dedicated estimates in
Appendix~\ref{app:bilevel-details}.

\begin{theorem}[Complexity for strongly convex minimization chains]
\label{thm:minimization-complexity}
Fix $M\ge1$ and $0<\epsilon\le1$.  Suppose the strongly convex minimization
specialization satisfies Assumptions~\ref{ass:implicit-layers} and~\ref{ass:outer-function}
and, \eqref{eq:single-kappa-regularity},
$\Phi^{\star}>-\infty$, and
\[
\Phi(x^0)-\Phi^{\star}=O(1),
\qquad
\mathcal E_i^0=O(1),
\quad i=1,\ldots,M.
\]
Initialize $z_{i,\delta}^0=z_i^0$ and $s^0=0$.  Use inner stepsizes
$\eta_i=1/L_i$ for $i<M$ and $\eta_M=1/\bar L_M$, and the
parameter choices in \eqref{eq:minimization-parameter-choice}, with
$C_N$ sufficiently large and $c_{\rm adm},c_\delta,c_\alpha$
sufficiently small.  These constants may depend on fixed $M$ and the
regularity bounds, but not on $\kappa$ or $\epsilon$.
Then Algorithm~\ref{alg:remid} reaches
$\min_{0\le k<K}\|\nabla\Phi(x^k)\|\le\epsilon$,
and a uniformly sampled iterate satisfies
$\mathbb E_R\|\nabla\Phi(x^R)\|^2\le\epsilon^2$,
within
\begin{equation}
\label{eq:minimization-complexity}
K
=
O\!\left(\kappa^{2M+1}\epsilon^{-2}\right),
\qquad
\mathcal N_{\rm FO}
=
\widetilde O\!\left(
M^2\kappa^{2M+2}\epsilon^{-2}
\right).
\end{equation}
\end{theorem}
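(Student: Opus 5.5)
The plan is to instantiate Corollary~\ref{cor:epsilon-stationarity} with the parameter choices \eqref{eq:minimization-parameter-choice} and then track the $\kappa$-dependence of every quantity appearing there. First, I would verify that the choices are admissible. The inner stepsizes $\eta_i=1/L_i$ ($i<M$) and $\eta_M=1/\bar L_M$ fall under Lemma~\ref{lem:delta-uniformity}. Consequently, Lemma~\ref{lem:inner-contraction}(b) gives $\overline\vartheta_i\le(1-1/(3\kappa))^2$ and $1-\theta_i\ge 5/(18\kappa)$, so that $\chi_{d,i}\le\exp(-5N_i/(18\kappa))$. Since $\vartheta_i\le\theta_i$, the same bound holds for $\chi_i\le\chi_{d,i}$. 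With $N_i=\lceil C_N\kappa M\log(2+\kappa)\rceil$ and $C_N$ large, this yields \eqref{eq:forward-contraction-accuracy}. It also yields $\sup_\delta\tau_i<1$ and $\sup_\delta\tau_{d,i}<1$, because $\tau_i=O(\chi_i)$ by \eqref{eq:forward-primal-coefficients} and $\tau_{d,i}=O(\theta_i^{N_i})$ with constants polynomial in $\kappa$, which the factor $\kappa^{-(4M+4)}$ dominates. The radius condition \eqref{eq:minimization-uniform-radius} with $c_{\rm adm}$ small ensures \eqref{eq:admissible-radius} and \eqref{eq:tracking-radius-condition}. This holds because $C_{\mathrm{path},i}=O(\kappa^{M-i+1})$ under \eqref{eq:single-kappa-regularity}, since $L_{i+1}/\mu_i=O(\kappa)$, while $\mu_i=\Theta(\kappa^{-1})$.

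Next, for $M\ge2$, I would import the bookkeeping orders \eqref{eq:minimization-bookkeeping-orders} and \eqref{eq:forward-Gamma-order} from Proposition~\ref{prop:minimization-kappa-bookkeeping}. These give $\max\{L_\Phi,\sqrt{\bar\Gamma_{\rm adm}}\}=O(\kappa^{2M+1})$, since $\kappa^{M-1}\le\kappa^{2M+1}$. Hence $\alpha=c_\alpha\kappa^{-(2M+1)}$ satisfies \eqref{eq:outer-stepsize-uniform} for small $c_\alpha$. Also, $\delta\le c_\delta\epsilon\kappa^{-(2M+1)}$ implies $\delta\le\epsilon/(2C_{\mathrm{bias},0})$ once $c_\delta$ is small. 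For $M=1$, I would instead quote the dedicated appendix estimates, which provide the same pattern with exponent $3$.

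The main obstacle is bounding $\mathcal V^0$ uniformly by $O(1)$ in $\kappa$, so that $K\ge 4\mathcal V^0/(\alpha\epsilon^2)=O(\kappa^{2M+1}\epsilon^{-2})$. We have $s^0=0$ and $\Phi(x^0)-\Phi^\star=O(1)$. The remaining terms are $\alpha\overline\omega_i\mathcal E_i^0$ and $\alpha\overline u_i\mathcal Z_i^0$. Paired initialization gives $d_i^0=0$, so $\mathcal Z_i^0\le C_{\mathrm{path},i}^2=O(\kappa^{2M})$. The key point is that the old-time weights carry the contraction factors: $\overline u_i=\tau_{d,i}\widetilde u_i$ and $\overline\omega_i=\tau_i\widetilde\omega_i+\psi'_{d,i}\widetilde u_i$, where $\tau_i,\tau_{d,i}=O(\kappa^{-(4M+4)})$ and $\psi'_{d,i}$ also contains a factor $\chi_{d,i}$ (it multiplies the warm-start error $\mathcal E_i^k$). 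I would therefore bound the new-time weights $\widetilde u_i,\widetilde\omega_i$ polynomially in $\kappa$ via the recursions \eqref{eq:auxiliary-lyapunov-weights}--\eqref{eq:primal-lyapunov-weights}, using the same bookkeeping as in Proposition~\ref{prop:minimization-kappa-bookkeeping}. I would then check that the exponent $4M+4$ in \eqref{eq:forward-contraction-accuracy} beats these polynomial factors together with $C_{\mathrm{path},i}^2$. This is where care is needed, and if the crude count fails, I would enlarge $C_N$, which costs only a constant.

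Finally, I would count oracle calls by \eqref{eq:general-fo-complexity}. Each outer iteration costs $O(M+\sum_iN_i)=O(M^2\kappa\log(2+\kappa))$ calls. Multiplying by $K=O(\kappa^{2M+1}\epsilon^{-2})$ gives $\mathcal N_{\rm FO}=\widetilde O(M^2\kappa^{2M+2}\epsilon^{-2})$. The stationarity conclusions, both for the minimum over iterates and for the uniformly sampled iterate, then follow directly from Corollary~\ref{cor:epsilon-stationarity}.
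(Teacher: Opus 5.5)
Your proposal is correct and follows essentially the same route as the paper. It instantiates Corollary~\ref{cor:epsilon-stationarity} through \eqref{eq:general-outer-complexity}, using Proposition~\ref{prop:minimization-kappa-bookkeeping} for $M\ge2$ and Appendix~\ref{app:bilevel-details} for $M=1$; it obtains $\mathcal V^0=O(1)$ because the old-time weights $\overline u_i=\tau_{d,i}\widetilde u_i$ and $\overline\omega_i=\tau_i\widetilde\omega_i+\psi'_{d,i}\widetilde u_i$ carry the contraction factors; and it multiplies by the per-iteration cost in \eqref{eq:general-fo-complexity}. Your explicit admissibility checks and your fallback of enlarging $C_N$ are consistent with the paper's appendix, where the count already closes without enlarging $C_N$, for example $\alpha\overline u_i\mathcal Z_i^0=O(\kappa^{2i-4M-7})$.
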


\begin{proof}
For $M\ge2$, Proposition~\ref{prop:minimization-kappa-bookkeeping}
gives
$\max\{L_\Phi,\sqrt{\bar\Gamma_{\rm adm}}\}
=O(\kappa^{2M+1})$.
The weight estimates and paired initialization yield
$\mathcal V^0=O(1)$, as verified in
Appendix~\ref{app:lyapunov-details}.
Thus \eqref{eq:general-outer-complexity} gives
$K=O(\kappa^{2M+1}\epsilon^{-2})$.

For $M=1$, the bounds and initialization estimates in
Appendix~\ref{app:bilevel-details} similarly give
$\max\{L_\Phi,\sqrt{\bar\Gamma_{\rm adm}}\}=O(\kappa^3)$
and $\mathcal V^0=O(1)$, yielding
$K=O(\kappa^3\epsilon^{-2})$.
Finally, $\sum_{i=1}^M N_i=O(M^2\kappa\log(2+\kappa))$
at fixed depth, and \eqref{eq:general-fo-complexity}
gives the stated oracle complexity.
\end{proof}

\begin{example}[Structural amplification with depth]
\label{ex:depth-amplification}
Consider
\[
g_i(z,u)=\frac{\mu_i}{2}z^2-b_i z u,
\qquad \mu_i>0,
\qquad i=1,\ldots,M,
\]
and
$f(x,z_M)=a z_M+\frac{\tau}{2}x^2$ with $\tau>0$.  Then
\[
S_i(u)=\frac{b_i}{\mu_i}u,
\qquad
v_0^{\star}=a\prod_{i=1}^M\frac{b_i}{\mu_i},
\qquad
\nabla\Phi(x)=\tau x+v_0^{\star}.
\]
Thus, if $b_i/\mu_i=q>1$ for all $i$, the exact reverse sensitivity grows as
$q^M$.  This amplification is already present in the exact quadratic chain,
before any inexact-solve approximation is introduced.
\end{example}

\begin{remark}[Depth dependence]
\label{rem:depth-dependence}
Example~\ref{ex:depth-amplification} shows that multiplicative sensitivity
propagation with depth is structural unless additional assumptions, such as
contractive response Jacobians, are imposed.  It does not establish the
tightness of the exponent $2M+1$ in
Theorem~\ref{thm:minimization-complexity}, which also reflects worst-case
smoothness propagation and squared-error bounds.
\end{remark}

\begin{remark}[Bilevel consistency]
For $M=1$, the bounds reduce to
$K=O(\kappa^3\epsilon^{-2})$ and
$\mathcal N_{\rm FO}=\widetilde O(\kappa^4\epsilon^{-2})$.
The oracle bound has the same condition-number and accuracy powers as
the nonaccelerated fully first-order bilevel order in
\cite{chen2025f2ba}; this is an order comparison, not an equivalence
of problem classes or assumptions.
\end{remark}

\begin{remark}[Centered refinement]
\label{rem:centered-complexity}
Appendix~\ref{app:centered} establishes $O(\delta^2)$ bias for an exact
centered estimator under stronger smoothness; the finite-iteration
complexity of that construction is not analyzed here.
\end{remark}
\begin{table}[t]
\centering
\caption{Solver-dependent complexity consequences.  The minimization-chain
rates use the homogeneous fixed-depth regime in
Theorem~\ref{thm:minimization-complexity}.  For general potential-based
layers, $\xi_i$ denotes a prescribed contraction factor.}
\label{tab:complexity-summary}
\small
\setlength{\tabcolsep}{3.5pt}
\renewcommand{\arraystretch}{1.15}
\begin{tabular}{@{}lccc@{}}
\toprule
\textbf{Structure}
&\textbf{Inner work / layer}
&\textbf{Outer iterations}
&\textbf{FO oracle complexity}
\\
\midrule
SC minimization chain (fixed $M\ge1$)
&$\widetilde O(M\kappa)$
&$O(\kappa^{2M+1}\epsilon^{-2})$
&$\widetilde O(M^2\kappa^{2M+2}\epsilon^{-2})$
\\
General potential-based layer
&$O\!\left(\kappa^2\log(2/\xi_i)\right)$
&\multicolumn{2}{c}{general Lyapunov bound}
\\
\bottomrule
\end{tabular}
\end{table}
\paragraph{General potential-based chains.}
For general potential-based layers, take
$\eta_i=\mu_i/L_i^2$ for $i<M$ and
$\eta_M=\bar\mu_M/\bar L_M^2$.  Lemma~\ref{lem:delta-uniformity} gives a
squared contraction gap bounded below by a constant multiple of
$\kappa^{-2}$, uniformly in $\delta$. For a prescribed tolerance
$0<\xi_i<1$, this gives
\begin{equation}
\label{eq:signed-potential-inner-interface}
\max\{\chi_i,\chi_{d,i}\}\le\xi_i
\qquad\text{with}\qquad
N_i=O\!\left(\kappa^2\log\frac2{\xi_i}\right).
\end{equation}
For any admissible choice of contraction factors,
\begin{equation}
\label{eq:signed-potential-complexity-interface}
K
=
O\!\left(
\overline{\mathcal V}_0
\max\{L_\Phi,\sqrt{\bar\Gamma_{\rm adm}}\}\epsilon^{-2}
\right),
\qquad
\mathcal N_{\rm FO}
=
O\!\left(K\left[M+\sum_{i=1}^M N_i\right]\right),
\end{equation}
Here $\bar\Gamma_{\rm adm}$ is the corresponding uniform Lyapunov bound.
These bounds yield $O(\epsilon^{-2})$ outer iterations for fixed
problem data, depth, and admissible contraction requirements.
The conditioning dependence remains expressed through
$L_\Phi$, $\bar\Gamma_{\rm adm}$, and the inner budgets.
For prescribed tolerances $\xi_i=\kappa^{-O(M)}$,
the inner work is $\widetilde O(M\kappa^2)$ per level.

\paragraph{Mixed chains.}
Partition $\{1,\ldots,M\}$ into $\mathcal I_{\rm min}$, the strongly
convex minimization levels, and $\mathcal I_{\rm sp}$, the remaining
signed-potential levels. Under the standing assumptions, a common radius
satisfying \eqref{eq:tracking-radius-condition}, and prescribed contraction
requirements with $\log(2/\xi_i)=O(M\log(2+\kappa))$,
\begin{equation}
\label{eq:mixed-layerwise-budget}
N_i
=
\begin{cases}
\widetilde O(M\kappa),&i\in\mathcal I_{\rm min},\\[1mm]
\widetilde O(M\kappa^2),&i\in\mathcal I_{\rm sp}.
\end{cases}
\end{equation}
Combining these solver costs with \eqref{eq:general-fo-complexity} gives
\begin{equation}
\label{eq:mixed-fo-complexity}
\mathcal N_{\rm FO}
=
\widetilde O\!\left(
KM\left[
1+|\mathcal I_{\rm min}|\kappa+|\mathcal I_{\rm sp}|\kappa^2
\right]
\right).
\end{equation}
This bound separates the layerwise solver costs from the outer
iteration count $K$, which is determined by
\eqref{eq:general-outer-complexity} for the chosen contraction
requirements.

\section{Numerical experiments}
\label{sec:numerics}

We first examine the perturbation and finite-solve errors in controlled
examples, then compare computational performance on multilevel PDE control
and transductive graph learning. The baselines use recursive implicit
differentiation with CG for minimization responses (ID-CG) and MINRES for
saddle responses (ID-MINRES), or differentiate finite response trajectories
(Sato/ITD and its signed-gradient counterpart, ITD-SG)
\cite{grazzi2020iteration,paige1975solution,sato2021multilevel}.
All application comparisons use a common algorithm-time budget per method,
including the required derivative computations.  Experimental details are collected in
Appendices~\ref{app:experimental-protocol}--\ref{app:domain-adaptation}.

\subsection{Perturbation accuracy and finite-solve errors}
\label{subsec:controlled-diagnostics}

Figure~\ref{fig:controlled-diagnostics}(a) isolates perturbation bias using
nonlinear strongly convex chains and high-accuracy response solves.
The observed log--log slopes are close to one for the one-sided estimator
and two for the centered estimator. The latter is used only as a bias
diagnostic; \ReMID{} uses one-sided perturbations.
For quadratic chains with an affine terminal objective, the exact
perturbation estimator has no bias. Panel~(b) therefore measures only
finite-solve error: its decay largely aligns across condition numbers
when plotted against $T/\kappa$, while deeper chains require more inner
work to reach a given error. Panel~(c) recovers the same first-order bias
for minimization, saddle, and mixed chains.

\begin{figure}[t]
\centering
\includegraphics[width=.98\textwidth]
{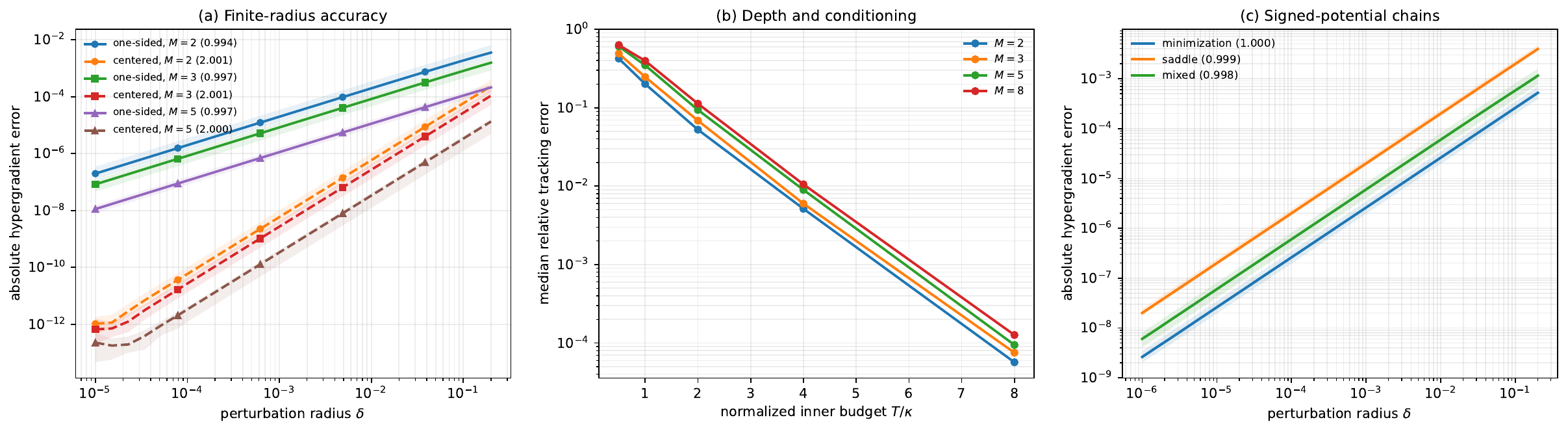}
\caption{Controlled diagnostics: (a) one-sided and centered perturbation
bias at depths $M=2,3,5$; (b) finite-solve error versus $T/\kappa$;
(c) one-sided bias for minimization, saddle, and mixed chains.
Curves show medians with interquartile ranges.}
\label{fig:controlled-diagnostics}
\end{figure}

Two additional tests distinguish finite-solve error from perturbation bias
and assess the forward refresh. At fixed inner budgets, reducing $\delta$
eventually reaches an error floor; increasing the budget lowers that floor.
With matched inner-gradient budgets, the updated-input schedule yields
smaller tracking errors than the previous-input schedule, especially at
greater depths. The constructions and results are given in
Appendix~\ref{app:controlled-diagnostics},
Figure~\ref{fig:finite-computation-diagnostics}.

\subsection{Multilevel PDE-constrained optimal control}
\label{subsec:numerical:pde}

We consider a five-level distributed-control problem for the
two-dimensional heat equation. With $z_0=x$, each response minimizes
\[
\Psi_i(z,u)=
\frac{\omega_i}{2}\|M_i(\mathcal K_i z-d_i^{\rm tar})\|^2
+\frac{\lambda_c}{2}\|z-u\|^2+\frac{\mu}{2}\|z\|^2,
\]
where $\mathcal K_i$ maps a control to the terminal PDE state and $M_i$
selects observed locations. Each response has $4096$ variables.
The outer objective is
\[
\Phi(x)=\phi_5\!\left(\sqrt{\omega_{\rm val}}M_{\rm val}
(\mathcal K_{\rm val}z_M^\star(x)-d_{\rm val}^{\rm tar})\right)
+\frac{\tau}{2}\|x\|^2,
\]
where $\phi_c(q)=c^2(\sqrt{1+\|q\|^2/c^2}-1)$.
The responses are affine, so the quadratic outer regularizer makes
$\Phi$ strongly convex.

All methods use local GD steps $\eta_i=1.8/\widehat L_i$, with
$\widehat L_i$ estimated from the state Hessian.
Figure~\ref{fig:pde-main} compares progress over 300 algorithm seconds.
\ReMID{} decreases the objective gap and reference-gradient norm more
rapidly than the two baselines and attains the smallest mean terminal
values. ID-CG eventually overtakes Sato/ITD, whose curves flatten toward
the end of the budget. Terminal statistics and oracle
counts are reported in Table~\ref{tab:pde-main}.

\begin{figure}[t]
\centering
\begin{subfigure}[t]{0.485\textwidth}
\centering
\includegraphics[width=\linewidth]{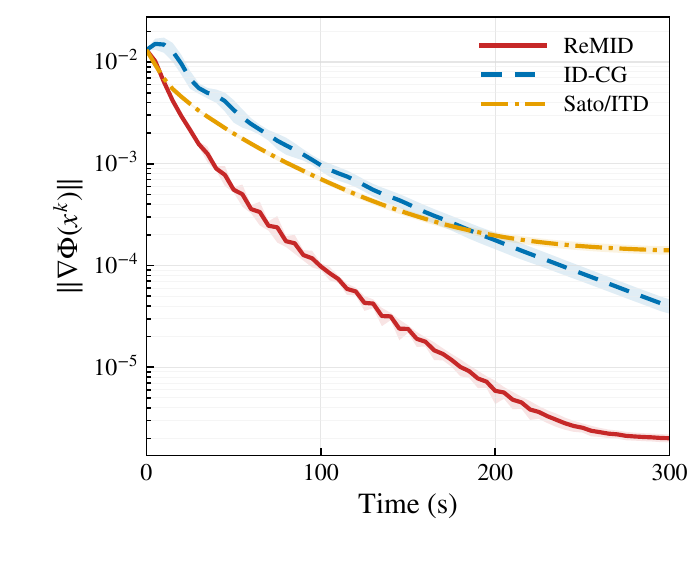}
\caption{Stationarity.}
\end{subfigure}\hfill
\begin{subfigure}[t]{0.485\textwidth}
\centering
\includegraphics[width=\linewidth]{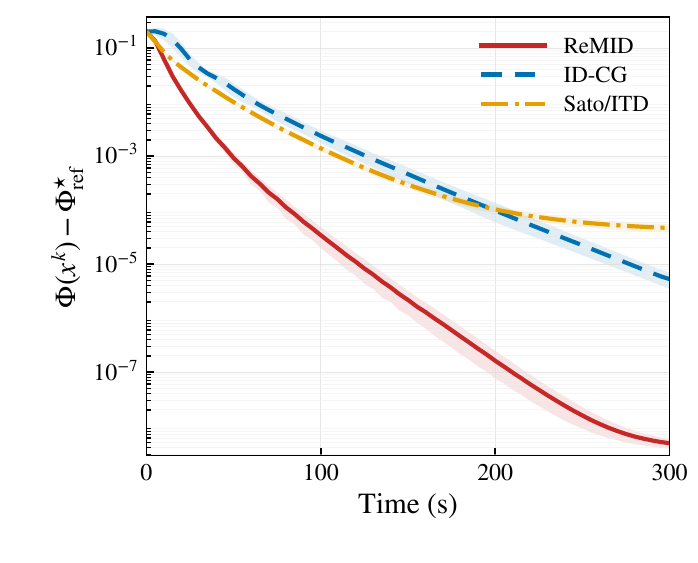}
\caption{Objective gap.}
\end{subfigure}
\caption{Five-level PDE control with a 300-second algorithm-time
budget. Lines and bands show means and one sample standard deviation
over five runs. Objective gaps use independently computed numerical
reference optima. Both vertical axes are logarithmic.}
\label{fig:pde-main}
\end{figure}

\paragraph{Saddle-point extension.}
We introduce an adversarial source at each level, giving strongly
convex--strongly concave responses in the control--disturbance pair
$z_i=(y_i,\xi_i)$, with $8192$ variables per level.
All three methods use the same local signed-gradient steps with
$P_i=\operatorname{diag}(I,-I)$; the potential and stepsizes are specified
in Appendix~\ref{app:pde-signed}.
Under a 500-second budget, \ReMID{} also attains the smallest mean
terminal objective gap and reference-gradient norm in this setting
(Figure~\ref{fig:pde-signed-time} and Table~\ref{tab:pde-signed}).
This comparison tests the method on indefinite response Hessians, for
which the implicit baseline uses MINRES rather than CG.

\begin{figure}[t]
\centering
\begin{subfigure}[t]{0.485\textwidth}
\centering
\includegraphics[width=\linewidth]{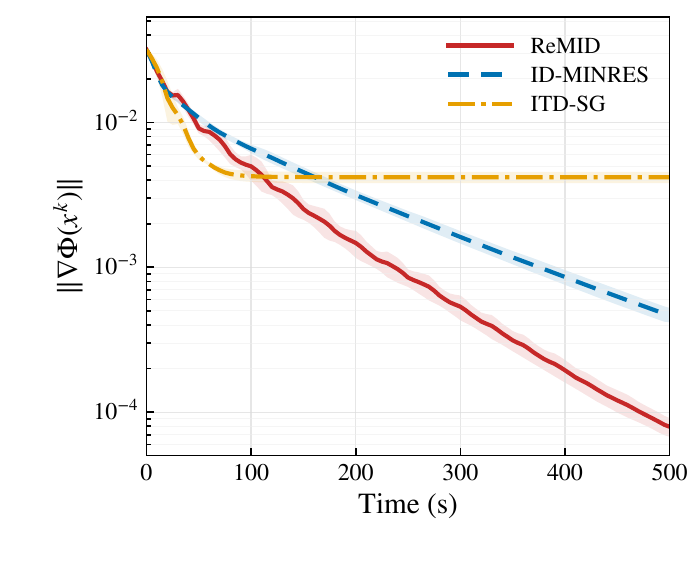}
\caption{Stationarity.}
\end{subfigure}\hfill
\begin{subfigure}[t]{0.485\textwidth}
\centering
\includegraphics[width=\linewidth]{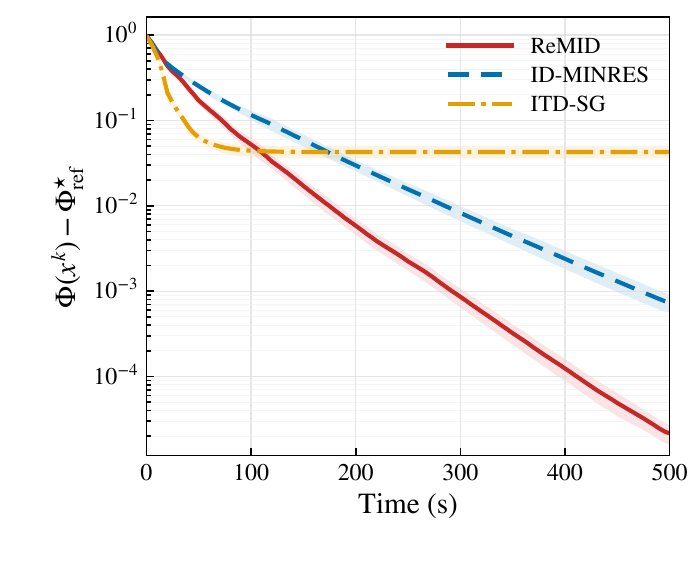}
\caption{Objective gap.}
\end{subfigure}
\caption{Saddle-point PDE with a 500-second algorithm-time
budget. Lines and bands show means and one sample standard deviation
over five runs. Objective gaps use independently computed numerical
reference optima. Both vertical axes are logarithmic.}
\label{fig:pde-signed-time}
\end{figure}

\subsection{Sequential transductive graph learning on MNIST}
\label{subsec:mnist-domain-adaptation}

We construct eight graph-learning levels from $2000$ MNIST images rotated
progressively to $60^\circ$. At each level, a strongly convex potential
combines a smooth nonlinear graph penalty, quadratic interlevel coupling,
and regularization:
\begin{equation}
\label{eq:mnist-main-response}
z_i^\star(x)=\operatorname*{arg\,min}_{z\in\mathbb R^{2000\times10}}
\Psi_i\bigl(z,z_{i-1}^\star(x)\bigr).
\end{equation}
The outer correction $x\in\mathbb R^{128\times10}$ acts on a fixed source
classifier through $z_0(x)=A_1(W_{\rm src}+x)$, where $A_1$ contains fixed
random Fourier features. Source labels anchor the first response; a
disjoint labeled set $\mathcal V$ defines the outer objective
\begin{equation}
\label{eq:mnist-main-outer}
\Phi(x)=\operatorname{CE}_{\mathcal V}(z_M^\star(x),y)
+\frac{\tau}{2}\|x\|_F^2.
\end{equation}
Here $\operatorname{CE}_{\mathcal V}$ is mean cross-entropy. All images
contribute graph features, but the remaining labels are unused.

Figure~\ref{fig:mnist-main-panels} compares optimization over 150 algorithm
seconds. ID-CG and Sato/ITD exhibit larger initial overshoots before
recovering. \ReMID{} reaches the lowest mean terminal objective difference
and reference-gradient norm among these configurations
(Table~\ref{tab:mnist-domain-adaptation}). Since the nonlinear response
composition need not be convex, objective differences use empirical
reference values rather than certified global optima.

\begin{figure}[t]
\centering
\begin{subfigure}[t]{0.485\textwidth}
\centering
\includegraphics[width=\linewidth]{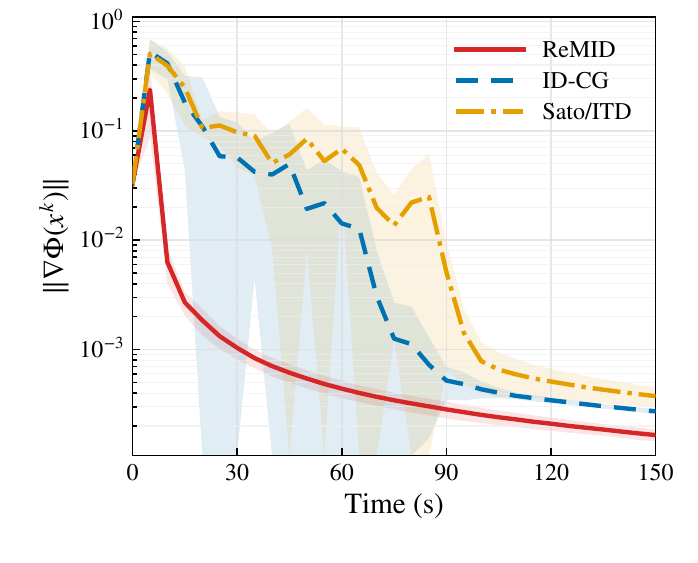}
\caption{Stationarity.}
\label{fig:mnist-stationarity-time}
\end{subfigure}\hfill
\begin{subfigure}[t]{0.485\textwidth}
\centering
\includegraphics[width=\linewidth]{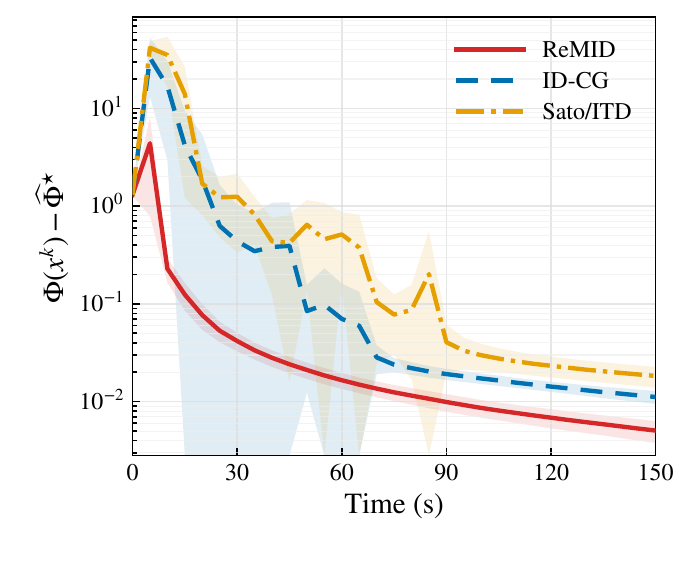}
\caption{Objective difference.}
\label{fig:mnist-main-obj-time}
\end{subfigure}
\caption{Eight-level MNIST graph learning with a 150-second algorithm-time
budget. Lines and bands show means and one sample standard deviation
over five runs. Panel~(b) subtracts an independently optimized empirical
reference value for each run. Both vertical axes are logarithmic.}
\label{fig:mnist-main-panels}
\end{figure}

\section{Conclusion}
\label{sec:conclusion}

We established convergence of recursive first-order differentiation
with inexact response solves for potential-based response compositions.  The central
step is a coupled tracking estimate: downstream adjoint errors enter
upstream perturbation problems, while unperturbed-state errors propagate
forward.  Bounds for the scaled response differences that are uniform in $\delta$ and a
triangular Lyapunov construction control these two dependencies.
Together with the perturbation bias, they yield
$O(K^{-1})+O(\delta^2)$ average squared stationarity for the exact
composite objective.
For fixed depth and problem data, this gives an $O(\epsilon^{-2})$
outer-iteration dependence.  The strongly convex specialization provides
explicit condition-number bounds, while the potential-based case retains
the response-solver contraction parameters.
The controlled experiments examine the bias and inexact-solve mechanisms.
Across PDE control, saddle-point PDE, and MNIST graph learning,
\ReMID{} attains the smallest mean terminal objective differences and
reference-gradient norms among the tested configurations under the
specified algorithm-time budgets.

The theory does not cover nonsymmetric implicit or general
variational-inequality responses, and its single-$\kappa$ bounds are not
uniform in depth.  Extending the perturbation principle to such systems and
sharpening its depth dependence remain open.



\appendix

\section{Supporting regularity and tracking proofs}
\label{app:supporting-proofs}

This appendix collects the supporting regularity and local tracking arguments
for Sections~\ref{sec:model}--\ref{sec:algorithm}.

\subsection{Implicit-response regularity}

\begin{proof}[Proof of Proposition~\ref{prop:response-regularity}]
\emph{Well-posedness.} Fix $i,u$ and any $b\in\mathbb R^{d_i}$.
The Hessian bound makes $T_i(\cdot,u)$ $L_i$-Lipschitz. For
$0<a<2\mu_i/L_i^2$, the map
$z\mapsto z-a(T_i(z,u)-b)$ is a contraction on $\mathbb R^{d_i}$,
with squared factor $1-2a\mu_i+a^2L_i^2<1$.
The Banach fixed-point theorem gives a unique solution of $T_i(z,u)=b$.
Taking $b=0$ proves well-posedness; the same argument covers additive
perturbations.

\emph{Lipschitz continuity.} Let $z:=S_i(u)$ and $z':=S_i(u')$. Since
$T_i(z,u)=T_i(z',u')=0$, strong monotonicity of
$T_i(\cdot,u)$ gives
$\mu_i\|z-z'\|^2 \le \langle T_i(z',u')-T_i(z',u),z-z'\rangle \le L_i\|u-u'\|\,\|z-z'\|.$
Dividing by $\mu_i\|z-z'\|$ when $z\neq z'$ yields
$\|S_i(u)-S_i(u')\|\le(L_i/\mu_i)\|u-u'\|$; the case $z=z'$
is immediate.

\emph{Differentiability.} For $A_i(u)=P_iH_i(u)$, strong monotonicity gives
$\langle A_i(u)h,h\rangle\ge\mu_i\|h\|^2$ and hence
$\|H_i(u)h\|=\|A_i(u)h\|\ge\mu_i\|h\|$.
Thus $\|H_i(u)^{-1}\|\le\mu_i^{-1}$.
The implicit-function theorem and uniqueness patch the local branches
into a global $C^1$ response. Differentiation gives
$H_iDS_i+B_i^\top=0$, with $\|DS_i\|\le\kappa_i$.

\emph{Jacobian Lipschitz continuity.} The response bound gives
$\max\{ \|H_i(u)-H_i(u')\|, \|B_i(u)-B_i(u')\| \} \le \rho_i(1+\kappa_i)\|u-u'\|.$
The inverse-difference identity
$H^{-1}-\widetilde H^{-1}
=H^{-1}(\widetilde H-H)\widetilde H^{-1}$
together with the inverse bound yields
$\|H_i(u)^{-1}-H_i(u')^{-1}\| \le \frac{\rho_i(1+\kappa_i)}{\mu_i^2} \|u-u'\|.$
Therefore,
\[
\begin{aligned}
\|DS_i(u)-DS_i(u')\|
&\le
\|H_i(u)^{-1}-H_i(u')^{-1}\|\,\|B_i(u)\|
+
\|H_i(u')^{-1}\|\,\|B_i(u)-B_i(u')\|\\
&\le
\rho_i(1+\kappa_i)
\left(
\frac{L_i}{\mu_i^2}
+
\frac{1}{\mu_i}
\right)
\|u-u'\|,
\end{aligned}
\]
which proves the result.
\end{proof}

\begin{proof}[Justification of Example~\ref{prop:saddle-layer}]
Fix $u$ and write $\mathcal H_i(z,u):=\nabla_{11}^2\Psi_i(z,u)$,
to distinguish this block from $H_i(u)$ evaluated at the response.
Since $D_zT_i=P_i\mathcal H_i$ and $\mathcal H_i$ is symmetric,
\[
\operatorname{sym}\!\bigl(D_zT_i(z,u)\bigr)
=
\begin{pmatrix}
\nabla_{yy}^2\Psi_i & 0\\
0 & -\nabla_{pp}^2\Psi_i
\end{pmatrix}
\succeq
\mu_i I.
\]
Here $\operatorname{sym}(A):=(A+A^\top)/2$.
Indeed, the mixed blocks cancel because
$\nabla_{p y}^2\Psi_i=(\nabla_{yp}^2\Psi_i)^\top$.
Integrating $D_zT_i$ along the line segment joining $z$ and $z'$
therefore gives
$\langle T_i(z,u)-T_i(z',u),z-z'\rangle \ge \mu_i\|z-z'\|^2.$
Thus $T_i(\cdot,u)$ is $\mu_i$-strongly monotone.

By Proposition~\ref{prop:response-regularity}, $T_i(\cdot,u)$ has a
unique zero $z^{\star}=(y^{\star},p^{\star})$. Since $P_i$ is nonsingular,
$T_i(z^{\star},u)=0$ is equivalent to
$\nabla_y\Psi_i(y^{\star},p^{\star},u)=0$ and
$\nabla_p\Psi_i(y^{\star},p^{\star},u)=0$.
Strong convexity in $y$ and strong concavity in $p$ then imply
$\Psi_i(y^{\star},p,u) \le \Psi_i(y^{\star},p^{\star},u) \le \Psi_i(y,p^{\star},u), \qquad \forall\,y,p,$
so $(y^{\star},p^{\star})$ is the unique saddle point.
\end{proof}

\begin{proof}[Proof of Lemma~\ref{lem:outer-objective-smoothness}]
By Proposition~\ref{prop:response-regularity}, every exact state and hence $\Phi$ is
continuously differentiable.

Let $\mathcal J_i(x):=Dz_i^\star(x)$ denote the composite-state Jacobian,
distinct from $DS_i$ whose Lipschitz constant is $L_{J_i}$.
The chain rule gives $\mathcal J_0=I$ and
$\mathcal J_i=DS_i(z_{i-1}^\star)\mathcal J_{i-1}$, so
$\|\mathcal J_i(x)\|\le L_{z,i}$.

We next bound the variation of $\mathcal J_i$. Using
Proposition~\ref{prop:response-regularity},
\[
\begin{aligned}
\|\mathcal J_i(x)-\mathcal J_i(x')\|
\le{}&
L_{J_i}L_{z,i-1}^2\|x-x'\|
+
\kappa_i\|\mathcal J_{i-1}(x)-\mathcal J_{i-1}(x')\|.
\end{aligned}
\]
Since $\mathcal J_0=I$, induction gives
$\|\mathcal J_M(x)-\mathcal J_M(x')\| \le \left( \sum_{r=1}^M L_{J_r}L_{z,r-1}^2 \prod_{j=r+1}^M\kappa_j \right) \|x-x'\|.$

Now write $z:=z_M^{\star}(x)$, $z':=z_M^{\star}(x')$,
$J:=\mathcal J_M(x)$, and $J':=\mathcal J_M(x')$. By the chain rule,
$\nabla\Phi(x) = \nabla_1f(x,z) + J^\top\nabla_2f(x,z).$
The bound on $\nabla^2f$ and the Lipschitz continuity of the exact
terminal state imply
$\|\nabla_1f(x,z)-\nabla_1f(x',z')\|
\le L_f(1+L_{z,M})\|x-x'\|$ and
$\|\nabla_2f(x,z)-\nabla_2f(x',z')\|
\le L_f(1+L_{z,M})\|x-x'\|$.
Moreover, $\|J'\|\le L_{z,M}$ and
$\|\nabla_2f(x,z)\|\le C_f$. Hence
\[
\begin{aligned}
\|\nabla\Phi(x)-\nabla\Phi(x')\|
\le{}&
L_f(1+L_{z,M})^2\|x-x'\|
+
C_f\|J-J'\|.
\end{aligned}
\]
Substituting the preceding bound on $\|J-J'\|$ gives the stated
constant $L_\Phi$.
\end{proof}

\subsection{Perturbation bounds and regularity}
\label{app:perturbation-technical-details}

For $t\in[0,\delta_{\rm adm}]$, define
\[
H_{i,t}(x)
:=
\nabla_{11}^2\Psi_i
\bigl(z_{i,t}^{\star}(x),z_{i-1}^{\star}(x)\bigr),
\qquad
B_{i,t}(x)
:=
\nabla_{21}^2\Psi_i
\bigl(z_{i,t}^{\star}(x),z_{i-1}^{\star}(x)\bigr).
\]
Thus $H_{i,0}(x)=H_i^{\star}(x)$ and
$B_{i,0}(x)=B_i^{\star}(x)$.
\begin{lemma}[Differentiated perturbation equations]
\label{lem:differentiated-perturbation}
Under Assumptions~\ref{ass:implicit-layers} and \ref{ass:outer-function}, and under \eqref{eq:admissible-radius}, the paths
$t\mapsto z_{i,t}^{\star}(x)$ are continuously differentiable on
$[0,\delta_{\rm adm}]$. At the terminal level, let
$Q_{M,t}(x) := H_{M,t}(x) + t\nabla_{22}^2f\bigl(x,z_{M,t}^{\star}(x)\bigr).$
Then
\begin{equation}
\label{eq:terminal-path-derivative}
\dot z_{M,t}^{\star}(x)
=
-Q_{M,t}(x)^{-1}
\nabla_2f\bigl(x,z_{M,t}^{\star}(x)\bigr),
\end{equation}
where $\|Q_{M,t}(x)^{-1}\|\le\bar\mu_M^{-1}$. For
$i=M-1,\ldots,1$,
\begin{equation}
\label{eq:intermediate-path-derivative}
\dot z_{i,t}^{\star}(x)
=
-H_{i,t}(x)^{-1}
B_{i+1,t}(x)\dot z_{i+1,t}^{\star}(x),
\end{equation}
and $\|H_{i,t}(x)^{-1}\|\le\mu_i^{-1}$.
\end{lemma}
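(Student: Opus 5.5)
We apply the implicit-function theorem level by level, in the order $i=M,M-1,\ldots,1$ already used in the proof of Theorem~\ref{thm:zero-radius-recovery}. The one new ingredient is the uniform invertibility bounds for the state Jacobians along the whole interval $[0,\delta_{\rm adm}]$, not only at $t=0$. Throughout, $x$ is fixed and the exact unperturbed chain $z_{i-1}^\star(x)$ is a constant input, so each level is a smooth equation in $(z,t)$.

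\emph{Terminal level.} Define $F_M(z,t):=\nabla_1\Psi_M(z,z_{M-1}^\star)+t\nabla_2 f(x,z)$, which is $C^1$ by Assumptions~\ref{ass:implicit-layers}(ii) and~\ref{ass:outer-function}. Its state Jacobian is $D_zF_M=\nabla_{11}^2\Psi_M(z,z_{M-1}^\star)+t\nabla_{22}^2f(x,z)$, which at $z=z_{M,t}^\star$ equals $Q_{M,t}$. For invertibility, note that $P_M D_zF_M$ is the Jacobian of the signed operator $z\mapsto P_M F_M(z,t)$. This operator is $(\mu_M-tL_f)$-strongly monotone, so $\langle P_M Q_{M,t}h,h\rangle\ge\bar\mu_M\|h\|^2$. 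Since $P_M$ is orthogonal, the argument of Proposition~\ref{prop:response-regularity} gives $\|Q_{M,t}h\|=\|P_MQ_{M,t}h\|\ge\bar\mu_M\|h\|$, hence $\|Q_{M,t}^{-1}\|\le\bar\mu_M^{-1}$.

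One step needs care here: strong monotonicity is a statement about the operator, and we need a pointwise Jacobian bound. We get it by differentiating $\langle T(z+sh)-T(z),h\rangle\ge \mu s\|h\|^2$, dividing by $s$ and letting $s\to0$. The perturbation term contributes at least $-tL_f\|h\|^2$ because $\|\nabla_{22}^2f\|\le L_f$.

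With the Jacobian invertible, the implicit-function theorem gives local $C^1$ branches around each $t\in[0,\delta_{\rm adm}]$, using one-sided neighborhoods at the endpoints, for instance by extending $F_M$ to $t$ slightly outside the interval, where strong monotonicity persists for small $|t|$. Uniqueness of the solution for each $t$, from Theorem~\ref{thm:zero-radius-recovery}, patches these branches into a single $C^1$ path. Differentiating $F_M(z_{M,t}^\star,t)=0$ in $t$ gives $Q_{M,t}\dot z_{M,t}^\star+\nabla_2f(x,z_{M,t}^\star)=0$, which is \eqref{eq:terminal-path-derivative}.

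\emph{Intermediate levels.} Assume inductively that $t\mapsto z_{i+1,t}^\star$ is $C^1$. Set $q_i(t):=\nabla_2\Psi_{i+1}(z_{i+1,t}^\star,z_i^\star)-\nabla_2\Psi_{i+1}(z_{i+1}^\star,z_i^\star)$. By \eqref{eq:general-vfd} we have $tv_{i,t}^\star=q_i(t)$, so $z_{i,t}^\star$ solves $\nabla_1\Psi_i(z,z_{i-1}^\star)+q_i(t)=0$. Here $q_i$ is $C^1$ with $q_i'(t)=\nabla_{21}^2\Psi_{i+1}(z_{i+1,t}^\star,z_i^\star)\dot z_{i+1,t}^\star=B_{i+1,t}\dot z_{i+1,t}^\star$. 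The state Jacobian of this equation is $H_{i,t}$, and the additive term does not change it. Applying the same monotonicity-to-Jacobian argument to $P_i\nabla_1\Psi_i$ gives $\|H_{i,t}^{-1}\|\le\mu_i^{-1}$. The implicit-function theorem together with uniqueness then yields a $C^1$ path, and differentiating gives $H_{i,t}\dot z_{i,t}^\star+B_{i+1,t}\dot z_{i+1,t}^\star=0$, which is \eqref{eq:intermediate-path-derivative}. Backward induction completes the proof.

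I expect the main obstacle to be the endpoint and global-patching issue, together with the pointwise Jacobian bound derived from operator monotonicity. The derivative formulas themselves are routine.
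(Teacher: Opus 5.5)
Your proposal is correct and follows essentially the same route as the paper. You bound the inverses through strong monotonicity of the signed perturbed operators, giving $\|Q_{M,t}^{-1}\|\le(\mu_M-tL_f)^{-1}\le\bar\mu_M^{-1}$ and $\|H_{i,t}^{-1}\|\le\mu_i^{-1}$; you introduce $q_i$ with $q_i'(t)=B_{i+1,t}\dot z_{i+1,t}^\star$; and you differentiate the perturbed stationarity equations. The only difference is that you re-derive the $C^1$ regularity of the paths (implicit-function theorem, uniqueness patching, endpoint extension), whereas the paper simply imports it from Theorem~\ref{thm:zero-radius-recovery}.
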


\begin{lemma}[Uniform regularity of the path derivatives]
\label{lem:path-derivative-x-lipschitz}
Under Assumptions~\ref{ass:implicit-layers} and \ref{ass:outer-function}, and under \eqref{eq:admissible-radius}, let
$C_{\mathrm{path},i}$ be the constants from
Lemma~\ref{lem:perturbation-displacement}. Then, for every
$i=1,\ldots,M$ and $t\in[0,\delta_{\rm adm}]$,
$\|\dot z_{i,t}^{\star}(x)\| \le C_{\mathrm{path},i}.$
Moreover, there exist finite constants $L_{d,i}$, independent of
$t$, such that
$\|\dot z_{i,t}^{\star}(x)-\dot z_{i,t}^{\star}(x')\| \le L_{d,i}\|x-x'\|.$
At the terminal level, one may choose
\[
L_{d,M}
:=
\frac{L_f(1+\widehat L_{z,M})}{\bar\mu_M}
+
\frac{C_f}{\bar\mu_M^2}
\left[
\rho_M(\widehat L_{z,M}+L_{z,M-1})
+
\delta_{\rm adm}\rho_f(1+\widehat L_{z,M})
\right],
\]
and, for $i=M-1,\ldots,1$,
\[
L_{d,i}
:=
\left[
\frac{L_{i+1}\rho_i(\widehat L_{z,i}+L_{z,i-1})}{\mu_i^2}
+
\frac{\rho_{i+1}(\widehat L_{z,i+1}+L_{z,i})}{\mu_i}
\right]C_{\mathrm{path},i+1}
+
\frac{L_{i+1}}{\mu_i}L_{d,i+1}.
\]
\end{lemma}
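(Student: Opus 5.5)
The proof goes by backward induction on $i=M,M-1,\ldots,1$, using the differentiated perturbation equations of Lemma~\ref{lem:differentiated-perturbation}. Two inputs are used throughout: the uniform Lipschitz bounds of Lemma~\ref{lem:perturbed-state-x-lipschitz} for the perturbed states, and $\|z_{i}^\star(x)-z_i^\star(x')\|\le L_{z,i}\|x-x'\|$ for the exact chain.

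\emph{Terminal level.} By \eqref{eq:terminal-path-derivative}, $\|\dot z_{M,t}^\star\|\le\|Q_{M,t}^{-1}\|\,\|\nabla_2 f\|\le C_f/\bar\mu_M=C_{\mathrm{path},M}$. For the Lipschitz bound, split the difference at $x$ and $x'$ as
\[
Q^{-1}g-Q'^{-1}g'=Q^{-1}(g-g')+Q^{-1}(Q'-Q)Q'^{-1}g'.
\]
The first term is at most $\bar\mu_M^{-1}L_f(1+\widehat L_{z,M})\|x-x'\|$, since $\nabla_2 f$ is $L_f$-Lipschitz in $(x,z)$ and $z_{M,t}^\star$ is $\widehat L_{z,M}$-Lipschitz. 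For the second term, bound $\|Q-Q'\|$ by two contributions. The Hessian part gives $\rho_M(\widehat L_{z,M}+L_{z,M-1})\|x-x'\|$, because the arguments of $\nabla_{11}^2\Psi_M$ are $(z_{M,t}^\star,z_{M-1}^\star)$. The term $t\nabla^2_{22}f$ gives $\delta_{\rm adm}\rho_f(1+\widehat L_{z,M})\|x-x'\|$. Multiplying by $\|Q^{-1}\|\,\|Q'^{-1}\|\,\|g'\|\le C_f/\bar\mu_M^2$ reproduces exactly the stated $L_{d,M}$.

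\emph{Inductive step $i<M$.} Assume the claims hold at level $i+1$. Formula \eqref{eq:intermediate-path-derivative} together with $\|H_{i,t}^{-1}\|\le\mu_i^{-1}$, $\|B_{i+1,t}\|\le L_{i+1}$ and the inductive bound gives
\[
\|\dot z_{i,t}^\star\|\le (L_{i+1}/\mu_i)C_{\mathrm{path},i+1}=C_{\mathrm{path},i}.
\]
For the Lipschitz bound, write $\dot z_{i,t}^\star=-H^{-1}B\,w$ with $w=\dot z_{i+1,t}^\star$, and telescope into three pieces:
\[
(H^{-1}-H'^{-1})Bw,\qquad H'^{-1}(B-B')w,\qquad H'^{-1}B'(w-w').
\]
\begin{itemize}
\item For the first piece, use the inverse-difference identity. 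The arguments of $H_{i,t}$ are $(z_{i,t}^\star,z_{i-1}^\star)$, so $\|H-H'\|\le\rho_i(\widehat L_{z,i}+L_{z,i-1})\|x-x'\|$, and the piece is at most $L_{i+1}\rho_i(\widehat L_{z,i}+L_{z,i-1})C_{\mathrm{path},i+1}\mu_i^{-2}\|x-x'\|$.
\item For the second piece, the arguments of $B_{i+1,t}$ are $(z_{i+1,t}^\star,z_i^\star)$, so $\|B-B'\|\le\rho_{i+1}(\widehat L_{z,i+1}+L_{z,i})\|x-x'\|$, giving a contribution of $\rho_{i+1}(\widehat L_{z,i+1}+L_{z,i})C_{\mathrm{path},i+1}\mu_i^{-1}\|x-x'\|$.
\item For the third piece, the inductive Lipschitz bound gives $(L_{i+1}/\mu_i)L_{d,i+1}\|x-x'\|$.
\end{itemize}
Summing the three pieces yields the stated recursion for $L_{d,i}$.

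\emph{Main obstacle.} The delicate point is the uniformity of all bounds in $t\in[0,\delta_{\rm adm}]$, including $t=0$. Every inverse bound ($\bar\mu_M^{-1}$ and $\mu_i^{-1}$) is uniform in $t$ by Lemma~\ref{lem:differentiated-perturbation}. The state Lipschitz constants $\widehat L_{z,i}$ are uniform by Lemma~\ref{lem:perturbed-state-x-lipschitz}. The only explicit $t$-dependence is the factor $t\le\delta_{\rm adm}$ multiplying $\nabla^2_{22}f$. I would therefore state each estimate for arbitrary fixed $t$ and never divide by $t$, so no negative powers of $t$ appear.
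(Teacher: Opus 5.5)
Your proposal is correct and follows the paper's proof essentially step for step. At the terminal level it uses the same split $Q^{-1}(g-g')+(Q^{-1}-Q'^{-1})g'$, with $\|Q-Q'\|$ controlled by Hessian Lipschitz continuity and Lemma~\ref{lem:perturbed-state-x-lipschitz}; your three-piece telescoping for $i<M$ is the paper's argument too, which only groups the first two pieces into a Lipschitz bound on $R_{i,t}:=H_{i,t}^{-1}B_{i+1,t}$ before splitting off $\dot z_{i+1,t}^{\star}$.
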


\begin{proof}[Proof of Lemma~\ref{lem:perturbation-displacement}]
At the terminal level, subtracting the exact stationarity condition
from \eqref{eq:terminal-pert-path-general} and using strong
monotonicity gives
$\mu_M\|z_{M,t}^{\star}-z_M^{\star}\|
\le t\|\nabla_2f(x,z_{M,t}^{\star})\|\le tC_f$.
Thus the sharper displacement-only bound
$\|z_{M,t}^{\star}-z_M^{\star}\|\le (C_f/\mu_M)t$ holds. Since
$\bar\mu_M\le\mu_M$, it also implies the stated common bound with
$C_{\mathrm{path},M}=C_f/\bar\mu_M$, which is convenient because the same
constant controls $\|\dot z_{M,t}^{\star}\|$ uniformly for
$t\in[0,\delta_{\rm adm}]$.

For $i<M$, subtracting the exact stationarity condition from
\eqref{eq:general-intermediate-pert} yields
$\mu_i\|z_{i,t}^{\star}-z_i^{\star}\| \le L_{i+1}\|z_{i+1,t}^{\star}-z_{i+1}^{\star}\|.$
The recursion follows by backward induction.
\end{proof}

\begin{proof}[Proof of Lemma~\ref{lem:averaged-linearization}]
The first identity follows by subtracting the exact stationarity
condition from the perturbed one, dividing by $t$, and applying the
fundamental theorem of calculus along the segment from $z_i^{\star}$ to
$z_{i,t}^{\star}$. The same argument applied to
$\nabla_2\Psi_i$ gives the second identity.

For every $h\in\mathbb R^{d_i}$, strong monotonicity implies
$\langle P_i\overline H_{i,t}h,h\rangle \ge \mu_i\|h\|^2.$
Since $P_i$ is orthogonal, this yields
$\|\overline H_{i,t}^{-1}\|\le\mu_i^{-1}$.
The bound on $\overline B_{i,t}$ follows directly from
Assumption~\ref{ass:implicit-layers}.
\end{proof}

\begin{proof}[Proof of Lemma~\ref{lem:perturbed-state-x-lipschitz}]
Fix $x,x'$ and $t\in[0,\delta_{\rm adm}]$. At the terminal level,
comparison of \eqref{eq:terminal-pert-path-general} at $x$ and $x'$
and $\bar\mu_M$-strong monotonicity give
$\|z_{M,t}^{\star}(x)-z_{M,t}^{\star}(x')\|\le\widehat L_{z,M}\|x-x'\|$ with
$\widehat L_{z,M}:=(L_ML_{z,M-1}+\delta_{\rm adm}L_f)/\bar\mu_M$.

For $i<M$, set $q_i(t,x):=t v_{i,t}^{\star}(x)$. By
\eqref{eq:general-vfd} and $\|\nabla^2\Psi_{i+1}\|\le L_{i+1}$,
$\|q_i(t,x)-q_i(t,x')\|$ is bounded by
$L_{i+1}(\widehat L_{z,i+1}+L_{z,i+1}+2L_{z,i})\|x-x'\|$ whenever the
level-$(i+1)$ bound holds. Comparing the two level-$i$ perturbed equations
and using $\mu_i$-strong monotonicity therefore gives the explicit admissible
recursion
\[
\widehat L_{z,i}
:=
\frac{1}{\mu_i}
\Bigl[
L_iL_{z,i-1}
+
L_{i+1}(\widehat L_{z,i+1}+L_{z,i+1}+2L_{z,i})
\Bigr],
\qquad i=M-1,\ldots,1.
\]
This constant is finite and independent of $t$. Backward induction completes
the proof.
\end{proof}

\begin{proof}[Proof of Lemma~\ref{lem:differentiated-perturbation}]
Theorem~\ref{thm:zero-radius-recovery} gives $C^1$ perturbation
paths on $[0,\delta_{\rm adm}]$.
Strong monotonicity of the signed terminal operator gives
$\|Q_{M,t}^{-1}\|\le(\mu_M-tL_f)^{-1}\le\bar\mu_M^{-1}$;
at nonterminal levels, it gives
$\|H_{i,t}^{-1}\|\le\mu_i^{-1}$.
Differentiating \eqref{eq:terminal-pert-path-general} yields
\eqref{eq:terminal-path-derivative}.
For $i<M$, set $q_i(t,x):=t v_{i,t}^\star(x)$ for $t>0$
and $q_i(0,x):=0$.
By \eqref{eq:general-vfd},
$\partial_tq_i=B_{i+1,t}\dot z_{i+1,t}^\star$,
including at $t=0$ by continuity.
Differentiating \eqref{eq:general-intermediate-pert} then gives
\eqref{eq:intermediate-path-derivative}.
\end{proof}

\begin{proof}[Proof of Lemma~\ref{lem:path-derivative-x-lipschitz}]
At the terminal level, \eqref{eq:terminal-path-derivative} gives
$\dot z_{M,t}^{\star}=-Q_{M,t}^{-1}\nabla_2f(x,z_{M,t}^{\star})$, hence
$\|\dot z_{M,t}^{\star}\|\le C_f/\bar\mu_M=C_{\mathrm{path},M}$. For two outer
points, abbreviate $Q:=Q_{M,t}(x)$, $Q':=Q_{M,t}(x')$,
$g:=\nabla_2f(x,z_{M,t}^{\star}(x))$, and
$g':=\nabla_2f(x',z_{M,t}^{\star}(x'))$. Then
\[
\begin{aligned}
\|\dot z_{M,t}^{\star}(x)-\dot z_{M,t}^{\star}(x')\|
&\le
\|Q^{-1}\|\,\|g-g'\|
+
\|Q^{-1}-Q'^{-1}\|\,\|g'\|\\
&\le
\frac{L_f(1+\widehat L_{z,M})}{\bar\mu_M}\|x-x'\|
+
\frac{C_f}{\bar\mu_M^2}\|Q-Q'\|.
\end{aligned}
\]
The Hessian Lipschitz assumptions and
Lemma~\ref{lem:perturbed-state-x-lipschitz} give
\[
\|Q-Q'\|
\le
\left[
\rho_M(\widehat L_{z,M}+L_{z,M-1})
+
\delta_{\rm adm}\rho_f(1+\widehat L_{z,M})
\right]\|x-x'\|.
\]
Combining the last two displays yields the stated constant $L_{d,M}$.

For $i<M$, write
$R_{i,t}(x):=H_{i,t}(x)^{-1}B_{i+1,t}(x)$, so that
$\dot z_{i,t}^{\star}(x)=-R_{i,t}(x)\dot z_{i+1,t}^{\star}(x)$. The bounds
$\|R_{i,t}(x)\|\le L_{i+1}/\mu_i$ and the induction hypothesis give
\[
\|\dot z_{i,t}^{\star}(x)\|
\le
\frac{L_{i+1}}{\mu_i}C_{\mathrm{path},i+1}
=
C_{\mathrm{path},i}.
\]
Moreover, the inverse-difference identity and Hessian Lipschitz continuity
imply
\[
\|R_{i,t}(x)-R_{i,t}(x')\|
\le
\Gamma_i\|x-x'\|,
\]
where
\[
\Gamma_i
:=
\frac{L_{i+1}\rho_i(\widehat L_{z,i}+L_{z,i-1})}{\mu_i^2}
+
\frac{\rho_{i+1}(\widehat L_{z,i+1}+L_{z,i})}{\mu_i}.
\]
Therefore,
\[
\begin{aligned}
\|\dot z_{i,t}^{\star}(x)-\dot z_{i,t}^{\star}(x')\|
&\le
\|R_{i,t}(x)-R_{i,t}(x')\|\,
\|\dot z_{i+1,t}^{\star}(x)\|+
\|R_{i,t}(x')\|\,
\|\dot z_{i+1,t}^{\star}(x)-\dot z_{i+1,t}^{\star}(x')\|\\
&\le
\left[
\Gamma_i C_{\mathrm{path},i+1}
+
\frac{L_{i+1}}{\mu_i}L_{d,i+1}
\right]\|x-x'\|,
\end{aligned}
\]
which is precisely the stated recursion for $L_{d,i}$. This completes the
backward induction.
\end{proof}

\begin{proof}[Proof of Corollary~\ref{cor:finite-direction-regularity}]
Apply Lemma~\ref{lem:path-derivative-x-lipschitz} to the integral
representation of $d_{i,t}^\star$ given in
Section~\ref{subsec:finite-radius-regularity}.
\end{proof}

\begin{proof}[Proof of Corollary~\ref{cor:finite-adjoint-regularity}]
For $i=M$, the result follows directly from
$v_{M,t}^{\star}(x)=\nabla_2f(x,z_{M,t}^{\star}(x))$ and
Lemma~\ref{lem:perturbed-state-x-lipschitz}. For $i<M$, subtract the two
integral representations \eqref{eq:adjoint-path-integral}. The Hessian
Lipschitz bound gives
$\|B_{i+1,s}(x)-B_{i+1,s}(x')\|\le
\rho_{i+1}(\widehat L_{z,i+1}+L_{z,i})\|x-x'\|$, while
Lemma~\ref{lem:path-derivative-x-lipschitz} gives the uniform bounds on
$\dot z_{i+1,s}^{\star}$ and its variation. Integrating yields precisely the
stated constant $L_{v,i}$.
\end{proof}





\subsection{Local inexact-solve tracking}

\begin{proof}[Proof of Lemma~\ref{lem:primal-tracking}]
For $i=1$, the local unperturbed target is
$S_1(x^k)=z_1^{\star}(x^k)$.  Hence contraction of the $N_1$ inner steps gives
\[
\mathcal E_1^{k+1}
\le
\chi_1\|z_1^k-z_1^{\star}(x^k)\|^2
\le
2\chi_1\mathcal E_1^k
+2\chi_1L_{z,1}^2\|s^k\|^2.
\]

Let $i\ge2$ and define the updated-input local target
$\widehat z_i^{k,*}:=S_i(z_{i-1}^{k+1})=S_i(\bar z_{i-1}^k)$.
Contraction gives
$\|z_i^{k+1}-\widehat z_i^{k,*}\|^2
\le\chi_i\|z_i^k-\widehat z_i^{k,*}\|^2$,
while Proposition~\ref{prop:response-regularity} yields
$\|\widehat z_i^{k,*}-z_i^{\star}(x^k)\|^2
\le\kappa_i^2\mathcal E_{i-1}^{k+1}$.
Moreover,
\[
\begin{aligned}
\|z_i^k-\widehat z_i^{k,*}\|^2
&\le
2\mathcal E_i^k
+2\|z_i^{\star}(x^{k-1})-\widehat z_i^{k,*}\|^2
\\
&\le
2\mathcal E_i^k
+4L_{z,i}^2\|s^k\|^2
+4\kappa_i^2\mathcal E_{i-1}^{k+1}.
\end{aligned}
\]
Combining the last three displays with
$\|a+b\|^2\le2\|a\|^2+2\|b\|^2$ gives
\[
\mathcal E_i^{k+1}
\le
4\chi_i\mathcal E_i^k
+2\kappa_i^2(1+4\chi_i)\mathcal E_{i-1}^{k+1}
+8\chi_iL_{z,i}^2\|s^k\|^2,
\]
which proves the claim.
\end{proof}

\begin{proof}[Proof of Lemma~\ref{lem:local-direction-reference}]
For $i<M$, set
$u:=\bar z_{i-1}^k$, $u^{\star}:=z_{i-1}^{\star}(x^k)$,
$\widehat d:=\widehat d_i^{k,*}$, and $d^{\star}:=d_i^{\star}(x^k)$.  If
$\widehat H$ and $H^{\star}$ denote the averaged Hessians along the local and exact
perturbation segments, then
$\widehat H\widehat d+v_{i,\delta}^k=0$ and
$H^{\star}d^{\star}+v_{i,\delta}^{\star}(x^k)=0$.
Using $\|\widehat H^{-1}\|\le\mu_i^{-1}$,
$\|d^{\star}\|\le C_{\mathrm{path},i}$, the Lipschitz estimate
$\|S_i(u)-S_i(u^{\star})\|\le\kappa_i\|u-u^{\star}\|$, and Hessian Lipschitz continuity
first gives
\[
\begin{aligned}
\mu_i\|\widehat d-d^{\star}\|
\le{}&
\|v_{i,\delta}^k-v_{i,\delta}^{\star}(x^k)\|
+\rho_iC_{\mathrm{path},i}(1+\kappa_i)\|u-u^{\star}\|
+\delta\rho_iC_{\mathrm{path},i}\|\widehat d-d^{\star}\|.
\end{aligned}
\]
The nonterminal condition in \eqref{eq:tracking-radius-condition} gives
$\delta\rho_iC_{\mathrm{path},i}\le\mu_i/2$, so the last term is absorbed and
we obtain
\[
\|\widehat d-d^{\star}\|
\le
\frac{2}{\mu_i}
\|v_{i,\delta}^k-v_{i,\delta}^{\star}(x^k)\|
+
\frac{2\rho_iC_{\mathrm{path},i}(1+\kappa_i)}{\mu_i}
\|u-u^{\star}\|.
\]
Squaring proves \eqref{eq:forward-local-direction-reference}.  At the
terminal level, the same comparison gives
\[
\begin{aligned}
\mu_M\|\widehat d_M^{k,*}-d_M^{\star}(x^k)\|
\le{}&
\left[\rho_MC_{\mathrm{path},M}(1+\kappa_M)+L_f\kappa_M\right]
\|u-u^{\star}\|
\\
&+\delta(\rho_MC_{\mathrm{path},M}+L_f)
\|\widehat d_M^{k,*}-d_M^{\star}(x^k)\|.
\end{aligned}
\]
The terminal condition in \eqref{eq:tracking-radius-condition} absorbs the
final term and gives the stated $C_{d,u,M}$. For $i\ge2$,
$\|u-u^{\star}\|^2=\mathcal E_{i-1}^{k+1}$; when $i=1$, the input is exact and the
corresponding input-mismatch term is zero. This proves all claims.
\end{proof}

\begin{proof}[Proof of Lemma~\ref{lem:delta-uniformity}]
Fix $\delta_{\rm adm}$ satisfying
\eqref{eq:admissible-radius} and \eqref{eq:tracking-radius-condition}. The exact
perturbation bounds are already uniform on $[0,\delta_{\rm adm}]$ by
Lemmas~\ref{lem:perturbation-displacement},
\ref{lem:perturbed-state-x-lipschitz}, and
\ref{lem:path-derivative-x-lipschitz}, Corollary~\ref{cor:finite-adjoint-regularity},
and Theorem~\ref{thm:finite-radius-bias}.  At the terminal layer,
$\widetilde\mu_M\ge\bar\mu_M$ and
$\widetilde L_M\le\bar L_M$.  Hence the stated worst-case signed-gradient
stepsize gives
$\widetilde\vartheta_M
\le1-2\eta_M\bar\mu_M+\eta_M^2\bar L_M^2<1$,
and the minimization choice gives the uniform squared-distance factor
$(1-\bar\mu_M/\bar L_M)^2<1$.  The nonterminal factors are independent of
$\delta$.  This proves the uniform contraction bounds and therefore the
existence of $\delta$-independent budgets $N_i$.

The explicit coefficients in
Appendix~\ref{app:auxiliary-tracking-details} depend only on these uniform
regularity constants, fixed inner stepsizes, contraction factors, and
nonnegative powers of $\delta$.  Moreover,
$\theta_i-\overline\vartheta_i=1-\theta_i=(1-\overline\vartheta_i)/2$, so the
uniform contraction gap keeps every denominator bounded away from zero.
Hence all tracking coefficients are uniformly bounded.  Increasing the fixed
budgets if necessary gives uniform margins
$\sup_\delta\tau_i<1$ and $\sup_\delta\tau_{d,i}<1$.
\end{proof}

\begin{proof}[Proof of Lemma~\ref{lem:Gamma-uniformity}]
Fix the $\delta$-independent inner stepsizes and budgets from
Lemma~\ref{lem:delta-uniformity}.  All coefficients below are nonnegative.
For every coefficient
$c(\delta)\in\{b_i^E,b_i^Z,\lambda_i,\gamma_i,
\beta_{d,i},\lambda_{d,i},\psi_{d,i},\psi'_{d,i},\gamma_{d,i}\}$,
write $c^{\rm adm}:=\sup_{0<\delta\le\delta_{\rm adm}}c(\delta)$.
Lemma~\ref{lem:delta-uniformity}, together with the explicit coefficient
formulas in Appendix~\ref{app:auxiliary-tracking-details}, implies that all
these suprema are finite.  The uniform budget conditions imposed in Section~\ref{sec:convergence}
give constants $\bar\tau_i,\bar\tau_{d,i}<1$ such that
\begin{equation}
\label{eq:uniform-tau-majorants}
\tau_i(\delta)\le\bar\tau_i<1,
\qquad
\tau_{d,i}(\delta)\le\bar\tau_{d,i}<1
\end{equation}
for every admissible $\delta$.

We now construct upper bounds independent of $\delta$ in the same acyclic order as
the Lyapunov weights.  Define
\begin{equation}
\label{eq:uniform-U-majorants}
U_1
:=
\frac{(b_1^Z)^{\rm adm}}{1-\bar\tau_{d,1}},
\qquad
U_i
:=
\frac{(b_i^Z)^{\rm adm}+(\beta_{d,i-1})^{\rm adm}U_{i-1}}
     {1-\bar\tau_{d,i}},
\quad i=2,\ldots,M.
\end{equation}
Because the coefficients are nonnegative, induction in increasing $i$ gives
\begin{equation}
\label{eq:u-uniform-majorization}
\widetilde u_i(\delta)\le U_i,
\qquad
0<\delta\le\delta_{\rm adm}.
\end{equation}
Indeed, the claim for $i=1$ follows directly from
\eqref{eq:auxiliary-lyapunov-weights-first} and
\eqref{eq:uniform-tau-majorants}; the recursion
\eqref{eq:auxiliary-lyapunov-weights} and the induction hypothesis give the
remaining levels.

For the boundary indices, use
$U_0=U_{M+1}=(\lambda_{d,0})^{\rm adm}
=(\psi_{d,M+1})^{\rm adm}:=0$.
Next define the uniform primal coefficients
\begin{equation}
\label{eq:uniform-BE-majorants}
B_i^E
:=
(b_i^E)^{\rm adm}
+(\lambda_{d,i})^{\rm adm}U_i
+\mathbf1_{\{i\ge2\}}(\lambda_{d,i-1})^{\rm adm}U_{i-1}
+\mathbf1_{\{i\le M-1\}}(\psi_{d,i+1})^{\rm adm}U_{i+1}.
\end{equation}
Then \eqref{eq:effective-primal-coefficient} and
\eqref{eq:u-uniform-majorization} imply
$\widehat b_i^E(\delta)\le B_i^E$ uniformly.  Set
$W_{M+1}:=0$ and $(\lambda_{M+1})^{\rm adm}:=0$, and recursively for
$i=M,M-1,\ldots,1$ define
\begin{equation}
\label{eq:uniform-W-majorants}
W_i
:=
\frac{
 B_i^E
 +(\lambda_{i+1})^{\rm adm}W_{i+1}
 +(\psi'_{d,i})^{\rm adm}U_i
}{1-\bar\tau_i}.
\end{equation}
A backward induction using
\eqref{eq:forward-primal-load}--\eqref{eq:primal-lyapunov-weights} yields
\begin{equation}
\label{eq:omega-uniform-majorization}
\widetilde\omega_i(\delta)\le W_i,
\qquad
0<\delta\le\delta_{\rm adm}.
\end{equation}

Finally, define
\begin{equation}
\label{eq:Gamma-explicit-uniform-majorant}
\bar\Gamma_{\rm adm}
:=
\sum_{i=1}^M
\left[
 W_i(\gamma_i)^{\rm adm}
 +U_i(\gamma_{d,i})^{\rm adm}
\right].
\end{equation}
Every quantity on the right-hand side is finite, and
\eqref{eq:Gamma-definition}, \eqref{eq:u-uniform-majorization}, and
\eqref{eq:omega-uniform-majorization} imply
\[
0\le\Gamma(\delta)\le\bar\Gamma_{\rm adm}
\qquad
\text{for every }0<\delta\le\delta_{\rm adm}.
\]
This proves \eqref{eq:Gamma-uniform-majorant}.
\end{proof}



\section{Proof of the coupled tracking recursion}
\label{app:auxiliary-tracking-details}

This appendix gives one explicit admissible coefficient choice for
Theorem~\ref{thm:auxiliary-tracking}, under
\eqref{eq:admissible-radius} and \eqref{eq:tracking-radius-condition} and the
contraction conditions of Lemma~\ref{lem:inner-contraction}. Define
\[
C_{\mathrm{dec},i}
:=
\frac{32\eta_i^2\theta_i}
{(\theta_i-\overline\vartheta_i)^2}
\bigl(\rho_i^2C_{\mathrm{path},i}^2+L_f^2\mathbf1_{\{i=M\}}\bigr),
\qquad
C_{\mathrm{ref},i}
:=
\frac{16\eta_i^2\theta_iL_i^2}
{(\theta_i-\overline\vartheta_i)(1-\theta_i)}.
\]
Recall that, for $i<M$,
$\mathcal R_{i-1}^k=C_{d,u,i}\mathcal E_{i-1}^{k+1}
+C_{d,v,i}\|v_{i,\delta}^k-v_{i,\delta}^{\star}(x^k)\|^2$,
whereas $\mathcal R_{M-1}^k
=C_{d,u,M}\mathcal E_{M-1}^{k+1}$ at the terminal level.
An admissible coefficient choice is
\begin{equation}
\label{eq:forward-aux-coefficients-1}
\tau_{d,i}
:=
4\chi_{d,i}(1+C_{\mathrm{dec},i}\delta^2),
\qquad
\psi'_{d,i}
:=
8C_{\mathrm{dec},i}\chi_{d,i},
\end{equation}
\begin{equation}
\label{eq:forward-aux-coefficients-2}
\beta_{d,i}
:=
2(\tau_{d,i}+C_{\mathrm{ref},i}+1)
C_{d,v,i}C_{vZ,i+1},
\qquad i=1,\ldots,M-1,
\end{equation}
\begin{equation}
\label{eq:forward-aux-coefficients-3}
\lambda_{d,i}
:=
2(\tau_{d,i}+C_{\mathrm{ref},i}+1)
C_{d,v,i}C_{vE,i+1},
\qquad i<M,
\end{equation}
with $\beta_{d,M}=\lambda_{d,M}:=0$, and
\begin{equation}
\label{eq:forward-aux-coefficients-4}
\psi_{d,i}
:=
2\mathbf1_{\{i\ge2\}}
(\tau_{d,i}+C_{\mathrm{ref},i}+1)C_{d,u,i}
+8\mathbf1_{\{i\ge2\}}\kappa_i^2C_{\mathrm{dec},i}\chi_{d,i},
\end{equation}
\begin{equation}
\label{eq:forward-aux-coefficients-5}
\gamma_{d,i}
:=
2\tau_{d,i}L_{d,i}^2
+8C_{\mathrm{dec},i}\chi_{d,i}L_{z,i}^2.
\end{equation}

\begin{proof}[Proof of Theorem~\ref{thm:auxiliary-tracking}]
Fix $i$ and $k$.  Although Algorithm~\ref{alg:remid} executes the
unperturbed and perturbed loops in different sweeps, define their paired inner
sequences using the common input $\bar z_{i-1}^k$ and set
$d_i^{k,\ell}:=[z_{i,\delta}^{k,\ell}-z_i^{k,\ell}]/\delta$.
Let $\overline H_i^{k,\ell}$ be the Hessian of $\Psi_i$ averaged along the
segment joining the two level-$i$ inner iterates.  With
\[
r_i^{k,\ell}
:=
\begin{cases}
v_{i,\delta}^k,&i<M,\\
\nabla_2f(x^k,z_{M,\delta}^{k,\ell}),&i=M,
\end{cases}
\]
subtracting the unperturbed and perturbed updates gives
\[
d_i^{k,\ell+1}
=
d_i^{k,\ell}
-\eta_iP_i\overline H_i^{k,\ell}d_i^{k,\ell}
-\eta_iP_ir_i^{k,\ell}.
\]
Define analogously $\widehat H_i^{k,*}$ on the local exact
segment and set
\[
\widehat r_i^{k,*}:=
\begin{cases}
v_{i,\delta}^k,&i<M,\\
\nabla_2f(x^k,\widehat z_{M,\delta}^{k,*}),&i=M.
\end{cases}
\]
The local stationarity equation is
$\widehat H_i^{k,*}\widehat d_i^{k,*}+\widehat r_i^{k,*}=0$.
Thus, with $e_i^{k,\ell}:=d_i^{k,\ell}-\widehat d_i^{k,*}$,
\begin{equation}
\label{eq:aux-inner-error-dynamics}
e_i^{k,\ell+1}
=
(I-\eta_iP_i\overline H_i^{k,\ell})e_i^{k,\ell}
-\eta_iP_i\Xi_i^{k,\ell},
\end{equation}
where
\[
\Xi_i^{k,\ell}
:=
(\overline H_i^{k,\ell}-\widehat H_i^{k,*})\widehat d_i^{k,*}
+(r_i^{k,\ell}-\widehat r_i^{k,*}).
\]
Lemma~\ref{lem:inner-contraction} and Young's inequality give
\begin{equation}
\label{eq:aux-inner-one-step}
\|e_i^{k,\ell+1}\|^2
\le
\theta_i\|e_i^{k,\ell}\|^2
+
\frac{\eta_i^2\theta_i}
{\theta_i-\overline\vartheta_i}
\|\Xi_i^{k,\ell}\|^2.
\end{equation}

To avoid a circular bound on $\widehat d_i^{k,*}$, decompose
$(\overline H_i^{k,\ell}-\widehat H_i^{k,*})\widehat d_i^{k,*}$ into the
same operator applied to $d_i^{\star}(x^k)$ and to
$\widehat d_i^{k,*}-d_i^{\star}(x^k)$.  The first term uses
$\|d_i^{\star}(x^k)\|\le C_{\mathrm{path},i}$, while the second is controlled by
$\mathcal R_{i-1}^k$.  The unperturbed and perturbed trajectories contract
toward their respective local targets with factor
$\overline\vartheta_i^\ell$; in particular,
\[
\|z_{i,\delta}^k-\widehat z_{i,\delta}^{k,*}\|^2
\le
2\|z_i^k-\widehat z_i^{k,*}\|^2
+2\delta^2\|d_i^k-\widehat d_i^{k,*}\|^2.
\]
Hessian Lipschitz continuity, together with the terminal Lipschitz bound for
$\nabla_2f$, controls the residual as follows. Write
$a_\ell:=\|z_i^{k,\ell}-\widehat z_i^{k,*}\|^2$ and
$b_\ell:=\|z_{i,\delta}^{k,\ell}-\widehat z_{i,\delta}^{k,*}\|^2$.
Then
$\|\overline H_i^{k,\ell}-\widehat H_i^{k,*}\|
\le\rho_i(\sqrt{a_\ell}+\sqrt{b_\ell})$ and also
$\|\overline H_i^{k,\ell}-\widehat H_i^{k,*}\|\le2L_i$.
Applying the first bound to $d_i^\star(x^k)$ and the second to
$\widehat d_i^{k,*}-d_i^\star(x^k)$ gives
\[
\|\Xi_i^{k,\ell}\|^2
\le
8\bigl(\rho_i^2C_{\mathrm{path},i}^2
+L_f^2\mathbf1_{\{i=M\}}\bigr)(a_\ell+b_\ell)
+16L_i^2\mathcal R_{i-1}^k.
\]
Since
$a_\ell+b_\ell\le\overline\vartheta_i^\ell(a_0+b_0)$ and
$b_0\le2a_0+2\delta^2\|d_i^k-\widehat d_i^{k,*}\|^2$,
we obtain
\begin{equation}
\label{eq:aux-residual-bound}
\begin{aligned}
\|\Xi_i^{k,\ell}\|^2
\le{}&
32(\rho_i^2C_{\mathrm{path},i}^2+L_f^2\mathbf1_{\{i=M\}})
\overline\vartheta_i^\ell
\bigl(
\|z_i^k-\widehat z_i^{k,*}\|^2
+\delta^2\|d_i^k-\widehat d_i^{k,*}\|^2
\bigr)
\\
&+16L_i^2\mathcal R_{i-1}^k.
\end{aligned}
\end{equation}
Unrolling \eqref{eq:aux-inner-one-step}, using
\[
\sum_{\ell=0}^{N_i-1}
\theta_i^{N_i-1-\ell}\overline\vartheta_i^\ell
\le
\frac{\chi_{d,i}}{\theta_i-\overline\vartheta_i},
\qquad
\sum_{\ell=0}^{N_i-1}\theta_i^{N_i-1-\ell}
\le
\frac{1}{1-\theta_i},
\]
yields
\begin{equation}
\label{eq:aux-local-inner-bound}
\begin{aligned}
\|d_i^{k+1}-\widehat d_i^{k,*}\|^2
\le{}&
\chi_{d,i}(1+C_{\mathrm{dec},i}\delta^2)
\|d_i^k-\widehat d_i^{k,*}\|^2
+
C_{\mathrm{dec},i}\chi_{d,i}
\|z_i^k-\widehat z_i^{k,*}\|^2
+C_{\mathrm{ref},i}\mathcal R_{i-1}^k.
\end{aligned}
\end{equation}

The change in the exact scaled response difference and the local-reference error satisfy
\[
\|d_i^k-\widehat d_i^{k,*}\|^2
\le
2\mathcal Z_i^k
+4\mathcal R_{i-1}^k
+4L_{d,i}^2\|s^k\|^2.
\]
For the unperturbed warm start, the updated-input target gives the simpler bound
\begin{equation}
\label{eq:aux-nominal-warmstart}
\|z_i^k-\widehat z_i^{k,*}\|^2
\le
4\mathcal E_i^k
+4\mathbf1_{\{i\ge2\}}\kappa_i^2\mathcal E_{i-1}^{k+1}
+4L_{z,i}^2\|s^k\|^2.
\end{equation}
Indeed, insert $z_i^{\star}(x^{k-1})$ and $z_i^{\star}(x^k)$ between the two terms and
use
$\|\widehat z_i^{k,*}-z_i^{\star}(x^k)\|
\le\kappa_i\sqrt{\mathcal E_{i-1}^{k+1}}$.

Both temporal-drift terms retain an inner contraction factor.
In contrast, $\mathcal R_{i-1}^k$ contains only current-point spatial
errors. Thus no uncontracted input-drift term enters $\gamma_{d,i}$,
which is essential for the condition-number bookkeeping below.

Combining \eqref{eq:aux-local-inner-bound} with
$\mathcal Z_i^{k+1}
\le2\|d_i^{k+1}-\widehat d_i^{k,*}\|^2+2\mathcal R_{i-1}^k$
gives
\begin{equation}
\label{eq:aux-pre-residual-substitution}
\begin{aligned}
\mathcal Z_i^{k+1}
\le{}&
\tau_{d,i}\mathcal Z_i^k
+2(\tau_{d,i}+C_{\mathrm{ref},i}+1)\mathcal R_{i-1}^k
+
8C_{\mathrm{dec},i}\chi_{d,i}\mathcal E_i^k
+8\mathbf1_{\{i\ge2\}}\kappa_i^2
C_{\mathrm{dec},i}\chi_{d,i}\mathcal E_{i-1}^{k+1}
\\
&+
\left(
2\tau_{d,i}L_{d,i}^2
+8C_{\mathrm{dec},i}\chi_{d,i}L_{z,i}^2
\right)\|s^k\|^2.
\end{aligned}
\end{equation}
For $i<M$, Lemma~\ref{lem:finite-adjoint-propagation} gives
\[
\begin{aligned}
\mathcal R_{i-1}^k
\le{}&
C_{d,u,i}\mathcal E_{i-1}^{k+1}
+C_{d,v,i}C_{vZ,i+1}\mathcal Z_{i+1}^{k+1}
+
C_{d,v,i}C_{vE,i+1}
(\mathcal E_{i+1}^{k+1}+\mathcal E_i^{k+1}),
\end{aligned}
\]
while at $i=M$ only the first term remains because $C_{d,v,M}=0$.
Substituting this estimate into
\eqref{eq:aux-pre-residual-substitution} gives
\eqref{eq:auxiliary-tracking-recursion} with
\eqref{eq:forward-aux-coefficients-1}--\eqref{eq:forward-aux-coefficients-5}.
\end{proof}

\section{Condition-number bounds for minimization chains}
\label{app:lyapunov-details}

This appendix verifies the conditioning orders used in
Theorem~\ref{thm:minimization-complexity}.  We work at fixed depth $M\ge2$ under
\eqref{eq:single-kappa-regularity}.  Constants may depend on $M$ but not on
$\kappa$.  Shrink the admissible parameter independently of the target accuracy so
that
\[
\delta_{\rm adm}
\le c_{\rm adm}\kappa^{-(2M+1)},
\]
for a sufficiently small constant $c_{\rm adm}>0$, and choose
\[
\chi_{d,i}\le c\kappa^{-(4M+4)},
\qquad i=1,\ldots,M,
\]
for a sufficiently small constant $c>0$.  Since
$\vartheta_i\le\overline\vartheta_i<\theta_i$, this also gives
$\chi_i\le\chi_{d,i}$.  All bounds below are uniform for
$0<\delta\le\delta_{\rm adm}$.

\begin{proposition}[Condition-number bounds for the forward response chain]
\label{prop:minimization-kappa-bookkeeping}
Suppose $M\ge2$.  Under the preceding choices,
\begin{equation}
\label{eq:appendix-main-orders}
C_{\mathrm{bias},0}=O(\kappa^{2M+1}),
\qquad
L_\Phi=O(\kappa^{2M+1}),
\qquad
\bar\Gamma_{\rm adm}=O(\kappa^{2M-2}),
\end{equation}
where $\bar\Gamma_{\rm adm}$ is the uniform majorant from
Lemma~\ref{lem:Gamma-uniformity}.  In particular, the bound on the Lyapunov
coefficient is uniform over the whole admissible parameter interval.
\end{proposition}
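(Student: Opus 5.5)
The plan is direct bookkeeping through the explicit recursions, tracking powers of $\kappa$ under \eqref{eq:single-kappa-regularity}. Here $L_i,\rho_i,L_f,C_f,\rho_f=O(1)$, $\mu_i=\Theta(\kappa^{-1})$, and $\kappa_i=L_i/\mu_i\le\kappa$. We first record the basic orders. The tracking restriction gives $\bar\mu_M\ge\mu_M/2$, so $C_{\mathrm{path},M}=C_f/\bar\mu_M=O(\kappa)$. The recursion $C_{\mathrm{path},i}=(L_{i+1}/\mu_i)C_{\mathrm{path},i+1}$ then gives $C_{\mathrm{path},i}=O(\kappa^{M-i+1})$, and in particular $C_{\mathrm{path},1}=O(\kappa^M)$. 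We also have $L_{z,i}\le\kappa^i$ and $L_{J_i}=\rho_i(1+\kappa_i)^2/\mu_i=O(\kappa^3)$.

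\textbf{Bias and smoothness.} For $C_{\mathrm{bias},0}$ we unroll the recursion of Theorem~\ref{thm:finite-radius-bias}:
\[
C_{\mathrm{bias},0}=\Bigl(\prod_{j=1}^M\kappa_j\Bigr)C_{\mathrm{bias},M}+\sum_{i=1}^M\Bigl(\prod_{j<i}\kappa_j\Bigr)\frac{\rho_iC_{\mathrm{path},i}^2}{2}(1+\kappa_i).
\]
The first term is $\kappa^M\cdot O(\kappa)$. The $i$th summand is $\kappa^{i-1}\cdot\kappa^{2(M-i+1)}\cdot\kappa=\kappa^{2M-i+2}$, which is largest at $i=1$ with value $\kappa^{2M+1}$. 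Hence $C_{\mathrm{bias},0}=O(\kappa^{2M+1})$.

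For $L_\Phi$ we use Lemma~\ref{lem:outer-objective-smoothness}. The term $L_f(1+L_{z,M})^2$ is $O(\kappa^{2M})$. The $r$th summand of the second term is $L_{J_r}L_{z,r-1}^2\kappa^{M-r}=\kappa^3\kappa^{2r-2}\kappa^{M-r}=\kappa^{M+r+1}$. Its maximum is at $r=M$, giving $\kappa^{2M+1}$, so $L_\Phi=O(\kappa^{2M+1})$.

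\textbf{Lyapunov coefficient.} This is the main obstacle. We follow the majorant construction in the proof of Lemma~\ref{lem:Gamma-uniformity}, bounding $U_i$, $B_i^E$, $W_i$ and then $\bar\Gamma_{\rm adm}=\sum_i(W_i\gamma_i^{\rm adm}+U_i\gamma_{d,i}^{\rm adm})$.
\begin{itemize}
\item The weights $b_1^Z=2C_{vZ,1}=O(1)$ and $b^E=O(\kappa^{2M})$, the latter from $C_{vE,1}=6\rho_1^2C_{\mathrm{path},1}^2$.
\item For the contraction constants, $\eta_i=\Theta(1)$ and $1-\theta_i\gtrsim\kappa^{-1}$. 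Hence $C_{\mathrm{dec},i}=O(\kappa^{2}C_{\mathrm{path},i}^2)$ and $C_{\mathrm{ref},i}=O(\kappa^2)$.
\item The local target constants are $C_{d,v,i}=O(\kappa^2)$ and $C_{d,u,i}=O(\kappa^{4}C_{\mathrm{path},i}^2)$.
\end{itemize}
The key point is that every coefficient carrying a factor $\chi_{d,i}$ or $\chi_i$ is made $O(1)$, or smaller, by the requirement $\chi_{d,i}\le c\kappa^{-(4M+4)}$. These coefficients are $\tau_{d,i}$, $\psi'_{d,i}$, the $C_{\mathrm{dec}}$-part of $\psi_{d,i}$, $\gamma_i=8\chi_iL_{z,i}^2$, and the second term of $\gamma_{d,i}$. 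Moreover, $\delta\le\delta_{\rm adm}\le c_{\rm adm}\kappa^{-(2M+1)}$ keeps $C_{vZ,i}=O(1)$ and $\tau_{d,i}<1$ uniformly.

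The uncontracted couplings are $\beta_{d,i}$, $\lambda_{d,i}$ and the $C_{d,u}$-part of $\psi_{d,i}$, each carrying $C_{\mathrm{ref},i}C_{d,v,i}$ or $C_{\mathrm{ref},i}C_{d,u,i}$. These produce polynomial growth in the weights $U_i$ and $W_i$. The delicate step is to check that, when multiplied by the drift coefficients $\gamma_i$ and $\gamma_{d,i}$, the total stays $O(\kappa^{2M-2})$. The term $\gamma_{d,i}\ni2\tau_{d,i}L_{d,i}^2$ does carry a contraction factor, since $\tau_{d,i}=O(\chi_{d,i})$. So it suffices to show $U_i,W_i=\kappa^{O(M)}$ with exponent at most $4M+4+(2M-2)$ minus the sizes of $L_{d,i}^2$ and $L_{z,i}^2$.

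To carry this out we will first bound $L_{d,i}$ and $\widehat L_{z,i}$ polynomially, using Lemmas~\ref{lem:perturbed-state-x-lipschitz} and \ref{lem:path-derivative-x-lipschitz}. Then we verify the exponent inequality level by level in the acyclic order of the weight construction. If the exponents do not close as stated, the fallback is to enlarge the required contraction exponent, since $\chi_{d,i}$ enters $\bar\Gamma_{\rm adm}$ only multiplicatively. The final claim $\bar\Gamma_{\rm adm}=O(\kappa^{2M-2})$ then follows by collecting the largest surviving power.
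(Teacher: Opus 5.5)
\textbf{Gap: the bound on $\bar\Gamma_{\rm adm}$ is not proved.} Your estimates for $C_{\mathrm{bias},0}$ and $L_\Phi$ are correct and follow the paper's route. For the bias you unroll the recursion, with the maximum at $i=1$; for $L_\Phi$ you use the formula of Lemma~\ref{lem:outer-objective-smoothness}, with the maximum at $r=M$. The gap is in the third claim, which you yourself flag as the main obstacle. You describe a strategy but never compute the weight orders, so $\bar\Gamma_{\rm adm}=O(\kappa^{2M-2})$ is asserted, not proved.

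The qualitative remark that the weights are $\kappa^{O(M)}$ while the drift coefficients carry a factor $\chi$ cannot settle it. Under $\chi_{d,i}\le c\kappa^{-(4M+4)}$ the exponent $2M-2$ is attained with no slack: your own sufficient condition for the primal weights, $W_i\lesssim\kappa^{6M+2-2i}$, holds with equality. The missing bookkeeping is as follows.
\begin{enumerate}
\item Since $b_1^Z=O(1)$ and $\beta_{d,i}=O(\kappa^4)$, one gets $\widetilde u_i=O(\kappa^{4(i-1)})$.
\item The coupling $\psi_{d,i}=O(\kappa^{2(M-i)+8})$ comes from $C_{\mathrm{ref},i}C_{d,u,i}$. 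It makes $\psi_{d,i+1}\widetilde u_{i+1}=O(\kappa^{2M+2i+6})$ the dominant part of $\widehat b_i^E$ for $i<M$. At the terminal level, $\widehat b_M^E=O(\kappa^{4M-2})$ comes from $\lambda_{d,M-1}\widetilde u_{M-1}$.
\item With $\lambda_i=O(\kappa^2)$ and $\tau_i,\tau_{d,i}=O(\kappa^{-(4M+4)})$, the backward recursion gives $\widetilde\omega_M=O(\kappa^{4M-2})$. For $i<M$ it gives $\widetilde\omega_i=O(\kappa^{6M-2i+2})$; below level $M-1$ the propagated load $\lambda_{i+1}\widetilde\omega_{i+1}$ dominates.
\item Since $\gamma_i=8\chi_iL_{z,i}^2=O(\kappa^{-4M-4+2i})$, this yields $\widetilde\omega_i\gamma_i=O(\kappa^{2M-2})$ exactly.
\item On the auxiliary side you need $\widehat L_{z,i}=O(\kappa^{2M-i})$ and $L_{d,i}=O(\kappa^{3M-2i+2})$. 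Then $\gamma_{d,i}=O(\kappa^{2M-4i})$ and $\widetilde u_i\gamma_{d,i}=O(\kappa^{2M-4})$.
\end{enumerate}
Only with these orders in hand does the majorant construction of Lemma~\ref{lem:Gamma-uniformity} give the claim uniformly in $\delta$.

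\textbf{The fallback does not close the gap.} The proposition asserts the order under the fixed requirement $\chi_{d,i}\le c\kappa^{-(4M+4)}$. Every term of $\Gamma$ carries a factor $\chi_i$ or $\chi_{d,i}$. Strengthening the contraction exponent would therefore shrink $\bar\Gamma_{\rm adm}$ trivially, but it changes the hypothesis instead of verifying that the stated exponent $4M+4$ suffices.

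\textbf{A smaller point.} The regime only gives $L_i=O(1)$, so $\eta_i=1/L_i=\Theta(1)$ is not implied. Your bound $C_{\mathrm{dec},i}=O(\kappa^2C_{\mathrm{path},i}^2)$ should instead come from the cancellation $\eta_i/(1-\overline\vartheta_i)\le\mu_i^{-1}$, using $\widetilde\mu_M\ge\mu_M/2$ at the terminal level. This is how the paper obtains it.
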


\begin{proof}
The exact-response and exact-perturbation quantities are independent of the
inexact-solve scheduling.  The sensitivity recursions of
Section~\ref{sec:perturbation} give
\begin{equation}
\label{eq:appendix-sensitivity-orders}
\begin{aligned}
L_{z,i}=O(\kappa^i),
\qquad &
C_{\mathrm{path},i}=O(\kappa^{M-i+1}),
\qquad &
\widehat L_{z,i}=O(\kappa^{2M-i}),
\\
L_{d,i}=O(\kappa^{3M-2i+2}),
\qquad &
L_{v,i}=O(\kappa^{3M-2i}).
\end{aligned}
\end{equation}
For completeness, these orders follow by direct induction.  First,
$L_{z,i}=O(\kappa^i)$.  The terminal bounds
$C_{\mathrm{path},M}=O(\kappa)$ and
$\widehat L_{z,M}=O(\kappa^M)$, followed by their respective backward
recursions, give
$C_{\mathrm{path},i}=O(\kappa^{M-i+1})$ and
$\widehat L_{z,i}=O(\kappa^{2M-i})$.  Substitution into the terminal formula
for $L_{d,M}$ gives $L_{d,M}=O(\kappa^{M+2})$; the recursion for $L_{d,i}$
then adds two powers at each preceding level and yields
$L_{d,i}=O(\kappa^{3M-2i+2})$.  Finally, the formula for $L_{v,M}$ and its
backward recursion give $L_{v,i}=O(\kappa^{3M-2i})$.

The perturbation bias recursion gives
$C_{\mathrm{bias},0}=O(\kappa^{2M+1})$, while
Lemma~\ref{lem:outer-objective-smoothness} gives
$L_\Phi=O(\kappa^{2M+1})$.

The local finite-difference constants also retain the orders
\begin{equation}
\label{eq:appendix-local-orders}
C_{vZ,i}=O(1),
\qquad
C_{vE,i}=O(\kappa^{2(M-i+1)}),
\qquad
C_{d,v,i}=O(\kappa^2),
\qquad
C_{d,u,i}=O(\kappa^{2(M-i)+6}).
\end{equation}
We make the stepsize dependence in the auxiliary constants explicit. Since
$\theta_i-\overline\vartheta_i=1-\theta_i=(1-\overline\vartheta_i)/2$,
the definitions in Appendix~\ref{app:auxiliary-tracking-details} involve the
common denominator $(1-\overline\vartheta_i)^2$. For $i<M$,
$\eta_i=1/L_i$ and
$1-\overline\vartheta_i=1-(1-\mu_i/L_i)^2\ge \mu_i/L_i$, whence
\[
\frac{\eta_i^2}{(1-\overline\vartheta_i)^2}
=O(\mu_i^{-2})=O(\kappa^2),
\qquad
\frac{\eta_i^2L_i^2}{(1-\overline\vartheta_i)^2}
=O(\kappa_i^2)=O(\kappa^2).
\]
At $i=M$ the same estimates hold with
$(\widetilde\mu_M,\widetilde L_M)$, because
$\widetilde\mu_M\ge\mu_M/2$ under the standing parameter restriction. Combining
these cancellations with $C_{\mathrm{path},i}=O(\kappa^{M-i+1})$ gives
\begin{equation}
\label{eq:appendix-dec-ref-orders}
C_{\mathrm{dec},i}=O(\kappa^{2(M-i)+4}),
\qquad
C_{\mathrm{ref},i}=O(\kappa^2).
\end{equation}
Moreover, uniformly for $0<\delta\le\delta_{\rm adm}$,
\begin{equation}
\label{eq:appendix-dec-radius-order}
C_{\mathrm{dec},i}\delta^2
\le
C_{\mathrm{dec},i}\delta_{\rm adm}^2
=O(\kappa^{-2M-2i+2})=O(1).
\end{equation}
Thus $1+C_{\mathrm{dec},i}\delta^2$ introduces no hidden polynomial loss in
$\tau_{d,i}$ anywhere on the admissible interval.  With
$c_{\rm adm}$ chosen sufficiently small, the same uniform parameter choice also
satisfies the absorption conditions in
Lemma~\ref{lem:local-direction-reference}.

We next record the effect of the updated-input scheduling on the tracking
coefficients.  From Lemma~\ref{lem:primal-tracking} and
$\chi_i\le c\kappa^{-(4M+4)}$,
\begin{equation}
\label{eq:appendix-forward-primal-orders}
\tau_i=O(\kappa^{-(4M+4)}),
\qquad
\lambda_i=O(\kappa^2),
\qquad
\gamma_i=O(\kappa^{-4M-4+2i}).
\end{equation}
In particular, the temporal-drift coefficient of the primal recursion is now
multiplied by the inner contraction.

Using
\eqref{eq:forward-aux-coefficients-1}--\eqref{eq:forward-aux-coefficients-5}
and \eqref{eq:appendix-local-orders}--\eqref{eq:appendix-dec-ref-orders},
we obtain
\begin{equation}
\label{eq:appendix-forward-auxiliary-orders}
\begin{aligned}
\tau_{d,i}&=O(\kappa^{-(4M+4)}),
&\beta_{d,i}&=O(\kappa^4),
\\
\lambda_{d,i}&=O(\kappa^{2(M-i)+4}),
&\psi'_{d,i}&=O(\kappa^{-2M-2i}),
\\
\psi_{d,i}&=O(\kappa^{2(M-i)+8}),
&\gamma_{d,i}&=O(\kappa^{2M-4i}).
\end{aligned}
\end{equation}
The last estimate follows directly from
$\tau_{d,i}L_{d,i}^2
=O(\kappa^{2M-4i})$ and
$C_{\mathrm{dec},i}\chi_{d,i}L_{z,i}^2=O(\kappa^{-2M})$.
The key consequence of the schedule using updated inputs is that no uncompensated
input-drift term of the form $\psi_{d,i}L_{z,i-1}^2$ or
$\lambda_{d,i}L_{z,i}^2$ enters $\gamma_{d,i}$.

The auxiliary-weight recursion is unchanged in direction and gives
\begin{equation}
\label{eq:appendix-u-orders}
\widetilde u_i=O(\kappa^{4(i-1)}).
\end{equation}
For the coefficient of the updated primal error \eqref{eq:effective-primal-coefficient},
\begin{equation}
\label{eq:appendix-bhat-orders}
\widehat b_i^E
=
\begin{cases}
O(\kappa^{2M+2i+6}),&i<M,\\
O(\kappa^{4M-2}),&i=M.
\end{cases}
\end{equation}
Indeed, for $i<M$ the largest direct contribution is
$\psi_{d,i+1}\widetilde u_{i+1}$, while at $i=M$ the leading cross-level
term is $\lambda_{d,M-1}\widetilde u_{M-1}$.
The backward primal-weight recursion
\eqref{eq:forward-primal-load}--\eqref{eq:primal-lyapunov-weights} then yields
\begin{equation}
\label{eq:appendix-weight-orders}
\widetilde\omega_M=O(\kappa^{4M-2}),
\qquad
\widetilde\omega_i=O(\kappa^{6M-2i+2}),
\quad i<M.
\end{equation}
For $i=M-1$, the direct load in \eqref{eq:appendix-bhat-orders} gives
$O(\kappa^{4M+4})$; below that level, the term
$\lambda_{i+1}\widetilde\omega_{i+1}$ propagates the displayed order
backward.  Equivalently, the worst new-time propagation path can be read as
\[
 b_1^Z
 \xrightarrow{\ \beta_{d,1}\cdots\beta_{d,M-1}\ }
 \widetilde u_M
 \xrightarrow{\ \psi_{d,M}\ }
 \mathcal E_{M-1}^{k+1}
 \xrightarrow{\ \lambda_{M-1}\cdots\lambda_{i+1}\ }
 \widetilde\omega_i
 \xrightarrow{\ \gamma_i\ }
 \|s^k\|^2.
\]
Its condition-number exponent is
$4(M-1)+8+2(M-1-i)-4M-4+2i=2M-2$,
so the large cross-level coefficients are fully accounted for rather than
suppressed in the bounds.

The barred weights that actually appear in the Lyapunov function satisfy
\begin{equation}
\label{eq:appendix-barred-weight-orders}
\overline u_i=O(\kappa^{4i-4M-8}),
\qquad
\overline\omega_i=
\begin{cases}
O(\kappa^{2M-2i-2}),&i<M,\\
O(\kappa^{-4}),&i=M.
\end{cases}
\end{equation}
Indeed, $\overline u_i=\tau_{d,i}\widetilde u_i$, while
$\overline\omega_i=\tau_i\widetilde\omega_i+
\psi'_{d,i}\widetilde u_i$.  These orders also control the initial tracking terms
in the Lyapunov function, as shown below.

Finally, for $i<M$,
\[
\widetilde\omega_i\gamma_i
=
O(\kappa^{6M-2i+2})
O(\kappa^{-4M-4+2i})
=
O(\kappa^{2M-2}),
\]
while the terminal primal contribution is smaller.  Moreover,
\[
\widetilde u_i\gamma_{d,i}
=
O(\kappa^{4i-4})O(\kappa^{2M-4i})
=
O(\kappa^{2M-4}).
\]
All preceding coefficient and weight bounds are uniform in
$\delta\in(0,\delta_{\rm adm}]$.  Applying the majorant construction from
Lemma~\ref{lem:Gamma-uniformity} to these bounds therefore gives $\bar\Gamma_{\rm adm}
=
O(\kappa^{2M-2}).$
This proves \eqref{eq:appendix-main-orders}.
\end{proof}

\paragraph{Initialization of the Lyapunov function.}
Under the initialization assumptions of
Theorem~\ref{thm:minimization-complexity},
$\mathcal E_i^0=O(1)$, $d_i^0=0$, and $s^0=0$.
Paired initialization gives
$\mathcal Z_i^0\le C_{\mathrm{path},i}^2
=O(\kappa^{2(M-i+1)})$.
Combining \eqref{eq:appendix-barred-weight-orders} with
$\alpha=c_\alpha\kappa^{-(2M+1)}$ yields
\[
\alpha\overline\omega_i\mathcal E_i^0
=
\begin{cases}
O(\kappa^{-2i-3}),&i<M,\\
O(\kappa^{-2M-5}),&i=M,
\end{cases}
\qquad
\alpha\overline u_i\mathcal Z_i^0
=O(\kappa^{2i-4M-7}),
\quad i=1,\ldots,M.
\]
These terms are uniformly bounded in $\kappa$ and $\delta$
at fixed depth. Since $\Phi(x^0)-\Phi^\star=O(1)$ and $s^0=0$,
it follows that $\mathcal V^0=O(1)$, uniformly over
$0<\delta\le\delta_{\rm adm}$.

\section{Bilevel structural simplification}
\label{app:bilevel-details}

For $M=1$, both branches use the exact input $x^k$; cross-level
forcing and input mismatch disappear. The following specialization
permits weaker contraction requirements than the general-depth analysis.

\begin{proposition}[Structural simplification for $M=1$]
\label{prop:bilevel-structural-simplification}
Suppose $M=1$ and the implicit layer is a strongly convex minimization
response.  With
$\mathcal E^k:=\mathcal E_1^k$ and $\mathcal Z^k:=\mathcal Z_1^k$,
\begin{equation}
\label{eq:bilevel-tracking-recursions}
\mathcal E^{k+1}
\le
\tau\mathcal E^k+\gamma_E\|s^k\|^2,
\qquad
\mathcal Z^{k+1}
\le
\tau_d\mathcal Z^k+\psi_d'\mathcal E^k+\gamma_Z\|s^k\|^2,
\end{equation}
where
\[
\tau=2\chi_1,
\qquad
\gamma_E=2\chi_1L_{z,1}^2,
\qquad
\tau_d=4\chi_{d,1}(1+C_{\mathrm{dec},1}\delta^2),
\]
\[
\psi_d'=8C_{\mathrm{dec},1}\chi_{d,1},
\qquad
\gamma_Z
=2\tau_dL_{d,1}^2
+8C_{\mathrm{dec},1}\chi_{d,1}L_{z,1}^2.
\]
In particular,
$C_{d,u,1}=C_{d,v,1}=\beta_{d,1}=\lambda_{d,1}=\psi_{d,1}=0$.
\end{proposition}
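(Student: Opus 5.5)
The plan is to specialize the general recursions of Lemma~\ref{lem:primal-tracking} and Theorem~\ref{thm:auxiliary-tracking}, together with the explicit coefficients of Appendix~\ref{app:auxiliary-tracking-details}, to depth one, and then check which coupling terms vanish. For $M=1$ the only level is also the terminal level, with input $\bar z_0^k=x^k=z_0^\star(x^k)$. Consequently $\mathcal E_0^{k+1}=0$, and there is no downstream level, so $\mathcal Z_2^{k+1}=\mathcal E_2^{k+1}=0$.

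\textbf{Primal recursion.} The first recursion is exactly the $i=1$ case of Lemma~\ref{lem:primal-tracking}, which gives $\tau_1=2\chi_1$, $\lambda_1=0$, and $\gamma_1=2\chi_1L_{z,1}^2$. Here I would only restate that argument: contract the $N_1$ inner steps toward $S_1(x^k)=z_1^\star(x^k)$, then insert $z_1^\star(x^{k-1})$ using $\|z_1^\star(x^k)-z_1^\star(x^{k-1})\|\le L_{z,1}\|s^k\|$.

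\textbf{Auxiliary recursion.} For the second recursion I would rerun the proof of Theorem~\ref{thm:auxiliary-tracking} at $i=M=1$. The key point is the local target residual. Lemma~\ref{lem:local-direction-reference} gives $C_{d,u,1}=0$ because the input is exact, and $C_{d,v,M}=0$ by convention, since the terminal perturbation is $\nabla_2 f$ rather than a computed adjoint. Hence $\mathcal R_0^k=0$, so $\widehat d_1^{k,*}=d_1^\star(x^k)$.

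Substituting into \eqref{eq:forward-aux-coefficients-1}--\eqref{eq:forward-aux-coefficients-5} then gives the following:
\begin{itemize}
\item $\beta_{d,1}=\lambda_{d,1}=0$, since these coefficients are defined to be zero at the terminal level.
\item $\psi_{d,1}=0$, because of the indicator $\mathbf 1_{\{i\ge2\}}$.
\item $\tau_d$, $\psi'_d$ and $\gamma_Z$ are the $i=1$ instances of $\tau_{d,i}$, $\psi'_{d,i}$ and $\gamma_{d,i}$, with the terminal $L_f^2$ term included in $C_{\mathrm{dec},1}$.
\end{itemize}
In \eqref{eq:aux-nominal-warmstart} the $\mathcal E_{0}^{k+1}$ term drops out, consistent with $\psi_{d,1}=0$.

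\textbf{Main obstacle.} The main point to verify is that the terminal residual $\nabla_2 f(x^k,z_{1,\delta}^{k,\ell})-\nabla_2 f(x^k,\widehat z_{1,\delta}^{k,*})$ is still controlled by $L_f\sqrt{b_\ell}$. I would then confirm that the absorption step of Lemma~\ref{lem:local-direction-reference} holds under \eqref{eq:tracking-radius-condition}, so that $\mathcal R_0^k=0$ genuinely yields no cross terms. Once this is checked, both recursions in \eqref{eq:bilevel-tracking-recursions} follow directly. The weaker contraction requirements would come later, from the simpler Lyapunov weights built from these two recursions.
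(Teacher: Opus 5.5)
Your proposal is correct and matches the paper's proof, which likewise uses $\bar z_0^k=x^k=z_0^\star(x^k)$ and the absence of a deeper perturbation vector to get $\mathcal R_0^k=0$. It then reads both recursions directly off Lemma~\ref{lem:primal-tracking} and Theorem~\ref{thm:auxiliary-tracking} at $i=M=1$. The absorption check you flag is harmless but unnecessary: at $M=1$ the local targets coincide exactly with $z_1^\star(x^k)$ and $z_{1,\delta}^\star(x^k)$, so no comparison between local and exact targets is needed.
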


\begin{proof}
Here $\bar z_0^k=x^k=z_0^{\star}(x^k)$, and because level $1$ is also terminal
there is no deeper perturbation vector.  Hence $\mathcal R_0^k=0$, and the displayed
recursions follow directly from Lemma~\ref{lem:primal-tracking} and
Theorem~\ref{thm:auxiliary-tracking}.
\end{proof}

\begin{proposition}[Bilevel condition-number orders]
\label{prop:bilevel-kappa-orders}
Suppose $M=1$ in the strongly convex minimization setting and
\eqref{eq:single-kappa-regularity} holds.  Choose
$\chi_1,\chi_{d,1}\le c_1\kappa^{-2}$ for sufficiently small fixed
$c_1>0$, and shrink the admissible parameter,
independently of the target accuracy, so that
$\delta_{\rm adm}=O(\kappa^{-3})$.  Then, uniformly for
$0<\delta\le\delta_{\rm adm}$,
\begin{equation}
\label{eq:bilevel-Gamma-order}
C_{\mathrm{bias},0}=O(\kappa^3),
\qquad
L_\Phi=O(\kappa^3),
\qquad
\bar\Gamma_{\rm adm}=O(\kappa^4).
\end{equation}
Consequently
$\max\{L_\Phi,\sqrt{\bar\Gamma_{\rm adm}}\}=O(\kappa^3)$.
\end{proposition}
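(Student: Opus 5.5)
We specialize the general bookkeeping of Proposition~\ref{prop:minimization-kappa-bookkeeping} to $M=1$, using the simplified recursions of Proposition~\ref{prop:bilevel-structural-simplification}. The work splits into two parts: the exact-response constants, which do not depend on scheduling, and the Lyapunov majorant $\bar\Gamma_{\rm adm}$.

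\emph{Step 1: exact-response constants.} Under \eqref{eq:single-kappa-regularity}, $\mu_1=\Theta(\kappa^{-1})$ and all smoothness constants are $O(1)$. Hence
\[
L_{z,1}=\kappa_1=O(\kappa),\qquad L_{J_1}=\rho_1(1+\kappa_1)^2/\mu_1=O(\kappa^3).
\]
Lemma~\ref{lem:outer-objective-smoothness} then gives $L_\Phi=L_f(1+L_{z,1})^2+C_fL_{J_1}=O(\kappa^3)$. Since $\delta_{\rm adm}L_f\le\mu_1/2$, we have $\bar\mu_1\ge\mu_1/2$, so $C_{\mathrm{path},1}=C_f/\bar\mu_1=O(\kappa)$ and $C_{\mathrm{bias},1}=L_fC_{\mathrm{path},1}=O(\kappa)$. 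Theorem~\ref{thm:finite-radius-bias} then gives
\[
C_{\mathrm{bias},0}=\kappa_1C_{\mathrm{bias},1}+\tfrac12\rho_1C_{\mathrm{path},1}^2(1+\kappa_1)=O(\kappa^3).
\]
Next, the terminal formulas give $\widehat L_{z,1}=(L_1+\delta_{\rm adm}L_f)/\bar\mu_1=O(\kappa)$. Inserting this into $L_{d,1}$ from Lemma~\ref{lem:path-derivative-x-lipschitz}, the two terms are of order $\kappa\cdot\kappa$ and $\kappa^2\cdot\kappa$, so $L_{d,1}=O(\kappa^3)$.

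\emph{Step 2: tracking coefficients.} From Appendix~\ref{app:auxiliary-tracking-details}, with $\eta_1=1/\bar L_1$ and $1-\overline\vartheta_1\gtrsim\kappa^{-1}$, the ratio $\eta_1^2/(\theta_1-\overline\vartheta_1)^2$ is $O(\kappa^2)$. Therefore $C_{\mathrm{dec},1}=O(\kappa^2(C_{\mathrm{path},1}^2+1))=O(\kappa^4)$. The condition $\delta_{\rm adm}=O(\kappa^{-3})$ makes $C_{\mathrm{dec},1}\delta^2=O(\kappa^{-2})=O(1)$, so $\tau_d=O(\chi_{d,1})$. With $\chi_1,\chi_{d,1}\le c_1\kappa^{-2}$ and $c_1$ small, both $\tau=2\chi_1$ and $\tau_d$ are at most $1/2$. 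The remaining coefficients are
\[
\gamma_E=O(\kappa^{-2}\kappa^2)=O(1),\qquad \psi_d'=O(\kappa^4\kappa^{-2})=O(\kappa^2),
\]
\[
\gamma_Z=2\tau_dL_{d,1}^2+8C_{\mathrm{dec},1}\chi_{d,1}L_{z,1}^2=O(\kappa^{-2}\kappa^6)+O(\kappa^{4})=O(\kappa^4).
\]

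\emph{Step 3: weights and $\bar\Gamma_{\rm adm}$.} For $M=1$ the weight sweep reduces to scalars. We have $b_1^Z=2C_{vZ,1}=O(1)$, which gives $\widetilde u_1=O(1)$. Also
\[
b_1^E=2C_{vE,1}+2L_f^2=O(\rho_1^2C_{\mathrm{path},1}^2)=O(\kappa^2).
\]
Because $\lambda_{d,1}=\psi_{d,1}=0$, we have $\widehat b_1^E=b_1^E$. Then $\widetilde\omega_1=(b_1^E+\psi_d'\widetilde u_1)/(1-\tau)=O(\kappa^2)$. Hence
\[
\Gamma=\widetilde\omega_1\gamma_E+\widetilde u_1\gamma_Z=O(\kappa^2)+O(\kappa^4)=O(\kappa^4).
\]
Every bound above is uniform in $\delta\le\delta_{\rm adm}$, so the majorant construction of Lemma~\ref{lem:Gamma-uniformity} gives $\bar\Gamma_{\rm adm}=O(\kappa^4)$. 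Consequently $\max\{L_\Phi,\sqrt{\bar\Gamma_{\rm adm}}\}=O(\kappa^3)$.

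\emph{Main obstacle.} The hardest step is Step~2, specifically the order of $\gamma_Z$, which is driven by $\tau_dL_{d,1}^2$. One must confirm that $L_{d,1}$ is really $O(\kappa^3)$ and not larger, and that the weaker requirement $\chi_{d,1}\lesssim\kappa^{-2}$ exactly cancels two powers. I would check the exponents term by term in the $L_{d,M}$ formula, using $\bar\mu_1^{-2}=O(\kappa^2)$.
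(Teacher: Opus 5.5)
Your proposal is correct and follows the paper's own proof. You use the same single-level orders ($L_{z,1},C_{\mathrm{path},1}=O(\kappa)$; $L_{J_1},L_{d,1},C_{\mathrm{bias},0}=O(\kappa^3)$; $C_{\mathrm{dec},1}=O(\kappa^4)$) and feed them into the $M=1$ recursions of Proposition~\ref{prop:bilevel-structural-simplification}, obtaining $\widetilde u_1=O(1)$, $\widetilde\omega_1=O(\kappa^2)$ and $\Gamma=\widetilde\omega_1\gamma_E+\widetilde u_1\gamma_Z=O(\kappa^4)$ uniformly in $\delta$. You simply spell out the exponent checks, such as the two terms of $L_{d,1}$ and $\tau_dL_{d,1}^2=O(\kappa^{-2}\kappa^{6})$, more explicitly than the paper does.
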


\begin{proof}
The single-level sensitivity orders are
$L_{z,1}=O(\kappa)$, $C_{\mathrm{path},1}=O(\kappa)$, and
$L_{d,1}=O(\kappa^3)$.  The bias recursion gives
$C_{\mathrm{bias},0}=O(\kappa^3)$, while
$L_{J_1}=O(\kappa^3)$ gives $L_\Phi=O(\kappa^3)$.  Furthermore
$C_{vZ,1}=O(1)$, $C_{vE,1}=O(\kappa^2)$, and
$C_{\mathrm{dec},1}=O(\kappa^4)$.  Proposition~\ref{prop:bilevel-structural-simplification}
therefore yields
\[
\tau,\tau_d=O(\kappa^{-2}),
\qquad
\gamma_E=O(1),
\qquad
\psi_d'=O(\kappa^2),
\qquad
\gamma_Z=O(\kappa^4).
\]
Thus $b_1^E=O(\kappa^2)$, $b_1^Z=O(1)$,
$\widetilde u_1=O(1)$, and $\widetilde\omega_1=O(\kappa^2)$ uniformly,
whence
$\bar\Gamma_{\rm adm}=O(\kappa^4)$.
Also $\overline u_1=O(\kappa^{-2})$ and
$\overline\omega_1=O(\kappa^2)$. Under paired initialization,
$\mathcal E_1^0=O(1)$ and $\mathcal Z_1^0=O(\kappa^2)$;
therefore $\mathcal V^0=O(1)$ when the initial objective gap is $O(1)$
and $\alpha=O(\kappa^{-3})$.
\end{proof}

\section{Centered recursive perturbations for minimization chains}
\label{app:centered}

This appendix concerns only the exact centered estimator for minimization
chains: $P_i=I$ and $\Psi_i=g_i$. We use $\zeta$ for its states to
distinguish them from the recursively defined one-sided states.

At the terminal level define
\begin{equation}
\label{eq:centered-terminal-states}
\begin{aligned}
\zeta_{M,\pm\delta}^{\star}(x)&:=\arg\min_z\{g_M(z,z_{M-1}^{\star}(x))\pm\delta f(x,z)\},\\
v_{M-1,\delta}^{\mathrm{cen}}(x)&:=
\frac{\nabla_2g_M(\zeta_{M,+\delta}^{\star},z_{M-1}^{\star})-
\nabla_2g_M(\zeta_{M,-\delta}^{\star},z_{M-1}^{\star})}{2\delta}.
\end{aligned}
\end{equation}
For $i=M-1,\ldots,1$, recursively set
\begin{equation}
\label{eq:centered-intermediate-states}
\begin{aligned}
\zeta_{i,\pm\delta}^{\star}(x)&:=\arg\min_z\{g_i(z,z_{i-1}^{\star}(x))
\pm\delta\langle z,v_{i,\delta}^{\mathrm{cen}}(x)\rangle\},\\
v_{i-1,\delta}^{\mathrm{cen}}(x)&:=
\frac{\nabla_2g_i(\zeta_{i,+\delta}^{\star},z_{i-1}^{\star})-
\nabla_2g_i(\zeta_{i,-\delta}^{\star},z_{i-1}^{\star})}{2\delta}.
\end{aligned}
\end{equation}
Let
$G_{\delta,\mathrm{cen}}^{\star}(x):=\nabla_1f(x,z_M^{\star}(x))+v_{0,\delta}^{\mathrm{cen}}(x)$.
The centered states are exact nonlinear responses, not generally
$z_i^\star\pm\delta d_i^\star$.

\begin{assumption}[Additional smoothness for centered perturbations]
\label{ass:centered-smoothness}
In addition to the standing strongly convex minimization assumptions, each
$g_i$ and $f$ is four times continuously differentiable. Their derivatives
of orders two through four are bounded on a common neighborhood of the
perturbed path families used below, uniformly over bounded forcing
vectors and uniformly in $x$ whenever a global bias bound is claimed.
The standing bound on $\nabla_2f$ is retained; no global bound on the
other first derivatives is imposed.
There exists $\delta_{\rm cen}>0$ such
that both terminal perturbed problems remain uniformly strongly convex
for $|s|\le\delta_{\rm cen}$; it suffices that
$\delta_{\rm cen}L_f<\mu_M$.
\end{assumption}

\begin{proposition}[Second-order centered bias]
\label{prop:centered-bias}
Under the standing strongly convex minimization assumptions and
Assumption~\ref{ass:centered-smoothness}, for
$0<\delta\le\delta_{\rm cen}$ there exist finite constants
$b_i^{\mathrm{cen}}$ such that
\begin{equation}
\label{eq:centered-recursive-bias}
\|v_{M-1,\delta}^{\mathrm{cen}}-v_{M-1}^{\star}\|\le b_M^{\mathrm{cen}}\delta^2,
\qquad
\|v_{i-1,\delta}^{\mathrm{cen}}-v_{i-1}^{\star}\|
\le\kappa_i\|v_{i,\delta}^{\mathrm{cen}}-v_i^{\star}\|+b_i^{\mathrm{cen}}\delta^2
\quad(i<M).
\end{equation}
Consequently, for every fixed $x$ the following bound holds; if the
neighborhood and derivative bounds in Assumption~\ref{ass:centered-smoothness}
are uniform in $x$, the constant below is uniform in $x$ as well:
\begin{equation}
\label{eq:centered-gradient-bias}
\|G_{\delta,\mathrm{cen}}^{\star}(x)-\nabla\Phi(x)\|
\le C_{\mathrm{bias},0}^{\mathrm{cen}}\delta^2,
\qquad
C_{\mathrm{bias},0}^{\mathrm{cen}}:=
\sum_{j=1}^M\left(\prod_{\ell=1}^{j-1}\kappa_\ell\right)b_j^{\mathrm{cen}}.
\end{equation}
\end{proposition}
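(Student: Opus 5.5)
The plan is to prove the one-level recursion \eqref{eq:centered-recursive-bias} by a symmetric Taylor expansion in the forcing parameter, and then unroll it. Fix $x$ and a level $i$, and treat the downstream forcing vector as given: $w:=v_{i,\delta}^{\mathrm{cen}}$ for $i<M$, or the state-dependent forcing $\nabla_2 f(x,\cdot)$ at $i=M$. Introduce the one-parameter family $\zeta_i(s)$ defined by $\nabla_1 g_i(\zeta_i(s),u)+s\,w=0$ (terminal case: $+s\nabla_2 f(x,\zeta_M(s))$), for $|s|\le\delta$ and $u=z_{i-1}^\star(x)$. Uniform strong convexity (at level $M$, via $\delta_{\rm cen}L_f<\mu_M$) and the implicit-function theorem make $s\mapsto\zeta_i(s)$ a $C^3$ path, with $\zeta_i(\pm\delta)=\zeta^\star_{i,\pm\delta}$ and $\zeta_i(0)=z_i^\star$. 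Set $\phi(s):=\nabla_2 g_i(\zeta_i(s),u)$. The centered difference is $[\phi(\delta)-\phi(-\delta)]/(2\delta)$, and Taylor's theorem with integral remainder gives
\[
\left\|\frac{\phi(\delta)-\phi(-\delta)}{2\delta}-\phi'(0)\right\|\le\frac{\delta^2}{6}\sup_{|s|\le\delta}\|\phi'''(s)\|.
\]
The even-order terms cancel by symmetry. This is the only place the $C^4$ assumption enters.

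The first derivative is, as in Proposition~\ref{prop:adjoint-compatibility}, $\phi'(0)=\nabla_{21}^2g_i\,\dot\zeta_i(0)=-B_iH_i^{-1}w=DS_i(z_{i-1}^\star)^\top w$. This holds for $i<M$; at $i=M$ one has $w=\nabla_2 f(x,z_M^\star)=v_M^\star$, so $\phi'(0)=v_{M-1}^\star$ exactly, and the terminal bound $b_M^{\mathrm{cen}}:=\sup\|\phi'''\|/6$ follows. For $i<M$, write
\[
\phi'(0)-v_{i-1}^\star=DS_i^\top\bigl(v_{i,\delta}^{\mathrm{cen}}-v_i^\star\bigr).
\]
Proposition~\ref{prop:response-regularity} then gives the factor $\kappa_i$, and we take $b_i^{\mathrm{cen}}:=\sup\|\phi'''\|/6$.

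The main obstacle is bounding $\phi'''$ uniformly. Differentiating the stationarity equation repeatedly expresses $\ddot\zeta$ and $\dddot\zeta$ through $H^{-1}$ (norm $\le\mu_i^{-1}$, or $\le(\mu_M-\delta_{\rm cen}L_f)^{-1}$ at level $M$), derivatives of $g_i$ (and of $f$ at level $M$) of orders two through four, and powers of $\dot\zeta=-H^{-1}w$. Hence $\sup\|\phi'''\|$ is bounded by a polynomial in $\|w\|$ and the derivative bounds of Assumption~\ref{ass:centered-smoothness}. The subtle point is that $w=v_{i,\delta}^{\mathrm{cen}}$ must lie in a bounded set, so that the path stays in the neighborhood where the assumption applies. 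I would establish this by backward induction alongside the bias: at level $M$, $\|w\|\le C_f$; and $\|v_{i-1,\delta}^{\mathrm{cen}}\|\le L_i\sup\|\dot\zeta_i\|\le \kappa_i\|v_{i,\delta}^{\mathrm{cen}}\|$ by the mean value theorem. Thus every forcing vector is bounded by $C_f\prod\kappa_j$, independently of $\delta$, and the displacements satisfy $\|\zeta_i(s)-z_i^\star\|\le\delta\|w\|/\mu_i$. The constants $b_i^{\mathrm{cen}}$ are therefore finite, and they are uniform in $x$ whenever the derivative bounds are.

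Finally, unroll the recursion from $i=M$ down to $1$. Since $G^\star_{\delta,\mathrm{cen}}-\nabla\Phi=v_{0,\delta}^{\mathrm{cen}}-v_0^\star$ by \eqref{eq:exact-hypergradient}, induction gives
\[
\|v_0^{\mathrm{cen}}-v_0^\star\|\le\sum_{j=1}^M\Bigl(\prod_{\ell=1}^{j-1}\kappa_\ell\Bigr)b_j^{\mathrm{cen}}\delta^2,
\]
which is \eqref{eq:centered-gradient-bias}.
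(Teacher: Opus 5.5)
Your proposal is correct and follows essentially the same route as the paper's proof: a centered Taylor estimate for $s\mapsto\nabla_2 g_i(\zeta_i(s),z_{i-1}^\star)$, whose third-derivative bound is made uniform in $\delta$ by first showing, without any Taylor argument, that every forcing vector obeys $\|v_{i,\delta}^{\mathrm{cen}}\|\le C_f\prod_{j=i+1}^M\kappa_j$. The exact adjoint identity is then subtracted using $\|DS_i\|\le\kappa_i$, and the recursion is unrolled. The one cosmetic difference is that you bound the forcing vectors with the mean value inequality on $\phi'$, whereas the paper uses the strong-monotonicity displacement bounds directly. At level $M$ the displacement route gives exactly $\kappa_M C_f$, while the derivative route gives $L_M C_f/(\mu_M-\delta_{\rm cen}L_f)$; this makes no difference to the statement.
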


\begin{proof}
Fix $x$. We use the standard centered Taylor estimate: if a vector map
$\mathcal G$ is $C^3$ on $[-\delta_{\rm cen},\delta_{\rm cen}]$ and
$\sup_s\|\mathcal G^{(3)}(s)\|\le M_3$, then
\begin{equation}
\label{eq:centered-Taylor}
\left\|\frac{\mathcal G(\delta)-\mathcal G(-\delta)}{2\delta}
-\mathcal G'(0)\right\|\le\frac{M_3}{6}\delta^2.
\end{equation}

First establish bounded forcing vectors without a Taylor argument.
The terminal stationarity equations give
$\|\zeta_{M,\pm\delta}^\star-z_M^\star\|
\le\delta C_f/\mu_M$, and hence
$\|v_{M-1,\delta}^{\mathrm{cen}}\|\le\kappa_M C_f$.
For $i<M$, strong monotonicity similarly gives
$\|\zeta_{i,+\delta}^\star-\zeta_{i,-\delta}^\star\|
\le2\delta\|v_{i,\delta}^{\mathrm{cen}}\|/\mu_i$.
Therefore
\[
\|v_{i,\delta}^{\mathrm{cen}}\|
\le C_f\prod_{j=i+1}^M\kappa_j,\qquad i=0,\ldots,M-1,
\]
uniformly in $\delta$. Thus the derivative bounds on bounded forcing
families in Assumption~\ref{ass:centered-smoothness} apply independently
of the bias estimate being proved.

For the terminal level, let $z_M(s)$ minimize
$g_M(z,z_{M-1}^{\star})+sf(x,z)$ and set
\[
\mathcal G_M(s):=\nabla_2g_M(z_M(s),z_{M-1}^{\star}).
\]
Uniform strong convexity and Assumption~\ref{ass:centered-smoothness} imply, by the
implicit-function theorem, that $\mathcal G_M$ is $C^3$ with bounded
third derivative. Differentiating the stationarity equation at zero gives
\[
\dot z_M(0)=-[H_M^{\star}]^{-1}v_M^{\star},\qquad
\mathcal G_M'(0)=-B_M^{\star}[H_M^{\star}]^{-1}v_M^{\star}=v_{M-1}^{\star}.
\]
Applying \eqref{eq:centered-Taylor} proves the terminal estimate in
\eqref{eq:centered-recursive-bias}.

For $i<M$ and a bounded perturbation vector $w$, let $z_i(s;w)$ minimize
$g_i(z,z_{i-1}^{\star})+s\langle z,w\rangle$ and set
$\mathcal G_i(s;w):=\nabla_2g_i(z_i(s;w),z_{i-1}^{\star})$. The same
implicit-function argument gives a uniform third-derivative bound on
bounded sets of $w$, and differentiation at zero yields
$\partial_s\mathcal G_i(0;w)=-B_i^{\star}[H_i^{\star}]^{-1}w$.
Taking $w=v_{i,\delta}^{\mathrm{cen}}$ in \eqref{eq:centered-Taylor}
gives
\[
\|v_{i-1,\delta}^{\mathrm{cen}}+B_i^{\star}[H_i^{\star}]^{-1}v_{i,\delta}^{\mathrm{cen}}\|
\le b_i^{\mathrm{cen}}\delta^2.
\]
Subtracting the exact adjoint identity and
using $\|B_i^{\star}[H_i^{\star}]^{-1}\|\le\kappa_i$ proves the recursive estimate.
Unrolling it gives the constant in \eqref{eq:centered-gradient-bias}; the
direct term $\nabla_1f(x,z_M^{\star})$ is common to both estimators.
\end{proof}

\begin{remark}[Scope of the centered result]
Proposition~\ref{prop:centered-bias} concerns the exact estimator only. An
implementable centered method would have to track the positive and negative
perturbation branches separately; controlling their centered difference is
not enough to control the individual branch errors. We therefore do not claim
a convergence theorem for the finite-iteration two-branch method.
\end{remark}

\section{Details of the Numerical Experiments}
\subsection{Experimental protocol}
\label{app:experimental-protocol}

\paragraph{Implementations and timing.}
Experiments use an Intel Core i5-1038NG7 CPU at 2.00~GHz, with four
intra-op threads and one inter-op thread. PDE runs use float64; MNIST
training uses float32 and reference evaluation uses float64.
Application methods use identical data and zero outer and response
initializations within each instance. Response states are warm-started
across outer iterations. Runs are serial, in fresh processes; revised PDE
configurations form separate batches.

Algorithm time includes response updates, sensitivity computation, and
outer updates. Setup, spectral calibration, saved-state output, and
independent reference evaluation are excluded. An update crossing the
budget is discarded, together with its oracle counts, and the last
accepted iterate is returned. Application statistics use seeds $6$--$10$;
seed $5$ is used for initial calibration. The final operating points are
listed in Table~\ref{tab:experimental-operating-points}.
For \ReMID{}, each layer performs $T$ unperturbed and $T$ perturbed
response updates per outer iteration.

\begin{table}[t]
\centering
\caption{Final operating points $(T,\eta_{\rm out})$.
The implicit baseline is ID-CG for PDE control and MNIST, and ID-MINRES
for the saddle-point PDE; the trajectory baseline is Sato/ITD or ITD-SG,
respectively. \ReMID{} uses $\delta=10^{-3}$.
For MNIST, the listed outer steps apply to $400\Phi$,
and the perturbation coefficient is $\delta/400$.
Reported objective values and gradient norms use $\Phi$.}
\label{tab:experimental-operating-points}
\label{tab:mnist-frozen-configurations}
\small
\begin{tabular}{@{}lcccc@{}}
\toprule
Problem & Budget (s) & \ReMID{} & Implicit & Trajectory\\
\midrule
PDE control & 300 & $(4,150)$ & $(4,250)$ & $(24,800)$\\
Saddle-point PDE & 500 & $(12,125)$ & $(12,150)$ & $(32,2400)$\\
MNIST & 150 & $(8,0.8)$ & $(4,0.8)$ & $(12,1.0)$\\
\bottomrule
\end{tabular}
\end{table}

\paragraph{Sensitivity computation.}
\ReMID{} performs a forward unperturbed refresh followed by a backward
perturbed sweep. Each unperturbed--perturbed response pair uses the same
refreshed parent, with both states warm-started across outer iterations.
The terminal outer-loss gradient is reevaluated at each perturbed update.
The implicit baselines compute adjoints recursively, using CG for
minimization responses and MINRES for saddle responses. Following the
public implementations associated with
\cite{grazzi2020iteration,ji2021bilevel},\footnote{
\url{https://github.com/prolearner/hypertorch};
\url{https://github.com/JunjieYang97/stocBiO}.}
we compute matrix-free HVPs by automatic differentiation, reusing each
level's gradient graph throughout its linear solve.
Sato/ITD differentiates the current $T$-step GD trajectories across
levels \cite{sato2021multilevel}; ITD-SG instead differentiates
signed-gradient trajectories. Warm-started response states are detached
at the start of each outer iteration, so trajectory differentiation
holds these initial states fixed.

\paragraph{Statistics and plots.}
Controlled diagnostics report medians and interquartile ranges;
applications report means and sample standard deviations over five runs.
Objective differences are computed per run before aggregation, and
gradient norms are averaged. Curves use a five-second grid: linear
interpolation for PDE and the last completed iterate for MNIST, with
terminal values held to the budget. All recorded transients are retained;
nonpositive lower standard-deviation bounds are clipped only for display
on logarithmic axes.

\paragraph{Oracle counts.}
FO denotes first-order oracle calls (gradient evaluations).
Per accepted outer update, FO counts are $2MT+2M+1$, $MT+M+1$, and
$MT+1$ for \ReMID{}, implicit, and trajectory methods in PDE;
MNIST counts only response-gradient evaluations: $2MT$, $MT$, and $MT$,
respectively. All other training computations remain timed.
HVP counts include explicit CG/MINRES operator calls and residual checks;
ITD counts $MT$ differentiated lower updates per outer update.
HVPs arising within trajectory differentiation are not counted separately.
These implementation counts differ from the theoretical
separate-block oracle convention.
\subsection{Controlled diagnostics}
\label{app:controlled-diagnostics}

The radial pseudo-Huber loss is
\begin{equation}
\label{eq:diagnostic-pseudohuber}
\phi_c(q)=c^2\left(\sqrt{1+\|q\|^2/c^2}-1\right).
\end{equation}
Its gradient and Hessian are bounded, and its Hessian is Lipschitz.
Unless stated otherwise, perturbation diagnostics use high-accuracy
response solves and independently computed implicit reference gradients.

\paragraph{Bias in nonlinear chains.}
For Figure~\ref{fig:controlled-diagnostics}(a), scalar response potentials
are
\[
g_i(z,u)=\tfrac12a_i z^2+b_i(1-\cos z)-c_iuz,
\qquad f(x,z_M)=\phi_1(z_M-\tau)+0.025x^2.
\]
Independently sample $a_i\sim\mathcal U[1.2,1.8]$,
$b_i\sim\mathcal U[0.10,0.35]$, $c_i\sim\mathcal U[0.45,0.85]$,
$\tau\sim\mathcal U[-0.8,0.8]$, and $x\sim\mathcal U[-0.9,0.9]$.
The state curvature is at least $a_i-b_i\ge0.85$.
For each $M\in\{2,3,5\}$, use 30 instances and 25 logarithmically
spaced radii in $[10^{-5},2\times10^{-1}]$.
Newton solves stop at increment tolerance $10^{-14}(1+|z_i|)$.
Log--log fits retain errors in $[2\times10^{-12},2\times10^{-2}]$;
the fitted slope ranges are $[0.994,0.997]$ for one-sided and
$[2.000,2.001]$ for centered perturbations.

\paragraph{Depth and conditioning.}
For Figure~\ref{fig:controlled-diagnostics}(b), set
$g_i(z,u)=\frac12z^\top H_i z-z^\top H_iS_i u$ in dimension 16,
with spectra of $H_i$ in $[1,\kappa]$ and singular values of $S_i$
in $[0.92,1]$. Use $\kappa\in\{2,4,8,16,32\}$,
$M\in\{2,3,5,8\}$, and 20 instances per pair.
The affine terminal loss $f(x,z_M)=a^\top z_M+0.025\|x\|^2$,
$\|a\|=1$, removes perturbation bias. At $\delta=10^{-3}$,
linear solves give the exact reference; perturbed solves start at the
exact unperturbed states and take $T$ first-order steps.
Fits of $\log e=A_M-B_M T/\kappa$ give
$B_M\in[1.143,1.177]$ and $R^2\ge0.9985$.
This experiment isolates solve error at a fixed input.

\paragraph{Saddle and mixed chains.}
For Figure~\ref{fig:controlled-diagnostics}(c), use
$\Psi_i(z,u)=\frac12z^\top H_i z-z^\top H_iS_i u$ with
contractive $S_i$. Minimization layers have $H_i\succ0$ and $P_i=I$;
saddle layers have
\[
H_i=\begin{pmatrix}A_i&C_i\\C_i^\top&-D_i\end{pmatrix},
\qquad P_i=\operatorname{diag}(I,-I),\qquad A_i,D_i\succ0.
\]
The signed operator is strongly monotone since
$\operatorname{sym}(P_iH_i)=\operatorname{diag}(A_i,D_i)\succ0$.
The terminal loss is $\phi_1(Cz_M-r)+0.025\|x\|^2$,
where $C=I+0.12E/\sqrt d$, $E$ is Gaussian, and
$r\sim\mathcal N(0,0.7^2I)$.
Compare all-minimization, all-saddle, and four-level
minimization--saddle--minimization--saddle chains over 30 instances.
The fitted one-sided slopes lie in $[0.9977,0.9998]$.

\paragraph{Finite-solve error and the forward refresh.}
For Figure~\ref{fig:finite-computation-diagnostics}(a), use the quadratic
chains above with $M=5$, $\kappa=8$, $d=16$, and terminal loss
$\frac12\|z_M-r\|^2+0.025\|x\|^2$.
Unperturbed responses are exact; each perturbed solve starts at its
unperturbed state and takes $T\in\{8,16,32,64\}$ steps.
Vary $\delta$ over $[10^{-5},10^{-1}]$ and use 30 instances.
The initial perturbed displacement scales with $\delta$, explaining
the bounded error floor after division by $\delta$.

For panel~(b), compare a forward refresh using newly updated parents
with a lagged refresh using parents from the preceding outer iteration.
Both retain numerical warm starts and use the same data, initializations,
solvers, outer steps, $\delta$, and $2MT$ inner-gradient evaluations.
Set $\kappa=8$, $T=16$, $\delta=10^{-3}$,
$M\in\{2,3,5,8\}$, and use 20 matched instances per depth.
Tracking error is measured after $k=2M$ outer iterations against the
exact perturbation estimator $G_\delta^\star(x^k)$.
These quadratic-loss tests examine local solve and tracking mechanisms;
their terminal gradients are not globally bounded.

\begin{figure}[t]
\centering
\begin{minipage}[t]{0.48\textwidth}
\centering
\includegraphics[width=\linewidth]{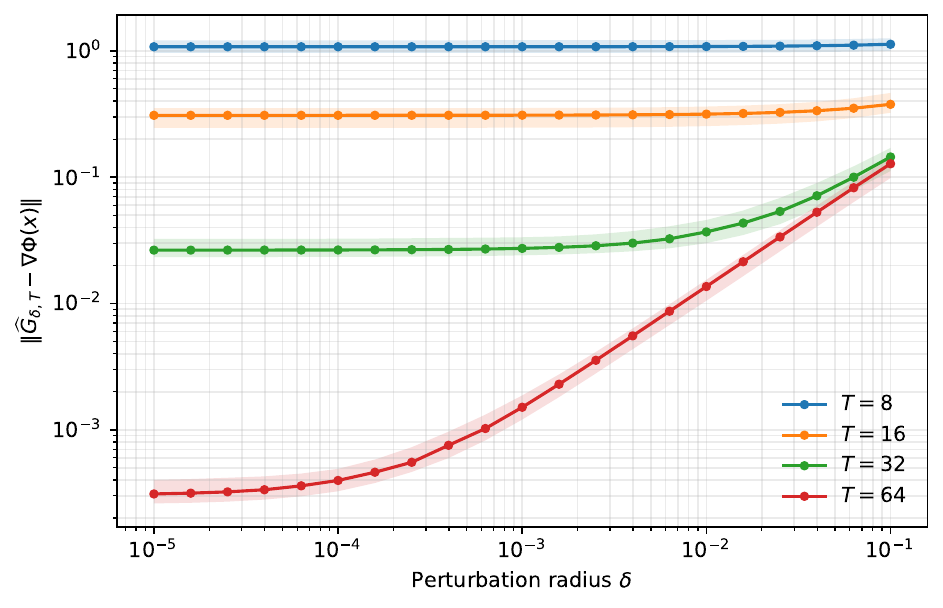}
\par\smallskip {\small (a) Perturbation and inner accuracy}
\end{minipage}\hfill
\begin{minipage}[t]{0.43\textwidth}
\centering
\includegraphics[width=\linewidth]{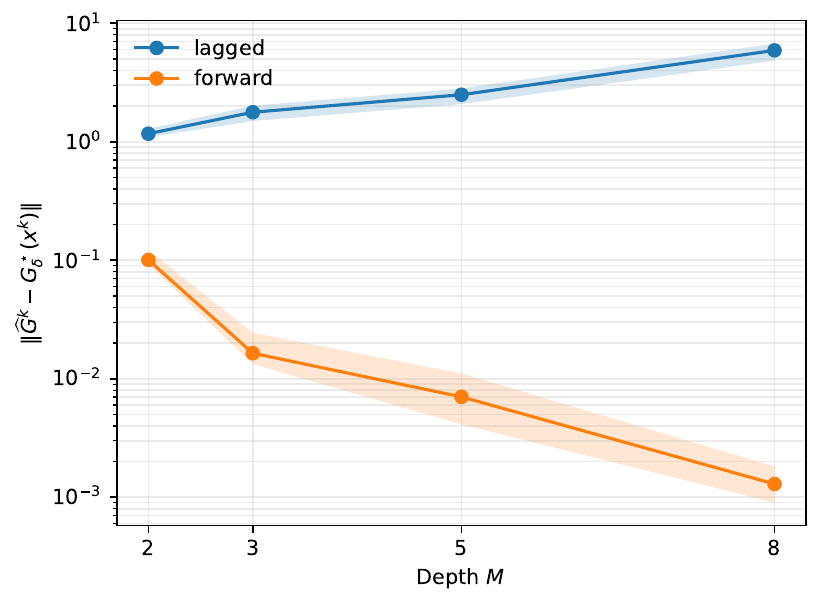}
\par\smallskip {\small (b) Updated-input ablation}
\end{minipage}
\caption{Finite-solve and scheduling diagnostics.
(a) $\|\widehat G_{\delta,T}-\nabla\Phi(x)\|$ versus $\delta$ for
different inner budgets.
(b) $\|\widehat G^k-G_\delta^\star(x^k)\|$ versus depth for updated-input
and previous-input schedules. Bands show interquartile ranges.}
\label{fig:finite-computation-diagnostics}
\end{figure}

\subsection{PDE control problems}
\label{app:pde-details}

\paragraph{Discretization and parameters.}
Both PDE problems use the heat equation on the unit square, homogeneous
Dirichlet boundary conditions, zero initial state, and a distributed
source constant in time. A five-point Laplacian on a $64\times64$
interior grid is integrated by 100 explicit-Euler steps, with
\[
\nu_i=0.02\left(0.90+0.20\frac{i-1}{5}\right),\qquad
\nu_i\Delta t_i/h^2=0.20,\qquad h=1/65.
\]
The validation domain has $\nu_{\rm val}=0.022$.
Each domain uses an independent $25\%$ observation mask, weight 20,
and Gaussian observation noise with standard deviation $0.005$.
For PDE control, $(\lambda_c,\mu,\tau)=(0.03,0.002,10^{-4})$,
with pseudo-Huber scale $c=5$.
All methods use $\eta_i=1.8/\widehat L_i$, where 15 power iterations
estimate the data-Hessian norm and $\lambda_c+\mu$ is added.
These are numerical estimates rather than certified upper bounds.

\paragraph{Parameter selection.}
Initial tuning uses seed $5$ with a 100-second algorithm-time budget
per candidate. We first compare the common inner-step scales
$\{0.9,1.5,1.8\}$ and select $1.8$. At this scale, we test
$T\in\{4,6,8\}$ for \ReMID{} and ID-CG, and
$T\in\{12,16,24\}$ for Sato/ITD.
The outer-step search covers $\{50,75,100,125,150\}$ for \ReMID{},
$\{50,75,100,125,150,200,250,300,400,500,600\}$ for ID-CG,
and $\{800,1600,2400\}$ for Sato/ITD.
We assess configurations jointly by their terminal objective values,
reference-gradient norms, and trajectory stability, rather than
by either endpoint metric alone.
Following stability checks on the evaluation runs, the outer steps
for ID-CG and Sato/ITD are reduced to 250 and 800, respectively.
The final settings are
$(T,\eta_{\rm out})=(4,150)$ for \ReMID{},
$(4,250)$ for ID-CG, and $(24,800)$ for Sato/ITD.
All methods use $\eta_i=1.8/\widehat L_i$, and these parameters
remain fixed throughout the reported 300-second runs.

\paragraph{Reference evaluations.}
For PDE control, reference responses use analytic matrix-free CG
with relative residual tolerance $10^{-10}$ and a cap of 500 iterations; reference adjoints use
$10^{-9}$ and a cap of 120. Training ID-CG instead uses autograd HVPs,
CG tolerance $10^{-6}$, and a cap of 80.
The affine responses and convex tracking loss make the exact composite
objective strongly convex. Separate spectral L-BFGS solves, with Newton
refinement where needed, provide numerical reference optima.
The spectral and original PDE maps, objectives, and gradients are
cross-checked; reference-gradient norms are below $8\times10^{-10}$.
Unrounded reference objectives are used for the reported gaps.
Reference checkpoints follow five-second algorithm-time thresholds.

\begin{table}[t]
\centering
\caption{PDE control: terminal mean $\pm$ sample standard
deviation under a 300-second algorithm-time budget.
$\Delta\Phi=\Phi-\Phi_{\rm ref}^\star$ uses numerical reference optima.
Counts are in thousands and exclude reference evaluations.}
\label{tab:pde-main}
\small\setlength{\tabcolsep}{3.5pt}
\begin{tabular}{@{}lccccc@{}}
\toprule
Method & $\Delta\Phi$ & $\|\nabla\Phi\|$ & FO & HVP & ITD\\
\midrule
\ReMID{} & $(\mathbf{4.83}\pm0.99)\times10^{-9}$ & $(\mathbf{2.01}\pm0.18)\times10^{-6}$ & $26.245\pm0.206$ & $0$ & $0$\\
ID-CG & $(5.28\pm1.69)\times10^{-6}$ & $(4.01\pm0.64)\times10^{-5}$ & $2.631\pm0.081$ & $9.714\pm0.293$ & $0$\\
Sato/ITD & $(4.73\pm0.87)\times10^{-5}$ & $(1.41\pm0.13)\times10^{-4}$ & $4.961\pm0.121$ & $0$ & $4.920\pm0.120$\\
\bottomrule
\end{tabular}
\end{table}

\subsection{Saddle-point extension}
\label{app:pde-signed}

For $z=(y,\xi)$ and $u=(u_y,u_\xi)$, the potential is
\begin{equation}
\label{eq:pde-saddle-potential}
\begin{aligned}
\Psi_i(y,\xi;u)={}&
\frac{\omega_i}{2}\|M_i(\mathcal K_i(y+\beta_\xi\xi)-d_i^{\rm tar})\|^2
+\frac{\lambda_y}{2}\|y-u_y\|^2+\frac{\mu_y}{2}\|y\|^2-\frac{\lambda_\xi}{2}\|\xi-u_\xi\|^2-\frac{\mu_\xi}{2}\|\xi\|^2.
\end{aligned}
\end{equation}
The first input is $(x,0)$; later inputs are preceding response pairs.
The outer objective uses the same tracking loss evaluated at
$\mathcal K_{\rm val}(y_M+\beta_\xi\xi_M)$, plus $\tau\|x\|^2/2$.
Set $(\lambda_y,\mu_y,\lambda_\xi,\mu_\xi,\beta_\xi)
=(0.08,0.01,0.12,0.02,0.40)$ and $\tau=10^{-4}$.

With $L_{{\rm data},i}=\|\sqrt{\omega_i}M_i\mathcal K_i\|^2$,
the condition $\lambda_\xi+\mu_\xi>\beta_\xi^2L_{{\rm data},i}$
ensures strong monotonicity of $P_i\nabla_1\Psi_i$.
The implementation uses 15-step power estimates and the common steps
\[
\begin{aligned}
\widehat\mu_i^{\rm sg}
&=\min\{\lambda_y+\mu_y,\lambda_\xi+\mu_\xi
-\beta_\xi^2\widehat L_{{\rm data},i}\},\\
\widehat L_i^{\rm sg}
&=(1+\beta_\xi^2)\widehat L_{{\rm data},i}
+\max\{\lambda_y+\mu_y,\lambda_\xi+\mu_\xi\},\qquad
\eta_i=\frac{1.8\widehat\mu_i^{\rm sg}}{(\widehat L_i^{\rm sg})^2}.
\end{aligned}
\]
The estimated concavity margin is checked positive; power iteration
alone does not certify the true margin. The update is
$z\leftarrow z-\eta_iP_i\nabla_1\Psi_i(z,u)$.

The operating points in Table~\ref{tab:experimental-operating-points}
use seed-$5$ calibration and additional sensitivity checks, with the
same evaluation seeds examined in earlier configurations.
ID-MINRES solves against the symmetric potential Hessian, not the
generally nonsymmetric signed-gradient Jacobian. Training uses tolerance
parameter $10^{-6}$ and at most 120 MINRES iterations; reference solves
use analytic Hessian actions, parameter $10^{-10}$, and a cap of 300,
with actual relative residuals recorded. Reference checkpoints follow
ten-second thresholds. Independent spectral L-BFGS/Newton refinement
provides numerical optima, checked against the original PDE implementation.
The final comparison combines separate serial batches at the reported
parameters; it is not a new interleaved three-method timing experiment.

\begin{table}[t]
\centering
\caption{Saddle-point PDE: terminal mean $\pm$ sample standard
deviation under a 500-second algorithm-time budget.
Counts are in thousands and exclude reference evaluations.}
\label{tab:pde-signed}
\label{tab:robust-pde-500s}
\small\setlength{\tabcolsep}{3.5pt}
\begin{tabular}{@{}lccccc@{}}
\toprule
Method & $\Delta\Phi$ & $\|\nabla\Phi\|$ & FO & HVP & ITD\\
\midrule
\ReMID{} & $(\mathbf{2.16}\pm0.56)\times10^{-5}$ & $(\mathbf{7.94}\pm1.17)\times10^{-5}$ & $34.12\pm1.17$ & $0$ & $0$\\
ID-MINRES & $(7.40\pm1.72)\times10^{-4}$ & $(4.71\pm0.54)\times10^{-4}$ & $8.17\pm0.24$ & $13.54\pm0.40$ & $0$\\
ITD-SG & $(4.26\pm0.66)\times10^{-2}$ & $(4.18\pm0.37)\times10^{-3}$ & $8.26\pm0.51$ & $0$ & $8.16\pm0.51$\\
\bottomrule
\end{tabular}
\end{table}

\subsection{MNIST graph learning}
\label{app:domain-adaptation}

\paragraph{Data and parameterization.}
Each instance uses 2000 class-balanced MNIST training images, scaled to
$[0,1]$, with disjoint source-label and outer-label sets
$|\mathcal A|=|\mathcal V|=400$ and 1200 remaining graph nodes.
The same image identities appear at rotations $\theta_i=60^\circ i/8$,
using bilinear interpolation. Each domain has a symmetrized 10-nearest-
neighbor graph in pixel space, with self-edges removed and each undirected
edge retained once. Weights are
$\omega_{i,pq}=\exp(-d_{i,pq}^2/(2s_i))$, where $s_i$ is the median
squared edge distance, bounded below by $10^{-12}$.

The fixed feature map has 127 Fourier coordinates
$\varphi_j(a)=\sqrt{2/127}\cos(\xi_j^\top a+b_j)$,
$\xi_j\sim\mathcal N(0,9I/784)$, $b_j\sim\mathcal U[0,2\pi]$,
and a bias coordinate. The source classifier minimizes
$\operatorname{CE}_{\mathcal A}(A_0W,y)+10^{-3}\|W\|_F^2/2$
on unrotated features $A_0$, using L-BFGS with a strong-Wolfe line search
and a 100-iteration cap, gradient tolerance $10^{-9}$, and
objective-change tolerance $10^{-12}$. It is then fixed, with first parent
$A_1(W_{\rm src}+x)$. Only labels in $\mathcal A$ and $\mathcal V$ are
used; the latter are outer-training labels, not held-out validation labels.

\paragraph{Potentials and normalization.}
For $z,u\in\mathbb R^{N\times10}$,
\begin{equation}
\label{eq:mnist-lower-objective}
\begin{aligned}
\Psi_i(z,u)={}&\mathbf1_{\{i=1\}}\operatorname{CE}_{\mathcal A}(z,y)
+\frac{\alpha}{N}\sum_{\{p,q\}\in E_i}\omega_{i,pq}\phi_r(z_p-z_q)+\frac{\lambda_c}{2N}\|z-u\|_F^2+\frac{\mu}{2N}\|z\|_F^2,
\end{aligned}
\end{equation}
where $(\alpha,r,\lambda_c,\mu)=(0.03,1,0.3,0.02)$ and $N=2000$.
The state curvature is bounded below by $(\lambda_c+\mu)/N=1.6\times10^{-4}$.
The smooth convex graph penalties have bounded Lipschitz Hessians;
the affine first-layer substitution preserves response regularity.
The mean cross-entropy has bounded terminal gradient and Lipschitz Hessian.
The reported outer objective uses $\tau=10^{-6}$ and is nonnegative,
but the nonlinear composition need not be convex.

Training uses $\Phi_{\rm sum}=400\Phi$, so the listed outer steps apply
to this summed objective. Reported gradients are divided by 400.
For \ReMID{}, $\delta_{\rm mean}=10^{-3}$ and
$\delta_{\rm sum}=\delta_{\rm mean}/400=2.5\times10^{-6}$, giving the
same terminal perturbation in either normalization.

\paragraph{Stepsizes and calibration.}
Let $\mathcal L_i$ be the weighted graph Laplacian and
$D_{\mathcal A}$ the source-node indicator matrix. Set
\[
B_i=\frac{\alpha}{N}\mathcal L_i+\frac{\lambda_c+\mu}{N}I
+\mathbf1_{\{i=1\}}\frac{D_{\mathcal A}}{2|\mathcal A|},
\qquad L_i=(1+10^{-6})\lambda_{\max}(B_i),
\]
using eigensolver tolerance $10^{-11}$.
All methods use $\eta_i=1.8/L_i$ for $i<M$ and
$\eta_M=1.8/(L_M+\delta_{\rm mean}/(2|\mathcal V|))$.
Training ID-CG uses relative residual tolerance $10^{-6}$ and at most
60 CG iterations.

Seed-$5$ calibration uses 150 seconds per candidate, testing
$T\in\{4,6,8\}$ for \ReMID{} and ID-CG and $\{6,12,24\}$ for Sato/ITD,
first at outer step 0.8, then $\{0.4,0.8,1.0\}$ at the selected budget.
The score $Q_{150}$ is defined as above, with Frobenius gradient norms;
candidates whose objective and gradient both rise from 100 to 150 seconds
are excluded. The configurations in
Table~\ref{tab:experimental-operating-points} were frozen before the
reported five-run evaluation.

\paragraph{Reference evaluation and empirical values.}
Responses are evaluated in float64 using accelerated GD, at most 5000
iterations, and stopping test
$\|\nabla_1\Psi_i(z,u)\|_F/(1+\|z\|_F)\le10^{-11}$.
Reference adjoints use autograd CG with tolerance $10^{-9}$ and at most
500 iterations. Nonconverged reference solves are rejected.
For each run, separate L-BFGS-B optimization starts from the
lowest-objective saved iterate among the three methods, using these
high-accuracy response and adjoint evaluations. It uses at most 250
outer iterations, objective-change tolerance $10^{-14}$, and requested
gradient tolerance $10^{-7}$. It stops on objective changes before
reaching that gradient target. The retained point is reevaluated with
cold response solves at tolerance $10^{-13}$ and adjoints at $10^{-9}$.
Its value $\widehat\Phi_s^\star$ is an empirical reference, not a
certificate of global optimality; its gradient norms range from
$3.74\times10^{-7}$ to $1.21\times10^{-6}$.

\begin{table}[t]
\centering
\caption{MNIST graph learning: terminal mean $\pm$ sample standard
deviation under a 150-second algorithm-time budget.
$\Delta\Phi=\Phi-\widehat\Phi^\star$ uses empirical reference values.
Counts are in thousands and exclude reference evaluations.}
\label{tab:mnist-domain-adaptation}
\small\setlength{\tabcolsep}{4pt}
\begin{tabular}{@{}lccccc@{}}
\toprule
Method & $\Delta\Phi\;(10^{-3})$ & $\|\nabla\Phi\|_F\;(10^{-4})$ & FO & HVP & ITD\\
\midrule
\ReMID{} & $\mathbf{5.05}\pm1.27$ & $\mathbf{1.65}\pm0.21$ & $107.67\pm12.06$ & $0$ & $0$\\
ID-CG & $11.16\pm1.69$ & $2.72\pm0.14$ & $16.38\pm1.36$ & $30.85\pm1.56$ & $0$\\
Sato/ITD & $18.32\pm4.38$ & $3.75\pm0.76$ & $33.39\pm3.80$ & $0$ & $33.39\pm3.80$\\
\bottomrule
\end{tabular}
\end{table}

\bibliographystyle{plain}
\bibliography{references_mp}

@inproceedings{ling2018game,
  author    = {Ling, Chun Kai and Fang, Fei and Kolter, J. Zico},
  title     = {What Game Are We Playing? End-to-End Learning
               in Normal and Extensive Form Games},
  booktitle = {Proceedings of the Twenty-Seventh International
               Joint Conference on Artificial Intelligence},
  pages     = {396--402},
  year      = {2018},
  doi       = {10.24963/ijcai.2018/55},
  url       = {https://www.ijcai.org/proceedings/2018/55}
}

@inproceedings{yao2025gap,
  author    = {Yao, Wei and Yin, Haian and Zeng, Shangzhi
               and Zhang, Jin},
  title     = {Overcoming Lower-Level Constraints in Bilevel
               Optimization: A Novel Approach with
               Regularized Gap Functions},
  booktitle = {The Thirteenth International Conference
               on Learning Representations},
  year      = {2025},
  url       = {https://openreview.net/forum?id=cyPMEXdqQ2}
}

@article{shen2026minimax,
  author  = {Shen, Yiyang and He, Yutian and Wang, Weiran
             and Lin, Qihang},
  title   = {Penalty-Based First-Order Methods for Bilevel
             Optimization with Minimax and Constrained
             Lower-Level Problems},
  journal = {arXiv preprint arXiv:2605.08006},
  year    = {2026},
  doi     = {10.48550/arXiv.2605.08006},
  url     = {https://arxiv.org/abs/2605.08006}
}

@article{ostroukhov2022tensor,
  title={Tensor methods for strongly convex strongly concave
         saddle point problems and strongly monotone
         variational inequalities},
  author={Ostroukhov, Petr and Kamalov, Rinat and
          Dvurechensky, Pavel and Gasnikov, Alexander},
  journal={Computer Research and Modeling},
  volume={14},
  number={2},
  pages={357--376},
  year={2022},
  url={https://arxiv.org/abs/2012.15595}
}

@article{paige1975solution,
  author  = {Paige, Christopher C. and Saunders, Michael A.},
  title   = {Solution of Sparse Indefinite Systems of Linear Equations},
  journal = {SIAM Journal on Numerical Analysis},
  volume  = {12},
  number  = {4},
  pages   = {617--629},
  year    = {1975}
}

@article{somayajula2022multilevel,
  author  = {Sai Ashish Somayajula and Linfeng Song and Pengtao Xie},
  title   = {A Multi-Level Optimization Framework for End-to-End Text Augmentation},
  journal = {Transactions of the Association for Computational Linguistics},
  volume  = {10},
  pages   = {343--358},
  year    = {2022},
  doi     = {10.1162/tacl_a_00464}
}

@article{fang2022endtoend,
  author  = {Hongchao Fang and Pengtao Xie},
  title   = {An End-to-End Contrastive Self-Supervised Learning Framework for Language Understanding},
  journal = {Transactions of the Association for Computational Linguistics},
  volume  = {10},
  pages   = {1324--1340},
  year    = {2022},
  doi     = {10.1162/tacl_a_00521}
}

@article{zhang2025tapweight,
  author  = {Ruiyi Zhang and Sai Ashish Somayajula and Pengtao Xie},
  title   = {{TapWeight}: Reweighting Pretraining Objectives for Task-Adaptive Pretraining},
  journal = {Transactions on Machine Learning Research},
  year    = {2025},
  url     = {https://openreview.net/forum?id=DCCw2CEVFS}
}

@article{jiang2026bizol,
  title   = {{Bi-ZOL}: Bilevel Zeroth-Order Learning with Nonsmooth Responses},
  author  = {Jiang, Zhisen and Bolognani, Saverio},
  journal = {arXiv preprint arXiv:2609.08021},
  year    = {2026}
}

@article{zheng2026bilevel,
  author  = {Zihao Zheng and Irwin King and Songtao Lu},
  title   = {Bilevel Optimization over Saddle Points of Zero-Sum Markov Games},
  journal = {arXiv preprint arXiv:2605.26654},
  year    = {2026},
  eprint  = {2605.26654},
  archivePrefix = {arXiv}
}

@article{shen2026penalty,
  author  = {Yiyang Shen and Yutian He and Weiran Wang and Qihang Lin},
  title   = {Penalty-Based First-Order Methods for Bilevel Optimization
             with Minimax and Constrained Lower-Level Problems},
  journal = {arXiv preprint arXiv:2605.08006},
  year    = {2026},
  eprint  = {2605.08006},
  archivePrefix = {arXiv}
}

@article{persak2024decision,
  author        = {Egon Per{\v{s}}ak and Miguel F. Anjos},
  title         = {Decision-Focused Forecasting: Decision Losses for
                   Multistage Optimisation},
  journal       = {arXiv preprint arXiv:2405.14719},
  year          = {2024},
  eprint        = {2405.14719},
  archiveprefix = {arXiv},
  url           = {https://arxiv.org/abs/2405.14719}
}

@article{xia2026sequentialmarkets,
  author  = {Yuxin Xia and Iacopo Savelli and Thomas Morstyn},
  title   = {Scalable Multi-Level Optimization for Sequentially Cleared
             Energy Markets with a Case Study on Gas and Carbon Aware
             Unit Commitment},
  journal = {Computers \& Industrial Engineering},
  volume  = {218},
  pages   = {112112},
  year    = {2026},
  doi     = {10.1016/j.cie.2026.112112}
}

@inproceedings{zhao2026ffolayer,
  author    = {Zihao Zhao and Kai-Chia Mo and Shing-Hei Ho and Brandon Amos and Kai Wang},
  title     = {A Fully First-Order Layer for Differentiable Optimization},
  booktitle = {Proceedings of the 43rd International Conference on Machine Learning},
  year      = {2026},
  note      = {Spotlight},
  eprint    = {2512.02494},
  archiveprefix = {arXiv},
  primaryclass  = {cs.LG}
}

@article{ren2025interlayer,
  author  = {Yong Ren and Corey Dickman and Matthew Pantano
             and Edward Reutzel and Qian Wang},
  title   = {Control of Interlayer Temperature through Laser Power
             Optimization in Powder Bed Fusion},
  journal = {Journal of Manufacturing Processes},
  volume  = {135},
  pages   = {189--203},
  year    = {2025},
  doi     = {10.1016/j.jmapro.2024.12.045}
}

@article{kavas2023layer,
  author  = {Bar{\i}\c{s} Kavas and Efe C. Balta and Michael Tucker
             and Alisa Rupenyan and John Lygeros and Markus Bambach},
  title   = {Layer-to-Layer Closed-Loop Feedback Control Application
             for Inter-Layer Temperature Stabilization in Laser Powder
             Bed Fusion},
  journal = {Additive Manufacturing},
  volume  = {78},
  pages   = {103847},
  year    = {2023},
  doi     = {10.1016/j.addma.2023.103847}
}

@article{riensche2022feedforward,
  author  = {Alex Riensche and Benjamin D. Bevans and Ziyad Smoqi
             and Reza Yavari and Ajay Krishnan and Josie Gilligan
             and Nicholas Piercy and Kevin Cole and Prahalada Rao},
  title   = {Feedforward Control of Thermal History in Laser Powder Bed
             Fusion: Toward Physics-Based Optimization of Processing
             Parameters},
  journal = {Materials \& Design},
  volume  = {224},
  pages   = {111351},
  year    = {2022},
  doi     = {10.1016/j.matdes.2022.111351}
}

@article{alam2021slmcontrol,
  author  = {Tonia-Maria Alam and Serge Nicaise and Luc Paquet},
  title   = {An Optimal Control Problem Governed by the Heat Equation
             with Nonconvex Constraints Applied to the Selective Laser
             Melting Process},
  journal = {Minimax Theory and its Applications},
  volume  = {6},
  number  = {2},
  pages   = {191--204},
  year    = {2021}
}

@inproceedings{lenga2020continual,
  author    = {Matthias Lenga and Heinrich Schulz and Axel Saalbach},
  title     = {Continual Learning for Domain Adaptation in Chest X-ray Classification},
  booktitle = {Proceedings of the Third Conference on Medical Imaging with Deep Learning},
  series    = {Proceedings of Machine Learning Research},
  volume    = {121},
  pages     = {413--423},
  year      = {2020},
  publisher = {PMLR},
  url       = {https://proceedings.mlr.press/v121/lenga20a.html}
}

@article{kirkpatrick2017overcoming,
  author  = {James Kirkpatrick and Razvan Pascanu and Neil Rabinowitz
             and Joel Veness and Guillaume Desjardins and Andrei A. Rusu
             and Kieran Milan and John Quan and Tiago Ramalho
             and Agnieszka Grabska-Barwinska and Demis Hassabis
             and Claudia Clopath and Dharshan Kumaran and Raia Hadsell},
  title   = {Overcoming Catastrophic Forgetting in Neural Networks},
  journal = {Proceedings of the National Academy of Sciences},
  volume  = {114},
  number  = {13},
  pages   = {3521--3526},
  year    = {2017},
  doi     = {10.1073/pnas.1611835114}
}

@inproceedings{xie2023classroom,
  title     = {Improving Bi-level Optimization Based Methods with Inspiration from Humans' Classroom Study Techniques},
  author    = {Xie, Pengtao},
  booktitle = {Proceedings of the 40th International Conference on Machine Learning},
  series    = {Proceedings of Machine Learning Research},
  volume    = {202},
  pages     = {38137--38186},
  year      = {2023},
  publisher = {PMLR},
  url       = {https://proceedings.mlr.press/v202/xie23a.html}
}

@article{guo2025mlomae,
  title   = {Downstream Task Guided Masking Learning in Masked Autoencoders Using Multi-Level Optimization},
  author  = {Guo, Han and Hosseini, Ramtin and Zhang, Ruiyi and Somayajula, Sai Ashish and Roy Chowdhury, Ranak and Gupta, Rajesh K. and Xie, Pengtao},
  journal = {Transactions on Machine Learning Research},
  year    = {2025},
  url     = {https://openreview.net/forum?id=cFmmaxkD5A}
}

@article{dizon2024differential,
  title         = {Differential Estimates for Fast First-Order Multilevel Nonconvex Optimisation},
  author        = {Dizon, Neil and Valkonen, Tuomo},
  journal       = {arXiv preprint arXiv:2412.01481},
  year          = {2024},
  eprint        = {2412.01481},
  archivePrefix = {arXiv},
  primaryClass  = {math.OC},
  url           = {https://arxiv.org/abs/2412.01481}
}

@inproceedings{li2020endtoendgames,
  author    = {Li, Jiayang and Yu, Jing and Nie, Yu and Wang, Zhaoran},
  title     = {End-to-End Learning and Intervention in Games},
  booktitle = {Advances in Neural Information Processing Systems},
  volume    = {33},
  year      = {2020}
}

@inproceedings{fiez2020stackelberg,
  author    = {Fiez, Tanner and Chasnov, Benjamin and Ratliff, Lillian},
  title     = {Implicit Learning Dynamics in Stackelberg Games: Equilibria Characterization, Convergence Analysis, and Empirical Study},
  booktitle = {Proceedings of the 37th International Conference on Machine Learning},
  series    = {Proceedings of Machine Learning Research},
  volume    = {119},
  pages     = {3133--3144},
  year      = {2020},
  publisher = {PMLR}
}

@article{shafiei2024trilevel,
  author  = {Shafiei, Allahkaram and Kungurtsev, Vyacheslav and Marecek, Jakub},
  title   = {Trilevel and Multilevel Optimization Using Monotone Operator Theory},
  journal = {Mathematical Methods of Operations Research},
  volume  = {99},
  number  = {1},
  pages   = {77--114},
  year    = {2024},
  doi     = {10.1007/s00186-024-00852-5}
}

@inproceedings{choe2023betty,
  author    = {Choe, Sang Keun and Neiswanger, Willie and Xie, Pengtao and Xing, Eric},
  title     = {Betty: An Automatic Differentiation Library for Multilevel Optimization},
  booktitle = {International Conference on Learning Representations},
  year      = {2023}
}

@article{gu2024differentiable,
  author        = {Gu, Yuntian and Chen, Xuzheng},
  title         = {Towards Differentiable Multilevel Optimization: A Gradient-Based Approach},
  journal       = {arXiv preprint arXiv:2410.11312},
  year          = {2024},
  eprint        = {2410.11312},
  archivePrefix = {arXiv}
}

@book{dempe2002foundations,
  author    = {Dempe, Stephan},
  title     = {Foundations of Bilevel Programming},
  series    = {Nonconvex Optimization and Its Applications},
  volume    = {61},
  publisher = {Springer},
  address   = {New York},
  year      = {2002}
}

@article{colson2007overview,
  author  = {Colson, Beno{\^\i}t and Marcotte, Patrice and Savard, Gilles},
  title   = {An Overview of Bilevel Optimization},
  journal = {Annals of Operations Research},
  volume  = {153},
  number  = {1},
  pages   = {235--256},
  year    = {2007},
  doi     = {10.1007/s10479-007-0176-2}
}

@book{luo1996mpec,
  author    = {Luo, Zhi-Quan and Pang, Jong-Shi and Ralph, Daniel},
  title     = {Mathematical Programs with Equilibrium Constraints},
  publisher = {Cambridge University Press},
  address   = {Cambridge},
  year      = {1996},
  doi       = {10.1017/CBO9780511983658}
}

@inproceedings{domke2012generic,
  author    = {Domke, Justin},
  title     = {Generic Methods for Optimization-Based Modeling},
  booktitle = {Proceedings of the 15th International Conference on Artificial Intelligence and Statistics},
  series    = {Proceedings of Machine Learning Research},
  volume    = {22},
  pages     = {318--326},
  year      = {2012},
  publisher = {PMLR}
}

@inproceedings{maclaurin2015reversible,
  author    = {Maclaurin, Dougal and Duvenaud, David and Adams, Ryan P.},
  title     = {Gradient-Based Hyperparameter Optimization through Reversible Learning},
  booktitle = {Proceedings of the 32nd International Conference on Machine Learning},
  series    = {Proceedings of Machine Learning Research},
  volume    = {37},
  pages     = {2113--2122},
  year      = {2015},
  publisher = {PMLR}
}

@inproceedings{pedregosa2016hyperparameter,
  author    = {Pedregosa, Fabian},
  title     = {Hyperparameter Optimization with Approximate Gradient},
  booktitle = {Proceedings of the 33rd International Conference on Machine Learning},
  series    = {Proceedings of Machine Learning Research},
  volume    = {48},
  pages     = {737--746},
  year      = {2016},
  publisher = {PMLR}
}

@inproceedings{franceschi2018bilevel,
  author    = {Franceschi, Luca and Frasconi, Paolo and Salzo, Saverio and Grazzi, Riccardo and Pontil, Massimiliano},
  title     = {Bilevel Programming for Hyperparameter Optimization and Meta-Learning},
  booktitle = {Proceedings of the 35th International Conference on Machine Learning},
  series    = {Proceedings of Machine Learning Research},
  volume    = {80},
  pages     = {1568--1577},
  year      = {2018},
  publisher = {PMLR}
}

@inproceedings{grazzi2020iteration,
  author    = {Grazzi, Riccardo and Franceschi, Luca and Pontil, Massimiliano and Salzo, Saverio},
  title     = {On the Iteration Complexity of Hypergradient Computation},
  booktitle = {Proceedings of the 37th International Conference on Machine Learning},
  series    = {Proceedings of Machine Learning Research},
  volume    = {119},
  pages     = {3748--3758},
  year      = {2020},
  publisher = {PMLR}
}

@inproceedings{blondel2022implicit,
  author    = {Blondel, Mathieu and Berthet, Quentin and Cuturi, Marco and Frostig, Roy and Hoyer, Stephan and Llinares-L{\'o}pez, Felipe and Pedregosa, Fabian and Vert, Jean-Philippe},
  title     = {Efficient and Modular Implicit Differentiation},
  booktitle = {Advances in Neural Information Processing Systems},
  volume    = {35},
  year      = {2022}
}

@article{ghadimi2018approximation,
  author        = {Ghadimi, Saeed and Wang, Mengdi},
  title         = {Approximation Methods for Bilevel Programming},
  journal       = {arXiv preprint arXiv:1802.02246},
  year          = {2018},
  eprint        = {1802.02246},
  archivePrefix = {arXiv}
}

@inproceedings{ji2021bilevel,
  author    = {Ji, Kaiyi and Yang, Junjie and Liang, Yingbin},
  title     = {Bilevel Optimization: Convergence Analysis and Enhanced Design},
  booktitle = {Proceedings of the 38th International Conference on Machine Learning},
  series    = {Proceedings of Machine Learning Research},
  volume    = {139},
  pages     = {4882--4892},
  year      = {2021},
  publisher = {PMLR}
}

@inproceedings{yang2021faster,
  author    = {Yang, Junjie and Ji, Kaiyi and Liang, Yingbin},
  title     = {Provably Faster Algorithms for Bilevel Optimization},
  booktitle = {Advances in Neural Information Processing Systems},
  volume    = {34},
  year      = {2021}
}

@article{hong2023twotimescale,
  author  = {Hong, Mingyi and Wai, Hoi-To and Wang, Zhaoran and Yang, Zhuoran},
  title   = {A Two-Timescale Stochastic Algorithm Framework for Bilevel Optimization: Complexity Analysis and Application to Actor-Critic},
  journal = {SIAM Journal on Optimization},
  volume  = {33},
  number  = {1},
  pages   = {147--180},
  year    = {2023},
  doi     = {10.1137/20M1387341}
}

@inproceedings{dagreou2022framework,
  author    = {Dagr{\'e}ou, Mathieu and Ablin, Pierre and Vaiter, Samuel and Moreau, Thomas},
  title     = {A Framework for Bilevel Optimization That Enables Stochastic and Global Variance Reduction Algorithms},
  booktitle = {Advances in Neural Information Processing Systems},
  volume    = {35},
  year      = {2022}
}

@inproceedings{dagreou2024lower,
  author    = {Dagr{\'e}ou, Mathieu and Moreau, Thomas and Vaiter, Samuel and Ablin, Pierre},
  title     = {A Lower Bound and a Near-Optimal Algorithm for Bilevel Empirical Risk Minimization},
  booktitle = {Proceedings of the 27th International Conference on Artificial Intelligence and Statistics},
  series    = {Proceedings of Machine Learning Research},
  volume    = {238},
  pages     = {82--90},
  year      = {2024},
  publisher = {PMLR}
}

@inproceedings{li2022fullysingle,
  author    = {Li, Junyi and Gu, Bin and Huang, Heng},
  title     = {A Fully Single Loop Algorithm for Bilevel Optimization without Hessian Inverse},
  booktitle = {Proceedings of the AAAI Conference on Artificial Intelligence},
  volume    = {36},
  pages     = {7426--7434},
  year      = {2022},
  doi       = {10.1609/aaai.v36i7.20706}
}

@inproceedings{liu2022bome,
  author    = {Liu, Bo and Ye, Mao and Wright, Stephen and Stone, Peter and Liu, Qiang},
  title     = {BOME! Bilevel Optimization Made Easy: A Simple First-Order Approach},
  booktitle = {Advances in Neural Information Processing Systems},
  volume    = {35},
  pages     = {17248--17262},
  year      = {2022}
}

@inproceedings{sow2022hessianfree,
  author    = {Sow, Daouda and Ji, Kaiyi and Liang, Yingbin},
  title     = {On the Convergence Theory for Hessian-Free Bilevel Algorithms},
  booktitle = {Advances in Neural Information Processing Systems},
  volume    = {35},
  year      = {2022}
}

@inproceedings{kwon2023fullyfirst,
  author    = {Kwon, Jeongyeol and Kwon, Dohyun and Wright, Stephen and Nowak, Robert D.},
  title     = {A Fully First-Order Method for Stochastic Bilevel Optimization},
  booktitle = {Proceedings of the 40th International Conference on Machine Learning},
  series    = {Proceedings of Machine Learning Research},
  volume    = {202},
  pages     = {18083--18113},
  year      = {2023},
  publisher = {PMLR}
}

@inproceedings{kornowski2024constrained,
  author    = {Kornowski, Guy and Padmanabhan, Swati and Wang, Kai and Zhang, Jimmy and Sra, Suvrit},
  title     = {First-Order Methods for Linearly Constrained Bilevel Optimization},
  booktitle = {Advances in Neural Information Processing Systems},
  volume    = {37},
  year      = {2024},
  doi       = {10.52202/079017-4491}
}

@article{chen2025f2ba,
  author  = {Chen, Lesi and Ma, Yaohua and Zhang, Jingzhao},
  title   = {Near-Optimal Nonconvex-Strongly-Convex Bilevel Optimization
             with Fully First-Order Oracles},
  journal = {Journal of Machine Learning Research},
  volume  = {26},
  number  = {109},
  pages   = {1--56},
  year    = {2025}
}

@inproceedings{sato2021multilevel,
  author    = {Sato, Ryo and Tanaka, Mirai and Takeda, Akiko},
  title     = {A Gradient Method for Multilevel Optimization},
  booktitle = {Advances in Neural Information Processing Systems},
  volume    = {34},
  year      = {2021}
}

@article{wang2017scgd,
  author  = {Wang, Mengdi and Fang, Ethan X. and Liu, Han},
  title   = {Stochastic Compositional Gradient Descent: Algorithms for Minimizing Compositions of Expected-Value Functions},
  journal = {Mathematical Programming},
  volume  = {161},
  pages   = {419--449},
  year    = {2017}
}

@article{yang2019multilevel,
  author  = {Yang, Shuoguang and Wang, Mengdi and Fang, Ethan X.},
  title   = {Multilevel Stochastic Gradient Methods for Nested Composition Optimization},
  journal = {SIAM Journal on Optimization},
  volume  = {29},
  number  = {1},
  pages   = {616--659},
  year    = {2019},
  doi     = {10.1137/18M1164846}
}

@article{zhang2021nested,
  author  = {Zhang, Junyu and Xiao, Lin},
  title   = {Multilevel Composite Stochastic Optimization via Nested Variance Reduction},
  journal = {SIAM Journal on Optimization},
  volume  = {31},
  number  = {2},
  pages   = {1131--1157},
  year    = {2021},
  doi     = {10.1137/19M1285457}
}

@article{balasubramanian2022multilevel,
  author  = {Balasubramanian, Krishnakumar and Ghadimi, Saeed and Nguyen, Anthony},
  title   = {Stochastic Multilevel Composition Optimization Algorithms with Level-Independent Convergence Rates},
  journal = {SIAM Journal on Optimization},
  volume  = {32},
  number  = {2},
  pages   = {519--544},
  year    = {2022},
  doi     = {10.1137/21M1406222}
}

@inproceedings{jiang2022optimal,
  author    = {Jiang, Wei and Wang, Bokun and Wang, Yibo and Zhang, Lijun and Yang, Tianbao},
  title     = {Optimal Algorithms for Stochastic Multi-Level Compositional Optimization},
  booktitle = {Proceedings of the 39th International Conference on Machine Learning},
  series    = {Proceedings of Machine Learning Research},
  volume    = {162},
  pages     = {10195--10216},
  year      = {2022},
  publisher = {PMLR}
}

@inproceedings{arbel2022amortized,
  title     = {Amortized Implicit Differentiation for Stochastic Bilevel Optimization},
  author    = {Arbel, Michael and Mairal, Julien},
  booktitle = {International Conference on Learning Representations},
  year      = {2022}
}
\end{document}